\let\oldmarginpar\marginpar
\renewcommand\marginpar[1]{\-\oldmarginpar[\raggedleft\footnotesize #1]%
{\raggedright\footnotesize #1}}
\numberwithin{equation}{section}
\newtheorem{thm}[equation]{Theorem}
\newtheorem{lemma}[equation]{Lemma}
\newtheorem{cor}[equation]{Corollary}
\newtheorem{prop}[equation]{Proposition}
\newtheorem{claim}[equation]{Claim}
\newtheorem*{conjecture*}{Conjecture}
\newtheorem*{question*}{Question}
\theoremstyle{definition}
\newtheorem{defi}[equation]{Definition}
\newtheorem{convention}[equation]{Convention}
 \newtheorem{example}[equation]{Example}
  \newtheorem*{example*}{Example}
 \newtheorem{examples}[equation]{Examples}
\theoremstyle{remark}
\newtheorem{remark}[equation]{Remark}
\def\co{\colon\thinspace}
\newcommand{\G}{\mathbb{G}}
\newcommand{\Z}{\mathbb{Z}}
\newcommand{\F}{\mathbb{F}}
\newcommand{\Fb}{\overline{\mathbb{F}}}
\newcommand{\Q}{\mathbb{Q}}
\newcommand{\N}{\mathbb{N}}
\newcommand{\A}{\mathbb{A}}
\newcommand{\C}{\mathbb{C}}
\newcommand{\Gm}{\mathbb{G}_m}
\newcommand{\CC}{\mathcal{C}}
\newcommand{\EE}{\mathcal{E}}
\newcommand{\FF}{\mathcal{F}}
\newcommand{\GG}{\mathcal{G}}
\newcommand{\LL}{\mathcal{L}}
\newcommand{\PP}{\mathcal{P}}
\newcommand{\XX}{\mathcal{X}}
\newcommand{\YY}{\mathcal{Y}}
\newcommand{\OO}{\mathcal{O}}
\newcommand{\MM}{\mathcal{M}}
\newcommand{\MMb}{\overline{\mathcal{M}}}
\newcommand{\tensor}{\otimes}
\DeclareMathOperator{\cont}{cont}
\DeclareMathOperator{\pt}{pt}
\DeclareMathOperator{\Aut}{Aut}
\DeclareMathOperator{\Spf}{Spf}
\DeclareMathOperator{\ev}{ev}
\DeclareMathOperator{\Mat}{Mat}
\DeclareMathOperator{\UpT}{UpT}
\DeclareMathOperator{\Shv}{Shv}
\DeclareMathOperator{\modules}{\text{-}mod}
\DeclareMathOperator{\id}{id}
\DeclareMathOperator{\im}{im}
\DeclareMathOperator{\sm}{\wedge}
\DeclareMathOperator{\Tors}{Tors}
\DeclareMathOperator{\Tor}{Tor}
\DeclareMathOperator{\Gal}{Gal}
\DeclareMathOperator{\Ext}{Ext}
\DeclareMathOperator{\Hom}{Hom}
\DeclareMathOperator{\Spec}{Spec}
\DeclareMathOperator{\holim}{holim}
\DeclareMathOperator{\QCoh}{QCoh}
\DeclareMathOperator{\Pic}{Pic}
\DeclareMathOperator{\rk}{rk}
\newcommand*{\myproofname}{Proof of claim}
\newenvironment{myproof}[1][\myproofname]{\begin{proof}[#1]}{\end{proof}}
\newcommand{\lbreak}{\leavevmode \\\vspace{-0.5cm}}
\DeclareMathOperator{\Xb}{\overline{\XX}}
\DeclareMathOperator{\XXb}{\overline{\XX}}
\DeclareMathOperator{\Yb}{\overline{\YY}}
\DeclareMathOperator{\Prj}{\mathbb{P}}
\begin{document}
\title[(Topological) modular forms with level structures]{(Topological) modular forms with level structures: decompositions and duality}
\author{Lennart Meier}
\maketitle
\begin{abstract}
 We study decompositions of vector bundles on the moduli stack of elliptic curves coming from moduli of elliptic curves with level structure. These decompositions imply decomposition results for rings of modular forms and also for topological modular forms. We give explicit formulas for these decompositions and also apply them to equivariant topological modular forms. 
 
 Moreover, we study the dualizing sheaf on $\MMb_1(n)$ and characterize the numbers $n$ such that $Tmf_1(n)$ is Anderson self-dual. 
\end{abstract}

\section{Introduction}
Rings of modular forms with level structure are of great importance in number theory. An example is $M_R(\Gamma_1(n))$, the ring of (holomorphic) $\Gamma_1(n)$-modular forms over a ring $R$, in which we assume $n$ to be invertible. 

While there is a lot of information available for low $n$, in general these rings are hard to understand. For example, it is an equivalent form of a famous theorem of Mazur \cite{Maz77} that the ring $M_\Q(\Gamma_1(n))[\Delta^{-1}]$ of meromorphic modular forms only admits a ring homomorphism to $\Q$ if $n\leq 10$ or $n=12$, i.e.\ exactly if (the coarse moduli space of) $\MMb_1(n)_{\C}$ has genus $0$.  Here, $\MMb_1(n)$ is the compactification of the moduli stack $\MM_1(n)$ of elliptic curves with chosen point of exact order $n$.

The aim of the present article is instead the more modest goal of an \emph{additive} understanding of the ring of modular forms with level structure. It is an elementary observation that $M_\Q(\Gamma_1(n))$ always splits as a graded $\Q$-vector space into shifted copies of $M_\Q(\Gamma_1(2))$ (and the same works with $M_\Q(\Gamma_1(3))$ or the ring of level-$1$ modular forms $M_\Q$ itself instead). Several questions arise: Is a similar splitting possible for other base rings and is it natural in the choice of base ring? Is the splitting compatible with the $M_\Q$-module structure? 

Under rather weak assumptions we can give a positive answer to these questions in a strong form. Denote by $f_n\colon \MMb_1(n)_R \to \MMb_{ell,R}$ the projection to the compactified moduli stack of elliptic curves, where $R$ is $\Z_{(l)}$ or a field of characteristic $l$ with $(l,n) = 1$ or $l=0$, and by $\EE_n$ the pushforward $(f_n)_*\OO_{\MMb_1(n)_R}$. Furthermore, denote by $\omega$ the pushforward $p_*\Omega^1_{\CC/\MMb_{ell}}$ of the sheaf of differentials of the universal generalized elliptic curve $p\colon \CC \to \MMb_{ell}$.

\begin{thm}\label{thm:int1}
Let $n\geq 4$. If $R$ is a field or $H^1(\MMb_1(n)_{(l)};\omega)$ has no $l$-torsion,\footnote{This happens if and only if all weight $1$ mod $l$ cusp forms of level $\Gamma_1(n)$ lift as $\Gamma_1(n)$-cusp forms to $\Z_{(l)}$.} the vector bundle $\EE_n$ on $\MMb_{ell,R}$ decomposes into vector bundles of the form
\begin{itemize}
\item $\EE_3\tensor \omega^{\tensor m}$ for $l=2$,
\item $\EE_2\tensor \omega^{\tensor m}$ for $l=3$,
\item $\omega^{\tensor m}$ for $l>3$.
\end{itemize} 
If $l=0$, the vector bundle $\EE_n$ decomposes indeed both into vector bundles of the form $\EE_2\tensor \omega^{\tensor m}$ and into vector bundles of the form $\EE_3\tensor \omega^{\tensor m}$ (if $n\geq 5$). 

For the corresponding splitting on the uncompactified moduli stack the $H^1$-condition is unnecessary if we are willing to $l$-complete everything in the case $R = \Z_{(l)}$. 
\end{thm}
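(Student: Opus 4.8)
The plan is to deduce both the field case and the case $R=\Z_{(l)}$ from the classification of vector bundles on $\MMb_{ell}$ over a field, together with a deformation-theoretic lifting argument whose unique obstruction is made to vanish by the $H^1$-hypothesis. Throughout write $p=3$ when $l=2$ and $p=2$ when $l=3$. For the \textbf{field case} I would invoke the classification of indecomposable vector bundles on $\MMb_{ell}$ over an algebraically closed field $k$ of characteristic $l$: for $l>3$ (and $l=0$) the only indecomposables are the line bundles $\omega^{\otimes m}$, so every bundle — in particular $\EE_n$ — is a sum of these; for $l\in\{2,3\}$ the indecomposables are the $\omega^{\otimes m}$ together with the $\omega$-twists of a single rank-$2$ bundle $A_l$, and $A_l$ is, up to twist, a summand of $\EE_p$. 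Krull--Schmidt applies since endomorphism rings of coherent sheaves on the proper stack $\MMb_{ell,k}$ are finite-dimensional $k$-algebras. It then remains to repackage a decomposition $\EE_n\cong\bigoplus A_l\otimes\omega^{\otimes a_i}\oplus\bigoplus\omega^{\otimes b_j}$ into one whose non-line pieces are full copies of $\EE_p\otimes\omega^{\otimes m}$; this is bookkeeping on the multiplicities of the $A_l$-twists, and I expect it to succeed exactly for $n\geq4$ (and the two-sided version for $n\geq5$), the point being that $\EE_n$ must contain enough $A_l$-summands in the right weights. A general field is reduced to $\bar k$ by descent along $\Spec\bar k\to\Spec k$, which is harmless as all building blocks $\omega^{\otimes m},\EE_p$ are defined over the prime field.

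For $R=\Z_{(l)}$ I would \textbf{reduce to a lifting problem}. The building blocks $\tilde B_i\in\{\omega^{\otimes m},\EE_p\otimes\omega^{\otimes m}\}$ are defined over $\Z_{(l)}$, and the field case supplies an isomorphism $\phi_0\colon\bigoplus_i\tilde B_i|_{\F_l}\xrightarrow{\ \sim\ }\EE_n|_{\F_l}$ on the special fibre. It suffices to lift $\phi_0$ to a \emph{morphism} $\phi\colon\bigoplus_i\tilde B_i\to\EE_n$ over $\Z_{(l)}$: since $\MMb_{ell,\Z_{(l)}}$ is proper over the local ring $\Z_{(l)}$, the locus where a map of bundles of equal rank fails to be an isomorphism is closed, its image in $\Spec\Z_{(l)}$ is closed by properness and avoids the closed point because $\phi|_{\F_l}=\phi_0$ is an isomorphism, hence is empty, so $\phi$ is automatically an isomorphism. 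To lift $\phi_0$, set $\mathcal{H}=\bigoplus_i\mathcal{H}om(\tilde B_i,\EE_n)$, so $\phi_0\in H^0(\mathcal{H}|_{\F_l})$. The sequence $0\to\mathcal{H}\xrightarrow{\,l\,}\mathcal{H}\to\mathcal{H}|_{\F_l}\to0$ produces a Bockstein $\delta\colon H^0(\mathcal{H}|_{\F_l})\to H^1(\mathcal{H})$ with image $H^1(\mathcal{H})[l]$, and $\phi_0$ lifts precisely when $\delta(\phi_0)=0$; in particular a lift exists whenever $H^1(\mathcal{H})$ is $l$-torsion-free.

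The task is thus to control the $l$-torsion in $H^1(\mathcal{H})=\bigoplus_i H^1(\MMb_{ell};\tilde B_i^{\vee}\otimes\EE_n)$, and for the line-bundle blocks I can \textbf{isolate the weight-one obstruction} cleanly. For $\tilde B_i=\omega^{\otimes m}$ the projection formula for the finite map $f_n$ (no higher pushforwards, $f_n^{*}\omega\cong\omega$) gives $H^1(\MMb_{ell};\EE_n\otimes\omega^{\otimes k})\cong H^1(\MMb_1(n);\omega^{\otimes k})$ with $k=-m$. By Serre duality on the proper stacky curve $\MMb_1(n)$, using the Kodaira--Spencer identification $\Omega^1_{\MMb_1(n)}\cong\omega^{\otimes2}(-\mathrm{cusps})$, this is dual to the space of weight-$(2-k)$ cusp forms: it vanishes for $k\geq2$ (weight $\leq0$), is $l$-torsion-free for $k\leq0$ (weight $\geq2$ cusp forms lift, equivalently the relevant $H^1$ vanishes by ampleness of $\omega^{\otimes j}(-\mathrm{cusps})$ for $j\geq2$), and can carry $l$-torsion only for $k=1$, i.e.\ for weight-one cusp forms. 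That last group is exactly $H^1(\MMb_1(n)_{(l)};\omega)$, whose $l$-torsion-freeness is the hypothesis (equivalently, the footnote's lifting of mod-$l$ weight-one forms). Hence the line-bundle part of the obstruction is killed precisely by the assumption, and for $l>3$ — where no other blocks occur — this already finishes the integral case.

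\textbf{The main obstacle} is the mixed term $H^1(\MMb_{ell};\EE_p^{\vee}\otimes\EE_n\otimes\omega^{\otimes-m})$ arising from the $\EE_p\otimes\omega^{\otimes m}$-blocks when $l\in\{2,3\}$. Using that $f_p$ is finite flat and Gorenstein, so that $\EE_p$ is self-dual up to a twist and a boundary correction, coherent duality and the projection formula rewrite this group as the weight-one cohomology of an $\omega$-twist on the fibre product $\MMb_1(n)\times_{\MMb_{ell}}\MMb_1(p)$, i.e.\ in terms of weight-one cusp forms of a combined level. The delicate point is that the combined level a priori sees more weight-one forms than $\Gamma_1(n)$, so I must show its $l$-torsion is still governed by the single group $H^1(\MMb_1(n)_{(l)};\omega)$; I expect this from the fact that $X_1(2)$ and $X_1(3)$ have genus $0$ and carry no weight-one cusp forms, so the $\Gamma_1(p)$-direction contributes only oldforms that split off. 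Concretely I would lift in two stages — first peel off the line-bundle summands (clean weight-one obstruction), then recognize the remaining bundle as a sum of $\EE_p$-twists by a rigidity/Euler-characteristic argument pinned down by the generic ($l=0$) decomposition — and this recognition step is where the real work lies. Finally, the $l=0$ two-sided refinement for $n\geq5$ is purely numerical: over $\Q$ every bundle is a sum of $\omega^{\otimes m}$, and rewriting $\EE_n$ as a sum of $\EE_2\otimes\omega^{\otimes m}$ (resp.\ $\EE_3\otimes\omega^{\otimes m}$) amounts to divisibility of the Hilbert series $P_{\EE_n}(t)$ by $P_{\EE_2}(t)$ (resp.\ $P_{\EE_3}(t)$) with non-negative coefficients, which holds once $n\geq5$.
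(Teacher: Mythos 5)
Your field-case argument rests on a false premise. There is no classification of indecomposable vector bundles on $\MMb_{ell,k}$ in characteristics $2$ and $3$, and in particular it is not true that the only indecomposables besides the $\omega^{\tensor m}$ are twists of a single rank-$2$ bundle: Lemma \ref{lem:indec} of the paper shows that $\EE_2$ (rank $3$, at $l=3$) and $\EE_3$ (rank $8$, at $l=2$) are themselves indecomposable --- proved by restricting along a supersingular point $\Spec \Fb_l/G \to \MM_{ell}$ and using that the regular representation of a Sylow $l$-subgroup (e.g.\ the quaternion group inside $SL_2(\F_3)$ when $l=2$) is indecomposable in characteristic $l$. Since $\EE_p$ does not decompose into line bundles and rank-$2$ pieces, your ``bookkeeping'' reassembly of such pieces into copies of $\EE_p\tensor\omega^{\tensor m}$ cannot start. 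The paper needs no classification on $\MMb_{ell,k}$ at all: it base-changes along $f\colon \MMb' \to \MMb_{ell}$ (with $\MMb'=\MMb_1(3)$ or $\MMb_1(2)$, which \emph{are} weighted projective lines where Proposition \ref{prop:ClassField} applies), identifies $g_*f'_*(f')^*\EE \cong f_*g'_*(f')^*\EE$, and combines the projection formula, Krull--Schmidt (Proposition \ref{prop:Krull-Schmidt}), indecomposability of $f_*\OO_{\MMb'}$, and --- crucially --- Lemma \ref{lem:splitting}, which says $g^*f_*\OO_{\MMb'}$ splits into powers of $g^*\omega$, to exhibit $g_*\EE$ (up to twist) as a summand of a sum of indecomposables of the desired form.

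That same Lemma \ref{lem:splitting} is exactly what your self-identified ``main obstacle'' needs and what your proposal leaves unresolved. The mixed term $H^1(\MMb_{ell};\EE_p^{\vee}\tensor\EE_n\tensor\omega^{\tensor -m})$ is controlled by the single hypothesis on $H^1(\MMb_1(n)_{(l)};\omega)$ precisely \emph{because} $g^*\EE_p$ splits into line bundles over $\MMb_1(n)$, reducing everything to groups $H^1(\Xb;g^*\omega^{\tensor j})$. At $l=3$ this splitting holds because the extensions building $\EE_2$ are classified by $\nu$ and a lift, killed by $H^1(\Xb;g^*\omega^{\tensor 2})=0$; at $l=2$ it additionally requires $g^*\eta=0$ (Proposition \ref{prop:eta}), which is equivalent to lifting the mod-$2$ Hasse invariant to a characteristic-zero weight-$1$ form of level $\Gamma_1(n)$ --- the content of Appendix \ref{app:Hasse}, proved via Eisenstein series over cyclotomic rings. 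This genuinely arithmetic input has no trace in your outline; ``the $\Gamma_1(p)$-direction contributes only oldforms that split off'' is a hope, not a mechanism. Note also that the paper's integral compactified argument does not run a Bockstein on $\MMb_{ell}$: it transfers the problem by adjunction to the weighted projective line $\MMb'$, shows $(g')_*\OO_{\Yb}$ splits there via the freeness criterion of Theorem \ref{thm:VectorBundle}, lifts the \emph{map} over $\MMb'$ where the relevant $H^1$'s are torsionfree, and only then concludes by your (correct) properness argument. Finally, your $l=0$ step is not ``purely numerical'' for free: the nonnegativity of the multiplicities $k_i$ in Corollaries \ref{cor:dec3} and \ref{cor:dec2} requires Riemann--Roch (affine linearity of $m_i$ for $i\geq 2$), the nonvanishing $m_1\neq 0$ via counting regular cusps, and Serre-duality identifications of the top coefficients with $s_1, s_2$.
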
 

Note that $M_R(\Gamma_1(n))$ are the global sections of $\bigoplus_{i\in\Z}\EE_n\tensor \omega^{\tensor i}$ so that we get the additive splittings of the rings of modular forms alluded to above. Note further that the vector bundles $\omega^{\tensor m}$, $\EE_2$ and $\EE_3$ are well-understood, giving a rather complete understanding of all $\EE_n$.

The theorem works likewise for $\MMb(n)$ instead of $\MMb_1(n)$ and also for $\MMb_0(n)$ if $l\neq 2,3$ or $l=3$ does not divide $\phi(n) = |(\Z/n)^\times|$ and $n$ is squarefree. Which tensor powers $\omega^{\tensor m}$ occur how often in the splitting of $\EE_n$ is computable in terms of dimensions of spaces of modular forms and we will give explicit formulas and examples in Section \ref{sec:dec}. 

In some cases these formulas possess a remarkable symmetry. This happens if the dualizing sheaf of $\MMb_1(n)$ is isomorphic to a power of $\omega$. We will classify the values of $n$ where this occurs. These are $n\leq 8$ (genus $0$) and $n=11,14,15$ (genus $1$) and $n=23$ (genus $12$). 

In contrast, over the uncompactified moduli stack all powers of $\omega$ in the decomposition appear equally often if $n\geq 4$ and $l=2$ or $3$. This finishes the account of the algebro-geometric content of the present paper. \\

Of equal importance to the author (and perhaps to the reader) are the applications to stable homotopy theory and, more precisely, to the theory of topological modular forms. 

The homotopy theorist's version of a commutative ring is an $E_\infty$-ring spectrum, essentially a multiplicative cohomology theory that satisfies associativity and commutativity in a homotopy coherent way. The homotopy theorist's version of the ring of modular forms $M_{\Z}$ is the $E_\infty$-ring spectrum $tmf$ of topological modular forms as constructed by Goerss, Hopkins and Miller \cite{TMF}. It comes with a ring homomorphism $\pi_{2*}tmf \to M_{\Z}$ from its homotopy groups that is neither injective (the source contains torsion) nor surjective (it does not hit $\Delta$), but is an isomorphism after inverting $6$. 

There are many variants of the spectrum $tmf$. While $tmf$ is connective (i.e.\ $\pi_ktmf = 0$ for $k<0$), there is also a nonconnective version $Tmf$, which takes the cohomology of $\omega^{\tensor k}$ for negative $k$ into account. There is also $TMF = tmf[\Delta^{-1}]$ corresponding to meromorphic modular forms or the uncompactified moduli stack of elliptic curves. 

We can also construct versions with level structures. In the uncompactified situation, $TMF_1(n)$, $TMF(n)$ and $TMF_0(n)$ were already available from the original construction of $TMF$. The corresponding $Tmf_1(n)$, $Tmf(n)$ and $Tmf_0(n)$ were constructed by Goerss--Hopkins (as written down in \cite{Sto12}) and in full generality by Hill--Lawson \cite{HL13}. Our algebraic results are easily applicable to the topological situation and imply the following theorem using the descent spectral sequence.

\begin{thm}
Let $n\geq 4$ and $l$ be a prime not dividing $n$. If $H^1(\MMb_1(n)_{(l)};\omega)$ has no $l$-torsion, the $Tmf_{(l)}$-module $Tmf_1(n)_{(l)}$ splits into $Tmf_{(l)}$-modules of the form
\begin{itemize}
\item $\Sigma^{2m} Tmf_1(3)_{(l)}$ for $l=2$,
\item $\Sigma^{2m} Tmf_1(2)_{(l)}$ for $l=3$,
\item $\Sigma^{2m} Tmf_{(l)}$ for $l>3$.
\end{itemize} 
For the corresponding splitting of $TMF_1(n)$ the condition on $l$-torsion in $H^1$ is not necessary if we are willing to $l$-complete everything.
\end{thm}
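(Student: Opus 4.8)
The plan is to transport the algebraic decomposition of $\EE_n$ from Theorem~\ref{thm:int1} along the derived structure sheaf. Recall that $Tmf$ is the global sections of a sheaf $\OO^{top}$ of $E_\infty$-ring spectra on the derived enhancement of $\MMb_{ell}$, with $\pi_{2*}\OO^{top}=\omega^{\tensor *}$ and $\pi_{odd}\OO^{top}=0$, and that by the construction of Goerss--Hopkins--Miller and Hill--Lawson one has $Tmf_1(n)_{(l)}=\Gamma\bigl(\MMb_{ell,(l)};\EE_n^{top}\bigr)$ where $\EE_n^{top}:=(f_n)_*\OO^{top}$. Since $f_n$ is finite flat, $(f_n)_*$ is exact, and the projection formula together with $\omega_{\MMb_1(n)}=f_n^*\omega$ gives $\pi_{2k}\EE_n^{top}=\EE_n\tensor\omega^{\tensor k}$ and $\pi_{odd}\EE_n^{top}=0$; thus the descent spectral sequence for $Tmf_1(n)_{(l)}$ has $E_2^{s,2k}=H^s(\MMb_{ell,(l)};\EE_n\tensor\omega^{\tensor k})$. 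As $\Gamma$ is additive on finite sums and satisfies $\Gamma(\Sigma^{2m}-)=\Sigma^{2m}\Gamma(-)$, $\Gamma(\OO^{top})=Tmf_{(l)}$ and $\Gamma(\EE_k^{top})=Tmf_1(k)_{(l)}$, it suffices to lift the isomorphism of Theorem~\ref{thm:int1} to an equivalence of $\OO^{top}$-modules
\[
\EE_n^{top}\simeq\bigoplus_m \Sigma^{2m}\OO^{top}\ (l>3),\qquad
\EE_n^{top}\simeq\bigoplus_m \Sigma^{2m}\EE_3^{top}\ (l=2),
\]
and the analogue with $\EE_2^{top}$ for $l=3$, with the multiplicities dictated by Theorem~\ref{thm:int1}.

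Next I would realize the individual summand inclusions of Theorem~\ref{thm:int1} as maps of $\OO^{top}$-modules. Each such inclusion is a map of the homotopy sheaves $\pi_{2*}$, hence a class in filtration $0$ of the descent spectral sequence computing $\pi_*\map_{\OO^{top}}(\mathcal{A},\EE_n^{top})$, where $\mathcal{A}$ is the relevant building block $\Sigma^{2m}\OO^{top}$, $\Sigma^{2m}\EE_2^{top}$ or $\Sigma^{2m}\EE_3^{top}$; realizing the inclusion amounts to showing this class is a permanent cycle. For $l>3$ this is automatic: the weighted projective stack $\MMb_{ell,(l)}\cong\Prj(4,6)$ has coherent cohomological dimension $\leq 1$, so the spectral sequence is concentrated in the columns $s=0,1$, collapses, and leaves no room for differentials (or extensions) obstructing either the realization or the final equivalence. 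The substantive case is $l=2,3$, where $\MMb_{ell,(l)}$ has unbounded cohomological dimension and a priori the filtration-$0$ class could support a differential.

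The main obstacle, and the one point where the level structure is used essentially, is therefore the realization at the primes $2$ and $3$. Here I would pass to the genus-$0$ cover: for $l=2$ the right adjoint $f_3^!$ to the finite pushforward $(f_3)_*$ (coherent duality) yields
\[
\map_{\OO^{top}_{\MMb_{ell}}}\bigl(\Sigma^{2m}\EE_3^{top},\EE_n^{top}\bigr)
\simeq \Sigma^{-2m}\,\Gamma\bigl(\MMb_1(3)_{(2)};f_3^!\EE_n^{top}\bigr),
\]
and the analogous identity with $\MMb_1(2)_{(3)}$ for $l=3$. On $\MMb_1(3)_{(2)}$ (resp.\ $\MMb_1(2)_{(3)}$) all automorphism groups have order prime to $l$ and the coarse space has genus $0$, so the stack has coherent cohomological dimension $\leq 1$; the descent spectral sequence for the right-hand side is again concentrated in $s=0,1$ and collapses. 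Hence every filtration-$0$ class is a permanent cycle, and the algebraic summand inclusions — which correspond under the same adjunction on the abelian level to elements of $H^0(\MMb_1(3)_{(2)};\pi_\bullet f_3^!\EE_n^{top})$ — are realized by $\OO^{top}$-module maps.

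Finally I would assemble these into a single map $\Phi\colon\bigoplus_m\mathcal{A}_m\to\EE_n^{top}$ and verify it is an equivalence. Since each summand map lifts the corresponding algebraic inclusion in filtration $0$, the induced $\pi_{2*}\Phi$ is exactly the isomorphism of Theorem~\ref{thm:int1}; a map of $\OO^{top}$-modules inducing an isomorphism on all homotopy sheaves is an equivalence, so $\Phi$ is one, and applying $\Gamma$ yields the claimed splitting of $Tmf_1(n)_{(l)}$. The torsion-freeness hypothesis on $H^1(\MMb_1(n)_{(l)};\omega)$ enters only through Theorem~\ref{thm:int1}, which supplies the algebraic splitting being lifted; no further condition is needed on the topological side. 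For the uncompactified statement one runs the identical argument over $\MM_{ell}$ with $TMF$ and $TMF_1(n)$, invoking the version of Theorem~\ref{thm:int1} that dispenses with the $H^1$-condition, and when $R=\Z_{(l)}$ one $l$-completes throughout, which is harmless since completion preserves the relevant finite sums and equivalences.
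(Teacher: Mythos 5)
Your proposal is correct and is essentially the paper's proof: both identify $Tmf_1(n)_{(l)}$ with the global sections of $(f_n)_*\OO^{top}$, feed in the algebraic splitting (with the $l$-torsion hypothesis on $H^1$ entering only through that algebraic input), and lift it to an equivalence of $\OO^{top}$-modules by showing the descent spectral sequence computing the relevant mapping spectrum is concentrated in filtrations $0$ and $1$ and hence collapses, so that the filtration-$0$ algebraic isomorphism is realized by a map which is then an equivalence on homotopy sheaves. The only cosmetic difference is how the collapse is obtained: the paper rewrites the Hom sheaf of homotopy sheaves as a pushforward along $f_n$ to $\MMb_1(n)$ (cohomological dimension $\leq 1$) using just the algebraic adjunction for affine morphisms, whereas you transfer the entire mapping spectrum along $f_3$ (resp.\ $f_2$) via a topological $f_3^!$-adjunction, which invokes slightly heavier coherent-duality machinery to reach the same vanishing.
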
 

There are analogous theorems for $Tmf(n)$ and also for $Tmf_0(n)$ (if $l\neq 2,3$ or $l=3$ does not divide $\phi(n)$ and $n$ is squarefree). These conditions are equivalent to $Tmf_1(n)$, $Tmf(n)$ or $Tmf_0(n)$ (for $n$ squarefree) having torsionfree homotopy groups -- this is an obvious necessary condition as $Tmf_1(3)_{(2)}$, $Tmf_1(2)_{(3)}$ and $Tmf_{(l)}$ for $l>3$ have torsionfree homotopy groups. Indeed, these spectra and their homotopy groups are well-understood, which gives the theorem its strength. The exact suspensions occurring in the theorem can be computed explicitly. 

Our theorem on the dualizing sheaf of $\MMb_1(n)$ has an implication for the \emph{Anderson dual} $I_{\Z[\frac1n]}Tmf_1(n)$ of $Tmf_1(n)$. The Anderson dual of a spectrum $X$ is defined so that one has a short exact sequence
\[0 \to \Ext^1_{\Z}\left(\pi_{-k-1}X, \Z[\tfrac1n]\right) \to \pi_kI_{\Z[\frac1n]}X \to \Hom_{\Z}\left(\pi_{-k}X, \Z[\tfrac1n]\right) \to 0. \]
 If $X$ is Anderson self-dual (up to suspension), then one obtains a convenient universal coefficient sequence for $X$. 

\begin{thm}The Anderson dual $I_{\Z[\frac1n]}Tmf_1(n)$ of $Tmf_1(n)$ is equivalent to a suspension of $Tmf_1(n)$ if and only if $1\leq n \leq 8$ or $n=11,14,15$ or $n=23$.
\end{thm}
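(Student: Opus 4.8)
The plan is to reduce the statement to the algebraic classification of the $n$ for which the dualizing sheaf $\omega_{\MMb_1(n)}$ is isomorphic to a power of $\omega$, established above, by means of a topological Serre duality. Following Stojanoska's analysis of the level-one case, I would first use that $Tmf_1(n)\simeq\Gamma(\MMb_1(n);\OO^{top})$ for the Goerss--Hopkins--Miller sheaf $\OO^{top}$ of $E_\infty$-rings, that $\MMb_1(n)_{\Z[\frac1n]}$ is smooth and proper of relative dimension $1$, and that $\OO^{top}$ is even weakly $2$-periodic with $\pi_{2t}\OO^{top}\cong\omega^{\tensor t}$. Under these hypotheses Anderson duality of the global sections is computed by Serre duality on the stack: on the $E_2$-pages of the descent spectral sequences the Anderson dual corresponds to the classical isomorphism
\[ H^s(\MMb_1(n);\omega^{\tensor t})\;\cong\;H^{1-s}\bigl(\MMb_1(n);\omega_{\MMb_1(n)}\tensor\omega^{\tensor -t}\bigr)^\vee, \]
and this assembles to an equivalence between $I_{\Z[\frac1n]}Tmf_1(n)$ and a single suspension of the $\OO^{top}$-module global sections of the dualizing twist, the single suspension reflecting the cohomological dimension $1$ of the stack.

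Granting this bridge, the \emph{if} direction is immediate. Recall that Kodaira--Spencer gives $\omega_{\MMb_1(n)}\cong\omega^{\tensor 2}(-\mathrm{cusps})$, so $\omega_{\MMb_1(n)}$ is a power of $\omega$ exactly when $\OO(\mathrm{cusps})$ is; by the classification established above this happens precisely for $n\in\{1,\dots,8,11,14,15,23\}$. For such $n$, writing $\omega_{\MMb_1(n)}\cong\omega^{\tensor c}$, the right-hand groups above become $H^{1-s}(\omega^{\tensor(c-t)})$, so the dual spectral sequence is again that of $Tmf_1(n)$ after the degree reflection $j\mapsto 2c-1-j$; comparing with the Anderson pairing $j\mapsto -j-d$ forces $d=1-2c$, whence
\[ I_{\Z[\frac1n]}Tmf_1(n)\;\simeq\;\Sigma^{1-2c}Tmf_1(n). \]
For $n=1$ one has $\omega_{\MMb_{ell}}\cong\omega^{\tensor -10}$ and recovers Stojanoska's $\Sigma^{21}$; this direction needs no torsion-freeness, as the $\Ext$-term of the Anderson sequence absorbs any torsion.

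For the \emph{only if} direction I would show that a suspension-level self-duality forces $\omega_{\MMb_1(n)}\cong\omega^{\tensor c}$. If $I_{\Z[\frac1n]}Tmf_1(n)\simeq\Sigma^d Tmf_1(n)$, then rationally the descent spectral sequence collapses and the self-duality yields $\dim H^0(\omega^{\tensor t})=\dim H^1(\omega^{\tensor(c-t)})$ for all $t$, where $2c=1-d$. Combined with Serre duality this equates the Hilbert series of $\omega_{\MMb_1(n)}$ with that of $\omega^{\tensor c}$ and hence forces $\deg\omega_{\MMb_1(n)}=c\deg\omega$. On a genus-$0$ stacky curve equal degree already forces an isomorphism of line bundles, so the non-integrality of $2-\#\mathrm{cusps}/\deg\omega$ rules out $n=9,10,12$. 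In higher genus, however, equal Hilbert series no longer detect the isomorphism type, and here one must upgrade the bridge to an $\OO^{top}$-module equivalence and descend it to an honest isomorphism $\omega_{\MMb_1(n)}\cong\omega^{\tensor c}$ of underlying line bundles. Either way one lands in the situation of the dualizing-sheaf classification, which then pins $n$ down to the asserted list.

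The main obstacle is exactly this descent in the \emph{only if} direction, together with the careful construction of the topological Serre duality equivalence over $\Z[\frac1n]$: one must treat the dualizing \emph{complex} (not merely the sheaf) correctly at the cusps and at the elliptic points of $\MMb_1(n)$ for small $n$, control the convergence and edge behaviour of the descent spectral sequence where the $\Ext$-contributions live, and promote a bare equivalence of spectra to one of $\OO^{top}$-modules so that it can be read off on underlying quasi-coherent sheaves. The level-one computation $c=-10$, $d=21$ serves throughout as the normalization and as a consistency check on the shift.
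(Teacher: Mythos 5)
Your overall strategy --- reducing to the classification of those $n$ for which $\Omega^1_{\MMb_1(n)/\Z[\frac1n]}$ is a power of $f_n^*\omega$ --- is indeed the paper's strategy, but the two steps you yourself flag as "the main obstacle" are genuine gaps, and one of your claims is false. First, the "bridge" is asserted rather than constructed: an isomorphism of $E_2$-pages of descent spectral sequences does not assemble into an equivalence of spectra; one needs an actual map. The paper produces one concretely: assuming $\Omega^1_{\MMb_1(n)/\Z[\frac1n]}\cong f_n^*\omega^{\tensor(-k)}$, it first proves from this hypothesis that every $H^1(\MMb_1(n);f_n^*\omega^{\tensor j})$ is torsionfree, hence $\pi_*Tmf_1(n)$ is torsionfree and the integral Serre duality pairing $H^0(\omega^{\tensor(-i-k)})\tensor H^1(\omega^{\tensor i})\to H^1(\omega^{\tensor(-k)})\cong\Z[\frac1n]$ is perfect; it then takes the class $\delta\in\pi_{2k+1}I_{\Z[\frac1n]}Tmf_1(n)$ dual to a generator $D\in\pi_{-2k-1}Tmf_1(n)$ and shows that the induced $Tmf_1(n)$-linear map $\Sigma^{2k+1}Tmf_1(n)\to I_{\Z[\frac1n]}Tmf_1(n)$ is a $\pi_*$-isomorphism by comparing the Serre pairing with the evaluation pairing. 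Your claim that the "if" direction "needs no torsion-freeness, as the $\Ext$-term absorbs any torsion" is wrong as a method: with torsion present the perfect-pairing argument collapses, and this is precisely why $n=1$ (where $\pi_*Tmf$ has $2$- and $3$-torsion) cannot be handled this way --- the paper excludes $n=1$ from its proposition and cites Stojanoska for that case, whereas you fold it into the general argument.

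Second, the only-if direction in positive genus, which you explicitly leave open, is resolved in the paper by an argument you do not anticipate and which needs no upgrade to $\OO^{top}$-modules. From a $Tmf_1(n)$-module equivalence $I_{\Z[\frac1n]}Tmf_1(n)\simeq\Sigma^{2k+1}Tmf_1(n)$ one rationalizes (Anderson duality commutes with rationalization when homotopy groups are finitely generated) and obtains not merely equality of Hilbert series but a \emph{perfect pairing} $H^0(\omega^{\tensor(-i-k)})\tensor H^1(\omega^{\tensor i})\to H^1(\omega^{\tensor(-k)})\cong\Q$ for every twist $i$; since $f_n^*\omega$ is ample, the argument of Theorem III.7.1(b) of Hartshorne then shows that $\omega^{\tensor(-k)}$ \emph{is} a dualizing sheaf over $\Q$, hence isomorphic to $\Omega^1_{\MMb_1(n)_{\Q}/\Q}$, and the injectivity of $\Pic(\MMb_1(n))\to\Pic(\MMb_1(n)_{\Q})$ descends this isomorphism to $\Z[\frac1n]$. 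This is exactly the point where your "equal degree/Hilbert series" reasoning stops short: perfect pairing against all twists of an ample line bundle characterizes the dualizing sheaf up to isomorphism, not just its degree. You also omit the parity step: the paper rules out even suspensions by noting that an even self-duality would rationally force $H^0(\omega^{\tensor i})\cong H^0(\omega^{\tensor(-l/2-i)})^{\vee}$, which is impossible because there are no modular forms of negative weight and the ring of modular forms has no nilpotent elements.
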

The case $n=1$, which is in some sense the most difficult, was already obtained in \cite{Sto12}. 

We want to finish with a few words of motivation. Topological modular forms with low level (say $tmf_1(3)$) have been important in the history of $tmf$ from the very beginning, e.g.\ in the computation of the homology of $tmf$ (as written up in \cite{Mathom}). But topological modular forms with high level -- to whose understanding the present paper contributes -- have also become more and more important in the last years and have for example been used to understand $TMF$-cooperations \cite{BOSS}. We want especially to stress applications to equivariant $TMF$ though. As will be explained in Section \ref{sec:eq}, in Lurie's model the $(\Z/n)$-fixed points $TMF^{\Z/n}$ of $\Z/n$-equivariant $TMF$ satisfy
$$TMF^{\Z/n}_{(l)} \simeq \prod_{k|n}TMF_1(k)_{(l)}$$
for $l$ not dividing $n$. Our results help us thus to understand equivariant $TMF$ for a cyclic group.\\

The structure of the present article is as follows. We begin in Section \ref{sec:weighted} with a classification result about vector bundles on weighted projective lines over normal rings. This applies in particular to $\MMb_1(2)$, $\MMb_1(3)$ and to $\MMb_{ell, (l)}$ for $l>3$. We continue in Section \ref{sec:ModularCurves} with background on the various moduli stacks we will consider.  In Section \ref{sec:decexistence}, we prove our algebraic decomposition results, first in the case of a field and then over $\Z_{(l)}$. In Section \ref{sec:dec}, we will give explicit formulas for these decompositions and in Section \ref{sec:duality}, we will prove our statement about dualizing sheaves. Section \ref{sec:tmf} will apply these algebraic considerations to $TMF$ and will also discuss the relevance for equivariant $TMF$. Appendix \ref{app:Hasse} contains a proof how to lift the Hasse invariant to a characteristic zero modular form in the presence of a level structure. Appendix \ref{sec:tables} contains tables of 
decompositions.

\subsection*{Acknowledgments}
I want to thank Viktoriya Ozornova for many discussions and computations, on which several of the ideas of the present article are based, and for providing the proof of Proposition \ref{prop:degreecomp}. Moreover, her comments on earlier versions of this paper have been a great help. Dimitar Kodjabachev also caught an oversight. I furthermore thank the mathoverflow community for many helpful questions and answers; in particular, the user Electric Penguin for his argument for lifting the Hasse invariant, which is crucial for significant parts of the present paper. 

\subsection*{Conventions}
The symbol $/$ will always denote \emph{stack} quotients. The dual of a module $M$ will be denoted by $M^{\vee}$ and similarly for sheaves. 

While we use the standard $\square$ for the end of proofs of theorems, propositions etc., we will use the symbol $\blacksquare$ for the end of proofs of claims inside bigger proofs.

\section{Vector bundles on weighted projective lines}\label{sec:weighted}
The aim of this section is to generalize some well-known facts about coherent sheaves and vector bundles on projective spaces to weighted projective stacks. This is relevant for our purposes because several compactified moduli stacks of elliptic curves (with level structure) are weighted projective stacks (see Example \ref{exa:moduli}).

\begin{defi}\label{def:wps} For $a_0,\dots, a_n$ positive integers  and a commutative ring $R$, the \textit{weighted projective stack} $\PP_R(a_0,\dots, a_n)$ is the (stack) quotient of $\A_R^{n+1}-\{0\}$ by the multiplicative group $\G_m$ under the action which is the restriction of the map
\begin{eqnarray*} \phi\co \G_m\times \A_R^{n+1} &\to& \A_R^{n+1} \\
 \Z[t, t^{-1}]\otimes R[t_0,\dots, t_n] &\leftarrow& R[t_0,\dots, t_n] \\
 t^{a_i}\tensor t_i &\mapsfrom& t_i
\end{eqnarray*}
to $\G_m\times (\A^{n+1}_R-\{0\})$. Here, $\A_R^{n+1}-\{0\}$ denotes the complement of the zero point, i.e.\ of the common vanishing locus of all $t_i$. On geometric points, the action corresponds to the map $(t, t_0,\dots, t_n) \mapsto (t^{a_0}t_0,\dots, t^{a_n}t_n)$. In the special case of $n=1$ we speak of a \textit{weighted projective line}.\end{defi}

As explained in \cite[Section 2]{Mei13}, this is a smooth and proper Artin stack over $\Spec R$, Deligne--Mumford if all $a_i$ are invertible on $R$. 

Recall that a grading on a commutative ring $A$ is equivalent to a $\Gm$-action on $\Spec A$. Moreover, there is an equivalence between graded $A$-modules and quasi-coherent sheaves on $\Spec A/\Gm$ given by pullback to $\Spec A$ and this equivalence is compatible with $\tensor$. The map $\phi$ above gives a $\G_m$-action on $\A^n_R$ and this corresponds to the grading $|t_i| = a_i$. The category of quasi-coherent modules on $\A^{n+1}_R/\G_m$ is thus equivalent to graded $R[t_0,\dots, t_n]$-modules. 

For $M$ a graded module, denote by $M[m]$ the graded module with $M[m]_k = M_{m+k}$. Then $R[t_0,\dots, t_n][m]$ is a graded $R[t_0,\dots, t_n]$-module, which corresponds to a line bundle on $\A^{n+1}_R/\G_m$ whose restriction to $\PP_R(a_0,\dots, a_n)$ we denote by $\OO(m)$. As usual, we set $\FF(m) = \FF \tensor \OO(m)$. It is easy to see that for a quasi-coherent sheaf $\FF$ on $\PP_R(a_0,\dots, a_n)$, the graded global sections $\Gamma_*(\FF) = H^0(\PP_R(a_0,\dots, a_n); \bigoplus_{m\in\Z}\FF(m))$ are exactly 
the graded $R[t_0,\dots, t_n]$-module corresponding to $\FF$. 

The following theorem summarizes some of the fundamental properties of $\OO(m)$:

\begin{thm}\label{thm:fundamentalweighted}Let $X = \PP_R(a_0,\dots, a_n)$.
 \begin{enumerate}
  \item \label{item:P1}The sheaf $\OO(1)$ is ample in the sense that for every coherent sheaf $\FF$ on $X$, there is a surjection from a sum of sheaves of the form $\OO(m)$ to $\FF$.
    \item \label{item:P1.5}For any coherent sheaf $\FF$, there exist an $m$ such that $H^i(X;\FF(m)) = 0$ for all $i>0$.
  \item \label{item:P2}The sheaf $\OO(-\sum_{i=0}^n a_i)$ is dualizing in the sense that there are natural isomorphisms
  \[\Hom_{\OO_X}(\FF,\OO(-\sum_{i=0}^n a_i)) \xrightarrow{\cong} \Hom_R(H^n(X; \FF),R)\]
  for all coherent sheaves $\FF$ on $X$. 
 \end{enumerate}
\end{thm}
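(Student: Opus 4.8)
The plan is to reduce all three statements to graded commutative algebra over the graded ring $S = R[t_0,\dots,t_n]$ (with $|t_i| = a_i$), via the equivalence recalled above between quasi-coherent sheaves on $\A^{n+1}_R/\G_m$ and graded $S$-modules. The single tool underlying parts \eqref{item:P1.5} and \eqref{item:P2} is a comparison of the cohomology of $X$ with the local cohomology of $S$ at the irrelevant ideal $\m = (t_0,\dots,t_n)$. Since $X = U/\G_m$ with $U = \A^{n+1}_R - \{0\}$, the closed--open decomposition of $\A^{n+1}_R/\G_m$ along the origin yields, for each graded module $M$, an exact triangle with pieces $R\Gamma_{\m}(M)$, the module $M$, and $R\Gamma(X;\widetilde M)$. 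The point is that passing from $\G_m$-equivariant objects to honest cohomology amounts to extracting the degree-zero part, which is \emph{exact} because $\G_m$ is linearly reductive in every characteristic. Taking $M = S[k]$, so that $\widetilde M|_X = \OO(k)$, and letting $k$ vary, this gives $H^i(X;\OO(k)) \cong H^{i+1}_{\m}(S)_k$ for $i\geq 1$, together with a low-degree exact sequence relating $H^0_{\m}(S)$, $S$, $H^0(X;\OO_X)$ and $H^1_{\m}(S)$; as $S$ has depth $\geq 2$ at $\m$ for $n\geq 1$, the two outer terms vanish and $H^0(X;\OO(k)) = S_k$.

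For part \eqref{item:P1} I would argue as in the classical Serre ampleness theorem: a coherent $\FF$ corresponds to a finitely generated graded $S$-module $M$, and homogeneous generators in degrees $d_1,\dots,d_r$ give a surjection $\bigoplus_j S[-d_j]\to M$ that sheafifies to the desired $\bigoplus_j \OO(-d_j)\to\FF$; concretely, on each open substack $D_+(t_i)$ where $t_i$ is invertible (a quotient of an affine by $\G_m$) the sheaf is generated by finitely many sections, which extend to homogeneous global sections of a twist after multiplying by a power of $t_i$. For part \eqref{item:P1.5}, note first that the $n+1$ opens $D_+(t_i)$ cover $X$ by cohomologically affine substacks, again by linear reductivity of $\G_m$, so the \v{C}ech-to-derived spectral sequence forces $H^i(X;-) = 0$ for $i>n$ on all quasi-coherent sheaves. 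Given coherent $\FF$, part \eqref{item:P1} supplies $0\to\KK\to\EE\to\FF\to 0$ with $\EE$ a finite sum of line bundles; since each $H^{i+1}_{\m}(S)$ is concentrated in internal degrees bounded above, the identification $H^i(X;\OO(k)) = H^{i+1}_{\m}(S)_k$ gives $H^i(X;\EE(m)) = 0$ for $m\gg 0$ and $i\geq 1$, and descending induction on $i$ (starting from vanishing above degree $n$, trading $H^i(\FF(m))$ for $H^{i+1}(\KK(m))$ via the long exact sequence) yields $H^i(X;\FF(m)) = 0$ for all $i>0$ and $m\gg 0$.

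The heart of the argument is part \eqref{item:P2}. The key input is the local-duality computation for the graded polynomial ring: $H^{j}_{\m}(S) = 0$ for $j\neq n+1$ (over a field, or more generally when $R$ is Cohen--Macaulay), while $H^{n+1}_{\m}(S)$ is spanned by the negative monomials $t_0^{-b_0}\cdots t_n^{-b_n}$ with all $b_i\geq 1$, the monomial having internal degree $-\sum_i b_i a_i$ and top degree $-\sum_i a_i$ one-dimensional over $R$. Combined with the first paragraph this gives $H^n(X;\OO(m)) \cong (S_{-m-\sum_i a_i})^{\vee}$ and a trace isomorphism $H^n(X;\OO(-\sum_i a_i))\cong R$. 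Composing the Yoneda pairing with the trace defines a natural transformation $\eta_{\FF}\co \Hom_{\OO_X}(\FF,\OO(-\sum_i a_i)) \to \Hom_R(H^n(X;\FF),R)$; for $\FF=\OO(m)$ the left side is $H^0(X;\OO(-m-\sum_i a_i)) = S_{-m-\sum_i a_i}$, so $\eta_{\OO(m)}$ is the canonical perfect pairing and hence an isomorphism. To pass to arbitrary coherent $\FF$, I would resolve $\EE_1\to\EE_0\to\FF\to 0$ by finite sums of line bundles using part \eqref{item:P1}; both $\Hom_{\OO_X}(-,\OO(-\sum_i a_i))$ and $\Hom_R(H^n(X;-),R)$ are contravariant and left exact (the latter because $H^n$ is right exact as the top cohomology), so the five lemma applied to the two resulting four-term exact sequences, together with the line-bundle case, forces $\eta_{\FF}$ to be an isomorphism.

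I expect the main obstacle to be exactly this part \eqref{item:P2}: carrying out the local-duality computation with the correct grading shift by $\sum_i a_i$ --- this shift is precisely the source of the dualizing sheaf $\OO(-\sum_i a_i)$, generalizing $\OO(-n-1)$ on $\mathbb{P}^n$ --- and verifying that the $R$-linear duality is genuinely perfect and that the bootstrap via the five lemma is valid over a general base. The cleanest safeguard is that each graded piece $S_k$ is a finite free $R$-module, so graded duals behave well and $H^n(X;\FF)$ is finitely generated over $R$ when $R$ is Noetherian, which holds in all intended applications.
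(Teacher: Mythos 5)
Your proposal is correct and takes essentially the same route as the paper's proof: part (1) by lifting finitely many local generators over the charts $D(t_i)$ to homogeneous sections of twists, part (2) by descending induction as in \cite[Thm III.5.2]{Har77} once cohomological dimension $\leq n$ and vanishing for large twists of line bundles are known, and part (3) by reducing to line bundles via a resolution and left-exactness as in \cite[Thm III.7.1]{Har77}. The only real difference is packaging: you derive the line-bundle input from graded local cohomology of $R[t_0,\dots,t_n]$, where the paper cites the equivalent \v{C}ech computation of \cite[Prop 2.5]{Mei13}; note also that your Cohen--Macaulay hedge is unnecessary, since $t_0,\dots,t_n$ is a regular sequence on $R[t_0,\dots,t_n]$ over any commutative base ring $R$.
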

\begin{proof}
 The proofs are analogous to the classical proofs. In some more detail:
 
 Let $\FF$ be a coherent sheaf on $\PP_R(a_0,\dots, a_n)$ and set $M = \Gamma_*(\FF)$. The stack $X$ is covered by the non-vanishing loci $D(t_i)$, where $t_i\in H^0(X; \OO(a_i))$. Furthermore, 
 $$D(t_i) \simeq \Spec R[t_0,\dots, t_n][t_i^{-1}] / \G_m.$$
 The restriction of $\FF$ to $D(t_i)$ corresponds to the graded $R[t_0,\dots, t_n][t_i^{-1}]$-module $M[t_i^{-1}]$.  Choose generating elements $s_{ij}$ in $M[t_i^{-1}]$. By multiplying with a power of $t_i$ we can assume that all $s_{ij}$ are actually in $M$ and thus define elements in $\Hom_X(\OO(m), \FF)$ for some $m$. Taking the sum of all these maps defines a surjection, proving (\ref{item:P1}). 
 
  For (\ref{item:P1.5}), we can argue by downward induction on $i$ as in \cite[Thm III.5.2]{Har77}, once we know that $X$ has cohomological dimension $\leq n$. This is clear as $X$ can be covered by the $(n+1)$ open substacks $D(t_i)$, on which the global sections functor is exact (because it corresponds to taking the $0$-graded piece of a graded module). 
 
 That $\OO(-\sum_{i=0}^n a_i)$ acts as a dualizing sheaf for all line bundles of the form $\OO(m)$ was shown in \cite[Prop 2.5]{Mei13}. The general case follows as in \cite[Thm III.7.1]{Har77} because $\OO(1)$ is ample.
\end{proof}

We also want to recall the cohomology of $\OO(m)$ on $\PP_R(a,b)$ from \cite[Prop 2.5]{Mei13}.

\begin{prop}\label{prop:projcoh}
 Let $B(m)$ be the set of pairs $(\lambda, \mu)$ of negative integers with $\lambda a + \mu b = m$. Then $H^1(\PP_R(a,b);\OO(m))$ is isomorphic to the free $R$-module on $B(m)$. 
\end{prop}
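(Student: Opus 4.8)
The plan is to compute $H^*(X;\OO(m))$ for $X = \PP_R(a,b)$ directly from the Čech complex of the standard two-element open cover by $U_0 = D(t_0)$ and $U_1 = D(t_1)$, exactly as in the classical $\Prj^1$ computation but keeping track of the grading. As recalled above, each $U_i \simeq \Spec R[t_0,t_1][t_i^{-1}]/\G_m$, and on such a quotient the global-sections functor is \emph{exact}, being the operation of extracting the degree-$0$ piece of the corresponding graded module; the same holds on the intersection $U_0 \cap U_1 = D(t_0 t_1)$. Hence each of these opens, as well as their intersection, is acyclic for every quasi-coherent sheaf, so the Čech-to-derived-functor comparison shows that the Čech cohomology of this cover computes $H^*(X;-)$. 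In particular $H^i(X;-)=0$ for $i>1$, consistent with the cohomological dimension bound of Theorem \ref{thm:fundamentalweighted}.

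Next I would write the complex out explicitly. The sheaf $\OO(m)$ corresponds to the graded module $R[t_0,t_1][m]$, so (since localization commutes with the shift) its sections over $U_i$ are the degree-$0$ part of $R[t_0,t_1,t_i^{-1}][m]$, i.e.\ the degree-$m$ part of $R[t_0,t_1,t_i^{-1}]$ in the grading $|t_0|=a$, $|t_1|=b$. The Čech complex is therefore the two-term complex
\[
\bigl(R[t_0^{\pm1},t_1]\bigr)_m \oplus \bigl(R[t_0,t_1^{\pm1}]\bigr)_m \xrightarrow{\ (f,g)\,\mapsto\, f-g\ } \bigl(R[t_0^{\pm1},t_1^{\pm1}]\bigr)_m,
\]
with $H^0(X;\OO(m))$ the kernel and $H^1(X;\OO(m))$ the cokernel. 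The target is the free $R$-module on the monomials $t_0^{\lambda}t_1^{\mu}$ with $\lambda,\mu\in\Z$ and $\lambda a + \mu b = m$.

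Finally I would read off the cokernel. The image of the differential is the $R$-span of those degree-$m$ monomials lying in one of the two summands of the source, namely the $t_0^{\lambda}t_1^{\mu}$ with $\mu \geq 0$ (coming from $U_0$) together with those with $\lambda \geq 0$ (coming from $U_1$). Since distinct Laurent monomials are $R$-linearly independent and the differential carries basis monomials to basis monomials up to sign, the monomials surviving in the cokernel are precisely those with both $\lambda < 0$ and $\mu < 0$, that is, the set $B(m)$. Thus $H^1(X;\OO(m))$ is the free $R$-module on $B(m)$, as claimed.

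The only genuinely nontrivial point is the acyclicity/comparison step in the first paragraph: one must justify that the \emph{stacky} open cover by $D(t_0), D(t_1)$ may be used to compute sheaf cohomology. This rests entirely on the dictionary between quasi-coherent sheaves on $\Spec A/\G_m$ and graded $A$-modules, under which $\Hom$ of global sections becomes the exact degree-$0$-part functor; once this is in hand, the remaining computation is the familiar bookkeeping of Laurent monomials, identical to the projective-line case.
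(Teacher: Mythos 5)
Your proof is correct. Note that the paper itself gives no argument for this proposition at all: it is simply recalled from \cite[Prop 2.5]{Mei13}, so your write-up supplies a self-contained proof where the paper offers only a citation. Your computation is the standard one and is exactly in the spirit of how the paper handles this stack elsewhere: the justification that the two-element cover $D(t_0), D(t_1)$ may be used to compute cohomology --- namely that on $\Spec A/\G_m$ the global sections functor is the (exact) degree-$0$-part functor on graded modules, so these opens and their intersection are acyclic for every quasi-coherent sheaf --- is precisely the argument the paper invokes in the proof of Theorem \ref{thm:fundamentalweighted} to bound the cohomological dimension of $\PP_R(a_0,\dots,a_n)$. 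Given that acyclicity, the Leray/\v{C}ech comparison applies, and the rest of your argument (the image of the differential is the span of the basis monomials $t_0^{\lambda}t_1^{\mu}$ with $\lambda\geq 0$ or $\mu\geq 0$, so the cokernel is free on the monomials with $\lambda,\mu<0$) is exactly the classical $\Prj^1$ bookkeeping and is airtight: the image is spanned by a subset of a basis of a free module, so the quotient is free on the complementary monomials, which form $B(m)$.
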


By a result that I learned from Angelo Vistoli \cite[Prop 3.4]{Mei13}, we have the following:
\begin{prop}\label{prop:ClassField}Let $K$ be an arbitrary field, $a_0, a_1\in\N$. Then every vector bundle $\FF$ on $\PP_K(a_0, a_1)$ is a direct sum of line bundles of the form $\OO(m)$.\end{prop}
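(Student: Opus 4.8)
The plan is to imitate Grothendieck's proof that every vector bundle on $\mathbb{P}^1$ splits, using Theorem \ref{thm:fundamentalweighted} and Proposition \ref{prop:projcoh} as the weighted analogues of the cohomology of $\OO(m)$ on $\mathbb{P}^1$. First I would record that $\Pic(\PP_K(a_0,a_1)) \cong \Z$ is generated by $\OO(1)$: since $\A_K^2 - \{0\}$ has trivial Picard group, a line bundle on the quotient stack is the same as a $\Gm$-equivariant line bundle on $\A_K^2 - \{0\}$, hence is determined by a character of $\Gm$, i.e.\ an integer. This settles the rank-one case and lets me speak of the \emph{degree} $\deg \OO(m) = m$, which is additive in short exact sequences of vector bundles since it is a homomorphism out of $\Pic$.

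Then I would argue by strong induction on $\rk \FF$. Writing $\Hom(\OO(m),\FF) = H^0(X;\FF(-m))$, I first show that the set $\{m : H^0(\FF(-m)) \neq 0\}$ is nonempty and bounded above. Nonemptiness holds because $\FF(-m)$ is globally generated for $m \ll 0$ by ampleness (Theorem \ref{thm:fundamentalweighted}(\ref{item:P1})); boundedness follows from Serre duality (Theorem \ref{thm:fundamentalweighted}(\ref{item:P2})), which identifies $H^0(\FF(-m))$ with the dual of $H^1(\FF^\vee(m - a_0 - a_1))$, and the latter vanishes for $m \gg 0$ by Theorem \ref{thm:fundamentalweighted}(\ref{item:P1.5}). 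Let $m_0$ be the maximum and fix a nonzero $s\colon \OO(m_0) \to \FF$. A saturation argument shows that $s$ is a subbundle inclusion: the saturation of its image is a line bundle $\OO(m')$ with $m' \geq m_0$ (as $X$ is smooth of dimension $1$, a rank-one reflexive subsheaf is invertible with locally free quotient), and $m' > m_0$ would contradict maximality; hence $m' = m_0$ and the quotient $\FF' := \FF/\OO(m_0)$ is a vector bundle of rank $\rk \FF - 1$. By induction $\FF' \cong \bigoplus_i \OO(n_i)$.

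It then remains to split the extension $0 \to \OO(m_0) \to \FF \to \bigoplus_i \OO(n_i) \to 0$. Its class lives in $\bigoplus_i \Ext^1(\OO(n_i),\OO(m_0)) = \bigoplus_i H^1(X;\OO(m_0 - n_i))$, and by Proposition \ref{prop:projcoh} this group vanishes as soon as $m_0 - n_i \geq 0$; thus the sequence splits once I show $n_i \leq m_0$ for every $i$. To see this, let $\FF_i \subseteq \FF$ be the rank-two preimage of the summand $\OO(n_i)$, an extension $0 \to \OO(m_0) \to \FF_i \to \OO(n_i) \to 0$. If $\rk \FF \geq 3$, then $\rk \FF_i = 2 < \rk \FF$, so by induction $\FF_i \cong \OO(p) \oplus \OO(q)$; additivity of degree gives $p + q = m_0 + n_i$, and since the inclusion $\OO(m_0) \hookrightarrow \OO(p) \oplus \OO(q)$ is nonzero into some factor we get $\max(p,q) \geq m_0$, while $\max(p,q) > m_0$ would exhibit a sub-line bundle of $\FF$ of degree exceeding $m_0$, contradicting maximality. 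Hence $\max(p,q) = m_0$ and $n_i = \min(p,q) \leq m_0$.

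The main obstacle is the base case $\rk \FF = 2$, where the induction on $\FF_i$ is unavailable: I must show directly that a rank-two bundle on $\PP_K(a_0,a_1)$ is a sum of two line bundles, equivalently that a non-split extension of $\OO(n_1)$ by $\OO(m_0)$ (necessarily with $n_1 > m_0$, since non-vanishing of $H^1(\OO(m_0 - n_1))$ forces $m_0 - n_1 \leq -a_0 - a_1$ by Proposition \ref{prop:projcoh}) nevertheless contains a sub-line bundle of degree strictly greater than $m_0$ --- which again contradicts the maximality of $m_0$ and so cannot occur. This is exactly the point where the weighted arithmetic of Proposition \ref{prop:projcoh} must replace the single one-dimensional group $H^1(\mathbb{P}^1;\OO(-d))$, and I expect to handle it by an explicit elementary-modification argument along the (possibly several) basis classes of $H^1$, producing the desired higher-degree sub-line bundle. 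Everything else reduces to the cohomological bookkeeping supplied by Theorem \ref{thm:fundamentalweighted} and Proposition \ref{prop:projcoh}.
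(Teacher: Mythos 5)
Your reduction of high rank to rank two is correct as far as it goes (the Picard computation, the maximality/saturation step, and the degree bookkeeping with $\det$ are all fine), but the proof has a genuine gap exactly where you flag it: the rank-two base case. The induction is circular there, since your argument that $n_i \leq m_0$ applies the inductive hypothesis to the rank-two subsheaf $\FF_i$, and when $\rk\FF = 2$ that subsheaf is $\FF$ itself. What you would still have to prove is that a non-split extension $0 \to \OO(m_0) \to \FF \to \OO(n_1) \to 0$ with $n_1 \geq m_0 + a_0 + a_1$ always contains a line subbundle of degree strictly greater than $m_0$; this is equivalent to the theorem in rank two, i.e.\ it is the entire content of the statement, and the ``elementary modification along the basis classes of $H^1$'' you gesture at is not carried out (it amounts to a Birkhoff-type factorization of a \v{C}ech cocycle, which is precisely the nontrivial part).

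For the record, the paper quotes this proposition from \cite[Prop 3.4]{Mei13}, but the argument that fills your gap appears in the paper's own proof of Theorem \ref{thm:VectorBundle}, which over $R=K$ proves exactly this statement (condition (\ref{item:free}) is automatic over a field). That argument needs no rank-two base case: with $m_0$ maximal and $Q = \FF/\OO(m_0) \cong \bigoplus_j \OO(n_j)$, twist the sequence by $\OO(-m_0-i)$ for $0 < i < a_0+a_1$; since $H^1(\OO(-i)) = 0$ in that range by Proposition \ref{prop:projcoh} and $H^0(\FF(-m_0-i)) = 0$ by maximality, one gets $H^0(Q(-m_0-i)) = 0$ for all such $i$. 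If some $b = n_j - m_0$ were positive, choosing $i$ with $0 < i \leq a_0$ and $i \equiv b \pmod{a_0}$ makes $b-i$ a nonnegative multiple of $a_0$, so that $H^0(\OO(b-i)) \neq 0$, a contradiction; hence all $n_j \leq m_0$, the groups $\Ext^1(\OO(n_j),\OO(m_0)) = H^1(\OO(m_0-n_j))$ vanish by Proposition \ref{prop:projcoh}, and the sequence splits for every rank at once. This congruence trick, which exploits the semigroup structure of $H^0(\OO(\ast)) \cong K[t_0,t_1]$, is the missing ingredient; with it, your reduction to rank two becomes unnecessary.
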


We want to prove a generalization to weighted projective lines over more general rings, which is in the spirit of \cite[Theorem 1.4]{H-S99b}. First we need three lemmas. 

\begin{lemma}\label{lem:divisor}
 Let $k$ be an algebraically closed field and let $s$ be a section of a vector bundle $\EE$ on $X = \PP_k(a,b)$. Assume that $s$ vanishes at some geometric point of $X$. Then $s$ is in the image of a morphism $\EE(-j) \to \EE$ for some $j>0$. 
\end{lemma}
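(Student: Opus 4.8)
The plan is to reduce to the case of line bundles and then to a concrete divisibility statement about weighted homogeneous polynomials. By Proposition \ref{prop:ClassField}, the vector bundle $\EE$ splits as $\bigoplus_{i=1}^r \OO(m_i)$, since $k$ is a field. Under the identification of Section \ref{sec:weighted}, a global section of $\OO(m)$ is exactly a weighted homogeneous polynomial of degree $m$ in $k[t_0,t_1]$ (with $|t_0|=a$, $|t_1|=b$), so $s$ corresponds to a tuple $(s_1,\dots,s_r)$ with each $s_i\in k[t_0,t_1]$ weighted homogeneous of degree $m_i$. I will show that all the $s_i$ share a common weighted homogeneous divisor $t$ of some positive degree $j$ vanishing at $P$; writing $s_i = t\cdot s_i'$ then exhibits $s$ as the image of $s'=(s_i')\in H^0(X;\EE(-j))$ under the map $\EE(-j)\xrightarrow{\cdot t}\EE$ obtained by tensoring $\EE$ with the morphism $t\co \OO(-j)\to\OO$ corresponding to $t\in H^0(X;\OO(j))$.

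To produce $t$, I would lift the geometric point $P$ to a point $(c_0,c_1)\in\A^2_k-\{0\}$ and consider its $\Gm$-orbit $O$. Since $(c_0,c_1)\neq 0$, at least one coordinate is nonzero, so $O$ is one-dimensional; let $C=\overline{O}$ be its closure, an irreducible affine curve. Because a weighted homogeneous polynomial $f$ of degree $m$ satisfies $f(t^{a}c_0,t^{b}c_1)=t^{m}f(c_0,c_1)$, vanishing of $f$ at $P$ is equivalent to vanishing of $f$ along all of $O$, hence along $C$. Thus the hypothesis says each $s_i$ lies in the ideal $I(C)\subset k[t_0,t_1]$. This ideal is homogeneous (as $C$ is $\Gm$-stable) and of height one, so by unique factorization in $k[t_0,t_1]$ it is generated by a single irreducible weighted homogeneous polynomial $t$; as $C$ is a proper subvariety, $t$ is nonconstant, so $j=\deg t>0$. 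Every $s_i\in I(C)=(t)$ is then divisible by $t$, with quotient $s_i'$ weighted homogeneous of degree $m_i-j$, which completes the argument.

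The point requiring the most care is the translation of ``vanishing at a geometric point'' into the statement that each $s_i$ lies in $I(C)$, especially when $P$ is one of the two stacky points $[1:0]$ or $[0:1]$ (with automorphism groups $\mu_a$ and $\mu_b$). There the orbit closure $C$ is a coordinate axis and $I(C)$ is generated by $t_1$ or $t_0$, so the conclusion holds with $j=b$ or $j=a$; the homogeneity computation above handles the generic and stacky cases uniformly. I expect the principal--ideal step to be the conceptual crux, since it is what simultaneously supplies a single divisor $t$ valid for all components $s_i$ and guarantees $j>0$; the remaining verifications are routine bookkeeping.
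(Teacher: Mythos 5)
Your proof is correct, and its geometric heart coincides with the paper's: lift $P$ to a point $(c_0,c_1)\in\A^2_k-\{0\}$, observe that weighted homogeneity forces the section to vanish on the whole $\Gm$-orbit and hence on its closure $C$, and use that $I(C)$ is a height-one homogeneous prime in the UFD $k[t_0,t_1]$, hence generated by a weighted-homogeneous prime $t$ of positive degree $j$; dividing by $t$ produces the desired map $\EE(-j)\to\EE$. The one genuine difference is how the vector bundle is handled. You first split $\EE\cong\bigoplus_i\OO(m_i)$ by Proposition \ref{prop:ClassField} and divide componentwise; this is legitimate --- there is no circularity, since that proposition is quoted from \cite{Mei13} rather than deduced from Lemma \ref{lem:divisor} --- and it makes the divisibility step completely transparent. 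The paper instead avoids invoking the classification theorem: it pulls $\EE$ back to $\A^2_k-\{0\}$, extends it to a vector bundle on $\A^2_k$, and uses that the resulting graded module $M$ over $k[x,y]$ (with $|x|=a$, $|y|=b$) is free, so that vanishing of the corresponding element $m\in M$ along $V(q)$ again yields $m=qm'$. So the paper's route is more self-contained, needing only the extension-plus-freeness argument, while yours buys a shorter and more concrete argument at the price of a stronger external input; both are complete. One cosmetic point, which applies equally to the paper's own write-up: you implicitly take the geometric point to be a $k$-point, which is harmless because the vanishing locus of $s$ is a closed substack of finite type over the algebraically closed field $k$, so if it is non-empty it contains a $k$-point.
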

\begin{proof}
 This would easily follow from a suitable formalism of divisors on Artin stacks. We will argue in a more elementary way. 
 
 Let $M$ be the global sections of the pullback of $\EE$ to $\A_k^2-\{0\}$. This pullback can be extended to a vector bundle $\FF$ on $\A^2_k$ with the same global sections. The module $M$ is a (finite rank free) graded module over the polynomial ring $k[x,y]$ with $|x| = a$ and $|y| = b$. Assume that $s$ vanishes at a geometric point that is the image of $(u,v) \in \A^2_k-\{0\}$. Then (the pullback of) $s$ also vanishes on $f(\mathbb{G}_{m,k})$ for $f\colon \A^1_k \to \A^2_k$ the map described by the formula $\lambda \mapsto (\lambda^au, \lambda^bv)$ for $\lambda \in k$. We claim that $f(\mathbb{G}_{m,k})$ is closed in $\A^2_k-\{0\}$.
 
 First assume that $u=0$ or $v=0$, say $v=0$. Then $f$ can on $\mathbb{G}_{m,k}$ be written as the composition 
 $$\mathbb{G}_{m,k} \to \mathbb{G}_{m,k} \cong \mathbb{G}_{m,k} \times \{0\} \to \A^2_k-\{0\},$$
 where the first map is the surjection $\lambda \mapsto \lambda^au$ and the last map is obviosly a closed immersion. 
 
 If $u$ and $v$ are nonzero, let $g$ be $\gcd(a,b)$. Because 
 $$\mathbb{G}_{m,k} \to \mathbb{G}_{m,k}, \qquad \lambda \mapsto \lambda^g$$
 is surjective, we can assume that $a$ and $b$ are coprime. Thus, $f$ defines closed immersions $\mathbb{G}_{m,k} \to \Spec k[x^{\pm 1},y]$ and $\mathbb{G}_{m,k} \to \Spec k[x,y^{\pm 1}]$. Hence $f(\mathbb{G}_{m,k})$ is closed in $\A^2_k-\{0\}$. 
 
 It follows that $A = f(\mathbb{A}^1_k)$ is closed and irreducible in $\A^2_k$ and thus must be the closure of $f(\mathbb{G}_{m,k})$. Thus, $s$ vanishes on $A$. The set $A$ corresponds to a prime ideal $\mathfrak{p} \subset k[x,y]$ of height $1$. As $k[x,y]$ is factorial, $\mathfrak{p}$ contains a prime element $q$ and thus $\mathfrak{p} = (q)$. 
 As $q = q(x,y)$ and $q(\lambda^ax, \lambda^by)$ for $\lambda \in k^\times$ have both the zero set $A$, they must be unit multiple of each other and it follows that $q$ is homogeneous of some positive degree $j$. Thus, the element $m\in M$ corresponding to $s$ must be of the form $qm'$ for $m'\in M$, where $|m'| = |m|-j$. 

\end{proof}

\begin{lemma}\label{lem:basechange}
 Let $\EE$ be a quasi-coherent sheaf on $X = \PP_R(a_0,a_1,\dots, a_n)$ that is flat over $R$. Let $R\to S$ be a morphism of commutative rings and denote by $f$ the projection $$Y=X\times_{\Spec R}\Spec S\xrightarrow{f} X.$$
 If $H^i(X;\EE)$ is a flat $R$-module for $i> p$, then 
 $$H^p(Y;f^*\EE) \cong H^p(X;\EE)\tensor_R S.$$ 
 The assumption is in particular fulfilled for $p=n$. 
\end{lemma}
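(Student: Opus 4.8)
The plan is to reduce the assertion to a homological statement about a single complex of flat $R$-modules, with the cohomology computed by Čech cohomology for the standard cover. Recall from the proof of Theorem~\ref{thm:fundamentalweighted} that $X$ is covered by the $n+1$ open substacks $D(t_i)=\Spec R[t_0,\dots,t_n][t_i^{-1}]/\G_m$, and that on each of these---and on each finite intersection $D(t_{i_0}\cdots t_{i_p})$---the global sections functor is exact, being given by passage to the degree-$0$ part of the corresponding graded module. In particular $\EE$ has no higher cohomology on any of these opens, so the Čech complex $C^\bullet=C^\bullet(\{D(t_i)\};\EE)$, concentrated in degrees $0,\dots,n$, computes $H^\bullet(X;\EE)$.

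First I would record two features of $C^\bullet$. Each term $C^p=\bigoplus_{i_0<\dots<i_p}\Gamma(D(t_{i_0}\cdots t_{i_p});\EE)$ is flat over $R$, since it is a direct sum of degree-$0$ parts of localizations of $M=\Gamma_*(\EE)$, and these are flat over $R$ because $\EE$ is and because the degree-$0$ part is an $R$-module direct summand of the localization. Second, as $-\tensor_R S$ commutes with localization and with extracting a graded piece, one has $C^p\tensor_R S\cong\bigoplus\Gamma(D(t_{i_0}\cdots t_{i_p})\times_{\Spec R}\Spec S;f^*\EE)$; that is, $C^\bullet\tensor_R S$ is exactly the Čech complex computing $H^\bullet(Y;f^*\EE)$ for the pulled-back cover, whose pieces again have exact global sections. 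Thus both sides of the desired isomorphism are cohomology of the single complex $C^\bullet$, before and after applying $-\tensor_R S$.

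It remains to prove the standard algebraic fact: if $C^\bullet$ is a complex of flat $R$-modules with $C^i=0$ outside $[0,n]$ and $H^i(C^\bullet)$ flat for all $i>p$, then $H^p(C^\bullet)\tensor_R S\to H^p(C^\bullet\tensor_R S)$ is an isomorphism. I would argue by descending induction using the short exact sequences $0\to Z^i\to C^i\to B^{i+1}\to 0$ and $0\to B^i\to Z^i\to H^i\to 0$, where $Z^i=\ker d^i$ and $B^i=\im d^{i-1}$. Starting from $B^{n+1}=0$ and repeatedly using that in a short exact sequence $0\to A\to B\to C\to 0$ with $B$ and $C$ flat the term $A$ is flat, one shows that $Z^i$ and $B^i$ are flat for all $i\geq p+1$---the hypothesis on $H^i$ for $i>p$ being precisely what makes the second sequence usable at each stage. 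In particular $B^{p+1}$ is flat, so $\Tor_1^R(B^{p+1},S)=0$, and tensoring the first sequence in degree $p$ identifies $Z^p\tensor_R S$ with $\ker(d^p\tensor_R S)$; comparing this with the right-exact sequence obtained from $0\to B^p\to Z^p\to H^p\to 0$ then yields $H^p(C^\bullet\tensor_R S)\cong H^p(C^\bullet)\tensor_R S$.

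The main point---and essentially the only place care is needed---is the flatness bookkeeping: it is the cohomology in degrees strictly \emph{above} $p$ that must be flat (not $H^p$ itself, which is exactly what we are computing), and this is just enough to force the single coboundary module $B^{p+1}$ to be flat. The final clause is then immediate from Theorem~\ref{thm:fundamentalweighted}: since $X$ has cohomological dimension at most $n$, we have $H^i(X;\EE)=0$ for $i>n$, so the flatness hypothesis is vacuous when $p=n$ and the isomorphism $H^n(Y;f^*\EE)\cong H^n(X;\EE)\tensor_R S$ holds for every $S$ (equivalently, because $H^n$ is a cokernel and $-\tensor_R S$ is right exact).
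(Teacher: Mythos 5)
Your proposal is correct, and its skeleton is the same as the paper's: both compute $H^*(X;\EE)$ via the \v{C}ech complex $C^\bullet$ of the standard cover $\{D(t_i)\}$ (using that global sections are exact on these opens and their intersections), and both identify $H^*(Y;f^*\EE)$ with the cohomology of $C^\bullet\tensor_R S$. The difference lies in the final step: the paper simply cites the K\"unneth (universal-coefficient) spectral sequence $\Tor_s^R(H^t(X;\EE),S)\Rightarrow H^{t-s}(Y;f^*\EE)$, which degenerates in the relevant total degrees because the flatness hypothesis kills all columns $s\geq 1$ with $t>p$, whereas you prove the needed degenerate case by hand via the dimension-shifting induction on the sequences $0\to Z^i\to C^i\to B^{i+1}\to 0$ and $0\to B^i\to Z^i\to H^i\to 0$. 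Your route is more elementary and self-contained---it makes visible that flatness of the \v{C}ech terms (deduced from flatness of $\EE$ over $R$ by the direct-summand observation) and flatness of $H^i$ for $i>p$ are exactly what the argument consumes---at the cost of bookkeeping that the spectral sequence packages for free. One small point to make airtight: identifying $\ker(d^p\tensor_R S)$ with $Z^p\tensor_R S$ requires not only $\Tor_1^R(B^{p+1},S)=0$ but also the injectivity of $B^{p+1}\tensor_R S\to C^{p+1}\tensor_R S$; this holds because $C^{p+1}/B^{p+1}$ is flat, being an extension of $B^{p+2}$ by $H^{p+1}$, both of which your induction has already shown to be flat. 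With that one sentence added, the argument is complete.
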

\begin{proof}
 Let $\{U_i\to X\}_{0\leq i \leq n}$ be the covering by the standard opens and $\check{C}(\EE)$ be the corresponding \v{C}ech complex, whose cohomology is $H^*(X;\EE)$. We can compute $H^*(Y,f^*\EE)$ as the cohomology of $\check{C}(\EE)\tensor_R S$. Then the resulting K{\"u}nneth spectral sequence
 $$\Tor_s^R(H^t(X;\EE), S) \Rightarrow H^{t-s}(Y;f^*\EE)$$
 implies the result. 
\end{proof}
\begin{remark}
 The same lemma holds, of course, much more generally, e.g.\ for any quasi-compact semi-separated scheme.
\end{remark}

\begin{lemma}\label{lem:constvector}
Let $\XX$ be a normal noetherian Artin stack and $\FF$ a coherent sheaf on $\XX$. Assume that there is an integer $n$ such that for every point $x\colon \Spec k \to \XX$, the pullback $x^*\FF$ is free of rank $n$. Then $\FF$ is a vector bundle.
\end{lemma}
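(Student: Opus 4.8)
The plan is to reduce the statement to a fact about finitely generated modules over a normal noetherian domain, and the hypothesis (over a field every fibre is automatically free) really just says that the fibre dimension $\dim_k x^*\FF$ is constantly equal to $n$. Since being a vector bundle is local in the smooth (indeed fppf) topology, I would first choose a smooth atlas $u\colon U\to \XX$ with $U$ a scheme; as $\XX$ is normal and noetherian and $u$ is smooth, $U$ is again normal and noetherian, and $\FF$ is a vector bundle if and only if $u^*\FF$ is. Every field-valued point $\Spec k\to U$ composes with $u$ to a point $\Spec k\to \XX$, and pullback is functorial, so the hypothesis on the fibres of $\FF$ descends to $u^*\FF$. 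Replacing $\XX$ by $U$, I may assume $\XX=\Spec A$ with $A$ normal and noetherian; since local freeness may be checked on connected components and a connected normal noetherian ring is an integral domain, I may further assume $A$ is a normal domain with fraction field $Q$. Write $M=\Gamma(\Spec A;\FF)$, a finitely generated $A$-module whose fibre $M\otimes_A k(\mathfrak{p})$ has dimension $n$ for every prime $\mathfrak{p}$.

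The core is then the assertion that such an $M$ is locally free of rank $n$. To prove freeness near a prime $\mathfrak{p}$, I would lift a $k(\mathfrak{p})$-basis of $M\otimes_A k(\mathfrak{p})$ to elements $m_1,\dots,m_n\in M_{\mathfrak{p}}$; by Nakayama they generate $M_{\mathfrak{p}}$, giving a surjection $A_{\mathfrak{p}}^n\twoheadrightarrow M_{\mathfrak{p}}$, which spreads out to a surjection $A_f^n\twoheadrightarrow M_f$ for a suitable $f\notin\mathfrak{p}$. Letting $N$ be its kernel, $A$ noetherian ensures $N$ is finitely generated and we have a short exact sequence $0\to N\to A_f^n\to M_f\to 0$.

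It remains to see that $N=0$. Localising at the generic point $(0)$ yields an exact sequence of $Q$-vector spaces $0\to N_{(0)}\to Q^n\to M\otimes_A Q\to 0$; by hypothesis $M\otimes_A Q$ has dimension $n$ over $Q$, so the surjection $Q^n\to M\otimes_A Q$ is an isomorphism and $N_{(0)}=0$. Thus $N$ is a torsion module, while as a submodule of the free module $A_f^n$ over the domain $A$ it is torsion-free; hence $N=0$ and $M_f\cong A_f^n$ is free. As $\mathfrak{p}$ was arbitrary, $M$ is locally free, i.e.\ $\FF$ is a vector bundle.

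The only genuinely delicate point I anticipate is the reduction to schemes: I must justify carefully that local freeness of a coherent sheaf is detected by a smooth cover and that the fibrewise hypothesis descends along it. Everything after that is the standard \emph{constant fibre dimension over a reduced base implies locally free} argument, in which normality enters only to guarantee that the base is a domain, so that the torsion-free conclusion forces $N=0$.
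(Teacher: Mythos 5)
Your proof is correct and follows essentially the same route as the paper: reduce along a smooth atlas to a normal noetherian scheme (hence, locally, a normal noetherian domain) and then invoke the standard commutative-algebra fact that a finitely generated module of constant fibre rank over such a ring is locally free. The only difference is that the paper outsources this last fact to a citation (Milne, \'Etale Cohomology, Thm 2.9, applied stalkwise to local rings), whereas you prove it directly via the Nakayama lifting and torsion-free-kernel argument, which is precisely the proof of the cited result.
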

\begin{proof}
 By taking a smooth cover, we reduce to the case of a noetherian normal scheme $X$. As a coherent sheaf over a noetherian scheme is a vector bundle if and only if its stalks are free over the stalks of the structure sheaf, we can assume that $\XX = \Spec A$ for a noetherian local domain $A$. Here, the statement is part of \cite[Thm 2.9]{Mil80}.
\end{proof}

\begin{thm}\label{thm:VectorBundle}
 Let $\EE$ be a vector bundle on $X = \PP_R(a_0,a_1)$ for $R$ a noetherian and normal ring. Then the following conditions are equivalent. 
 \begin{enumerate}
  \item \label{item:free} Both $H^0(X;\EE(m))$ and $H^1(X;\EE(m))$ are free $R$-modules for all $m\in\Z$. 
  \item \label{item:dec} The vector bundle $\EE$ decomposes into a sum of line bundles of the form $\OO(m)$. 
 \end{enumerate}

\end{thm}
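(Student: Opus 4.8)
The plan is to prove the two implications separately. The implication (\ref{item:dec}) $\Rightarrow$ (\ref{item:free}) is a direct computation: if $\EE \cong \bigoplus_i \OO(m_i)$ then $\EE(m) \cong \bigoplus_i \OO(m_i+m)$, and since cohomology commutes with finite direct sums it suffices to treat a single $\OO(k)$. Its $H^0$ is the weighted-degree-$k$ piece of $R[t_0,t_1]$, free on the monomials of that degree, while $H^1(\PP_R(a_0,a_1);\OO(k))$ is free by Proposition \ref{prop:projcoh}; both are free over $R$. The reverse implication (\ref{item:free}) $\Rightarrow$ (\ref{item:dec}) carries the content, and I would prove it by induction on $\rk\EE$.

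The first point is that freeness of $H^1$ forces base change for $H^0$: applying Lemma \ref{lem:basechange} with $p=0$ to the $R$-flat sheaf $\EE(m)$ (whose $H^1$ is flat, being free) gives $H^0(\EE(m))\otimes_R\kappa \cong H^0(\EE_\kappa(m))$ for every residue field $\kappa$. As $H^0(\EE(m))$ is free, the fibre dimension $\dim_\kappa H^0(\EE_\kappa(m))$ equals $\rk_R H^0(\EE(m))$, independent of $\kappa$. By Proposition \ref{prop:ClassField} each $\EE_\kappa$ splits into line bundles $\OO(c_i)$, and these fibrewise dimensions determine the multiset $\{c_i\}$, so $\EE$ has a constant splitting type $c_1\geq\cdots\geq c_r$ on all geometric fibres.

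Next I would split off the top line bundle. Let $e=\min\{m:H^0(\EE(m))\neq0\}$, which exists because $\Gamma_*(\EE)$ is bounded below (being coherent) and nonzero in large degrees by ampleness (Theorem \ref{thm:fundamentalweighted}); one checks $-e=c_1$. Choose a basis vector $s$ of the free module $H^0(\EE(e))=\Hom(\OO(-e),\EE)$. Being a basis vector, $s$ restricts to a nonzero section on every geometric fibre; and by Lemma \ref{lem:divisor} together with the minimality of $e$ this restriction vanishes nowhere (a zero would, after twisting down, produce a section in degree $e-j<e$, contradicting minimality on that fibre). Hence $s\colon\OO(-e)\to\EE$ is fibrewise injective, its cokernel $\QQ$ has constant fibre rank $r-1$, and $\QQ$ is a vector bundle by Lemma \ref{lem:constvector}, fitting into $0\to\OO(-e)\to\EE\to\QQ\to0$. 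Fibrewise $s$ is the inclusion of a summand $\OO(c_1)$, so $\QQ$ has constant splitting type $\{c_i\}_{i\geq2}$, all $\leq -e$. Granting that $\QQ$ again satisfies (\ref{item:free}), induction yields $\QQ\cong\bigoplus_j\OO(m_j)$ with each $m_j\leq -e$, and then $\Ext^1_{\OO_X}(\QQ,\OO(-e))\cong\bigoplus_j H^1(X;\OO(-e-m_j))=0$ since each $-e-m_j\geq0$ and $H^1$ of a nonnegative twist vanishes by Proposition \ref{prop:projcoh}. Thus the extension splits and $\EE\cong\OO(-e)\oplus\QQ$, completing the induction.

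The main obstacle is the bracketed step: verifying that $\QQ$ genuinely inherits the freeness hypothesis (\ref{item:free}). Twisting the short exact sequence by $\OO(m)$ and using that $X$ has cohomological dimension $1$ (Theorem \ref{thm:fundamentalweighted}) gives a long exact sequence; on each fibre the map $H^1(\OO(m-e))\to H^1(\EE(m))$ is the inclusion of a direct summand, hence injective, and since $H^1$ is top degree it commutes with base change (Lemma \ref{lem:basechange}), so the connecting maps vanish and the sequence degenerates into short exact sequences $0\to H^i(\OO(m-e))\to H^i(\EE(m))\to H^i(\QQ(m))\to0$ with free outer sub- and total terms. The delicate point is upgrading the resulting (fibrewise split, hence locally free) cohomology of $\QQ$ to \emph{genuinely free} modules; this is forced by the freeness of the cohomology of both $\EE$ and $\OO(m-e)$ together with the fact that $R$ is reduced (being normal), so that the fibrewise information faithfully controls the modules and rules out the $\Pic$-type twists that a mere local-freeness argument would leave open. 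This is exactly the place where the hypothesis of free — not merely locally free — cohomology is essential.
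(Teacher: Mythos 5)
Your overall strategy is the same as the paper's: induct on the rank, use freeness of $H^0(X;\EE(e))$ to pick a section $s$ that is nonzero in every fibre, use Lemma \ref{lem:divisor} plus minimality of $e$ to see that $s$ vanishes nowhere on the fibres, split off a line bundle via Lemma \ref{lem:constvector}, and conclude with the $\Ext^1$-vanishing coming from Proposition \ref{prop:projcoh}. (The paper locates the minimal twist via Serre duality and Theorem \ref{thm:fundamentalweighted} rather than via your constant-splitting-type argument, but that difference is cosmetic.) The problem is exactly the step you single out, and your proposed resolution of it is not a proof. From your short exact sequences $0\to H^i(\OO(m-e))\to H^i(\EE(m))\to H^i(\QQ(m))\to 0$ the most you can extract is that $H^i(\QQ(m))$ has constant fibre rank, hence is finitely generated projective (this is where reducedness of $R$ enters), hence that the sequence splits and $H^i(\QQ(m))$ is \emph{stably free}: $H^i(\QQ(m))\oplus H^i(\OO(m-e))\cong H^i(\EE(m))$. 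There is no principle by which ``freeness of the outer terms plus $R$ reduced'' upgrades stably free to free: over a normal noetherian ring, stably free modules need not be free. The standard example is $R=\R[x,y,z]/(x^2+y^2+z^2-1)$ with $P=\ker\bigl((a,b,c)\mapsto ax+by+cz\colon R^3\to R\bigr)$, which satisfies $P\oplus R\cong R^3$ but is not free (a basis would yield a nowhere vanishing vector field on the $2$-sphere). So the induction hypothesis is never verified for $\QQ$, and the argument as written does not close.

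Moreover, this is not a repairable oversight in the stated generality. The paper's own proof makes the very same unjustified step (it applies the inductive hypothesis to the cokernel $\FF$ without checking condition (\ref{item:free}) for it), and with $R$ and $P$ as above the statement itself fails: take $\EE=\OO\oplus\bigl(p^*P\tensor\OO(-1)\bigr)\oplus\OO(-2)$ on $\PP_R(1,1)$, where $p\colon \PP_R(1,1)\to\Spec R$ is the projection. Using $H^j(p^*M\tensor\OO(m))\cong M\tensor_R H^j(\OO(m))$ and Bass's theorem that finitely generated stably free modules of rank greater than $\dim R=2$ are free, one checks that every group $H^0(\EE(m))$, $H^1(\EE(m))$ has the form $P^{\oplus b}\oplus R^{\oplus a}$ with either $b=0$ or total rank at least $3$, hence is free; yet $\EE$ is not a sum of $\OO(m_i)$'s, as one sees by comparing the cokernel of the multiplication map $H^0(\OO(1))\tensor H^0(\EE)\to H^0(\EE(1))$, which is $P$ for this $\EE$ but is free for any sum of line bundles $\OO(m_i)$. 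So you have in fact put your finger on a genuine gap in the paper. Both your induction and the paper's do go through whenever finitely generated stably free $R$-modules are automatically free --- e.g.\ $R$ local or a Dedekind domain, which covers every case the paper actually uses ($R=\Z_{(l)}$ or a field) --- and in that setting your argument, with ``stably free, hence free'' replacing your last step, is complete.
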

\begin{proof}
By Proposition \ref{prop:projcoh}, the part (\ref{item:dec}) implies (\ref{item:free}) because the cohomology of $\OO(m)$ is a free $R$-module. 
 
 Now we assume (\ref{item:free}) and want to prove (\ref{item:dec}). The proof will be similar to one of the standard proofs for an unweighted projective line over a field. We will argue by induction on the rank of $\EE$ and assume that the theorem has been proven for all ranks $\leq r$ and that $\EE$ has rank $r+1$. 
 
 Denote by $\EE^\vee$ the $\OO_X$-dual of $\EE$. By Theorem \ref{thm:fundamentalweighted}, there is a maximal $m$ such that $H^1(X; \EE^\vee(m)) \neq 0$. Setting $m_0 = -m-a_0-a_1$, we claim that $m_0$ is the smallest index such that $H^0(X;\EE(m_0)) \neq 0$. Indeed: By Lemma \ref{lem:basechange}, we have for $j\colon \Spec k \to \Spec R$ (for $k$ a field) and every $i\in\Z$ an isomorphism $H^1(X; \EE^\vee(i))\tensor_R k \cong H^1(X_k; j^*\EE^{\vee}(i))$. Thus, for every $i>m$, the group $H^1(X_k; j^*\EE^{\vee}(i))$ vanishes and there exists a point $j$ of $\Spec R$ such that $H^1(X_k; j^*\EE^{\vee}(m))$ is nonzero. Serre duality implies that $H^0(X_k; j^*\EE(m_0)) \neq 0$ for this $j$ and $H^0(X_k;j^*\EE(i)) = 0$ for every $j\colon \Spec k \to \Spec R$ if $i<m_0$. By Lemma \ref{lem:basechange} again, $$H^0(X_k;j^*\EE(i)) \cong H^0(X;\EE(i)) \tensor_R k$$
  because $H^1(X;\EE(i))$ is a free $R$-module, which shows the claim that $m_0$ is minimal with $H^0(X;\EE(m_0)) \neq 0$. 
 
 By possibly tensoring $\EE$ with $\OO(-m_0)$, we can assume that $m_0 = 0$. Choose now an element $s \in H^0(X;\EE)$ that is part of an $R$-basis. Then we consider the sequence
 \begin{align}\label{align:vector}\OO_X \xrightarrow{s} \EE \to \FF \to 0.\end{align}
 We want to show that $s$ defines an injection and that its cokernel $\FF$ is a vector bundle. By Lemma \ref{lem:basechange}, we see that $s$ is still nonzero after base change to an arbitrary geometric point $j\colon \Spec k \to \Spec R$. We claim that $s$ does not vanish at any geometric point of $X_k$. Indeed, if $s$ had a zero on $X_k$, then $s$ would by Lemma \ref{lem:divisor} define a nonzero section of $j^*\EE(i)$ for some $i<0$. But by Lemma \ref{lem:basechange},
 $$H^0(X_k;j^*\EE(i)) \cong H^0(X;\EE(i))\tensor_R k = 0$$
 for $i<0$. 

Thus, $\OO_X \xrightarrow{s} \EE$ is an injection and $\FF$ has rank $r$ over every geometric point and is thus a vector bundle again by Lemma \ref{lem:constvector}. Thus $\FF \cong \OO(b_1)\oplus \cdots \oplus \OO(b_r)$ by induction. By shifting the sequence (\ref{align:vector}) by $(-i)$, it is easy to see that $H^0(X;\FF(-i)) = 0$ for $0<i<a_0+a_1$. Furthermore, for every $b>0$, take $i$ with $0<i\leq a_0$ and $i\equiv b \mod a_0$; then $H^0(X; \OO(b-i)) \neq 0$ because $H^0(X; \OO(\ast)) \cong R[t_0, t_1]$ with $|t_j| = a_j$. Thus, we see that $b_j\leq 0$ for all $1\leq j\leq r$. 

Therefore we get
\[\Ext^1_X(\FF, \OO_X) \cong \bigoplus_{j=1}^{r} H^1(X; \OO(-b_j)) = 0\]
by Proposition \ref{prop:projcoh}. Hence, (\ref{align:vector}) is a \emph{split} short exact sequence. 
\end{proof}

\section{Background on modular curves}\label{sec:ModularCurves}
Most of the material in this section is well-known to the experts. We will nevertheless provide some proofs and references to the literature for the convenience of the reader. 

\subsection{Basics and examples}
Denote by $\MM_{ell}$ the uncompactified moduli stack of elliptic curves and by $\MMb_{ell}$ its compactification. We define the stacks
$\MM_0(n)$, $\MM_1(n)$ and $\MM(n)$ by
\begin{align*}
 \MM_0(n)(S) &= \text{ Elliptic curves }E \text{ over }S\text{ with chosen cyclic subgroup }H\subset E(S)\text{ of order }n \\
 \MM_1(n)(S) &= \text{ Elliptic curves }E \text{ over }S\text{ with chosen point }P\in E(S)\text{ of exact order }n \\
 \MM(n)(S) &= \text{ Elliptic curves }E \text{ over }S\text{ with chosen isomorphism }(\Z/n)^2\cong E[n](S), 
\end{align*}
where we always assume $n$ to be invertible on $S$ and where $E[n]$ denotes the $n$-torsion points. More precisely, we demand for $\MM_1(n)$ that for every geometric point $s\colon \Spec K \to S$ the pullback $s^*P$ spans a cyclic subgroup of order $n$ in $E(K)$ and similarly for $\MM_0(n)$. 

We can define the compactified versions $\MMb_0(n)$, $\MMb_1(n)$ and $\MMb(n)$ as the normalization of $\MMb_{ell}$ in $\MM_0(n)$, $\MM_1(n)$ and $\MM(n)$, respectively \cite[IV.3]{D-R73}. These are all Deligne--Mumford stacks. For the corresponding modular interpretations see also \cite{Con07} and \cite{Ces15}. These moduli interpretations are based on the notion of a \emph{generalized elliptic curve}, which we will recall only over an algebraically closed field. By \cite[Lemme II.1.3]{D-R73}, a generalized elliptic curve is in this case either a (smooth) elliptic curve or a N\'eron $k$-gon. The \emph{N\'eron $k$-gon} $C$ over a scheme $S$ is the scheme quotient of $\Z/k\times \Prj^1_S$ where we identify $(i, \infty)$ with $(i+1,0)$ for all $i$. Its smooth part $C^{reg}$ is isomorphic to $\Z/k \times \G_{m,S}$. With its obvious group structure, $C^{reg}$ acts on $C$. See \cite[II.1]{D-R73} or \cite[Section 2.
1]{
Con07} for more details.

The stack $\MMb_1(n)$ classifies generalized elliptic curves $E$ with a chosen point of exact order $n$ in the smooth part of $E$ satisfying the following condition: Over every geometric point of the base scheme every irreducible component of $E$ contains a multiple of $P$. 
For $n$ \emph{squarefree} $\MMb_0(n)$ classifies generalized elliptic curves $E$ with a chosen cyclic subgroup $H$ of order $n$ in the smooth part of $E$ satisfying the analogous condition: Over every geometric point every irreducible component of $E$ intersects $H$ nontrivially. By definition, such an $H$ is \'etale locally isomorphic to the constant group scheme $\Z/n$. It follows that in the squarefree case $\MMb_0(n)$ is equivalent to the quotient of $\MMb_1(n)$ by the obvious $(\Z/n)^\times$-action. 

In particular, $\MMb_{ell}$ just classifies generalized elliptic curves, which are over an algebraically closed field either smooth or a N\'eron $1$-gon. A cubic curve of the form
$$y^2 +a_1xy + a_3y = x^3+a_2x^2+a_4x+a_6$$
is called a Weierstra{\ss} curve and defines a generalized elliptic curve if and only if certain quantities $\Delta$ and $c_4$ are nowhere vanishing \cite[Prop. III.1.4]{Sil09}.

For the universal generalized elliptic curve $p\colon \CC\to \MMb_{ell}$ define $\omega = p_*\Omega^1_{\CC/\MMb_{ell}}$, which is known to be a line bundle and actually to generate $\Pic(\MMb_{ell})$ \cite{F-O10}. There is another interpretation: Consider the moduli stack $\MMb_{ell}^1$ of generalized elliptic curves with a chosen invariant differential. Quasi-coherent sheaves on $\MMb_{ell}$ correspond to graded quasi-coherent sheaves on $\MMb_{ell}^1$ and $\omega$ corresponds to $\OO_{\MMb_{ell}^1}$ viewed as concentrated in degree $1$. 


\begin{examples}\label{exa:moduli}
 Denote by $\PP_R(a,b)$ the weighted projective stack $(\mathbb{A}^2_R -\{0,0\})/\Gm$ (as in Definition \ref{def:wps}). Then we have equivalences
 \begin{align*}
  \MMb_{ell, \Z[\frac16]} &\simeq \PP_{\Z[\frac16]}(4,6) \\
  \MMb_1(2) &\simeq \PP_{\Z[\frac12]}(2,4) \\
  \MMb_1(3) &\simeq \PP_{\Z[\frac13]}(1,3) \\
  \MMb(2) &\simeq \PP_{\Z[\frac12]}(2,2) \\
  \MMb_1(4) &\simeq \PP_{\Z[\frac12]}(1,2).
 \end{align*}
 and in each case the pullback of $\omega$ to the weighted projective line is isomorphic to $\OO(1)$. 
These are classically well-known and we obtain the corresponding uncompactified moduli by taking the non-vanishing locus of $\Delta$. Proofs of (most of) the second, third and fourth equivalence can be found, for example, in \cite[Sec 1.3]{Beh06}, \cite[Prop 4.5]{H-M15} and \cite[Prop 7.1]{Sto12} respectively. We give a sketch of the fifth one as this is probably the hardest one to find in the literature. Given an elliptic curve with a chosen invariant differential and a point $P$ of exact order $4$, we can write it uniquely in the form 
$$y^2 +a_1xy +a_3y = x^3+a_2x^2$$
such that $P = (0,0)$ and $\frac{dx}{2y+a_1x+a_3}$ is the chosen invariant differential; this is sometimes called the \emph{homogeneous Tate normal form} (see \cite[Section 4.4]{HusElliptic} or \cite[Section 1]{B-O16}). The condition that $(0,0)$ is a point of order $4$ is equivalent to $a_3 = a_1a_2$. Thus, we obtain an equivalence 
$$\MM_1(4) \simeq (\Spec \Z[\frac12][a_1,a_2, \Delta^{-1}])/\Gm$$
with $\Delta = a_1^2a_2^4(a_1^2-16a_2)$. 

The map 
$$(\Spec \Z[\frac12][a_1,a_2, \Delta^{-1}])/\Gm \to \MM_{ell, \Z[\frac12]}$$
extends to a map 
$$f\colon (\Spec \Z[\frac12][a_1,a_2])/\Gm \to \MM_{cub,\Z[\frac12]},$$
where $\MM_{cub}$ is the stack classifying all curves defined by a cubic equation \cite[Section 3.1]{Mathom} and $f$ classifies the cubic curve
$$y^2 +a_1xy +a_1a_2y = x^3+a_2x^2.$$
Let $A = \Z[\frac12][a_1,a_2,a_3,a_4,a_6]$ and consider the fpqc morphism $\Spec A \to \MM_{cub}$ classifying the universal Weierstra{\ss} curve
$$y^2 +a_1xy + a_3y = x^3+a_2x^2+a_4x+a_6.$$
Then 
$$\Spec A \times_{\MM_{cub}} (\Spec \Z[\frac12][a_1,a_2])/\Gm
\simeq \Z[\frac12][a_1,a_2][r,s,t]$$
as the morphisms of elliptic curves (preserving an invariant differential) are classified by parameters $r,s,t$ (see \cite[Section III.1]{Sil09}). Thus $f$ is representable and affine.  Using that $c_4 = a_1^4 - 16a_1^2a_2 + 16a_2^2$ it is easy to see that $c_4(a_1,a_2) = \Delta(a_1,a_2) = 0$ if and only if $a_1 = a_2 = 0$ with $a_1,a_2$ in a field of characteristic $\neq 2$. The pullback of $f$ along $\MMb_{ell,\Z[\frac12]} \to \MM_{cub,\Z[\frac12]}$ is thus a map 
$$f'\colon \PP_{\Z[\frac12]}(1,2) \to \MMb_{ell,\Z[\frac12]}.$$
Clearly, $f'$ is still affine and it is also proper by the valuative criteria \cite[Section 7]{LMB00} because the source is proper \cite[Section 2]{Mei13} and the target separated over $\Z[\frac12]$. Thus, $f'$ is finite. As $\PP_{\Z[\frac12]}(1,2)$ is normal, this implies that $\MMb_1(4) \simeq \PP_{\Z[\frac12]}(1,2)$ by the uniqueness of normal compactifications (see e.g.\ \cite[Lemma 4.4]{H-M15}). We have an isomorphism $f^*\omega \cong \OO(1)$ because $f'$ is induced by a $\Gm$-equivariant map $\A^2_{\Z[\frac12]} -\{0\} \to \MMb_{ell,\Z[\frac12]}^1$. 
\end{examples}

We call a Deligne--Mumford stack $\XX$ \emph{tame} if the automorphism group of every geometric point $\Spec \overline{K} \to \XX$ has order prime to the characteristic of $\overline{K}$. If $\XX$ is separated, it has by \cite{Con05} a coarse moduli space $X$ and we denote the canonical map $\XX \to X$ by $p$. Then $\XX$ is tame if and only if the pushforward functor
$$p_*\colon \QCoh(\XX) \to \QCoh(X)$$
is exact as proven in \cite{AOV08} (note that while they work with Artin stacks, their theory simplifies in the case of Deligne--Mumford stacks because automorphism group schemes of geometric points are in this case \'etale and hence constant). For example $\PP_R(a_0,\dots, a_n)$ is tame if and only if all $a_i$ are invertible in $R$ by \cite[Rem. 2.2]{Mei13}. In particular, all the Examples \ref{exa:moduli} are tame. 

\begin{lemma}\label{lem:tame}
Let $f\colon \XX \to \YY$ be a representable morphism into a tame Deligne--Mumford stack. Then $\XX$ is tame as well. 
\end{lemma}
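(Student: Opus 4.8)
The plan is to argue directly from the pointwise definition of tameness recorded just above, rather than from the $p_*$-exactness characterization. The entire statement then reduces to a single structural fact about representable morphisms: for each geometric point they induce an \emph{injection} of automorphism groups. Granting this, everything is a short divisibility argument.

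Concretely, I would first observe that since $f$ is representable and $\YY$ is Deligne--Mumford, the source $\XX$ is again Deligne--Mumford, so that the notion of tameness applies to it and all automorphism groups of geometric points are finite. I would then fix an arbitrary geometric point $x\colon \Spec \overline{K} \to \XX$ and set $y = f\circ x\colon \Spec \overline{K} \to \YY$. The key input is the standard characterization that a $1$-morphism of algebraic stacks is representable (by algebraic spaces) if and only if it is faithful (see, e.g., \cite{LMB00}); applied to the automorphisms of $x$ lying over $\id_{\Spec \overline{K}}$, faithfulness yields an injective homomorphism $\Aut_\XX(x)\hookrightarrow \Aut_\YY(y)$.

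Once this injection is available the conclusion is immediate: $|\Aut_\XX(x)|$ divides $|\Aut_\YY(y)|$, and the latter is coprime to $\operatorname{char}(\overline{K})$ because $\YY$ is tame, so $|\Aut_\XX(x)|$ is coprime to $\operatorname{char}(\overline{K})$ as well (a divisor of an integer prime to $\operatorname{char}(\overline{K})$ is again prime to it, the characteristic-zero case being vacuous). As $x$ was an arbitrary geometric point, $\XX$ is tame.

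The only step that requires genuine care, and hence the main obstacle, is the injectivity on automorphism groups. This is exactly the equivalence between representability and faithfulness for $1$-morphisms; the intuition is that a nontrivial element in the kernel of $\Aut_\XX(x)\to\Aut_\YY(y)$ would produce a nontrivial automorphism of an object of the fibre $\XX\times_\YY \Spec\overline{K}$, contradicting that this fibre is an algebraic space and therefore has no nontrivial automorphisms of its points. I would cite the relevant statement from the standard references rather than reprove it, and the rest of the proof needs no further hypotheses on $f$ or $\YY$ beyond those given.
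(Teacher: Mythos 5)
Your proof is correct and takes essentially the same route as the paper: both obtain the injection $\Aut_\XX(x)\hookrightarrow\Aut_\YY(y)$ by observing that its kernel is the automorphism group of a geometric point of the algebraic space $\Spec \overline{K}\times_\YY\XX$, hence trivial, and then conclude by the pointwise definition of tameness. The only cosmetic difference is that you route this through the general ``representable iff faithful'' characterization, whereas the paper argues the injectivity directly via the fibre product.
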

\begin{proof}
Let $x\colon \Spec K \to \XX$ a geometric point and $y$ its image in $\YY$. This defines a geometric point in the pullback $\Spec K \times_{\YY}\XX$ whose (trivial) automorphism group is the kernel of $\Aut(x) \to \Aut(y)$. Thus $\Aut(x) \subset \Aut(y)$ and $\XX$ is tame. 
\end{proof}

We will mainly work with moduli stacks of elliptic curves in the tame or even representable case and specifically with the class singled out in the following convention.

\begin{convention}\label{convention}In the following, let $\Xb$ be either $\MMb_1(n)_R$ (with $n\geq 2$), $\MMb(n)_R$ (with $n\geq 2$) or $\MMb_0(n)_R$ (with $n\geq 2$ squarefree and $\phi(n)$ or $6$ invertible on $R$) over a noetherian ring $R$ where $n$ is invertible. We denote the projection $\Xb \to \MMb_{ell, R}$ by $g$ and by $\XX$ the interior $g^{-1}(\MM_{ell,R})$. 
\end{convention}

\begin{prop}\label{prop:basicprops}We have the following properties of $\Xb$ and $g$:
\begin{enumerate}
 \item The map $g$ is finite, representable and flat.
 \item The stack $\Xb$ is tame. In fact, $\MMb_1(n)_R$ (for $n\geq 5$) and $\MMb(n)_R$ (for $n\geq 3$) are even representable by projective $R$-schemes. In this case, $\XX$ is affine.
 \item The map $\XXb \to \Spec R$ is in the representable case smooth of relative dimension $1$. 
 \item For every quasi-coherent sheaf $\FF$ on $\Xb$, we have $H^i(\Xb;\FF) = 0$ for $i\geq 2$. For every quasi-coherent sheaf $\FF$ on $\XX$, we have $H^i(\XX;\FF) = 0$ for $i\geq 1$. 

\end{enumerate}
\end{prop}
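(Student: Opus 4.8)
The plan is to establish the four assertions in turn, treating tameness in part (2) as the technical heart on which the cohomological vanishing in part (4) rests. For part (1), finiteness of $g$ is built into the definition of $\Xb$ as the normalization of the proper, regular stack $\MMb_{ell,R}$ in the finite extension $\XX\to\MM_{ell,R}$; over the open part the map is even finite étale since $n$ is invertible, and finiteness of the normalization is \cite[IV.3]{D-R73}. Representability follows from the standard criterion that a $1$-morphism of Deligne--Mumford stacks is representable iff it is faithful on automorphism groups: for each of the three moduli problems the automorphisms of a leveled object $(E,P)$, $(E,\text{basis})$ or $(E,H)$ acting trivially on $E$ are the identity, so the automorphism group of the leveled object is literally a subgroup of $\Aut(E)$. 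For flatness I would first reduce to $R=\Z[\tfrac1n]$ by base change (flatness is stable under base change, and $\Xb$, $\MMb_{ell,R}$ are pulled back from $\Z[\tfrac1n]$), and then invoke miracle flatness: over $\Z[\tfrac1n]$ both source and target are regular of the same dimension (regularity of the modular curves being \cite[IV.3]{D-R73}), and a finite morphism between regular noetherian stacks of equal dimension is flat, checked on a smooth presentation.

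For part (2), the representable cases $\MMb_1(n)$ ($n\geq 5$) and $\MMb(n)$ ($n\geq 3$) are classical \cite{D-R73}; they are then proper over $R$ (being finite over the proper $\MMb_{ell,R}$) schemes carrying the ample line bundle $\omega$, hence projective, and for such $\XX$ the induced map to the affine $j$-line $\A^1_R=\Spec R[j]$ (the coarse space of $\MM_{ell,R}$) is proper and quasi-finite, hence finite, so $\XX$ is finite over an affine scheme and therefore affine. Tameness is immediate in the representable cases (trivial automorphism groups); for the remaining $\MMb_1(n)$, $\MMb(n)$ the stacks are exactly the weighted projective lines of Examples \ref{exa:moduli}, which are tame because all their weights are invertible on the base by \cite[Rem. 2.2]{Mei13}. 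For $\MMb_0(n)$ (squarefree) I would use $\MMb_0(n)\simeq\MMb_1(n)/(\Z/n)^\times$: the automorphism group of a geometric point is an extension of a stabilizer in $(\Z/n)^\times$ by an automorphism group of $\MMb_1(n)$, the latter of order prime to the characteristic by the previous case, and the stabilizer has order dividing $\phi(n)$ and is realized by automorphisms of the underlying generalized elliptic curve, so it is prime to the characteristic either because $\phi(n)$ is invertible or --- when $6$ is invertible --- because in characteristic $0$ or $\geq 5$ the relevant automorphism groups have order dividing $24$. This case analysis, which is exactly where the hypotheses of Convention \ref{convention} are used, is the step I expect to be the main obstacle. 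Part (3) is then the smoothness of the modular curves over $\Z[\tfrac1n]$ of relative dimension $1$ \cite{D-R73}, transported to $R$ by base change.

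For part (4) the key is to descend to coarse spaces using tameness. Since $\Xb$ is tame, \cite{AOV08} shows that the coarse-space map $p\colon\Xb\to\overline X$ has exact pushforward and vanishing higher direct images on $\QCoh$, so $H^i(\Xb;\FF)\cong H^i(\overline X;p_*\FF)$, and likewise for $\XX$. Now $\overline X$ is finite over the coarse space $\Prj^1_R$ of $\MMb_{ell,R}$, so pulling back a two-element affine cover of $\Prj^1_R$ yields a two-element affine cover of $\overline X$ (finite over affine is affine, and intersections of affines in the separated $\overline X$ are affine); a \v{C}ech/Mayer--Vietoris argument then forces $H^i(\overline X;-)=0$ for $i\geq 2$, giving the first assertion. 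For the open part, the coarse space $X$ of $\XX$ is finite over the affine $j$-line $\A^1_R$, hence affine, so $H^i(X;-)=0$ for $i\geq 1$, giving the second. The subtlety worth flagging here is that one must route through the coarse spaces of $\Xb$ and $\XX$ rather than through $\MMb_{ell}$ itself, since $\MMb_{ell}$ fails to be tame at the primes $2$ and $3$; tameness of $\Xb$ from part (2) is precisely what legitimizes this reduction.
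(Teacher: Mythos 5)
Your parts (1) and (3) are essentially the paper's own argument: finiteness and representability from the normalization construction, miracle flatness over the regular universal base $\Z[\tfrac1n]$ followed by base change, and smoothness from \cite{D-R73}. Your part (4), by contrast, takes a genuinely different and arguably more uniform route. The paper never passes through coarse spaces here: it reduces $\MMb_0(n)$ to $\MMb_1(n)$ either by the $(\Z/n)^\times$-descent spectral sequence (which collapses because $\phi(n)$ is invertible) or by finiteness over $\PP_R(4,6)$, treats the non-representable cases via the weighted projective lines of Examples \ref{exa:moduli}, and in the representable case reduces to fields by flat base change. Your argument --- exactness of the coarse-space pushforward for tame stacks \cite{AOV08}, Leray, and then a two-element affine \v{C}ech cover of the coarse space pulled back from $\Prj^1_R$ (resp.\ affineness of the coarse space over the $j$-line) --- handles all cases at once, at the price of leaning harder on part (2) and on the Keel--Mori/\cite{Con05} machinery; one should add a word on why the coarse space is finite over $\Prj^1_R$ (it receives the proper, quasi-finite map from $\Xb$, hence is itself proper and quasi-finite, hence finite, over $\Prj^1_R$), but the trade is reasonable.

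Two points in your part (2) need repair. First, deducing projectivity from ``$\Xb$ is proper and carries the ample line bundle $\omega$'' is circular \emph{inside this paper}: ampleness of $(f_n)^*\omega$ is proven only in the following subsection, and its proof invokes Proposition \ref{prop:basicprops} itself (finiteness of $\MMb(\Gamma)_R \to \Prj^1_R$). The non-circular argument, which you essentially already use for affineness of $\XX$, is the paper's: $\Xb \to \Prj^1_R$ is proper and quasi-finite, hence finite by \cite{Knutson}, and finite over a projective scheme is projective. (Relatedly, you cite representability of $\MMb_1(n)$, $n\geq 5$, and $\MMb(n)$, $n\geq 3$, as classical, whereas the paper proves it: triviality of automorphism groups in the interior via \cite[Cor.\ 2.7.2]{K-M85} and at the boundary via an explicit N\'eron-polygon computation, then \cite[Theorem 2.2.5]{Con07} and \cite[Cor 6.16]{Knutson}; citing this is defensible, but it is precisely the content being recorded.) Second, your tameness argument for $\MMb_0(n)$ when $6$ is invertible rests on a false statement: automorphism groups of generalized elliptic curves in characteristic $0$ or $\geq 5$ do \emph{not} have order dividing $24$; a N\'eron $k$-gon has automorphism group $\mu_k\rtimes\Z/2$ of order $2k$ by \cite[Prop.\ II.1.10]{D-R73}, and $k$ can be any divisor of $n$. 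Your conclusion survives, since $2k$ is invertible ($2$ and $n$ being invertible), but the cleaner fix is the paper's: for $n$ squarefree the map $\MMb_0(n)_R \to \MMb_{ell,R}$ is representable, and $\MMb_{ell,R}$ is tame once $6$ is invertible, so Lemma \ref{lem:tame} applies directly.
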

\begin{proof}\lbreak
\begin{enumerate}
\item The map $g$ being integral and representable follows from the definition of normalization. By \cite[03GR]{STACKS} it is also finite because $\MMb_{ell}$ has a smooth cover by a Nagata scheme, e.g.\ by the union of the non-vanishing loci of $c_4$ and $\Delta$ in $\Spec \Z[a_1,a_2,a_3,a_4,a_6]$ . 

Furthermore, both $\Xb$ and $\MMb_{ell}$ are smooth over $\Spec R$ by Theorem 3.4 of \cite{D-R73}. Every finite map between Deligne--Mumford stacks that are smooth over $\Spec R$ is automatically flat if $R$ is regular. By choosing an \'etale cover, this follows from the affine case, which in turn follows from \cite[Prop 6.1.5]{EGAIV.2}. As the universal case $R = \Z[\frac1n]$ is regular, flatness follows for all $R$. 

\item As $\MMb_0(n)_R$ is the quotient of $\MMb_1(n)_R$ by a finite group with invertible order (in the case $\phi(n)$ is invertible) or representable over a tame stack (in the case $6$ is invertible), we only have to show tameness or representability for $\Xb = \MMb_1(n)$ and $\MMb(n)$. By the examples from \ref{exa:moduli}, we see that $\MMb_1(n)$ is tame for $2\leq n \leq 4$ and $\MMb(n)$ is tame for $n=2$.  

Next we will show that the automorphism groups of $K$-valued geometric points for $\MMb_1(n)$ for $n\geq 5$ and for $\MMb(n)$ for $n\geq 3$ are trivial. In the interior, this follows from \cite[Cor.\ 2.7.2]{K-M85}. Now consider a geometric point of $\MMb_1(n)$ not in the interior. This corresponds to a N\'eron $k$-gon with a point $P = (i,x)$ of exact order $n$ in the smooth part such that $i$ is a generator of $\Z/k$. Every automorphism of the N\'eron $k$-gon that preserves the group operation is of the form $u_{\zeta}$ or $\tau u_{\zeta}$ for a $k$-th root of unity $\zeta$ \cite[Prop.\ II.1.10]{D-R73}. On $(i,x) \in \Z/k \times \Gm(K)$ these are defined as 
$$\tau\colon (i,x) \mapsto (-i, x^{-1}) \quad \text{ and } \quad u_{\zeta}\colon (i,x) \mapsto (i, \zeta^i x).$$
As $n\geq 2$, we have $k=1$ or $i \neq [0]$ and thus $P$ cannot be fixed by $u_{\zeta}$ if $\zeta \neq 1$. If $P$ is fixed by $\tau u_{\zeta}$, then $i = -i$ and thus $k= 1$ or $2$, which implies $\zeta^2 = 1$. As $x = \zeta^ix^{-1}$, this shows that $x^4 = 1$. Thus, $P$ is a $4$-torsion point, contrary to the assumption that $n\geq 5$. Thus, we see that all automorphisms of geometric points of $\MMb_1(n)$ are trivial if $n\geq 5$. By the same arguments, the analogous statement follows for $\MMb(n)$ if $n\geq 3$ because an isomorphism $(\Z/n)^2 \cong C^{reg}[n]$ for a N\'eron $k$-gon $C$ implies that $k=n$.

By \cite[Theorem 2.2.5]{Con07} it follows that $\Xb$ is an algebraic space for $\Xb = \MMb_1(n)_R$ for $n\geq 5$ or $\MMb(n)_R$ for $n\geq 3$. The coarse moduli space of $\MMb_{ell, R}$ is $\mathbb{P}_R^1$ by \cite[VI.1]{D-R73} for $R=\Z$ and \cite[Prop 3.3.2]{Ces15} in the general case. As the map $\Xb \to \MMb_{ell, R}$ is finite, the composition $\Xb \to \mathbb{P}^1_R$ with the map $\MMb_{ell,R} \to \mathbb{P}_R^1$ is proper and quasi-finite as the map into the coarse moduli space is proper and quasi-finite \cite{Con05}. Thus, $\Xb$ is a scheme by \cite[Cor 6.16]{Knutson} and then automatically a projective scheme over $R$ as a proper and quasi-finite map of schemes is finite and hence projective.  

If $\Xb$ is representable by a scheme, then $\XX$ is as well. The coarse moduli scheme of $\MM_{ell,R}$ is $\A^1_R$ and the composition $\XX \to \MM_{ell,R} \to \A^1_R$ is finite again. Thus, $\XX$ is an affine scheme if $\Xb$ is representable. 

\item By \cite[Thm 3.4]{D-R73}, $\Xb_R$ is smooth over $\Spec R$ and clearly of relative dimension $1$. 

\item Under our assumptions, the case $\MMb_0(n)_R$ reduces to $\MMb_1(n)_R$ as follows: If $6$ is invertible, then $\MMb_{ell,R} \simeq \PP_R^1(4,6)$ itself has cohomological dimension $1$ and $\MMb_0(n)_R$ is finite over $\MMb_{ell,R}$ and thus $\MMb_0(n)_R$ has cohomological dimension $1$ as well. Thus assume that $\phi(n)$ is invertible and denote by $p$ the canonical map $\MMb_1(n)_R \to \MMb_0(n)_R$ (which is a $(\Z/n)^\times$-Galois cover). Furthermore, let $\FF$ be a quasi-coherent sheaf on $\MMb_0(n)_R$. In this case, the descent spectral sequence
$$H^j((\Z/n)^\times, H^i(\MMb_1(n); p^*\FF)) \Rightarrow H^{i+j}(\MMb_0(n);\FF)$$
collapses to isomorphisms 
$$H^i(\MMb_0(n)_R; \FF) \cong H^i(\MMb_1(n); p^*\FF)^{(\Z/n)^\times}.$$


The cases $\MMb_1(n)_R$ and $\MMb(n)_R$ are either treated in the Examples \ref{exa:moduli} (where one clearly has cohomological dimension $1$) or are representable. In the latter case, our statement for $\XXb$ follows from the item 3 (e.g.\ by reducing via \cite[Prop 9.3]{Har77} to the case of $R$ being a field). 

Similarly, we can reduce the case $\XX$ to $\MM_1(n)_R$ and $\MM(n)_R$. In the representable case, these are affine. The Examples \ref{exa:moduli} can be treated by hand again.

\qedhere
\end{enumerate}
\end{proof}

\begin{example}\label{exa:5-12}
 For $n=5,\dots, 10$ or $n=12$, we have an equivalence $\MMb_1(n) \simeq \Prj^1_{\Z[\frac1n]}$. Indeed, by the last proposition, $\MMb_1(n)$ is representable by a projective $\Z[\frac1n]$-scheme. Over $\C$, the scheme $\MMb_1(n)$ is connected of genus zero (for the genus formula see for example \cite[Section 3.9]{D-S05}). As in the discussion in \cite[Section 3.3]{H-L10b}, this implies that $\MMb_1(n) \simeq \Prj^1_{\Z[\frac1n]}$ as soon as we have exhibited a $\Q$-valued point of $\MMb_1(n)$. This is easily done as a N\'eron $n$-gon with $\Gamma_1(n)$-level structure already exists over $\Q$. 
\end{example}

\begin{lemma}\label{lem:sectionsirreducible}
 Let $f\colon X \to S$ be a smooth proper morphism with geometrically connected fibers and $S$ locally noetherian. Then $\OO_S \to f_*\OO_X$ is an isomorphism. 
\end{lemma}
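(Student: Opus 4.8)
The assertion is local on the base, so I would begin by reducing to the case $S = \Spec A$ with $A$ noetherian. Since $f$ is smooth it is flat, and since it is proper with $S$ noetherian the sheaf $f_*\OO_X$ is coherent; thus the plan is to show that the unit map $u\colon \OO_S \to f_*\OO_X$ is an isomorphism by checking it fiberwise and invoking cohomology and base change.

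The first step is to identify the fibers. For $s\in S$ the fiber $X_s = X\times_S \Spec\kappa(s)$ is a proper $\kappa(s)$-scheme which is geometrically reduced (as $f$ is smooth) and geometrically connected (by hypothesis, and in particular nonempty). Hence $B := H^0(X_s,\OO_{X_s})$ is a finite-dimensional $\kappa(s)$-algebra whose base change $B\otimes_{\kappa(s)}\overline{\kappa(s)} = H^0(X_{\overline{s}},\OO_{X_{\overline{s}}})$ is a connected, reduced, finite $\overline{\kappa(s)}$-algebra, hence the field $\overline{\kappa(s)}$ itself. Therefore $\dim_{\kappa(s)} B = 1$ and the unit $\kappa(s)\to B$ is an isomorphism.

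Next I would apply the theorem on cohomology and base change \cite[III.12.11]{Har77} to $\FF = \OO_X$, which is flat over $S$. Write $\varphi^0_s\colon (f_*\OO_X)\otimes\kappa(s)\to H^0(X_s,\OO_{X_s})$ for the base change map. The composite $\kappa(s)\xrightarrow{u\otimes\kappa(s)} (f_*\OO_X)\otimes\kappa(s)\xrightarrow{\varphi^0_s} H^0(X_s,\OO_{X_s}) = \kappa(s)$ is induced by the unit section, which sends $1$ to $1$, and is therefore the identity; in particular $\varphi^0_s$ is surjective for every $s$. Part (a) of the theorem then shows $\varphi^0_s$ is an isomorphism, and part (b) (whose hypothesis in degree $-1$ is vacuous) shows that $f_*\OO_X$ is locally free near $s$; since each fiber $H^0(X_s,\OO_{X_s})$ is one-dimensional, $f_*\OO_X$ is a line bundle.

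It remains to conclude that $u$ is an isomorphism. Both $\OO_S$ and $f_*\OO_X$ are line bundles, and for each $s$ the fiber map $u\otimes\kappa(s)$ is an isomorphism, because $\varphi^0_s\circ(u\otimes\kappa(s)) = \mathrm{id}$ with $\varphi^0_s$ an isomorphism. A morphism of line bundles that is an isomorphism on every fiber is an isomorphism: its cokernel vanishes by Nakayama, and a surjection of line bundles on a noetherian scheme is an isomorphism. The main subtlety to keep an eye on is that $S$ need not be reduced, so one cannot simply quote Grauert's semicontinuity theorem to obtain local freeness; the surjectivity of $\varphi^0_s$ supplied by the unit section is exactly what replaces the reducedness hypothesis. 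An alternative route runs through the Stein factorization $X\to S'\to S$, where geometric connectedness of the fibers forces $S'\to S$ to be finite and bijective on geometric points and geometric reducedness forces it to be an isomorphism; but the base change argument above is more self-contained.
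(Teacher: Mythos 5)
Your proof is correct and follows essentially the same route as the paper's: reduce to an affine noetherian base, show $H^0(X_s,\OO_{X_s})\cong\kappa(s)$ on fibers (where the paper simply cites \cite[Cor 3.21]{Liu} and you argue directly from geometric reducedness and connectedness), then apply cohomology and base change and conclude via finiteness/local freeness. The one quibble is that \cite[Theorem 12.11]{Har77} is stated for \emph{projective} morphisms, so for a merely proper $f$ you should instead invoke a proper version of cohomology and base change, e.g.\ \cite[Sec 5, Cor 2]{Mum08} as the paper does; this is a citation fix, not a gap in the argument.
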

\begin{proof}
 The case of $S = \Spec k$ for a field $k$ is \cite[Cor 3.21]{Liu}. We can assume that $S = \Spec R$ affine and noetherian. By cohomology and base change (\cite[Sec 5, Cor 2]{Mum08}), we see that 
 $$H^0(X,\OO_X)\tensor_R k \to H^0(X_k, \OO_{X,k})\cong k$$
 is an isomorphism for every $\Spec k \to S$. As $H^0(X,\OO_X)$ is a finitely generated $R$-module, this implies that the canonical map $R \to H^0(X,\OO_X)$ is an isomorphism. 
\end{proof}

\begin{prop}\label{prop:irreducible}
Define $\MMb(n)_{R,\zeta}$ to be the stack of generalized elliptic curves with a level-$n$-structure over $R$-schemes, where the Weil pairing of the generators of the level structure is a chosen primitive $n$-th root of unity $\zeta\in R$. 
\begin{enumerate}
 \item For every field $k$ and $n\geq 5$ the scheme $\MMb_1(n)_k$ is geometrically irreducible. 
 \item For every field $k$ with a primitive $n$-th root of unity $\zeta \in k$ and $n\geq 3$ the scheme $\MMb(n)_{k,\zeta}$ is geometrically irreducible.
 \item We have 
 $$H^0(\MMb_0(n)_R, \OO_{\MMb_0(n)_R}) \cong H^0(\MMb_1(n)_R, \OO_{\MMb_1(n)_R}) \cong R$$
 and
 $$H^0(\MMb(n)_R, \OO_{\MMb(n)_R}) \cong R[\zeta_n] = R[x]/\Phi_n(x)$$
 for all $n$ (squarefree for $\MMb_0(n)$) and noetherian $R$. Here, $\Phi_n(x)$ denotes the $n$-th cyclotomic polynomial. 
\end{enumerate}
\end{prop}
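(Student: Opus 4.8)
The plan is to prove the three parts of Proposition \ref{prop:irreducible} in sequence, with the geometric irreducibility statements (1) and (2) feeding into the cohomological computation (3) via Lemma \ref{lem:sectionsirreducible}. For part (1), I would argue that $\MMb_1(n)_k$ is smooth over $k$ of relative dimension $1$ by Proposition \ref{prop:basicprops}(3), so it suffices to show that the geometric fiber $\MMb_1(n)_{\overline{k}}$ is connected; irreducibility then follows because a smooth connected scheme over a field is irreducible. Connectedness of the geometric fiber is the standard irreducibility-of-modular-curves input: over $\C$ this is classical (the complex points form a single connected Riemann surface, the completion of $\Gamma_1(n)\backslash \mathfrak{H}$), and one propagates this to all algebraically closed fields of residue characteristic prime to $n$ via the flatness of $\MMb_1(n)$ over $\Z[\frac1n]$ together with Zariski's connectedness theorem (the fibers of a proper flat morphism with geometrically connected generic fiber over a normal base are geometrically connected). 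I would cite Deligne--Rapoport \cite{D-R73} or Katz--Mazur \cite{K-M85} for this well-known fact.

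For part (2), the same strategy applies to $\MMb(n)_{k,\zeta}$: it is smooth of relative dimension $1$ over the field $k$ (after Proposition \ref{prop:basicprops}, noting we have fixed the Weil pairing to a root of unity, which is exactly the device that cuts out a geometrically connected component), so again I reduce to connectedness of the geometric fiber. The crucial subtlety here — and the reason for introducing $\MMb(n)_{R,\zeta}$ rather than the full $\MMb(n)_R$ — is that $\MMb(n)_{\overline{k}}$ has $\phi(n)$ connected components indexed by the value of the Weil pairing $e_n(P,Q)$ on the chosen basis, as these components are permuted by $\Gal(\overline{k}/k)$ and by the determinant action; fixing $\zeta$ selects a single one of these, which is then geometrically connected by the same transport-of-connectedness argument from characteristic $0$.

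For part (3), the isomorphisms $H^0(\MMb_1(n)_R,\OO)\cong R$ and $H^0(\MMb_0(n)_R,\OO)\cong R$ follow by applying Lemma \ref{lem:sectionsirreducible} to the structure morphism $\MMb_1(n)_R \to \Spec R$: it is smooth and proper with geometrically connected fibers (geometric connectedness of the fibers is exactly parts (1)--(2), after descending the representability hypotheses, and the low-level cases $n\leq 4$ are handled by the explicit weighted-projective-line descriptions of Examples \ref{exa:moduli}, whose sections are computed directly). For $\MMb_0(n)_R$ one uses the identification with the quotient of $\MMb_1(n)_R$ by $(\Z/n)^\times$ and takes invariants. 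The $\MMb(n)_R$ case is the interesting one: here $\MMb(n)_R \to \Spec R$ does \emph{not} have geometrically connected fibers, so instead I would stratify $\MMb(n)_R$ over $R$ according to the Weil pairing, obtaining $\MMb(n)_R \cong \MMb(n)_{R[\zeta_n]}$ as a scheme over $\Spec R[\zeta_n] = \Spec R[x]/\Phi_n(x)$ (the Weil pairing defines a canonical map $\MMb(n)_R \to \Spec R[x]/\Phi_n(x)$ sending a level structure to its pairing, a primitive $n$-th root of unity). Applying Lemma \ref{lem:sectionsirreducible} to $\MMb(n)_{R[\zeta_n]} \to \Spec R[\zeta_n]$, whose fibers are the geometrically connected $\MMb(n)_{k,\zeta}$ by part (2), yields $H^0(\MMb(n)_R,\OO)\cong R[\zeta_n]$.

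The main obstacle I anticipate is precisely the bookkeeping of connected components and the Weil pairing in part (2)/(3): one must verify that the map to $\Spec R[x]/\Phi_n(x)$ defined by the Weil pairing is well-defined on the compactification (including on the cusps, i.e.\ on level structures on N\'eron polygons) and flat with the stated fibers, and that fixing $\zeta$ genuinely isolates a geometrically connected piece rather than merely a connected-over-$k$ piece. The transport of connectedness from characteristic zero also requires care in residue characteristics dividing the discriminant of $\Phi_n$, where $R[x]/\Phi_n(x)$ may fail to be a domain; here I would work componentwise over $\Spec R[\zeta_n]$ after base change and invoke the flatness and properness of $\MMb(n)$ over $\Z[\frac1n,\zeta_n]$ to conclude.
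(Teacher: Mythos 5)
Your proposal is correct and takes essentially the same route as the paper: irreducibility comes from uniformization over $\C$ together with transport of connectedness through the smooth proper model over $\Z[\frac1n]$ (the paper packages exactly this via the Stacks project tag 038H and Deligne--Mumford's Theorem 4.17), and part (3) follows from Lemma \ref{lem:sectionsirreducible}, the quotient description of $\MMb_0(n)$, the low-level Examples \ref{exa:moduli}, and the Weil-pairing morphism $\MMb(n)_R \to \Spec R[\zeta_n]$. One slip to correct: the claimed identification $\MMb(n)_R \cong \MMb(n)_{R[\zeta_n]}$ over $\Spec R[\zeta_n]$ is false, since the base change $\MMb(n)_{R[\zeta_n]} = \MMb(n)_R \times_{\Spec R} \Spec R[\zeta_n]$ splits into $\phi(n)$ copies of $\MMb(n)_{R[\zeta_n],\zeta_n}$ indexed by $(\Z/n)^\times$ --- indeed, this splitting combined with faithfully flat descent along $R \to R[\zeta_n]$ is precisely how the paper concludes. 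Your argument does not actually need that identification: applying Lemma \ref{lem:sectionsirreducible} directly to the Weil-pairing morphism $\MMb(n)_R \to \Spec R[\zeta_n]$ (proper since $\MMb(n)_R$ is proper over $R$ and the target is affine over $R$; smooth since $R \to R[\zeta_n]$ is \'etale when $n$ is invertible), whose geometric fibers are the connected schemes $\MMb(n)_{k,\zeta}$ by part (2), already yields $H^0(\MMb(n)_R;\OO_{\MMb(n)_R}) \cong R[\zeta_n]$, a mild and valid streamlining of the paper's final step.
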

\begin{proof}
 By \cite[038H]{STACKS}, $\MMb_1(n)_k$ is geometrically irreducible for all $k$ if it is irreducible for $k = \overline{\F}_p$ (for all primes $p$) and for $k = \C$ and likewise for $\MM(n)_{k,\zeta}$. They are irreducible for $k = \C$ because they can be uniformized by the upper half plane and they are thus smooth and connected in the complex topology. 
 
 By Proposition \ref{prop:basicprops}, $\MMb_1(n)$ and $\MMb(n)_{\Z[\frac1n,\zeta_n],\zeta_n}$ are smooth and proper over their base schemes. Thus by \cite[Thm 4.17]{DeligneMumford}, the schemes are thus also irreducible over $\overline{\mathbb{F}}_p$ and thus the first two items are proved. 
 
 By Lemma \ref{lem:sectionsirreducible}, it follows that the inclusion $R \to H^0(\MMb_1(n)_R, \OO_{\MMb_1(n)_R})$ of constant functions is an isomorphism for $n\geq 5$ and the same is true for $n=1,\dots, 4$ by Examples \ref{exa:moduli}. Because of the isomorphism
 $$H^0(\MMb_0(n)_R, \OO_{\MMb_0(n)_R}) \cong H^0(\MMb_1(n)_R, \OO_{\MMb_1(n)_R})^{(\Z/n)^\times}$$
 our claim follows for $n$ squarefree also for $\MMb_0(n)_R$. 
 
 The Weil pairing defines a morphism $\MMb(n)_R \to \Spec R[\zeta_n]$, which induces a morphism $R[\zeta_n] \to H^0(\MMb(n)_R, \OO_{\MMb(n)_R})$. We claim that this morphism is an isomorphism after $\tensor_R R[\zeta_n]$ and thus an isomorphism itself. Indeed, $\MMb(n)_{R[\zeta_n]} \cong (\Z/n)^\times \times \MMb(n)_{R[\zeta_n],\zeta_n}$ and $H^0(\MMb(n)_{R[\zeta_n],\zeta_n}, \OO_{\MMb(n)_{R[\zeta_n],\zeta_n}}) \cong R[\zeta_n]$ by Lemma \ref{lem:sectionsirreducible}. 
 \end{proof}

\subsection{Line bundles}
 In this subsection, we will show how to compare line bundles in an integral context by just comparing them over  $\C$. Furthermore, we recall the definition of cusp forms. 

\begin{prop}\label{prop:liu}
 Let $A$ be a discrete valuation ring or a field and $X \to \Spec A$ a proper morphism (with geometrically integral fibers). Let $L$ be a field extension of the field of fractions $K$ of $A$ and assume $K$ is perfect. Then $\Pic(X) \to \Pic(X_L)$ is injective. 
\end{prop}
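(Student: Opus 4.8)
The plan is to use the factorisation $\Pic(X) \to \Pic(X_K) \to \Pic(X_L)$ coming from $X_L = X_K \times_K L$ and to prove injectivity of the two maps separately. If $A$ is a field, then $K = A$ and $X = X_K$, so only the second map occurs; the first map is the genuinely new phenomenon in the discrete valuation ring case.

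For the base-change map $\Pic(X_K) \to \Pic(X_L)$, write $Y = X_K$, which is proper and geometrically integral over $K$, and take a line bundle $\mathcal{M}$ on $Y$ with $\mathcal{M}_L \cong \mathcal{O}_{Y_L}$. Since geometric integrality is preserved under arbitrary field extensions, $Y_L$ is geometrically integral over $L$, so $H^0(Y_L, \mathcal{O}_{Y_L}) = L$ and hence $H^0(Y_L, \mathcal{M}_L)$ is one-dimensional over $L$. Flat base change along $K \to L$ identifies this with $H^0(Y, \mathcal{M}) \otimes_K L$, so $H^0(Y, \mathcal{M})$ is one-dimensional over $K$; I would choose a generator $s \neq 0$, giving a map $\mathcal{O}_Y \xrightarrow{s} \mathcal{M}$. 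Its pullback $s_L$ spans the one-dimensional space $H^0(Y_L, \mathcal{M}_L)$ and is therefore, up to a scalar, a trivialisation of $\mathcal{M}_L$; in particular $\coker(\mathcal{O}_Y \xrightarrow{s} \mathcal{M})$ pulls back to zero along the faithfully flat map $Y_L \to Y$ and hence already vanishes. Thus $s$ is an isomorphism and $\mathcal{M}$ is trivial.

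For the restriction map $\Pic(X) \to \Pic(X_K)$ with $A$ a discrete valuation ring (uniformiser $\pi$, residue field $\kappa$), take $\mathcal{L}$ on $X$ with $\mathcal{L}_K$ trivial. Using flatness of $X$ over $A$ (which holds in our applications, where $X$ is even smooth) together with geometric integrality of the fibres, cohomology and base change --- exactly as in the proof of Lemma \ref{lem:sectionsirreducible} --- yields $H^0(X, \mathcal{O}_X) = A$; combined with $H^0(X_K, \mathcal{L}_K) = K$ this shows $H^0(X, \mathcal{L})$ is free of rank one over $A$, say with generator $s$, and likewise $H^0(X, \mathcal{L}^{-1}) = A \cdot s'$. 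The point is to control $s$ on the integral special fibre $X_s$: the short exact sequence $0 \to \mathcal{L} \xrightarrow{\pi} \mathcal{L} \to \mathcal{L}|_{X_s} \to 0$ gives an injection $H^0(X, \mathcal{L})/\pi \hookrightarrow H^0(X_s, \mathcal{L}|_{X_s})$, and since a generator of a free rank-one module is not divisible by $\pi$, the restriction $s|_{X_s}$ is nonzero; similarly $s'|_{X_s} \neq 0$.

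Finally I would combine the two sections: $s \otimes s'$ lies in $H^0(X, \mathcal{L} \otimes \mathcal{L}^{-1}) = H^0(X, \mathcal{O}_X) = A$, and its restriction to $X_s$ is $(s|_{X_s}) \otimes (s'|_{X_s})$, a product of nonzero sections of mutually inverse line bundles on the integral scheme $X_s$, hence nonzero in $H^0(X_s, \mathcal{O}_{X_s}) = \kappa$. Therefore $s \otimes s'$ is a unit in $A$, which forces $\mathcal{O}_X \xrightarrow{s} \mathcal{L}$ to be an isomorphism. The main obstacle is precisely this discrete valuation ring step: a trivialisation of $\mathcal{L}_K$ need not extend to a nowhere-vanishing section over all of $X$, and the decisive trick is to play $\mathcal{L}$ against $\mathcal{L}^{-1}$ so that any vanishing on the special fibre is ruled out by the integrality of $X_s$. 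I note that perfectness of $K$ is not actually needed for this argument; geometric integrality of the fibres already suffices.
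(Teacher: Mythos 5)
Your proof is correct, but it follows a genuinely different route from the paper's. The paper handles the two factors of $\Pic(X)\to\Pic(X_K)\to\Pic(X_L)$ by citation and descent: the discrete-valuation-ring step is quoted from (the proof of) \cite[Thm 3, Section 8.4]{BLR90}, the extension $K\subset \overline{K}$ is treated with the Hochschild--Serre spectral sequence for $\Gal(\overline{K}|K)$ together with Hilbert 90 (this is precisely where perfectness of $K$ enters, to make $\overline{K}/K$ Galois), and the passage from $\overline{K}$ to an arbitrary algebraically closed $L$ uses a Lefschetz-principle argument: a trivialization is defined over a finitely generated subextension $R$, which has a $\overline{K}$-point by the Nullstellensatz. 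You instead prove both steps by hand with sections: for the field-extension step, one-dimensionality of $H^0(Y,\mathcal{M})$ via flat base change plus faithfully flat descent of the cokernel of $s$; for the DVR step, freeness of $H^0(X,\mathcal{L})$ and $H^0(X,\mathcal{L}^{-1})$, non-vanishing of generators on the special fibre, and the pairing $s\otimes s'\in H^0(X,\OO_X)=A$ forced to be a unit by integrality of $X_s$. Your approach buys self-containedness (no \'etale cohomology, no Galois descent, no reduction to algebraically closed fields) and, as you observe, shows the perfectness hypothesis is superfluous; the paper's approach buys brevity and, via the descent spectral sequence, the finer statement that $\Pic(X_K)$ sits inside the Galois-invariants of $\Pic(X_{\overline{K}})$. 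One remark on your explicit use of flatness of $X$ over $A$: this is not a defect relative to the paper, since the reference BLR90 that the paper invokes also works with flat proper families, and some such hypothesis is genuinely needed --- without flatness the statement can fail (e.g.\ $V(\pi x,\pi y)\subset \mathbb{P}^2_A$ has geometrically integral fibres but a vertical component $\mathbb{P}^2_\kappa$ carrying nontrivial line bundles that die on the generic fibre) --- so flagging it, as you do, is the right call.
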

\begin{proof}
 The injectivity of $\Pic(X) \to \Pic(X_K)$ is proven in (the proof of) \cite[Thm 3, Section 8.4]{BLR90}. See also \cite{mathoverflowLiu} for a version where the fibers are not assumed to be geometrically integral. 
 
 In general, we can assume that $L$ is algebraically closed. For $L=\overline{K}$ the algebraic closure of $K$, we can argue as follows: Consider the descent spectral sequence for \'etale cohomology
 $$H^p_{\cont}(\Gal(\overline{K}|K), H^q_{\text{\'et}}(X_{\overline{K}},\Gm)) \Rightarrow H_{\text{\'et}}^{p+q}(X_K, \Gm). $$
 We know that $H^1_{\cont}(\Gal(\overline{K}|K), H^0_{\text{\'et}}(X_{\overline{K}};\Gm)) \cong H^1_{\cont}(\Gal(\overline{K}|K), \overline{K}^\times) = 0$ by Hilbert 90. (Here we use $H^0(X_{\overline{K}}; \OO_{X_{\overline{K}}}) \cong \overline{K}$ because $X \to \Spec A$ is proper \cite[Prop 3.18]{Liu}.) Thus, $\Pic(X_K) \cong H^1_{\text{\'et}}(X_K, \Gm)$ is the subgroup of permanent cycles in the invariants of $\Pic(X_{\overline{K}}) \cong H^1_{\text{\'et}}(X_{\overline{K}}, \Gm)$.
 
 For the general case assume that $\LL \in \Pic(X_{\overline{K}})$ is send to $0$ in $\Pic(X_L)$. We will now argue via a version of the Lefschetz principle. There is a trivialization $\OO_{X_L} \xrightarrow{\cong} \LL_L$. Cover $X_{\overline{K}}$ with affine charts $U_1 = \Spec A_1,\dots, U_n = \Spec A_n$ where $\LL$ is trivial. Then the trivialization of $\LL_L$ is determined in these charts by invertible elements $\sum a_j\tensor t_j \in A_i \tensor_{\overline{K}} L$ with inverses $\sum b_j \tensor s_j$. There is a finitely generated ring extension $\overline{K} \subset R$ with $R\subset L$ such that all $s_j, t_j$ are in $R$. Thus, $\LL_R \in \Pic(X_R)$ is already trivial. By Hilbert's Nullstellensatz, $\Spec R$ has a $\overline{K}$-point. Thus, $X_R \to X_{\overline{K}}$ has a section. This implies that $\LL$ itself is already trivial in $\Pic(X_{\overline{K}})$.  
\end{proof}
We can use this to obtain the following well-known corollary (for example stated in \cite{Kat73}).
\begin{cor}\label{cor:Omegaomega}
Let $\Gamma$ be $\Gamma_1(n), \Gamma_0(n)$ or $\Gamma(n)$. 
 Denote by $f_n\colon \MMb(\Gamma)_R \to \MMb_{ell,R}$ the canonical map for $R$ a $\Z_{(l)}$-algebra with $l$ not dividing $n$. Then 
 $$(f_n)^*\omega^{\tensor 2} \cong \Omega^1_{\MMb(\Gamma)_R/\Spec R} \tensor \LL(\mathrm{cusps}),$$
 where $\mathrm{cusps}$ is the complement of $\MM(\Gamma)_R$ in $\MMb(\Gamma)_R$ and $\LL(\mathrm{cusps})$ denotes the line bundle associated to this divisor.
\end{cor}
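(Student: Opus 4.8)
The assertion is the Kodaira--Spencer isomorphism together with its logarithmic correction along the cusps, so the plan is to split it into the (classical) isomorphism over the interior and a purely local analysis at the cusps, and then to control the integral statement by reduction to the generic fibre. First I would reduce to the universal base $R=\Z[\tfrac1n]$: since $l\nmid n$ the integer $n$ is invertible in $\Z_{(l)}$, so every admissible $R$ is a $\Z[\tfrac1n]$-algebra, and all three sheaves commute with base change — $\omega$ by its definition on $\MMb_{ell}$, the relative differentials $\Omega^1_{\MMb(\Gamma)/\,\cdot}$ because $\MMb(\Gamma)\to\Spec(\cdot)$ is smooth (Proposition \ref{prop:basicprops} and \cite[Thm 3.4]{D-R73}), and $\LL(\mathrm{cusps})$ because the cusps form a relative effective Cartier divisor. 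I would also note that the representable cases ($\MMb_1(n)$ for $n\geq 5$, $\MMb(n)$ for $n\geq 3$) are the substantial ones; the small-level cases carry the explicit weighted-projective models of Examples \ref{exa:moduli}, and $\MMb_0(n)$ descends from $\MMb_1(n)$ along the $(\Z/n)^\times$-quotient.

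The heart is to exhibit and control a natural comparison. On the interior $\MM(\Gamma)_{\Z[1/n]}$ the Kodaira--Spencer isomorphism $f_n^*\omega^{\otimes 2}\cong\Omega^1_{\MM(\Gamma)/R}$ — built from the Gauss--Manin connection on $H^1_{\mathrm{dR}}$ of the universal curve, the Hodge filtration $\omega\subset H^1_{\mathrm{dR}}$, and Serre duality $R^1\pi_*\OO\cong\omega^\vee$ — furnishes a nowhere-vanishing rational section $s$ of the defect line bundle $\mathcal{N}:=f_n^*\omega^{\otimes 2}\otimes(\Omega^1_{\MMb(\Gamma)/R})^\vee\otimes\LL(\mathrm{cusps})^{-1}$, regular and invertible away from the cusps. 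As $\MMb(\Gamma)_{\Z[1/n]}$ is regular of relative dimension $1$ and $\mathrm{cusps}$ is reduced, one has $\mathcal{N}\cong\LL(\mathrm{div}(s))$ with $\mathrm{div}(s)=\sum_i m_i\,C_i$ supported on the irreducible cusp-components $C_i$, and the whole claim reduces to $m_i=0$ for all $i$; here I use that on a relative curve $\Omega^1(\log\mathrm{cusps})\cong\Omega^1\otimes\LL(\mathrm{cusps})$, both being locally generated by $dq/q$.

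To pin down $m_i=\mathrm{ord}_{C_i}(s)$ I would exploit horizontality: each $C_i$ is finite \'etale over the connected base $\Spec\Z[\tfrac1n]$, hence dominates it, so its generic point is a codimension-one point lying over $\Spec\Q$, and $m_i$ is already computed in the local ring of $\MMb(\Gamma)_\Q$ at the corresponding cusp. The further extension $\Q\subset\C$ leaves this order unchanged — this is the comparison principle underlying Proposition \ref{prop:liu} — so it suffices to verify $\mathrm{ord}_{\mathrm{cusp}}(s)=0$ over $\C$. That is the classical computation: on the Tate curve $\Gm/q^{\Z}$ the invariant differential $dt/t$ trivializes $\omega$ and Kodaira--Spencer sends $(dt/t)^{\otimes 2}$ to a unit multiple of $dq/q$, so $f_n^*\omega^{\otimes 2}$ matches $\Omega^1(\log\mathrm{cusps})$ at each cusp and $s$ has order $0$. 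Hence every $m_i=0$, giving the isomorphism over $\Z[\tfrac1n]$ and, after base change, over all $R$.

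The main obstacle is exactly this local analysis at the cusps: verifying that the Gauss--Manin connection has a regular singular point there with the precise residue making $s$ neither vanish nor acquire more than a logarithmic pole. Equivalently, it is the Tate-curve / $q$-expansion input that converts the interior isomorphism into the logarithmically twisted one. Once the order of $s$ along each cusp is forced to $0$ over $\C$, the integral and general-$R$ statements follow formally from base change together with the horizontality of the cusps.
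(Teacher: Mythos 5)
Your proof is correct in outline, but it takes a genuinely different route from the paper. The paper works entirely from the analytic side: it identifies $\MMb(\Gamma)_{\C}$ with the compactified orbifold quotient $\overline{\mathbb{H}/\Gamma}$, quotes Deligne's construction in \cite[Section 2]{Del71} of a holomorphic map $\Omega^1_{\MMb(\Gamma)/\C} \to (f_n)^*\omega^{\tensor 2}$ with simple zeros exactly at the cusps, algebraizes this map by GAGA (using the finite flat cover of $\MMb(\Gamma)_{\C}$ by a projective variety supplied by Proposition \ref{prop:basicprops}), and then transports the resulting isomorphism of line bundles from $\C$ down to $\Z_{(l)}$ by the injectivity of $\Pic(X)\to\Pic(X_L)$ from Proposition \ref{prop:liu}, finishing by base change. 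You instead build the comparison map algebraically and integrally: the Kodaira--Spencer isomorphism over the interior $\MM(\Gamma)$ produces a rational section of the defect line bundle, and the whole statement reduces to showing its divisor (supported on the cusps) vanishes, which you check at the generic points of the cusp components; since the cusps are horizontal over $\Spec \Z[\frac1n]$, this is a computation over $\Q$, hence over $\C$, where the Tate-curve calculation gives order zero. What the paper's route buys is economy of algebraic input -- no Gauss--Manin or deformation theory is needed, only Deligne's ready-made analytic statement -- at the price of invoking GAGA on stacks and the full strength of Proposition \ref{prop:liu}. Your route requires the algebraic Kodaira--Spencer machinery (which is in \cite{Kat73}, the very reference the paper gives for the statement), but makes the descent from $\C$ to the integral base essentially trivial: comparing orders along horizontal divisors is an elementary local-ring statement, much weaker than Picard-group injectivity. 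Indeed, your argument could be made to avoid $\C$ altogether by running the $q$-expansion computation on the algebraic Tate curve over $\Z(\!(q)\!)$, whereas the paper's proof is irreducibly analytic in origin. One point to make explicit if you write this up: the identification of your algebraically defined section's order at a cusp with the analytic Tate-curve computation uses the standard compatibility of the algebraic and analytic Kodaira--Spencer maps, which deserves a citation.
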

\begin{proof}
 There is a holomorphic isomorphism $\MM(\Gamma)_{\C}$ to the orbifold quotient $\mathbb{H}/\Gamma$ for $\mathbb{H}$ the upper half plane; this extends (for example by the uniqueness of normal compactifications) to an equivalence $\MMb(\Gamma)_{\C}$ to the usual compactification $\overline{\mathbb{H}/\Gamma}$. Using this equivalence, Deligne constructs in Section 2 of \cite{Del71} a holomorphic map $\Omega^1_{\MMb(\Gamma)/\C} \to (f_n)^*\omega^{\tensor 2}_{\C}$, which has simple zeros precisely at the cusps. As $\MMb(\Gamma)_{\C}$ has a finite flat cover by a complex projective algebraic variety (by Proposition \ref{prop:basicprops}), we can apply Serre's GAGA and descent to see that the holomorphic map above is actually algebraic. This implies the result over $\C$ and thus by the last proposition over $\Z_{(l)}$ and thus for any $\Z_{(l)}$-algebra.
\end{proof}
We can use this to get information about cusp forms in the following sense:
\begin{defi}
 For $\Gamma \in \{\Gamma_1(n), \Gamma_0(n),\Gamma(n)\}$, a \emph{$\Gamma$-cusp form} with coefficients in $R$ and weight $i$ is a section of $\LL(-\mathrm{cusps}) \tensor (f_n)^*\omega^{\tensor i}$ on $\MMb(\Gamma)_R$, i.e.\ a section of $(f_n)^*\omega^{\tensor i}$ that vanishes at the cusps.
\end{defi}
\begin{cor}\label{cor:cusps}
 For $\Gamma \in \{\Gamma_1(n), \Gamma_0(n),\Gamma(n)\}$, the space of $\Gamma$-cusp form with coefficients in $R$ and weight $i$ is isomorphic to 
 $$H^0(\MMb(\Gamma)_R; \Omega^1_{\MMb(\Gamma)_R/\Spec R} \tensor (f_n)^*\omega^{\tensor (i-2)}).$$
 If $\MMb(\Gamma)_R$ is representable, this in turn is isomorphic to $\Hom_R(H^1(\MMb(\Gamma)_R; (f_n)^*\omega^{\tensor (2-i)}), R)$ by Grothendieck duality. 
\end{cor}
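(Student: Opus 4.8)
The plan is to first reduce the cusp-form space to a cohomology group of a twist of the relative cotangent sheaf, using only the definition of cusp forms together with Corollary \ref{cor:Omegaomega}, and then to obtain the $\Hom$-description from Grothendieck--Serre duality on the smooth proper relative curve $X := \MMb(\Gamma)_R$.

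By definition, the space of weight-$i$ $\Gamma$-cusp forms with coefficients in $R$ is $H^0\big(\MMb(\Gamma)_R; \LL(-\mathrm{cusps}) \tensor (f_n)^*\omega^{\tensor i}\big)$. Writing $\omega^{\tensor i} = \omega^{\tensor 2}\tensor \omega^{\tensor(i-2)}$ and substituting the isomorphism $(f_n)^*\omega^{\tensor 2} \cong \Omega^1_{\MMb(\Gamma)_R/\Spec R} \tensor \LL(\mathrm{cusps})$ of Corollary \ref{cor:Omegaomega}, the twist $\LL(-\mathrm{cusps})$ cancels against $\LL(\mathrm{cusps})$ since $\LL(-\mathrm{cusps})\tensor\LL(\mathrm{cusps}) \cong \OO$. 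This leaves $\Omega^1_{\MMb(\Gamma)_R/\Spec R} \tensor (f_n)^*\omega^{\tensor(i-2)}$ and establishes the first displayed isomorphism immediately.

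For the second isomorphism, suppose $\MMb(\Gamma)_R$ is representable; then by Proposition \ref{prop:basicprops} the structure morphism $X \to \Spec R$ is smooth and proper of relative dimension $1$, with geometrically connected fibers by Proposition \ref{prop:irreducible}. The relative dualizing sheaf of such a morphism is $\Omega^1_{X/\Spec R}$, and the trace map identifies $H^1(X; \Omega^1_{X/\Spec R})$ with $R$. Applying Grothendieck--Serre duality to the line bundle $\GG := (f_n)^*\omega^{\tensor(2-i)}$, with $\GG^\vee = (f_n)^*\omega^{\tensor(i-2)}$, yields a perfect pairing
\[H^0\big(X; \Omega^1_{X/\Spec R}\tensor \GG^\vee\big) \times H^1\big(X; \GG\big) \longrightarrow H^1\big(X; \Omega^1_{X/\Spec R}\big) \cong R,\]
which is exactly the asserted identification of $H^0(X; \Omega^1_{X/\Spec R}\tensor (f_n)^*\omega^{\tensor(i-2)})$ with $\Hom_R(H^1(X; (f_n)^*\omega^{\tensor(2-i)}), R)$.

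The step I expect to be the main obstacle is making this last duality precise over a general noetherian base $R$ rather than a field. The content of Grothendieck duality is a quasi-isomorphism of complexes $Rf_*\GG \simeq R\Hom_R(Rf_*(\GG^\vee \tensor \Omega^1_{X/\Spec R})[1], R)$, and passing from this to the naive $\Hom$-isomorphism on $H^0$ requires ruling out $\Ext^1_R$-contributions from the hyperext spectral sequence. This is automatic when $R$ is a field, where it is classical Serre duality; in general one either restricts to the situation where the cohomology groups are $R$-projective (so that the pairing is genuinely perfect) or cites relative duality for smooth proper curves in a form that already incorporates cohomology and base change. The remaining manipulations are only bookkeeping with the line-bundle twists.
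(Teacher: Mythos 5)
Your first isomorphism is exactly the paper's (implicit) argument: the corollary is stated in the paper without proof, as an immediate consequence of the definition of cusp forms and Corollary \ref{cor:Omegaomega}, and your cancellation of $\LL(-\mathrm{cusps})$ against $\LL(\mathrm{cusps})$ is precisely that. The duality half likewise follows the paper's one-phrase justification (``by Grothendieck duality''), and the subtlety you flag about $\Ext^1_R$-contributions over a general noetherian base is legitimate; but it has a cleaner resolution than the two you offer. By item (4) of Proposition \ref{prop:basicprops}, $H^i(\MMb(\Gamma)_R;\GG)=0$ for $i\geq 2$ for every quasi-coherent $\GG$, so $Rf_*\GG$ is concentrated in degrees $0$ and $1$; in the hyperext spectral sequence computing $R\Hom_R(Rf_*\GG,R)$ the only term in the relevant total degree is then $\Hom_R(H^1(\MMb(\Gamma)_R;\GG),R)$, and the naive $\Hom$-isomorphism drops out of the derived duality $Rf_*R\mathcal{H}om(\GG,\Omega^1_{\MMb(\Gamma)_R/R}[1])\simeq R\Hom_R(Rf_*\GG,R)$ with no further hypotheses. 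In particular you should not retreat to the case of $R$-projective cohomology: $H^1$ of these stacks can have torsion (this is the whole point of Remark \ref{rem:cuspy}), and that restriction would weaken the corollary where the paper needs it.

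There is, however, one incorrect intermediate claim: for $\Gamma=\Gamma(n)$ the fibers of $\MMb(n)_R\to\Spec R$ are \emph{not} geometrically connected, and the trace map does not identify $H^1(\MMb(n)_R;\Omega^1_{\MMb(n)_R/R})$ with $R$. Indeed, by Proposition \ref{prop:irreducible} one has $H^0(\MMb(n)_R;\OO_{\MMb(n)_R})\cong R[\zeta_n]$, so by the very duality being proven $H^1(\MMb(n)_R;\Omega^1_{\MMb(n)_R/R})\cong\Hom_R(R[\zeta_n],R)$, which is free of rank $\phi(n)$ over $R$; geometric fibers have $\phi(n)$ components. Your pairing must therefore be taken with values in $R$ via the trace map $H^1(X;\Omega^1_{X/R})\to R$, which always exists but is an isomorphism only in the geometrically connected cases $\Gamma_1(n)$ and $\Gamma_0(n)$ --- or, better, bypass the pairing and extract the $\Hom$-isomorphism directly from the derived statement as above. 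With that repair your argument is correct and coincides with the paper's intended proof.
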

This is in accordance with the definition given in \cite[Def 2.8]{Del71}. We end with another proposition about $\omega$. 
\begin{prop}
Let $\Gamma$ be $\Gamma_1(n)$ or $\Gamma(n)$ and assume that $\MMb(\Gamma)_R$ is representable. Then the line bundle $(f_n)^*\omega$ is ample on $\MMb(\Gamma)_R$. 
\end{prop}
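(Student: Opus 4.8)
The plan is to deduce ampleness of $(f_n)^*\omega$ from a positivity statement for its degree on the geometric fibres of $\Xb=\MMb(\Gamma)_R\to\Spec R$, the degree being computed from Corollary \ref{cor:Omegaomega}. By Proposition \ref{prop:basicprops}, $\Xb$ is a projective $R$-scheme, smooth and proper of relative dimension $1$ over $\Spec R$, so each geometric fibre $\MMb(\Gamma)_{\bar k}$ is a smooth proper curve. Conceptually the cleanest reason to expect the result is that $f_n$ is finite and $\omega$ is ample on $\MMb_{ell,R}$; but rather than develop ampleness on the stack $\MMb_{ell,R}$ I would stay with the scheme $\Xb$ and argue fibrewise.

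First I would use the fibrewise criterion for ampleness over a base: for the proper, finitely presented morphism $\Xb\to\Spec R$, the locus in $\Spec R$ where $(f_n)^*\omega$ restricts to an ample bundle on the fibre is open, and ampleness on every fibre is equivalent to relative ampleness; as $\Spec R$ is affine this is the same as ampleness of $(f_n)^*\omega$ (EGA IV, 9.6.4). It therefore suffices to treat a single geometric fibre $\MMb(\Gamma)_{\bar k}$ with $\mathrm{char}(\bar k)\nmid n$. On a smooth proper curve over a field a line bundle is ample precisely when it has positive degree on every irreducible component. For $\Gamma=\Gamma_1(n)$ the fibre is geometrically irreducible by Proposition \ref{prop:irreducible}; for $\Gamma=\Gamma(n)$ it is a disjoint union of the $\phi(n)$ geometrically irreducible curves $\MMb(n)_{\bar k,\zeta}$, which are permuted by the diamond operators lying over the identity of $\MMb_{ell,\bar k}$, so $(f_n)^*\omega$ has the same degree on each. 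Hence the whole statement reduces to the positivity of this one degree.

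To evaluate it I would apply Corollary \ref{cor:Omegaomega} over $\bar k$ --- legitimate since $\bar k$ is a $\Z_{(l)}$-algebra for $l=\mathrm{char}(\bar k)$ --- giving $(f_n)^*\omega^{\tensor 2}\cong\Omega^1_{\MMb(\Gamma)_{\bar k}/\bar k}\tensor\LL(\mathrm{cusps})$. Taking degrees on a component of genus $g$ carrying $c$ cusps gives $2\deg\big((f_n)^*\omega\big)=(2g-2)+c$. Because $(f_n)^*\omega$ lies in the flat proper family over the connected base $\Spec\Z[\tfrac1n]$, this per-component degree is locally constant, hence constant, and so equals its value over $\C$; there it is half the negative Euler characteristic $-\chi(\mathbb{H}/\Gamma)=2g-2+c$ of the open modular curve $\MM(\Gamma)_{\C}\cong\mathbb{H}/\Gamma$. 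In the representable ranges ($n\geq5$ for $\Gamma_1(n)$, $n\geq3$ for $\Gamma(n)$) the group $\Gamma$ is torsion-free of finite index in $\mathrm{PSL}_2(\Z)$, so $\chi(\mathbb{H}/\Gamma)=-\tfrac16[\mathrm{PSL}_2(\Z):\bar\Gamma]<0$ and the degree is positive.

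The step I expect to require the most care is the descent of ampleness from geometric fibres to the global line bundle over an arbitrary noetherian $R$, together with the constancy of the fibrewise degree across the family: I would pin the latter down by invoking flatness and connectedness of the universal base $\Spec\Z[\tfrac1n]$ (and, for $\Gamma(n)$, the diamond symmetry identifying the degrees on the distinct components), which lets me import the single positive value computed over $\C$. Everything else is routine Riemann--Roch and degree bookkeeping once Corollary \ref{cor:Omegaomega} is available.
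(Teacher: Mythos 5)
Your proof is correct, but it takes a genuinely different route from the paper's. The paper argues globally: it first shows that the pullback of $\OO(1)$ along the coarse moduli map $\pi\colon \MMb_{ell,R}\to\Prj^1_R$ is $\omega^{\tensor 12}$ (using that $\Pic(\MMb_{ell})\cong\Z$ is generated by $\omega$, together with the degree-$12$ computation on $\MMb_1(5)\simeq\Prj^1$), then uses that $\MMb(\Gamma)_R\to\Prj^1_R$ is finite (Proposition \ref{prop:basicprops}) and that pullbacks of ample line bundles along finite morphisms are ample; so $f_n^*\omega^{\tensor 12}\cong(\pi f_n)^*\OO(1)$ is ample and hence so is $f_n^*\omega$. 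You instead argue fibrewise: reduce via the EGA openness/fibrewise-ampleness criterion to positivity of the degree on each component of each geometric fibre, compute that degree from the Kodaira--Spencer isomorphism of Corollary \ref{cor:Omegaomega} as $\tfrac12\left((2g-2)+c\right)$, handle the several components of $\MMb(n)_{\bar k}$ by the transitive $GL_2(\Z/n)$-action over the identity of $\MMb_{ell,\bar k}$, and import positivity from $\C$ via constancy of degrees in flat proper families and the Euler characteristic $2g-2+c=\tfrac16[\mathrm{PSL}_2(\Z):\bar\Gamma]>0$ for torsion-free $\bar\Gamma$. What the paper's route buys is brevity given its prior results --- no case analysis on components, no fibrewise reduction, and the reusable identity $\pi^*\OO(1)\cong\omega^{\tensor 12}$; it also works directly over any $R$ without passing through the universal base. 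What your route buys is independence from the Picard group computation of \cite{F-O10} and from any facts about the coarse moduli space, at the cost of the EGA machinery, the component-symmetry argument for $\Gamma(n)$, and the Fuchsian Euler characteristic input. One small imprecision worth fixing: for $\Gamma=\Gamma(n)$ the complex points $\MM(n)_{\C}$ form $\phi(n)$ copies of $\mathbb{H}/\Gamma(n)$, not one, so your appeal to $\MM(\Gamma)_{\C}\cong\mathbb{H}/\Gamma$ should be phrased per component (as your own symmetry argument already allows).
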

\begin{proof}
Denote the map $\MMb_{ell,R} \to \mathbb{P}_R^1$ to the coarse moduli space by $\pi$. We claim first that $\pi^*\OO(1) \cong \omega^{\tensor 12}$. Because $\Pic(\MMb_{ell}) \cong \Z$ is generated by $\omega$ \cite{F-O10}, we just have to show which $\omega^{\tensor m}$ the pullback $\pi^*\OO(1)$ is isomorphic to and we can do it over $\C$. By Example \ref{exa:5-12}, we know that $\MMb_1(5)_{\C} \simeq \mathbb{P}^1_{\C}$ and by \cite[Sec.\ 3.8+3.9]{D-S05} we know that the composition 
$$\MMb_1(5)_{\C} \xrightarrow{g} \MMb_{ell,\C} \xrightarrow{\pi} \mathbb{P}_{\C}^1$$
 has degree $12$. Thus, $(\pi g)^*\OO(1) \cong \OO(12)$. As there are modular forms for $\Gamma_1(5)$, the line bundle $g^*\omega$ must have positive degree. Thus, it follows that $\pi^*\OO(1) \cong \omega^{\tensor 12}$.

By Proposition \ref{prop:basicprops}, the composition $\pi \colon \MMb(\Gamma)_R \to \mathbb{P}_R^1$ is finite as $\MMb(\Gamma)_R \to \MMb_{ell,R}$ is finite and $\MMb_{ell,R} \to \mathbb{P}_R^1$ is quasi-finite and projective. Thus, $f_n^*\omega^{\tensor 12} \cong (\pi f_n)^*\OO(1)$ is ample and thus $f_n^*\omega$ is ample as well (see \cite[Prop 13.83]{G-W10}). 
\end{proof}

\subsection{Cohomology}
In this subsection, we will collect some information about the cohomology of $g^*\omega^{\tensor m}$ on $\Xb$ and of $\omega^{\tensor m}$ on $\MMb_{ell}$. Here, $\Xb$ and $g$ are as in Convention \ref{convention}.

\begin{prop}\label{prop:coh}
 We have
 \begin{enumerate}
  \item \label{cor:nonegative} $H^0(\Xb;g^*\omega^{\tensor m}) = 0$ for $m<0$ (i.e.\ there are no modular forms of negative weight),
  \item \label{lem:vanishingcoh}  $H^1(\Xb;g^*\omega^{\tensor m}) = 0$ for $m\geq 2$,
  \item \label{lem:torsionfree} $H^1(\Xb; g^*\omega^{\tensor m})$ is torsionfree for all $m\neq 1$ if $R$ is torsionfree. 
 \end{enumerate}
\end{prop}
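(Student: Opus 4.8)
The plan is to establish each statement first on a geometric fibre, where Serre duality and ampleness are available, and then to propagate it to an arbitrary noetherian $R$ by base change. The key structural input is that $\Xb$ has cohomological dimension $\le 1$ (Proposition \ref{prop:basicprops}), so $H^1$ is the top cohomology and its formation commutes with arbitrary base change $R\to S$; concretely this is Lemma \ref{lem:basechange} with $p=1$, whose flatness hypothesis is then vacuous. Thus for every residue field $k$ of $R$ one has $H^1(\Xb_R;g^*\omega^{\tensor m})\tensor_R k\cong H^1(\Xb_k;g^*\omega^{\tensor m})$. I would treat the three families of Convention \ref{convention} uniformly by first reducing $\MMb_0(n)$ to $\MMb_1(n)$ (via $H^i(\MMb_0(n);\FF)=H^i(\MMb_1(n);p^*\FF)^{(\Z/n)^\times}$ when $\phi(n)$ is invertible, and via the finite pushforward to $\MMb_{ell}\simeq\PP(4,6)$ when $6$ is invertible). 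The small non-representable cases $\MMb_1(2),\MMb_1(3),\MMb_1(4),\MMb(2)$ are weighted projective lines (Examples \ref{exa:moduli}) with $g^*\omega\cong\OO(1)$, where all three assertions are immediate from Proposition \ref{prop:projcoh}: that group is \emph{free} over $R$ on the set of pairs of negative integers $(\lambda,\mu)$ with $\lambda a+\mu b=m$, which is empty unless $m<0$. So the substance lies in the representable stacks $\MMb_1(n)$ ($n\ge5$) and $\MMb(n)$ ($n\ge3$).

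For (2), over a field $k$ the stack $\Xb_k$ is a smooth proper geometrically connected curve (Propositions \ref{prop:basicprops} and \ref{prop:irreducible}), so Serre duality gives $H^1(\Xb_k;g^*\omega^{\tensor m})^{\vee}\cong H^0(\Xb_k;\Omega^1_{\Xb_k}\tensor g^*\omega^{\tensor(-m)})$. By Corollary \ref{cor:Omegaomega} the right-hand sheaf is $g^*\omega^{\tensor(2-m)}\tensor\LL(-\mathrm{cusps})$, whose sections are the weight-$(2-m)$ cusp forms of Corollary \ref{cor:cusps}. For $m\ge2$ the weight $2-m$ is $\le0$; since $g^*\omega$ is ample, a negative power has no sections, and for $m=2$ the only global functions are the constants, which must vanish on the non-empty cusp divisor. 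Hence $H^1(\Xb_k;g^*\omega^{\tensor m})=0$ for all $k$. As this group is a finitely generated $R$-module that vanishes after $\tensor_R k$ for every residue field $k$, Nakayama's lemma gives $H^1(\Xb_R;g^*\omega^{\tensor m})=0$, proving (2).

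For (1), over a field the same curve carries the ample $g^*\omega$, so $g^*\omega^{\tensor m}$ has negative degree for $m<0$ and therefore no nonzero sections on the integral curve $\Xb_k$; equivalently, there are no modular forms of negative weight. To globalize I would invoke cohomology and base change: the fibrewise dimension of $H^0$ is constantly $0$, so $\pi_*\,g^*\omega^{\tensor m}=0$ for $\pi\colon\Xb_R\to\Spec R$, i.e.\ $H^0(\Xb_R;g^*\omega^{\tensor m})=0$. (Alternatively one may reduce to the domain $\Z[\tfrac1n]$, where $H^0$ is torsionfree and $H^0\tensor_{\Z[1/n]}\Q\cong H^0(\Xb_{\Q};-)=0$ forces vanishing, and then apply Lemma \ref{lem:basechange} with $p=0$, whose flatness hypothesis holds by part (3).)

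Finally (3) is where the passage between fibres and $R$ is most delicate, and I expect it to be the main obstacle. I would reduce to $R=\Z[\tfrac1n]$: for torsionfree $R$ the base-change isomorphism for the top cohomology $H^1$ turns freeness over the principal ideal domain $\Z[\tfrac1n]$ into torsionfreeness over $R$. It then suffices to show $H^1(\Xb_{\Z_{(p)}};g^*\omega^{\tensor m})$ has no $p$-torsion for each $p\nmid n$. Writing $M=\Z_{(p)}^{r}\oplus T$ with $T$ the $p$-torsion, base change gives $r=\dim_{\Q}H^1_{\Q}$ and $\dim_{\F_p}H^1_{\F_p}=r+\dim_{\F_p}(T/pT)$, so $T=0$ exactly when $\dim_{\F_p}H^1_{\F_p}=\dim_{\Q}H^1_{\Q}$. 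Since the universal curve is flat and proper over the connected base $\Spec\Z_{(p)}$, the Euler characteristic $\dim H^0-\dim H^1$ is the same on the fibres over $\Q$ and $\F_p$, so this equality is equivalent to $\dim_{\F_p}H^0_{\F_p}=\dim_{\Q}H^0_{\Q}$. For $m<0$ both sides vanish by (1); for $m=0$ both equal $1$ (respectively $\phi(n)$ for $\MMb(n)$) by Proposition \ref{prop:irreducible}; and for $m\ge2$ one has $H^1=0$ by (2). Since the excluded value $m=1$ is precisely the one at which the dimension of $H^0$ (weight-one forms) may jump between characteristics, the torsionfreeness follows for all $m\ne1$.
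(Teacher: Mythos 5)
Your treatment of the main cases, $\MMb_1(n)$ and $\MMb(n)$, is correct, and for parts (1) and (2) it is essentially the paper's own proof: dispose of the non-representable cases via Proposition \ref{prop:projcoh} on weighted projective lines, argue on geometric fibres with Serre duality, Corollary \ref{cor:Omegaomega} and positivity of $\deg g^*\omega$, and then globalize by cohomology and base change (your Nakayama formulation and the paper's appeal to \cite[Thm 12.11]{Har77} are the same mechanism). For part (3) you take a genuinely different route. The paper applies the cohomology long exact sequence of $0 \to \omega^{\tensor m}\xrightarrow{l}\omega^{\tensor m}\to\omega^{\tensor m}/l\to 0$ on $\Xb$, extracts $0 \to H^0(\Xb;\omega^{\tensor m})/l \to H^0(\Xb_{R/l};\omega^{\tensor m}) \to \Tors_l H^1(\Xb;\omega^{\tensor m}) \to 0$, and kills the torsion term for $m<0$ by (1), for $m=0$ by Proposition \ref{prop:irreducible}, and for $m\geq 2$ by (2). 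Your argument via constancy of Euler characteristics in flat projective families, combined with base change for the top cohomology $H^1$, is an equivalent repackaging of the same computation (the jump of $\dim H^0$ between the $\Q$- and $\F_l$-fibres measures exactly the $l$-torsion of $H^1$), and it consumes the same three inputs at $m<0$, $m=0$, $m\geq 2$. Both are sound; the paper's version works directly over any torsionfree $R$ and needs no Euler-characteristic theorem, while yours first descends to $\Z[\frac1n]$, a step your top-cohomology base-change observation does justify.

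The one genuine gap is your reduction for $\MMb_0(n)$ in the case where $\phi(n)$ is \emph{not} invertible but $6$ is. Pushing forward to $\MMb_{ell}\simeq\PP_R(4,6)$ converts (2) into the vanishing of $H^1(\PP_R(4,6); f_*\OO_{\MMb_0(n)}\tensor\OO(m))$ for $m\geq 2$, but this is not formal: over a field $f_*\OO_{\MMb_0(n)}$ splits as $\bigoplus_i\OO(-a_i)$ with $a_i\geq 0$, and by Proposition \ref{prop:projcoh} the group $H^1(\PP(4,6);\OO(m-a_i))$ is nonzero whenever $m-a_i=4\lambda+6\mu$ with $\lambda,\mu<0$ (e.g.\ $m=2$, $a_i=12$). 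So one needs an upper bound on the twists $a_i$ (in fact $a_i\leq 11$), and that bound is precisely Proposition \ref{prop:dec5}(1), which is proved later, over characteristic-zero fields, partly using the present proposition; to use it here you would have to re-derive it in characteristic $p$, with Serre duality on the non-representable stack $\MMb_0(n)_{\F_p}$. (Part (1) for $\MMb_0(n)$ is unproblematic, since $H^0(\MMb_0(n);\cdot)$ injects into $H^0(\MMb_1(n);\cdot)$ as the $(\Z/n)^\times$-invariants with no invertibility hypothesis.) The paper avoids the issue differently: it runs the descent spectral sequence for the $(\Z/n)^\times$-cover $\MMb_1(n)\to\MMb_0(n)$, which exists without any invertibility; once $H^1(\MMb_1(n);\omega^{\tensor m})=0$ is known for $m\geq 2$, the sequence identifies $H^i(\MMb_0(n);\omega^{\tensor m})$ with $H^i((\Z/n)^\times;H^0(\MMb_1(n);\omega^{\tensor m}))$ for all $i$, and since these groups vanish for $i\geq 2$ by cohomological dimension (Proposition \ref{prop:basicprops}), a periodicity/cohomological-triviality argument for finite-group cohomology forces the $i=1$ group to vanish as well. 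Your $\phi(n)$-invertible mechanism coincides with the paper's; this second mechanism is what your proposal is missing.
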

\begin{proof}Throughout the proof, we will write $\omega$ for $g^*\omega$ when the context is clear. 
\begin{enumerate}
 \item  As $H^0(\MMb_0(n)_R; \omega^{\tensor m}) \cong H^0(\MMb_1(n)_R; \omega^{\tensor m})^{(\Z/n)^\times}$, we only have to deal with $\MMb_1(n)_R$ and $\MMb(n)_R$. The non-representable cases from Example \ref{exa:moduli} can be dealt with by hand. 

 Assume now that $\Xb$ is representable. By \cite[VI.4.4]{D-R73}, the line bundle $\omega$ on $\MMb_{ell}$ has degree $\frac1{24}$. Thus, $\omega^{\tensor m}$ has negative degree on $\Xb_k$ for $m<0$ and every field $k$. Thus, $H^0(\Xb_k;\omega^{\tensor m}_k) = 0$ by \cite[IV.1.2]{Har77}. Thus, the pushforward of $g^*\omega^{\tensor m}$ to $\Spec R$ vanishes at every point of $\Spec R$ and thus vanishes completely. 
 
 \item This is shown for $\MMb(n)_R$ in \cite[Thm 1.7.1]{Kat73}. We will give the proof in the case of $\MMb_1(n)_R$ to add some details. 
 
 By dealing with the cases $n\leq 4$ by hand, we can assume again that $\MMb_1(n)_R$ is representable by a projective $R$-scheme. By cohomology and base change (see e.g.\ \cite[Theorem 12.11]{Har77}), we see that it is enough to show the claim in the case where $k= R$ is an algebraically closed field. By Corollary \ref{cor:Omegaomega},
 $$\omega^{\tensor m} \cong \Omega^1_{\MMb_1(n)_k/k} \tensor \omega^{\tensor m-2} \tensor \LL(\mathrm{cusps}).$$
 Because $\omega$ has positive degree \cite[VI.4.4]{D-R73} and $m\geq 2$, we see that the degree of $\omega^{\tensor m}$ is bigger than the degree of $\Omega^1_{\MMb_1(n)_k/k}$. By Serre duality
 $$H^1(\MMb_1(n)_k; \omega^{\tensor m}) \cong H^0(\MMb_1(n)_k;\omega^{\tensor -m} \tensor \Omega^1_{\MMb_1(n)_k/k})$$
 and this vanishes as $\omega^{\tensor -m} \tensor \Omega^1_{\MMb_1(n)_k/k}$ has negative degree. 
 
It remains to prove the claim for $\MMb_0(n)_R$ if $\phi(n)$ or $6$ is invertible on $R$. As we already know that $H^1(\MMb_1(n)_R;\omega^{\tensor m}) = 0$ for $m\geq 2$, it follows by the descent spectral sequence that 
$$H^i(\MMb_0(n)_R; \omega^{\tensor m}) \cong H^i((\Z/n)^\times; H^0(\MMb_1(n)_R; \omega^{\tensor m})).$$
Begininng with $i$, this is periodic in $i$. But we know that these cohomology groups vanish for $i>1$ by Proposition \ref{prop:basicprops}. Thus, they have to vanish for $i=1$ as well. 
 
 \item Let $l$ be a prime that does not divide $n$. Consider the short exact sequence
 \[0 \to H^0(\Xb; \omega^{\tensor m})/l \to H^0(\Xb_{R/l};\omega^{\tensor m}) \to \Tors_l H^1(\Xb;\omega^{\tensor m}) \to 0.\]
First note that the middle group is zero for $m<0$ by Item (\ref{cor:nonegative}) and hence also  $$\Tors_l H^1(\Xb;\omega^{\tensor m}) = 0$$
for $m < 0$. 

The morphism $H^0(\Xb; \omega^{\tensor m})/l \to H^0(\Xb_{R/l};\omega^{\tensor m})$ is an isomorphism for $m = 0$ by Proposition \ref{prop:irreducible}. Thus, $H^1(\Xb;\omega^{\tensor m})$ can only have torsion for $m=1$ as it vanishes for $m\geq 2$.\qedhere
\end{enumerate}
\end{proof}

At last, we have to collect some facts about the cohomology of $\MMb_{ell}$ itself, which is certainly not tame if we do not invert $6$. The cohomology of the sheaves $\omega^{\tensor m}$ on $\MMb_{ell}$ was computed by \cite{Konter}, based on \cite{Bau08}. We need essentially only the following.
\begin{prop}\label{prop:cohmell}We have isomorphisms
 \begin{align*}\
  H^1(\MMb_{ell}; \omega) &\cong \Z/2 \cdot \eta, \\
 H^1(\MMb_{ell};\omega^{\tensor 2}) & \cong \Z/12 \cdot \nu.
 \end{align*}
\end{prop}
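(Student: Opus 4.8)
The plan is to reduce to the primes $2$ and $3$ and then read off the answer from the cohomology of the Weierstra{\ss} Hopf algebroid, which is what \cite{Konter} (building on Bauer \cite{Bau08}) computes.

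\emph{Step 1: reduction to $2$ and $3$.} Since $\MMb_{ell}$ is proper over $\Z$, the group $H^1(\MMb_{ell};\omega^{\tensor m})$ is a finitely generated abelian group. By Examples \ref{exa:moduli} we have $\MMb_{ell,\Z[\frac16]} \simeq \PP_{\Z[\frac16]}(4,6)$ with $\omega$ corresponding to $\OO(1)$, so Proposition \ref{prop:projcoh} identifies $H^1(\MMb_{ell,\Z[\frac16]};\omega^{\tensor m})$ with the free module on the pairs of negative integers $(\lambda,\mu)$ with $4\lambda + 6\mu = m$. For $m = 1, 2$ this set is empty, since $4\lambda+6\mu$ is at most $-10$; as $\Z \to \Z[\frac16]$ is flat, flat base change gives $H^1(\MMb_{ell};\omega^{\tensor m})\tensor_\Z \Z[\frac16] = 0$ for $m=1,2$. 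Hence $H^1(\MMb_{ell};\omega^{\tensor m})$ is a finite abelian group supported at $2$ and $3$, and it suffices to compute its $2$- and $3$-primary parts separately.

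\emph{Step 2: the prime $3$.} Present $\MMb_{ell}$ by the elliptic-and-nodal part of the Weierstra{\ss} Hopf algebroid. Over $\Z_{(3)}$ one may complete the square but not the cube, so the curve takes the form $y^2 = x^3 + a_2 x^2 + a_4 x + a_6$ and the relevant Hopf algebroid is $(\Z_{(3)}[a_2,a_4,a_6], \Z_{(3)}[a_2,a_4,a_6][r])$ with the grading $|a_i| = i$, the coaction being induced by $x \mapsto x+r$, under which $a_2 \mapsto a_2 + 3r$. The coefficient $3$ appearing here is ultimately responsible for the $3$-torsion, and a direct computation of the cobar complex (Bauer \cite{Bau08}) gives $H^1(\MMb_{ell};\omega)_{(3)} = 0$ and $H^1(\MMb_{ell};\omega^{\tensor 2})_{(3)} \cong \Z/3$.

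\emph{Step 3: the prime $2$ and assembly.} Over $\Z_{(2)}$ one can neither complete the square nor the cube, so one must work with the full Weierstra{\ss} Hopf algebroid $(\Z_{(2)}[a_1,\dots,a_6], \Z_{(2)}[a_1,\dots,a_6][r,s,t])$; the analogous but considerably more involved calculation (see \cite{Konter}) yields $H^1(\MMb_{ell};\omega)_{(2)} \cong \Z/2$ and $H^1(\MMb_{ell};\omega^{\tensor 2})_{(2)} \cong \Z/4$. Combining with Step 2 via the Chinese Remainder Theorem gives $H^1(\MMb_{ell};\omega) \cong \Z/2$ and $H^1(\MMb_{ell};\omega^{\tensor 2}) \cong \Z/4 \oplus \Z/3 \cong \Z/12$. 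The generators are denoted $\eta$ and $\nu$ because they detect the corresponding Hopf elements $\eta \in \pi_1\S$ and $\nu \in \pi_3\S$ in filtration $1$ of the descent spectral sequence computing $\pi_* tmf$. The main obstacle is Step 3: the full five-variable Weierstra{\ss} Hopf algebroid does not simplify the way the $3$-local one does, and controlling the cobar differentials and extensions is precisely the delicate part of the $2$-primary $tmf$ computation.
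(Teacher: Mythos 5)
Your conclusion is correct, and at bottom you take the same route as the paper: the paper gives no proof of this proposition at all, but simply quotes the computation of $H^*(\MMb_{ell};\omega^{\tensor \ast})$ from \cite{Konter} (which builds on \cite{Bau08}), and your Steps 2--3 likewise defer the actual cohomology computations to those sources. What you add is Step 1 -- properness gives finite generation, and Proposition \ref{prop:projcoh} applied to $\MMb_{ell,\Z[\frac16]}\simeq \PP_{\Z[\frac16]}(4,6)$ from Examples \ref{exa:moduli} kills $H^1$ in weights $1$ and $2$ after inverting $6$ -- which correctly localizes the dependence on the literature to the $2$- and $3$-primary parts. That reduction is sound and is a worthwhile refinement.

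One caveat on how Steps 2--3 are phrased: there is no Hopf algebroid presenting $\MMb_{ell}$ itself, so ``the elliptic-and-nodal part of the Weierstra{\ss} Hopf algebroid'' is not an object whose cobar complex you can run. The cobar complex of the Weierstra{\ss} Hopf algebroid (in its $3$-local form $(\Z_{(3)}[a_2,a_4,a_6],\Z_{(3)}[a_2,a_4,a_6][r])$, or the five-variable version at $2$) computes the cohomology of the stack $\MM_{cub}$ of all cubic curves, because $\Spec \Z[a_1,\dots,a_6]\to\MM_{cub}$ is an affine presentation. The stack $\MMb_{ell}$ is the open substack lying over the complement of $\{c_4=\Delta=0\}$, and that complement is a non-affine open subscheme (its ring of functions is all of $\Z[a_1,\dots,a_6]$ by Hartogs), so restricting the groupoid does not produce a Hopf algebroid whose cobar complex computes $H^*(\MMb_{ell};-)$. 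This is exactly the division of labor between your two references: \cite{Bau08} computes $H^*(\MM_{cub};\omega^{\tensor\ast})$, and \cite{Konter} passes to $\MMb_{ell}$ by a Mayer--Vietoris argument for the cover by the affine loci $\{\Delta\neq 0\}$ and $\{c_4\neq 0\}$. In the weights you need ($m=1,2$) the two answers agree, but that agreement is an output of Konter's comparison, not something to be assumed; so Step 2 should cite \cite{Konter} (or include the Mayer--Vietoris step) rather than attribute the statement about $\MMb_{ell}$ directly to a cobar computation. With that repair, the assembly $\Z/2$ at weight $1$ and $\Z/4\oplus\Z/3\cong\Z/12$ at weight $2$ is correct.
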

These classes have a rather classical description as obstructions to lift the Hasse invariant. Indeed, denote by 
$$A_p \in H^0(\MMb_{ell,\F_p}; \omega^{\tensor (p-1)}) \cong H^0(\MMb_{ell,\Z_{(p)}}; \omega^{\tensor (p-1)}/p) $$
the mod $p$ Hasse invariant (see Appendix \ref{app:Hasse} for a definition). The short exact sequence
$$0 \to \omega^{\tensor (p-1)} \xrightarrow{p} \omega^{\tensor (p-1)} \to \omega^{\tensor (p-1)}/p \to 0$$
on $\MMb_{ell}$ induces a long exact sequence
$$\cdots \to H^0(\MMb_{ell,\Z_{(p)}};\omega^{\tensor (p-1)}) \to H^0(\MMb_{ell,\Z_{(p)}};\omega^{\tensor (p-1)}/p) \xrightarrow{\partial} H^1(\MMb_{ell,\Z_{(p)}};\omega^{\tensor (p-1)}) \to \cdots.$$
Because the $H^1$-term vanishes for $p>3$, there is no obstruction to lift $A_p$ to characteristic zero for $p>3$. As the Hasse invariant does not lift to characteristic zero for $p=2,3$ (there does not even exist a nonzero integral modular form in these degrees), we must have $\partial(A_2) = \eta$ and $\partial(A_3)$ at least a nonzero multiple of $\nu$.

\begin{remark}
 There is also a topological interpretation of the classes $\eta$ and $\nu$. These cohomology classes detect the Hurewicz images of the Hopf maps of the same name in $\pi_*Tmf$ in the descent spectral sequence (see \cite{Bau08} for a related statement). 
\end{remark}

\begin{prop}\label{prop:eta}
 The image of $\eta$ in $H^1(\Xb; g^*\omega)$ is zero. 
\end{prop}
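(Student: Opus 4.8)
The plan is to exploit the identification of $\eta$ as the boundary class of the mod $2$ Hasse invariant, together with the fact that the Hasse invariant becomes liftable to characteristic zero once a level structure is present (Appendix \ref{app:Hasse}). Recall from the discussion after Proposition \ref{prop:cohmell} that
\[\eta = \partial(A_2),\]
where $A_2\in H^0(\MMb_{ell,\F_2};\omega)$ is the mod $2$ Hasse invariant --- of weight $p-1 = 1$ --- and $\partial$ is the connecting map of the long exact sequence attached to the short exact sequence $0 \to \omega \xrightarrow{2} \omega \to \omega/2 \to 0$ on $\MMb_{ell,\Z[\frac1n]}$. Since the formation of $g^*\eta$ is compatible with base change along the universal map $\Xb_R \to \Xb_{\Z[\frac1n]}$ (which exists as $n$ is invertible in $R$) and pullback sends $0$ to $0$, it suffices to treat the universal case $R = \Z[\frac1n]$. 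For $\MMb_0(n)$ the convention forces $2$ to be invertible on $R$ (if $6$ is invertible this is clear; if $\phi(n)$ is invertible then $2\mid\phi(n)$ for squarefree $n\geq 3$, while for $n=2$ already $2\mid n$ is a unit), and for $\MMb(n)$ the forgetful map $\MMb(n)\to\MMb_1(n)$ over $\MMb_{ell}$ lets us pull the assertion back; so the essential case is $\Xb = \MMb_1(n)_{\Z[\frac1n]}$.

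Now I would argue as follows. If $n$ is even then $2$ is a unit in $\Z[\frac1n]$; as $H^1(\Xb;g^*\omega)$ is a $\Z[\frac1n]$-module and $2\cdot g^*\eta = g^*(2\eta) = 0$, this already gives $g^*\eta = 0$. Assume therefore that $n$ is odd, so that $2$ is not inverted. Because $g$ is flat (Proposition \ref{prop:basicprops}), pulling back the short exact sequence above along $g$ stays exact, giving $0 \to g^*\omega \xrightarrow{2} g^*\omega \to g^*\omega/2 \to 0$ on $\Xb$, and by naturality of the connecting homomorphism
\[g^*\eta = g^*\partial(A_2) = \partial\bigl(g^*A_2\bigr),\]
where $g^*A_2 \in H^0(\Xb_{\F_2}; g^*\omega/2)$ is the pulled-back Hasse invariant. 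By Appendix \ref{app:Hasse} there is a weight $1$ modular form over $\Z[\frac1n]$, i.e.\ a section of $g^*\omega$ on $\Xb$, reducing modulo $2$ to $g^*A_2$. Hence $g^*A_2$ lies in the image of $H^0(\Xb;g^*\omega) \to H^0(\Xb_{\F_2};g^*\omega/2)$, so exactness of the long exact sequence gives $\partial(g^*A_2) = 0$ and therefore $g^*\eta = 0$.

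The whole argument is routine naturality of boundary maps and flat base change once one knows the single nontrivial input, and that input --- the existence of a characteristic-zero lift of the Hasse invariant --- is exactly the main obstacle. It genuinely uses the level structure: on $\MMb_{ell}$ itself no nonzero integral weight $1$ modular form exists, which is precisely why $\eta$ is nonzero there in the first place. This lift is established separately in Appendix \ref{app:Hasse}.
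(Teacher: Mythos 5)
Your strategy coincides with the paper's own proof: identify $\eta = \partial(A_2)$, use naturality of the connecting homomorphism along the flat map $g$, and kill the resulting obstruction by lifting the pulled-back Hasse invariant to characteristic zero via Appendix \ref{app:Hasse}. Your reductions are also sound, and for $\MMb_0(n)$ your observation that Convention \ref{convention} already forces $2$ to be invertible is a slight shortcut compared to the paper's route via the $(\Z/n)^\times$-fixed-point inclusion $H^1(\MMb_0(n);g^*\omega)\hookrightarrow H^1(\MMb_1(n);g^*\omega)$.

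However, there is a genuine gap at the decisive step: you claim that Appendix \ref{app:Hasse} provides a weight $1$ modular form over $\Z[\frac1n]$ reducing to $g^*A_2$ modulo $2$. It does not. Proposition \ref{prop:Hasse} only produces such a lift $F$ over a cyclotomic ring $\Z_{(2)}[\zeta_{2^m}]$; the construction goes through Eisenstein series $E_1^{\chi}$ attached to an odd character $\chi$ of $2$-power order, so the cyclotomic coefficients are built into it, and no lift over $\Z_{(2)}$ (let alone $\Z[\frac1n]$) is established anywhere in the paper --- descending weight $1$ forms to smaller coefficient rings is precisely the delicate point here. Consequently your exact-sequence argument, as written, has no element of $H^0(\Xb;g^*\omega)$ to feed into it. The repair is the step the paper inserts before invoking the appendix: it suffices to prove $g^*\eta=0$ after base change to $C=\Z_{(2)}[\zeta_{2^m}]$, because $C$ is free, hence faithfully flat, over $\Z_{(2)}$, so flat base change gives $H^1(\Xb_C;g^*\omega)\cong H^1(\Xb_{\Z_{(2)}};g^*\omega)\otimes_{\Z_{(2)}}C$ and the comparison map $H^1(\Xb_{\Z_{(2)}};g^*\omega)\to H^1(\Xb_C;g^*\omega)$ is injective. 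Over $C$ the lift from Proposition \ref{prop:Hasse} is available and your boundary-map argument then goes through verbatim.
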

\begin{proof}
 It is enough to show $g^*\eta = 0$ for $\Xb = \MMb_1(n)_{(2)}$ and $n$ odd. Indeed, consider the composite
 \[H^1(\MMb_{ell,\Z_{2)}};\omega) \to H^1(\MMb_0(n)_{\Z_{2)}}; g^*\omega) \to H^1(\MMb_1(n)_{\Z_{2)}};g^*\omega) \to H^1(\MMb(n)_{\Z_{2)}};g^*\omega).
 \]
 Now we only have to use that the second map is actually an injection (namely the inclusion of the $(\Z/n)^\times$-fixed points by the proof of Proposition \ref{prop:basicprops}). 
 
It is enough to show $g^*\eta = 0$ after base change to $C= \Z_{(2)}[\zeta]$ for a $\zeta$ a $2^m$-th root of unity. Consider the commutative square
 \[
  \xymatrix{
  0 = H^0(\MMb_{ell,C}; \omega) \ar[r]\ar[d]^{g^*} & H^0(\MMb_{ell,C}; \omega/2) \ar[r]^-{\partial} \ar[d]^{g^*} & H^1(\MMb_{ell,C}; \omega) \ar[d]^{g^*} \\
   H^0(\MMb_1(n)_C; g^*\omega) \ar[r] & H^0(\MMb_1(n)_C; g^*\omega/2) \ar[r]^-{\partial_n}  & H^1(\MMb_1(n)_C; g^*\omega)
  }
 \]
 As $\eta = \partial(A_2)$ is still true over $C$, it is enough to show that $\partial_ng^*A_2 = 0$, i.e.\ that $g^*A_2$ lifts to $H^0(\MMb_1(n)_C; g^*\omega)$. This is exactly the content of Proposition \ref{prop:Hasse} for suitable $\zeta$.
\end{proof}

\section{The existence of decompositions}\label{sec:decexistence}
In this section, we want to prove Theorem \ref{thm:int1}. We will first do it over a field. Then we will use deformation theory to deduce it over the formal spectrum of $\Z_l$ in the case of uncompactified moduli of elliptic curves. Afterwards, we will use our results on vector bundles on weighted projective lines to deduce it also on compactified moduli under certain conditions. 
\subsection{Working over a field}\label{sec:workingfield}
We fix an integer $l$ that is either a prime or $0$ and localize throughout this section implicitly at the ideal $(l)$. Denote by $\MMb'$ for $l=3$ the moduli stack $\MMb_1(2)$, for $l=2$ the moduli stack $\MMb_1(3)$ and for $l\neq 2,3$ the moduli stack $\MMb_{ell}$ itself. We will denote the canonical morphism $\MMb' \to \MMb_{ell}$ by $f$. We denote by $\MM'$ correspondingly $\MM_1(2)$, $\MM_1(3)$ or $\MM_{ell}$. Recall furthermore the notation from Convention \ref{convention}. Our goal is to show the following proposition.

\begin{prop}\label{prop:split}
Let $k$ be a field of characteristic $l$ and let $\EE$ be a vector bundle on $\Xb_k$ with $\Xb_k$ as in Convention \ref{convention}. Then $g_*\EE$ decomposes into a direct sum of vector bundles of the form $f_*\OO_{\MMb_k'} \tensor \omega^{\tensor i}$. 
\end{prop}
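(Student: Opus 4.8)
The plan is to reduce everything to the classification of vector bundles on a weighted projective line (Proposition \ref{prop:ClassField}) by base-changing along $f$. First I would record that $g_*\EE$ is itself a vector bundle on $\MMb_{ell,k}$: by Proposition \ref{prop:basicprops} the map $g$ is finite, representable and flat, so the pushforward of a vector bundle is finite locally free. When $l\notin\{2,3\}$ (in particular $l=0$) the stack $\MMb'=\MMb_{ell}$ agrees with $\MMb_{ell,k}\simeq \PP_k(4,6)$ by Examples \ref{exa:moduli}, the morphism $f$ is the identity, and $f_*\OO_{\MMb_k'}\tensor\omega^{\tensor i}=\omega^{\tensor i}=\OO(i)$; thus Proposition \ref{prop:ClassField} applied directly to $g_*\EE$ produces the decomposition and settles this case.

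The substantial cases are $l=2,3$, where $\MMb'$ (namely $\MMb_1(3)_k$ or $\MMb_1(2)_k$) is a weighted projective line by Examples \ref{exa:moduli} but $\MMb_{ell,k}$ is not. Here I would form the fibre product $\Xb'=\Xb_k\times_{\MMb_{ell,k}}\MMb'_k$, with its two finite flat projections $f'\colon\Xb'\to\Xb_k$ and $g'\colon\Xb'\to\MMb'_k$. Since $f$ is flat, flat base change along the Cartesian square gives $f^*g_*\EE\cong g'_*(f')^*\EE$, which is a vector bundle on the weighted projective line $\MMb'_k$ and hence splits by Proposition \ref{prop:ClassField}:
\[ f^*g_*\EE\;\cong\;\bigoplus_i\OO(m_i)\;\cong\;\bigoplus_i f^*\omega^{\tensor m_i}. \]
Applying $f_*$ and using the projection formula together with $\OO(m_i)\cong f^*\omega^{\tensor m_i}$ on $\MMb'_k$, this yields
\[ (g_*\EE)\tensor f_*\OO_{\MMb_k'}\;\cong\;\bigoplus_i f_*\OO_{\MMb_k'}\tensor\omega^{\tensor m_i}. \]
So after tensoring with the ``building block'' $f_*\OO_{\MMb_k'}$ the desired decomposition already holds on the nose.

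The remaining and genuinely hard step is to cancel the factor $f_*\OO_{\MMb_k'}$, i.e.\ to pass from a decomposition of $(g_*\EE)\tensor f_*\OO_{\MMb_k'}$ to one of $g_*\EE$ itself; I expect this to be the main obstacle, since tensoring by a non-invertible bundle is not cancellative in general. The approach I would take is to translate into graded modules: writing $\MMb'_k\simeq\PP_k(a_0,a_1)$, each block $f_*\OO_{\MMb_k'}\tensor\omega^{\tensor i}$ has graded global sections a shifted copy of the polynomial ring $k[t_0,t_1]$ with $|t_j|=a_j$, so the statement becomes that the finitely generated graded module $M=\bigoplus_m H^0(\MMb_{ell,k};g_*\EE\tensor\omega^{\tensor m})$, viewed over the ring of level-$1$ forms, is a sum of shifted copies of this free module. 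Equivalently, one can argue geometrically by induction on the rank of $g_*\EE$, exactly as in the proof of Theorem \ref{thm:VectorBundle}: choose the extremal $m_0$ admitting a nonzero map $f_*\OO_{\MMb_k'}\tensor\omega^{\tensor m_0}\to g_*\EE$ adjoint to a line-bundle summand of $f^*g_*\EE$ of extremal degree, check by a fibrewise (geometric point) argument that it is a subbundle inclusion so that the cokernel is again of the allowed type by induction, and split the resulting short exact sequence by verifying that the pertinent $\Ext^1$ vanishes, the vanishing being supplied by Proposition \ref{prop:projcoh} after pulling back along $f$. The crux in either formulation is that the descent datum on $f^*g_*\EE$ forces the multiset $\{m_i\}$ to be a union of translated copies of the weights of $f^*f_*\OO_{\MMb_k'}$, which is precisely what makes the cancellation legitimate.
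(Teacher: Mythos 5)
Your reduction for $l\neq 2,3$ and your derivation of the identity $(g_*\EE)\tensor f_*\OO_{\MMb'_k}\cong\bigoplus_i f_*\OO_{\MMb'_k}\tensor\omega^{\tensor m_i}$ (base change along the Cartesian square, Proposition \ref{prop:ClassField} on $\MMb'_k$, projection formula) coincide with the first half of the paper's argument. The gap is the step you yourself flag as the hard one: the ``cancellation'' of $f_*\OO_{\MMb'_k}$. This is not merely unfinished---as a general principle it is false in this exact setting. Take $l=2$ and $A=E_\eta$, Mathew's non-split extension $0\to\OO_{\MMb_{ell}}\to E_\eta\to\omega^{\tensor(-1)}\to 0$ classified by $\eta$. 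Since the pullback of $\eta$ to $\MMb_1(3)$ vanishes (Proposition \ref{prop:eta}), the projection formula gives $E_\eta\tensor f_*\OO_{\MMb_1(3)}\cong f_*f^*E_\eta\cong f_*\OO_{\MMb_1(3)}\oplus f_*\OO_{\MMb_1(3)}\tensor\omega^{\tensor(-1)}$, so $E_\eta$ satisfies your intermediate identity; yet $E_\eta$ has rank $2$ and cannot be a sum of copies of the rank-$8$ bundles $f_*\OO_{\MMb_1(3)}\tensor\omega^{\tensor ?}$. Hence the identity alone can never yield the conclusion: one must use that $g_*\EE$ is a pushforward from the level cover, and your closing assertion that ``the descent datum forces the multiset $\{m_i\}$ to be a union of translated copies of the weights of $f^*f_*\OO_{\MMb'_k}$'' is exactly the statement to be proved, not an available fact.

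Your two sketched routes for the cancellation also have concrete defects. The graded-module reformulation is unjustified: for $l=2,3$ the stack $\MMb_{ell,k}$ is not a weighted projective stack, and quasi-coherent sheaves on it are not recovered from their graded global sections over the ring of level-$1$ modular forms. The induction mimicking Theorem \ref{thm:VectorBundle} fails at the start: the $(f^*,f_*)$-adjunction produces maps \emph{into} $f_*(-)$, not out of it, so a line-bundle summand of $f^*g_*\EE$ does not give a map $f_*\OO_{\MMb'_k}\tensor\omega^{\tensor m_0}\to g_*\EE$ (you would need Grothendieck duality $f^!$ plus degree estimates); moreover the cokernel of such a map is not known to be a pushforward from any $\Xb_k$, so your inductive hypothesis does not apply to it. What the paper does instead, and what your proposal is missing, is: (i) Lemma \ref{lem:splitting}, that over a field $g^*f_*\OO_{\MMb'}$ splits as $\bigoplus_i g^*\omega^{\tensor n_i}$---this rests on $g^*\eta=0$, i.e.\ on lifting the Hasse invariant (Proposition \ref{prop:eta}, Appendix \ref{app:Hasse})---which converts the tensor identity into $\bigoplus_i g_*\EE\tensor\omega^{\tensor n_i}\cong\bigoplus_j f_*\OO_{\MMb'_k}\tensor\omega^{\tensor m_j}$, exhibiting $g_*\EE$ up to twist as a \emph{direct summand} of a sum of the blocks; and (ii) the Krull--Schmidt theorem for the proper stack $\MMb_{ell,k}$ (Proposition \ref{prop:Krull-Schmidt}) together with indecomposability of the blocks (Lemma \ref{lem:indec}), which is what makes that summand itself a sum of blocks. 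These ingredients, not any formal cancellation, carry the proof.
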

The proof for $l\neq 2,3$ is very easy. In this case, we have an equivalence $\MMb_{ell,k} \simeq \PP_k(4,6)$ with $\omega$ corresponding to $\OO(1)$. By Proposition \ref{prop:ClassField}, \emph{every} vector bundle on $\PP_k(4,6)$ decomposes into a sum of the line bundles $\OO(i)$. Thus, we will assume $l=2$ or $3$ in the following. 

We will often use the following standard lemma:
\begin{lemma}\label{lem:pushpull}
 Let 
 \[
  \xymatrix{X' \ar[r]^{v}\ar[d]^{q} & X \ar[d]^p \\
  Y' \ar[r]^u & Y }
 \]
 be a cartesian diagram of Deligne--Mumford stacks, where $p$ is representable and affine, and let $\FF$ be a quasi-coherent sheaf on $X$. Then the natural map 
 $$u^*p_*\FF \to q_*v^*\FF$$
 is an isomorphism.
\end{lemma}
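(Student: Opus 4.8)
The plan is to exploit the explicit local structure of affine representable morphisms, under which the pushforward $p_*$ is literally restriction of scalars; with that description in hand both sides of the comparison map become the \emph{same} quasi-coherent sheaf, so that the base-change map is a tautological isomorphism. Since $p$ is affine and representable, $X$ is the relative spectrum $X \cong \underline{\Spec}_Y(\Ac)$ of the quasi-coherent $\OO_Y$-algebra $\Ac := p_*\OO_X$, and there is an equivalence between $\QCoh(X)$ and the category of quasi-coherent $\Ac$-modules on $Y$, under which $p_*$ corresponds to restriction of scalars along $\OO_Y \to \Ac$. Write $M$ for the $\Ac$-module corresponding to $\FF$, so that $p_*\FF$ is exactly $M$ regarded as an $\OO_Y$-module.

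Next I would use that relative spectra are stable under base change: forming the cartesian square identifies $X' \cong \underline{\Spec}_{Y'}(u^*\Ac)$ with $u^*\Ac$ the pulled-back sheaf of algebras, and under the corresponding equivalence $v^*\FF$ is carried to $u^*M$ equipped with its natural $u^*\Ac$-module structure. Consequently $q_*v^*\FF$ is $u^*M$ regarded as an $\OO_{Y'}$-module. On the other hand $u^*p_*\FF$ is the pullback of $M$-as-an-$\OO_Y$-module, which is again $u^*M$ regarded as an $\OO_{Y'}$-module, because pullback commutes with the forgetful (restriction-of-scalars) functor. The natural map of the lemma is precisely the canonical identification of these two descriptions, hence an isomorphism.

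To make this rigorous in the Deligne--Mumford setting, all that is needed is that the equivalence $\QCoh(X)\simeq\{\text{quasi-coherent }\Ac\text{-modules}\}$, its compatibility with $p_*$, and the base-change identity for $\underline{\Spec}$ are available for such stacks; these are standard. Alternatively one can reduce to affine schemes: since checking that a map of quasi-coherent sheaves is an isomorphism is \'etale-local on $Y'$, one pulls the square back along an \'etale surjection $\Spec B' \to Y'$, after which $p$ becomes a morphism of affine schemes $\Spec A \to \Spec B$ with $B\to A$ a ring map and $\FF$ an $A$-module $M$. The statement then degenerates to the evident identity $u^*(M|_B) \cong (M\otimes_B B')|_{B'}$, with the base-change map the identity. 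This reduction is legitimate because pulling back along the (flat) cover map lets one invoke ordinary flat base change for the affine morphism $q$, which is independent of, and hence not circular with, the general non-flat $u$ being treated.

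I do not expect any genuine difficulty here: this is the standard affine base-change lemma. The only real work is bookkeeping, namely verifying that the natural transformation appearing in the statement agrees with the canonical identification produced above and that the $\underline{\Spec}$ formalism together with its behaviour under base change is correctly set up for Deligne--Mumford stacks rather than schemes. That compatibility, rather than any substantive argument, is the mild obstacle to address.
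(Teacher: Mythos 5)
Your main argument, via the relative $\underline{\Spec}$ formalism, is correct but follows a genuinely different route from the paper. The paper argues in two short steps: first, the lemma holds for \emph{arbitrary} $p$ when $u$ is \'etale, since for $U \to Y'$ \'etale both sides evaluate to $\FF(U\times_Y X)$; this licenses working \'etale-locally, reducing to $Y = \Spec A$, $Y' = \Spec A'$ affine, hence $X = \Spec B$ and $X' = \Spec (B\tensor_A A')$, where the assertion becomes the tautology $M\tensor_A A' \cong M\tensor_B(B\tensor_A A')$. Your global argument instead makes the map tautological without any localization, at the price of invoking the equivalence between $\QCoh(X)$ and quasi-coherent $\Ac$-modules on $Y$, its compatibility with pullback, and base change for $\underline{\Spec}$ over Deligne--Mumford stacks; these are standard and, as you note, non-circular (the inverse equivalence $N \mapsto p^*N \tensor_{p^*\Ac}\OO_X$ visibly commutes with pullback since $v^*p^* \cong q^*u^*$), and your route buys a proof that works verbatim for more general algebraic stacks while making transparent that no flatness of $u$ is needed. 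One caveat on your \emph{alternative} affine-reduction sketch: pulling the square back along an \'etale surjection $\Spec B' \to Y'$ leaves $Y$ untouched, so $p$ does not thereby become a morphism of affine schemes; one must first take an affine \'etale cover $V \to Y$, then an affine \'etale cover of $Y'\times_Y V$, and compose the two base-change maps --- this two-step localization is exactly why the paper isolates the \'etale case of the lemma as its opening step (your appeal to flat base change along the affine morphism $q$ plays the same role and is indeed not circular). Since your primary argument is complete, this is only a remark on the backup route.
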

\begin{proof}
Note first that the lemma is true without any assumptions on $p$ if $u$ is \'etale because on $U\to Y'$ \'etale, both source and target can be identified with $\FF(U\times_Y X)$. Thus we can work \'etale locally and assume that $Y = \Spec A$ and $Y' = \Spec A'$ are affine schemes and hence also $X = \Spec B$ and $X' = \Spec B\tensor_{A}A'$. If $\FF$ corresponds to the $B$-module $M$, our assertion just becomes
\[M\tensor_A A' \cong M\tensor_B (B\tensor_A A').\qedhere\]
\end{proof}

Our strategy will be to analyze the consequences of this lemma in the case of the pullback square
 \[\xymatrix{
  \Yb = \Xb_k \times_{\MMb_{ell,k}} \MMb'_k \ar[r]^-{g'} \ar[d]^{f'} & \MMb'_k \ar[d]^f \\
  \Xb_k \ar[r]^{g} &  \MMb_{ell,k}
 }\]
 using a Krull--Schmidt theorem and the following indecomposability statement.

\begin{lemma}\label{lem:indec}
 The vector bundle $(f_S)_*\OO_{\MM'_S}$ is indecomposable for $S$ an arbitrary $\Z_{(l)}$-scheme admitting a morphism $\Spec \Fb_l \to S$ (if $l\leq 3$). 
\end{lemma}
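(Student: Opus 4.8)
The plan is as follows. For $l=0$ or $l>3$ the statement is immediate: there $\MM' = \MM_{ell}$ and $f$ is the identity, so $(f_S)_*\OO_{\MM'_S} = \OO_{\MM_{ell,S}}$ is a line bundle and hence indecomposable. So I assume from now on that $l\in\{2,3\}$ and that $S$ carries a point $x\colon \Spec\Fb_l \to S$.

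First I would reduce to the case $S=\Spec\Fb_l$. Since $f$ is finite, hence affine (Proposition \ref{prop:basicprops}), Lemma \ref{lem:pushpull} shows that forming $f_*\OO$ commutes with the base change along $x$, so that the pullback of $(f_S)_*\OO_{\MM'_S}$ to $\MM_{ell,\Fb_l}$ is $(f_{\Fb_l})_*\OO_{\MM'_{\Fb_l}}$. A direct summand of a vector bundle is again a vector bundle, of rank constant on the connected $\MM_{ell,\Fb_l}$; a nontrivial decomposition of $(f_S)_*\OO_{\MM'_S}$ into summands of positive rank would therefore restrict to a nontrivial decomposition of $(f_{\Fb_l})_*\OO_{\MM'_{\Fb_l}}$. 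Hence it suffices to prove indecomposability over $k:=\Fb_l$.

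Next I localize the problem at the supersingular point. Over $k$ there is a unique supersingular elliptic curve $E_0$, and the inclusion of its residual gerbe $\iota\colon BG \hookrightarrow \MM_{ell,k}$, with $G=\Aut_k(E_0)$, gives an additive restriction functor $\iota^*$ from vector bundles on $\MM_{ell,k}$ to finite-dimensional $k$-representations of $G$. It therefore suffices to show that $\iota^*(f_k)_*\OO_{\MM'_k}$ is an indecomposable $G$-representation. By Lemma \ref{lem:pushpull} applied to the pullback of $f$ along $\iota$, this representation is the permutation module $k[\Omega]$, where $\Omega=f^{-1}(E_0)$ is the set of level structures on $E_0$: the $8$ points of exact order $3$ in $E_0[3]$ when $l=2$, and the $3$ points of exact order $2$ when $l=3$. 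Concretely, for $l=2$ one has $G\cong\mathrm{SL}_2(\F_3)$ of order $24$, with normal Sylow $2$-subgroup $O_2(G)=Q_8$ and $G/O_2(G)\cong\Z/3$; for $l=3$ one has $|G|=12$ with $O_3(G)\cong\Z/3$ and $G/O_3(G)\cong\Z/4$. In both cases $G$ acts transitively on $\Omega$, and the stabilizer $H$ of a point is a complement to $O_l(G)$ of order prime to $l$ (namely $|H|=3$, resp. $|H|=4$), so that $k[\Omega]\cong\Ind_H^G\mathbf{1}$. Because $H$ is an $l'$-group, $\mathbf{1}$ is projective over $k[H]$, and hence $\Ind_H^G\mathbf{1}$ is a projective $k[G]$-module. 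All simple $k[G]$-modules are inflated from the abelian $l'$-quotient $G/O_l(G)$ and are therefore $1$-dimensional, so every indecomposable projective has dimension $|O_l(G)|$; since $\dim_k k[\Omega]=[G:H]=|O_l(G)|$, the module $k[\Omega]$ is a single indecomposable projective, in particular indecomposable.

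I expect the main obstacle to be the geometric input of the third paragraph: pinning down $\Aut_k(E_0)$ and, above all, verifying that $\Omega$ is a single $G$-orbit with stabilizer an $l'$-complement to $O_l(G)$ — this is exactly where the exceptional automorphisms of the supersingular curve in characteristics $2$ and $3$ enter. Once transitivity and the order of $H$ are in hand, the conclusion that $k[\Omega]$ is a projective of the dimension of a single indecomposable projective is immediate from the fact that the simple modules are inflated from the abelian quotient $G/O_l(G)$. The only other point requiring care is the bookkeeping in the reduction step — constancy of ranks and the fact that positive-rank summands stay nonzero after base change to the $\Fb_l$-point — for which one may harmlessly assume $S$ connected.
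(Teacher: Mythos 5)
Your proof is correct, and it shares the paper's skeleton: reduce to $S=\Spec\Fb_l$ via Lemma \ref{lem:pushpull}, restrict to the residual gerbe $BG$ of the supersingular point, and identify the restriction with the permutation representation $k[\Omega]$ of $G=\Aut(E_0)$ on the set of level structures. Where you genuinely diverge is the final representation-theoretic step. The paper restricts $k[\Omega]$ once more, to the normal Sylow $l$-subgroup $O_l(G)$ (the quaternion group $Q_8\subset SL_2(\F_3)$ for $l=2$, resp.\ $\Z/3$ for $l=3$), which acts freely and transitively on $\Omega$; the restriction is then the regular representation of an $l$-group in characteristic $l$, whose indecomposability is elementary (its fixed subspace is one-dimensional, while every nonzero representation of an $l$-group has a nonzero fixed vector). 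You instead use that $k[\Omega]\cong\Ind_H^G\mathbf{1}$ is a \emph{projective} $k[G]$-module of dimension $|O_l(G)|$, which is the smallest dimension a nonzero projective can have. Note that the geometric input is identical: free transitivity of $O_l(G)$ on $\Omega$ is exactly equivalent to your statement that the stabilizer $H$ is an $l'$-complement to $O_l(G)$. What your route buys is uniformity in $l$ (the paper only writes out $l=2$ and defers $l=3$ to an ``analogous argument'' or a citation) and a slightly stronger conclusion (the restriction is an indecomposable projective); what it costs is more modular representation theory. On that score, your step ``all simples are one-dimensional, hence every indecomposable projective has dimension $|O_l(G)|$'' is standard but can be shortcut so as to avoid simple modules entirely: a direct summand of a projective is projective, and any nonzero projective restricts to a free module over the local ring $k[O_l(G)]$, so its dimension is divisible by $|O_l(G)|$; hence a projective of dimension exactly $|O_l(G)|$ cannot split. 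Finally, your remark that one may ``harmlessly assume $S$ connected'' is doing real work: for disconnected $S$ the bundle splits along the components of $\MM_{ell,S}$, so connectedness (which makes ranks of summands constant) is genuinely needed -- an imprecision in the statement that the paper's own proof glosses over in the same way.
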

\begin{proof}
 Consider the case $l=2$ and the elliptic curve $E\colon y^2 +y = x^3$ over $\overline{\F}_2$. This has, according to \cite{Sil09}, III.10.1, automorphism group $G$ of order $24$.  By \cite[2.7.2]{K-M85}, the morphism $G\to GL_2(\F_3)$ (given by the operation of $S$ on $E[3]$) is injective. Using elementary group theory, $GL_2(\F_3)$ has a unique subgroup of order $24$, namely $SL_2(\F_3)$; thus $G$ embeds onto $SL_2(\F_3)$. This induces a map $\Spec \Fb_2 / SL_2(\F_3) \to \MM_{ell}$. 
 
 By Lemma \ref{lem:pushpull}, pulling $f_*\OO_{\MM'}$ back along this map gives an $8$-dimensional $\Fb_2$-vector space $V$ with $SL_2(\F_3)$-action; this is isomorphic to the permutation representation defined by the action of $SL_2(\F_3)$ on the eight points of exact order $3$ in $\F_3^2$. The quaternion subgroup $Q\subset SL_2(\F_3)$ acts freely and transitively on these points; thus, $V$ restricted to $Q$ is isomorphic to the regular representation of $Q$. 
 
 The regular representation $K[P]$ of a finite $p$-group $P$ over a field $K$ of characteristic $p$ is always indecomposable. Indeed, $K[P]^P \cong K$, but every $K$-representation of $P$ has a 
nonzero $P$-fixed vector by \cite[Prop 26]{Ser77}. Thus every nontrivial decomposition of $K[P]$ would yield that $\dim K[P]^P \geq 2$. 

Thus, also $(f_{\Fb_2})_*\OO_{\MM'_{\Fb_2}}$ is indecomposable. The pullback of $(f_S)_*\OO_{\MM'_S}$ to $\MM_{ell,{\Fb_2}}$ agrees with $(f_{\Fb_2})_*\OO_{\MM'_{\Fb_2}}$ by the last lemma; thus we see that also the vector bundle $(f_S)_*\OO_{\MM'_S}$ is indecomposable. 
 
 For $l = 3$, we can either do an analogous argument or cite \cite[Cor 4.8]{Mei13}. 
\end{proof}

\begin{prop}[Krull--Schmidt]\label{prop:Krull-Schmidt}
 Let $\YY$ be a proper Artin stack over a field $k$. Then the Krull--Schmidt theorem holds for coherent sheaves on $\YY$. This means that every coherent sheaf on $\YY$ decomposes into finitely many indecomposables and that this decomposition is unique up to permutation of the summands.
\end{prop}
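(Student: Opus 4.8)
The plan is to establish the Krull--Schmidt property by verifying the two standard hypotheses that make it go through in any additive category: that every object decomposes into finitely many indecomposables, and that the endomorphism ring of each indecomposable is local. For the first, I would use that $\YY$ is a proper Artin stack over $k$, so that $\Hom_{\OO_{\YY}}(\FF,\GG)$ is a finite-dimensional $k$-vector space for any two coherent sheaves $\FF,\GG$; in particular each $\End_{\OO_{\YY}}(\FF)$ is a finite-dimensional $k$-algebra. Finiteness of the Hom-spaces for a proper stack follows by taking a smooth proper cover by a scheme (or by reducing to the coherence and finite-dimensionality of cohomology on a proper algebraic space, which holds by properness over the field $k$) and noting $\Hom(\FF,\GG) = H^0(\YY;\mathcal{H}om(\FF,\GG))$. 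Since coherent sheaves on a noetherian stack satisfy the descending chain condition on subsheaves, one cannot decompose indefinitely, giving existence of a finite decomposition into indecomposables.

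For uniqueness, the key point is that an indecomposable coherent sheaf $\FF$ on $\YY$ has \emph{local} endomorphism ring. Here I would invoke the finite-dimensionality just established: $\End_{\OO_{\YY}}(\FF)$ is a finite-dimensional $k$-algebra, hence (being Artinian) a product of local rings corresponding to its idempotents. A nontrivial idempotent $e\in\End(\FF)$ other than $0,1$ would split off $e\FF$ as a direct summand, contradicting indecomposability; so the only idempotents are $0$ and $1$, which for a finite-dimensional (equivalently Artinian) algebra forces it to be local. With local endomorphism rings in hand, the classical Krull--Remak--Schmidt--Azumaya theorem applies verbatim to the additive category of coherent sheaves on $\YY$, yielding uniqueness of the decomposition up to permutation and isomorphism of the indecomposable summands.

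The main obstacle I anticipate is the finiteness of $\Hom$-spaces in the stacky setting, since the clean statement ``coherent cohomology of a proper scheme over a field is finite-dimensional'' must be transported to proper Artin stacks. I would handle this by the smooth cover together with cohomological descent, or simply cite finiteness of coherent cohomology for proper stacks; everything else is formal category theory (the Fitting-type argument that an indecomposable object with Artinian endomorphism ring has local endomorphism ring, followed by Azumaya's theorem). No base-ring subtleties arise because we work over a single field $k$, so I would not expect technical complications beyond correctly citing the finiteness input.
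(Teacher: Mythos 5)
Your proposal is correct and follows essentially the same route as the paper: the key input in both cases is that Hom-spaces between coherent sheaves on $\YY$ are finite-dimensional over $k$, obtained from finiteness of coherent cohomology on a proper Artin stack applied to the coherent sheaf $\mathcal{H}om_{\OO_\YY}(\FF,\GG)$. The only structural difference is that the paper then simply cites Atiyah's theorem that a $k$-linear abelian category with finite-dimensional Hom-spaces satisfies Krull--Schmidt, whereas you unpack that argument by hand (idempotents, local endomorphism rings, Azumaya), which is fine.

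Two small corrections. First, coherent sheaves on a noetherian stack do \emph{not} satisfy the descending chain condition on subsheaves (consider $\OO \supset \II \supset \II^2 \supset \cdots$ for a nontrivial ideal sheaf $\II$); for existence of a finite decomposition you should instead use the ascending chain condition applied to the chain of partial sums of summands, or, more simply, observe that pairwise orthogonal nonzero idempotents in $\End(\FF)$ are linearly independent, so the number of summands in any decomposition is bounded by $\dim_k\End(\FF)$ -- a finiteness you have already established. Second, a smooth cover of a proper Artin stack is in general not proper (e.g.\ $\A^2_k-\{0\} \to \PP_k(a,b)$), so the descent route to finiteness of cohomology is more delicate than you suggest; the clean reference is Faltings' finiteness theorem for coherent cohomology of proper Artin stacks, which is exactly the citation the paper uses.
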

\begin{proof}
 As shown by Atiyah in \cite{Ati56}, a $k$-linear abelian category has a Krull--Schmidt theorem if all $\Hom$-vector spaces are finite dimensional. By a theorem of Faltings \cite{Fal03}, the global sections of any coherent sheaf on $\YY$ form a finite-dimensional $k$-vector space. We can apply this to the Hom-sheaf $\mathcal{H}om_{\OO_\YY}(\FF,\GG)$ for two coherent $\OO_\YY$-modules, which is coherent itself.
\end{proof}

\begin{example}\label{exa:KS}
 We give a counterexample to the Krull--Schmidt theorem if we do not assume properness. It follows from \cite[Prop 4.14]{Mathom} that $f_*\OO_{\MM_1(2)}$ splits as $\OO_{\MM_{ell}} \oplus \omega^{\tensor 2}\oplus \omega^{\tensor 4}$ after rationalization. By \cite[Sec 1.3]{Beh06}, there is an equivalence 
 $$\MM_1(2) \simeq \Spec \Z[\tfrac12][b_2,b_4][\Delta^{-1}]/\G_m,$$ 
 where $\Delta$ is a polynomial in $b_2$ and $b_4$. As $\Delta$ is divisible by $b_4$, the ring $\Z[\tfrac12][b_2,b_4][\Delta^{-1}]$ is $4$-periodic and we deduce that $f^*\omega^{\tensor 4} \cong \OO_{\MM_1(2)}$. In particular, it follows that 
$$\OO_{\MM_{ell}} \oplus \omega^{\tensor 2}\oplus \omega^{\tensor 4} \cong f_*\OO_{\MM_1(2)} \cong f_*\OO_{\MM_1(2)} \tensor \omega^{\tensor 4} \cong \omega^{\tensor 4}\oplus \omega^{\tensor 6} \oplus \omega^{\tensor 8}$$
on $\MM_{ell,\Q}$, contradicting a possible Krull--Schmidt theorem in the uncompactified case. With some extra work one can remove the summand $\omega^{\tensor 4}$ from both sides. 
\end{example}

\begin{lemma}\label{lem:splitting}
The vector bundle $g^*f_*\OO_{\MMb'}$ is a direct summand of a sum of line bundles of the form $g^*\omega^{\tensor n}$. For $l=3$ or after base-changing to a field, $g^*f_*\OO_{\MMb'}$ is the sum of such line bundles itself.
\end{lemma}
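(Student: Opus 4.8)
The plan is to study $\EE:=f_*\OO_{\MMb'}$ directly on $\MMb_{ell,(l)}$ and then pull back along $g$; since $g^*$ is exact, monoidal, and carries $\omega^{\tensor n}$ to $g^*\omega^{\tensor n}$, it is enough to build a filtration of $\EE$ by powers of $\omega$ and to control the extension classes after applying $g^*$. First I would record the splitting type: with $6$ inverted $\MMb_{ell}\simeq\PP(4,6)$ and $f^*\omega=\OO(1)$ on the weighted projective line $\MMb'$ (Example~\ref{exa:moduli}), so $\EE$ is a sum $\bigoplus_i\omega^{\tensor n_i}$ by Proposition~\ref{prop:ClassField} (or Theorem~\ref{thm:VectorBundle}); computing the Hilbert series of $\Gamma_*(\OO_{\MMb'})$ over the ring of modular forms gives the multiset of weights $\{n_i\}$ — for $l=3$ (where $\MMb'=\MMb_1(2)=\PP(2,4)$) these are $0,2,4$, and for $l=2$ (where $\MMb'=\MMb_1(3)=\PP(1,3)$) they are $0,1,2,3,3,4,5,6$.

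Next I would produce, over $\MMb_{ell,(l)}$, a filtration $0=F_0\subset F_1\subset\cdots\subset F_r=\EE$ by subbundles with $F_i/F_{i-1}\cong\omega^{\tensor n_i}$ and weights $n_1>n_2>\cdots>n_r$, imitating the proof of Theorem~\ref{thm:VectorBundle}: a maximal-weight line bundle maps into $\EE$ through a nonzero element of $H^0(\MMb_{ell};\EE\tensor\omega^{\tensor -n_1})$, one saturates it to a subbundle with locally free quotient (Lemma~\ref{lem:constvector}), and repeats on the quotient. Whenever consecutive weights coincide—as with the two weight-$3$ pieces for $l=2$—the corresponding rank-$2$ subquotient is an extension of $\omega^{\tensor n}$ by $\omega^{\tensor n}$, classified by $H^1(\MMb_{ell};\OO)=0$, and so is split already over $\MMb_{ell}$; arranging this keeps every remaining extension twist strictly positive.

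Now I would pull the filtration back along $g$ and split it from the bottom by induction: assuming $g^*F_{i-1}\cong\bigoplus_{j<i}g^*\omega^{\tensor n_j}$, the sequence $0\to g^*F_{i-1}\to g^*F_i\to g^*\omega^{\tensor n_i}\to0$ is classified by $\bigoplus_{j<i}H^1(\Xb;g^*\omega^{\tensor(n_j-n_i)})$ with all $n_j-n_i\geq1$. The summands with $n_j-n_i\geq2$ vanish by Proposition~\ref{prop:coh}\,(\ref{lem:vanishingcoh}), and the summand with $n_j-n_i=1$ is killed using $g^*\eta=0$ (Proposition~\ref{prop:eta}), $\eta$ generating $H^1(\MMb_{ell};\omega)\cong\Z/2$ by Proposition~\ref{prop:cohmell}. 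For $l=3$ no twist-$1$ extensions occur at all—the weights $0,2,4$ differ by at least $2$—so every extension dies by Proposition~\ref{prop:coh}\,(\ref{lem:vanishingcoh}) and $g^*\EE$ is a full sum over any $\Z_{(3)}$-base; for $l=2$ the same holds once the residual twist-$1$ classes vanish, which is what I would verify over a field (using $g^*\eta=0$ together with the base-change description $g^*\EE\cong f'_*\OO_{\Yb}$ of Lemma~\ref{lem:pushpull} and the splitting of $g'_*\OO_{\Yb}$ on $\MMb'$ from Proposition~\ref{prop:ClassField}).

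The hard part is exactly this last twist-$1$ step over $R=\Z_{(2)}$. The obstruction lives in $H^1(\Xb;g^*\omega)=H^1(\MMb_1(n)_{(l)};\omega)$, and it is \emph{not} automatically a multiple of $g^*\eta$: the splitting $g^*F_{i-1}\cong\bigoplus g^*\omega^{\tensor n_j}$ used to extract the twist-$1$ component exists only after pulling back to $\Xb$, so Proposition~\ref{prop:eta} no longer suffices to annihilate it. By Proposition~\ref{prop:coh}\,(\ref{lem:torsionfree}) this group can carry $l$-torsion precisely in weight $1$—the $l$-torsion of the footnote to Theorem~\ref{thm:int1}, detecting weight-$1$ mod-$l$ cusp forms that fail to lift—and such a class can survive, so $g^*\EE$ need not be a full sum. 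Since Krull--Schmidt (Proposition~\ref{prop:Krull-Schmidt}) is unavailable over $\Z_{(2)}$, ``direct summand'' is strictly weaker than ``sum'' here, and I would obtain the summand statement anyway from the surjection $\bigoplus_i g^*\omega^{\tensor n_i}\twoheadrightarrow g^*\EE$ furnished by the module generators: all its splitting obstructions in twists $\geq2$ and the $\eta$-part vanish as above, and the remaining $l$-torsion obstruction, while it blocks a full splitting, is compatible with realizing $g^*\EE$ as a direct summand. Making this last compatibility precise—and confirming that the $l$-torsion class is the sole obstruction to the full sum, vanishing for $l=3$ and over fields but persisting over $\Z_{(2)}$—is the delicate point of the proof.
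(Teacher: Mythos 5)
There is a genuine gap, and it sits exactly where you say the ``delicate point'' is: your proposal never proves the lemma for $l=2$. Your diagnosis of the weight-$1$ problem is correct and important --- once the splitting $g^*F_{i-1}\cong\bigoplus_j g^*\omega^{\tensor n_j}$ exists only over $\Xb$, the twist-$1$ component of the extension class need not be a multiple of $g^*\eta$, so Proposition \ref{prop:eta} cannot kill it --- but your proposed recovery of the ``direct summand'' statement is circular: a surjection $\bigoplus_i g^*\omega^{\tensor n_i}\twoheadrightarrow g^*\EE$ admits a section if and only if $g^*\EE$ is a direct summand of the source, so asserting that the residual obstruction ``is compatible with realizing $g^*\EE$ as a direct summand'' is precisely the statement to be proved. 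The idea you are missing is the paper's mechanism at $l=2$, which avoids filtering the rank-$8$ bundle $f_*\OO_{\MMb_1(3)}$ altogether: by \cite{Mathom} (Prop 4.7, Cor 4.11), $f_*\OO_{\MMb_1(3)}$ is the cokernel of the coevaluation $q\colon \OO_{\MMb_{ell}}\to\FF(Q)\tensor\FF(Q)^{\vee}$ for Mathew's rank-$3$ bundle $\FF(Q)$, and $\mathrm{ev}\circ q$ is multiplication by $\rk\FF(Q)=3$, which is invertible at $2$; hence $f_*\OO_{\MMb_1(3)}$ splits off $\FF(Q)\tensor\FF(Q)^{\vee}$ as a direct summand already over $\MMb_{ell,(2)}$. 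Since $\FF(Q)$ is built from exactly two extensions, with classes $\eta$ and a lift of $\nu$, Proposition \ref{prop:eta} and Proposition \ref{prop:coh}\,(\ref{lem:vanishingcoh}) show that $g^*\FF(Q)\cong\OO_{\XXb}\oplus g^*\omega^{\tensor(-1)}\oplus g^*\omega^{\tensor(-3)}$; thus $g^*(\FF(Q)\tensor\FF(Q)^{\vee})$ is a sum of line bundles $g^*\omega^{\tensor i}$, the summand statement follows, and Krull--Schmidt (Proposition \ref{prop:Krull-Schmidt}) upgrades it to an actual sum after base change to a field. This tensor trick sidesteps both the weight-$1$ obstruction and the need for any integral filtration of the rank-$8$ bundle.

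A secondary gap is your construction of the integral filtration itself. You propose to imitate the proof of Theorem \ref{thm:VectorBundle}, but that proof and its inputs (Lemma \ref{lem:divisor}, Proposition \ref{prop:projcoh}) are specific to weighted projective lines, and $\MMb_{ell,(2)}$, $\MMb_{ell,(3)}$ are precisely \emph{not} weighted projective stacks --- that is the entire difficulty the lemma addresses. Saturating the image of a maximal-weight section yields a line subsheaf whose saturation need not be a power of $\omega$ at points where the section vanishes (and sections of weight-$2$ forms do vanish at supersingular points, being congruent to Hasse invariants), and the quotient of a saturated subsheaf on a $2$-dimensional stack is only torsion-free, not automatically locally free. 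For $l=3$ your outline matches the paper in spirit, but the paper makes it rigorous by importing the two explicit extensions of $f_*\OO_{\MMb_1(2)}$ from \cite{Mathom} (Prop 4.14), whose classes $\nu$ and $\tilde{\nu}$ lie in weights $\geq 2$ and hence die under $g^*$ by Proposition \ref{prop:coh}\,(\ref{lem:vanishingcoh}); citing that result (or reproving it) is needed for your argument to close even in this easier case.
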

\begin{proof}
 We will first do the case $l=3$. By \cite[Prop 4.14]{Mathom}, there are extensions
  \[ 0 \to \OO_{\MMb_{ell}} \to f_*\OO_{\MMb_1(2)} \to E_{\nu} \tensor \omega^{\tensor (-2)} \to 0\]
  and 
  \[0 \to \OO_{\MMb_{ell}} \to E_\nu \to \omega^{\tensor (-2)} \to 0\]
  classified by $\tilde{\nu} \in \Ext^1(E_\nu \tensor \omega^{\tensor (-2)}, \OO_{\MMb_{ell}})$ and $\nu \in \Ext^1(\omega^{\tensor (-2)},\OO_{\MMb_{ell}})$ respectively, where $\nu$ is as in Proposition \ref{prop:cohmell} and $\tilde{\nu}$ is a lift of $\nu$. The classes $g^*\nu$ and $g^*\tilde{\nu}$ are zero by Proposition \ref{prop:coh}. Thus $g^*f_*\OO_{\MMb_1(2)}$ splits into line bundles of the form $g^*\omega^{\tensor n}$. 
  
  The same argument -- only more complicated -- works at the prime $2$. In \cite[Section 4.1]{Mathom}, Mathew constructs a vector bundle $\FF(Q)$ on $\MMb_{ell}$, which arises via two short exact sequences
  $$ 0 \to E_{\eta} \to \FF(Q) \to \omega^{\tensor (-3)} \to 0$$
  and
  $$ 0\to \OO_{\MMb_{ell}} \to E_{\eta} \to \omega^{\tensor (-1)} \to 0.$$
  The latter extension is classified by $\eta \in H^1(\MMb_{ell}; \omega) \cong \Ext^1(\omega^{\tensor (-1)}, \OO_{\MMb_{ell}})$, while the former is classified by a lift $\tilde{\nu}\in\Ext^1(\omega^{\tensor (-3)}, E_\eta)$ of $\nu$. By Proposition \ref{prop:eta}, we know that $g^*\eta = 0$ so that $g^*E_{\eta} \cong \OO_{\XXb}\oplus g^*\omega^{\tensor(-1)}$. The other extension splits as well because by Proposition \ref{prop:coh} we see that $\Ext^1(g^*\omega^{\tensor (-3)}, g^*E_{\eta}) = 0$. Thus, $$g^*\FF(Q) \cong \OO_{\XXb}\oplus g^*\omega^{\tensor(-1)} \oplus g^*\omega^{\tensor (-3)}.$$ 
  
    By \cite[Prop 4.7, Cor 4.11]{Mathom}, the cokernel of the coevaluation map 
    $$q\colon \OO_{\MMb_{ell}} \to \FF(Q) \tensor \FF(Q)^{\vee}$$
    is isomorphic to $f_*\OO_{\MMb_1(3)}$.\footnote{Topologically, this corresponds to the fact that $DA(1)\sm Tmf \simeq Tmf_1(3)$ for an $8$-cell complex $DA(1)$.} The composition of $q$ with the evaluation map $\FF(Q) \tensor \FF(Q)^{\vee} \to \OO_{\MMb_{ell}}$ equals multiplication by $3 = \rk(\FF(Q))$, which is invertible. Hence, $f_*\OO_{\MMb_1(3)}$ splits off $\FF(Q) \tensor \FF(Q)^{\vee}$ as a direct summand. Thus, $g^*f_*\OO_{\MMb_1(3)}$ is a direct summand of a sum of line bundles of the form $g^*\omega^{\tensor i}$. After base-change to a field, Krull--Schmidt implies that $g^*f_*\OO_{\MMb_1(3)}$ itself splits into line bundles of the form $g^*\omega^{\tensor i}$.
\end{proof}

\begin{proof}[Proof of Proposition \ref{prop:split}:]
 Consider again the pullback diagram
 \[\xymatrix{
  \Yb = \Xb_k \times_{\MMb_{ell,k}} \MMb'_k \ar[r]^-{g'} \ar[d]^{f'} & \MMb'_k \ar[d]^f \\
  \Xb_k \ar[r]^{g} &  \MMb_{ell,k}.
 }\]
 We have an isomorphism
$$g_*f'_*(f')^*\EE \cong f_*g'_* (f')^*\EE.$$
 Every vector bundle on $\MMb'_k$ decomposes into line bundles of the form $f^*\omega^{\tensor ?}$ by the Examples \ref{exa:moduli} and Proposition \ref{prop:ClassField}. By the projection formula, $f_*g'_* (f')^*\EE$ is thus a sum of vector bundles of the form $f_*\OO_{\MMb'_k} \tensor \omega^{\tensor ?}$. As these are indecomposable by Lemma \ref{lem:indec}, this is also the decomposition of $g_*f'_*(f')^*\EE$ into indecomposables. By the Krull--Schmidt theorem it is thus enough to show that $g_*\EE \tensor \omega^{\tensor n}$ is a summand of $g_*f'_*(f')^*\EE$ for some $n\in\Z$. 
 
 Note first that by the last lemma and Lemma \ref{lem:pushpull}, we can write $g^*f_*\OO_{\MMb_k'} \cong f'_*\OO_{\Yb}$ as $\bigoplus_i g^*\omega^{\tensor n_i}$. By the projection formula, we have a chain of isomorphisms
 \begin{align*}
  g_*(f')_*(f')^*\EE &\cong g_*((f')_*\OO_{\Yb}\tensor \EE) \\
		   &\cong g_*(\bigoplus_i g^*\omega^{\tensor n_i} \tensor \EE) \\
		   &\cong \bigoplus_i (g_*\EE) \tensor \omega^{\tensor n_i},
 \end{align*}
The result follows. 
\end{proof}

\begin{remark}
 The explicit decomposition (i.e.\ the powers of $\omega$) can be worked out by computing graded global sections over $k$, as we will do later in the case $k=\C$ in Section \ref{sec:dec}. 
\end{remark}

\subsection{Working integrally and uncompactified}
In the following, we assume again that $\Xb$ is as in Convention \ref{convention} with $R = \Z_{(l)}$. Set $\XX = \Xb \times_{\MMb_{ell}} \MM_{ell}$ again and let $\MM'$ be as in the last subsection. Furthermore, let $\EE$ be a vector bundle on $\XX$. We aim to prove the following theorem:

\begin{thm}\label{thm:decuncompact}
 Under these assumptions, $g_*\EE$ splits into a sum of vector bundles of the form $f_*\OO_{\MM'} \tensor \omega^{\tensor i}$ on the $l$-completion $\widehat{\MM}_{ell,l}$. 
\end{thm}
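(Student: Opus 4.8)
The plan is to lift the decomposition from the special fiber $\MM_{ell,\F_l}$ to the formal scheme $\widehat{\MM}_{ell,l}$ by a deformation-theoretic successive-approximation argument. By flat base change along $\Z_{(l)}\to\F_l$ one has $(g_*\EE)_{\F_l}\cong (g_{\F_l})_*(\EE_{\F_l})$, so Proposition \ref{prop:split} (restricted to the interior) supplies a finite multiset of integers together with an isomorphism
$$\phi_0\colon (g_*\EE)_{\F_l} \xrightarrow{\ \cong\ } \GG_{\F_l}, \qquad \GG := \bigoplus_i \big(f_*\OO_{\MM'}\tensor\omega^{\tensor i}\big)^{\oplus a_i},$$
where $\GG$ is a vector bundle already defined over $\Z_{(l)}$. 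Since an $l$-complete coherent sheaf is the same datum as a compatible system of sheaves on the $\MM_{ell,\Z/l^n}$, it suffices to lift $\phi_0$ to a compatible system $\phi_n\in\Hom_{\OO/l^n}\big((g_*\EE)/l^n,\GG/l^n\big)$: the limit $\phi=\varprojlim_n\phi_n$ then provides the desired splitting of $\widehat{g_*\EE}$ on $\widehat{\MM}_{ell,l}$.

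Writing $\mathcal{H}=\mathcal{H}om_{\OO}(g_*\EE,\GG)$ (a sheaf flat over $\Z_{(l)}$ with $\mathcal{H}/l^k=\mathcal{H}om((g_*\EE)/l^k,\GG/l^k)$), the obstruction to extending $\phi_n$ across the sequence $0\to\mathcal{H}/l\xrightarrow{l^n}\mathcal{H}/l^{n+1}\to\mathcal{H}/l^n\to 0$ is its image under
$$\partial\colon H^0(\MM_{ell,\Z/l^n};\mathcal{H}/l^n)\longrightarrow H^1(\MM_{ell,\F_l};\mathcal{H}_{\F_l}),\qquad \mathcal{H}_{\F_l}\cong \mathcal{E}nd\big((g_*\EE)_{\F_l}\big),$$
the last identification coming from $\phi_0$. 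The heart of the matter is thus to prove $H^1\big(\MM_{ell,\F_l};\mathcal{E}nd((g_*\EE)_{\F_l})\big)=0$. The main obstacle is that the naive transfer argument is unavailable: the degree of $g$ is divisible by $l$ (indeed $\operatorname{rk} f_*\OO_{\MM_1(3)}=8$ at $l=2$ and $\operatorname{rk} f_*\OO_{\MM_1(2)}=3$ at $l=3$), so one cannot split off $H^\ast(\MM_{ell,\F_l};-)$ from $H^\ast(\XX_{\F_l};g^\ast-)$ by a trace. Instead I would argue as follows. Using the special-fiber splitting, $\mathcal{E}nd((g_*\EE)_{\F_l})$ is a finite direct sum of sheaves $\mathcal{H}om(f_*\OO_{\MM'},f_*\OO_{\MM'}\tensor\omega^{\tensor m})$, so it is enough to kill the $H^1$ of each. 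By the projection formula $f_*\OO_{\MM'}\tensor\omega^{\tensor m}\cong f_*f^*\omega^{\tensor m}$ and the tensor--hom adjunction for the affine morphism $f$, together with exactness of $f_*$,
$$H^i\big(\MM_{ell,\F_l};\mathcal{H}om(f_*\OO_{\MM'},f_*\OO_{\MM'}\tensor\omega^{\tensor m})\big)\cong H^i\big(\MM'_{\F_l};\mathcal{H}om_{\MM'}(f^*f_*\OO_{\MM'},f^*\omega^{\tensor m})\big).$$
The right-hand side is the cohomology of a vector bundle on $\MM'_{\F_l}$, which for $l=2,3$ is the uncompactified stack $\MM_1(3)$ or $\MM_1(2)$ of Convention \ref{convention}; by Proposition \ref{prop:basicprops}(4) this vanishes in degrees $i\geq 1$. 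Hence every obstruction $\partial\phi_n$ vanishes and the lifts $\phi_n$ can be chosen compatibly.

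It remains to assemble the conclusion. Each $\phi_n$ reduces modulo $l$ to the isomorphism $\phi_0$, so by Nakayama $\phi_n$ is itself an isomorphism of $\OO/l^n$-modules; passing to the limit shows that $\phi$ is an isomorphism of $l$-complete sheaves, which is exactly the asserted decomposition of $g_*\EE$ on $\widehat{\MM}_{ell,l}$. For $l>3$ the morphism $f$ is the identity and $\MM'=\MM_{ell}$, and the same adjunction computation goes through (alternatively, this case is immediate from the equivalence $\MM_{ell,\F_l}\simeq\PP_{\F_l}(4,6)$ with $\Delta$ inverted and Proposition \ref{prop:ClassField}). The entire argument hinges on the cohomological affineness of the \emph{uncompactified} $\MM'$ funneled through the adjunction above; this is what replaces the Krull--Schmidt input of the field case, which is genuinely unavailable integrally as Example \ref{exa:KS} shows.
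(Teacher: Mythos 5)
Your overall strategy is the same as the paper's: obtain the splitting over $\F_l$ from Proposition \ref{prop:split}, then lift the special-fiber isomorphism $\phi_0$ through the thickenings $\MM_{ell,\Z/l^n}$ by obstruction theory, where the obstructions live in an $H^1$ over $\MM_{ell,\F_l}$ that one shows to vanish, and conclude with Nakayama and a passage to the limit of adic systems. Where you genuinely deviate is in the vanishing argument: you first use $\phi_0$ to identify the coefficient sheaf with $\mathcal{E}nd$ of the split model, decompose it into summands $\mathcal{H}om(f_*\OO_{\MM'},f_*\OO_{\MM'}\tensor\omega^{\tensor m})$, and push the computation along the affine map $f$ onto $\MM'_{\F_l}$, where quasi-coherent $H^{\geq 1}$ vanishes. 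The paper instead pushes along $g$ itself, identifying $H^1(\MM_{ell,\F_l};\mathcal{H}om(\FF_0,g_*\EE_0))$ with $H^1(\XX_{\F_l};\mathcal{H}om(g^*\FF_0,\EE_0))=0$; this is a bit more economical (no prior decomposition of the Hom sheaf is needed) and is uniform in $l$. Your route is correct for $l=2,3$, but note that for $l>3$ Proposition \ref{prop:basicprops}(4) does not literally cover $\MM_{ell,\F_l}$, which is not one of the stacks of Convention \ref{convention}; there you need the separate observation that $\MM_{ell,\F_l}$ is a $\Gm$-quotient of an affine scheme, or your alternative via $\PP_{\F_l}(4,6)$ and Proposition \ref{prop:ClassField}.

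The one genuine gap is at the very first step. Proposition \ref{prop:split} takes as input a vector bundle on the \emph{compactified} stack $\Xb_k$, while your $\EE_{\F_l}$ lives on $\XX_{\F_l}$; the parenthetical ``restricted to the interior'' silently assumes that $\EE_{\F_l}$ is the restriction of a vector bundle on $\Xb_{\F_l}$. This cannot be waved away by running the proof of Proposition \ref{prop:split} directly on the interior: its Krull--Schmidt input genuinely fails there, as Example \ref{exa:KS} shows, so the detour through the compactification is essential. The paper devotes the first paragraph of its proof to exactly this point, extending $\EE$ to $\Xb$ integrally via a reflexive extension (Lemma 3.2 of \cite{Mei13}) together with \cite[Cor 1.4]{Har80}. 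In your setting it suffices to extend over the field, which is easier: take any coherent extension of $\EE_{\F_l}$ to $\Xb_{\F_l}$ and replace it by its double dual; since $\Xb_{\F_l}$ is a regular one-dimensional Deligne--Mumford stack, this reflexive sheaf is locally free. With that step added (and a word, via Lemma \ref{lem:pushpull}, that the reduction mod $l$ of your integral model $\GG$ agrees with the output of Proposition \ref{prop:split}), your proof is complete.
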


We refer to \cite{Con} for a treatment of completions in the generality of Artin stacks. We will deduce the theorem from the corresponding theorem over $\F_l$ by the following result from obstruction theory, which is analogous to \cite[Cor 8.5.5]{FAG}. 

\begin{prop}
 Let $A$ be a noetherian ring with a maximal ideal $I$. Let $\XX$ be a locally noetherian Artin stack over $A$. We denote by $\hat{\XX}$ the completion with respect to $I\OO_\XX$ and by $\XX_0$ the closed substack corresponding to $I\OO_{\XX}$. Let $\FF,\GG$ be vector bundles on $\XX$ such that $H^1(\XX_0; \mathcal{H}om(\FF|_{\XX_0}, \GG|_{\XX_0})) = 0$. Then the completions $\widehat{\FF}$ and $\widehat{\GG}$ are isomorphic on $\widehat{\XX}$ if and only if $\FF|_{\XX_0}\cong \GG|_{\XX_0}$.
\end{prop}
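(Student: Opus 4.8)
The plan is to deduce this from the corresponding statement about formal completions via a standard successive-approximation (obstruction theory) argument, following the cited analogue \cite[Cor 8.5.5]{FAG}. The key observation is that an isomorphism on the formal completion $\widehat{\XX}$ amounts to a compatible system of isomorphisms over the infinitesimal thickenings $\XX_m$ corresponding to the ideals $I^{m+1}\OO_\XX$, and that lifting an isomorphism from $\XX_m$ to $\XX_{m+1}$ is governed precisely by the cohomology group $H^1$ of the internal $\Hom$-sheaf of $\FF$ and $\GG$ on $\XX_0$.

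First I would establish the easy direction: if $\widehat{\FF}\cong\widehat{\GG}$ on $\widehat{\XX}$, then restricting along the closed immersion $\XX_0 \hookrightarrow \widehat{\XX}$ immediately gives $\FF|_{\XX_0}\cong\GG|_{\XX_0}$, since $\XX_0$ is the reduction of the formal scheme/stack and the completion functor is compatible with this restriction. For the substantive converse, I would set $\HH = \mathcal{H}om_{\OO_\XX}(\FF,\GG)$, which is again a vector bundle since $\FF,\GG$ are, and work inductively. Suppose given an isomorphism $\varphi_m\colon \FF|_{\XX_m} \xrightarrow{\cong} \GG|_{\XX_m}$, i.e.\ an element of $H^0(\XX_m;\HH|_{\XX_m})$ whose image under the determinant (or simply whose value at each point) is invertible. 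The obstruction to lifting $\varphi_m$ to a section over $\XX_{m+1}$ lives in $H^1(\XX_0; \HH|_{\XX_0}\tensor (I^{m+1}/I^{m+2}))$, which comes from the short exact sequence of sheaves
\[0 \to \HH|_{\XX_0}\tensor (I^{m+1}/I^{m+2}) \to \HH|_{\XX_{m+1}} \to \HH|_{\XX_m} \to 0\]
associated to the square-zero thickening $\XX_m \hookrightarrow \XX_{m+1}$. Since $I/I^2$ and its symmetric powers are modules over the residue field $A/I$, the group $H^1(\XX_0; \HH|_{\XX_0}\tensor(I^{m+1}/I^{m+2}))$ is a sum of copies of $H^1(\XX_0;\HH|_{\XX_0})$, which vanishes by hypothesis. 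Hence every $\varphi_m$ lifts, and since any lift of an isomorphism (one whose reduction to $\XX_0$ is invertible) is automatically an isomorphism by Nakayama, we obtain a compatible system $(\varphi_m)$ and therefore an isomorphism $\widehat{\FF}\cong\widehat{\GG}$ on $\widehat{\XX}$.

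The two points requiring care, which I would treat as the main obstacles, are: first, identifying the obstruction group correctly in the stacky setting, i.e.\ verifying that the deformation theory of morphisms of vector bundles along a square-zero extension is controlled by $H^1$ of the internal $\Hom$-sheaf even when $\XX$ is an Artin (rather than Deligne--Mumford) stack; this is where I would lean on the cited \cite{Con} and \cite{FAG} to ensure the formal glueing and cohomological base-change statements go through. Second, I would need to ensure that the compatible system of lifts $(\varphi_m)$ genuinely assembles to a morphism on the formal completion $\widehat{\XX}$ rather than merely an inverse-limit of sections; this is the content of the equivalence between coherent sheaves on $\widehat{\XX}$ and compatible systems on the $\XX_m$, which again is part of the formal-functions machinery in \cite{Con}. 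The vanishing of all higher obstruction groups is purely formal once the base case $H^1(\XX_0;\HH|_{\XX_0})=0$ is granted, so the genuine technical weight sits entirely in setting up the infinitesimal-lifting exact sequence correctly for stacks.
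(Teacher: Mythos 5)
Your proposal is correct and follows essentially the same route as the paper: inductive lifting of the isomorphism along the thickenings $\XX_m$, with the obstruction identified in $H^1(\XX_0;\mathcal{H}om(\FF|_{\XX_0},\GG|_{\XX_0})\tensor I^{m+1}/I^{m+2})$ (a finite sum of copies of the vanishing group, since $I^{m+1}/I^{m+2}$ is a finite-dimensional $A/I$-vector space), then Nakayama to see the lifts are isomorphisms and \cite{Con} to assemble the adic system into an isomorphism of completions. The only cosmetic difference is that you produce the obstruction directly from the connecting map of the short exact sequence of $\mathcal{H}om$-sheaves, where the paper cites \cite[Thm 8.5.3]{FAG} for the same fact.
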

\begin{proof}The only-if direction is clear. We will prove the if direction.

 Set $\XX_n = \XX\times_{\Spec A} \Spec A/I^{n+1}$ and $\FF_n =\FF|_{\XX_n}$ and $\GG_n = \GG|_{\XX_n}$. Choose an isomorphism $f_0\colon \FF_0 \to \GG_0$. We want to show inductively that it lifts to a morphism $f_n\colon \FF_n \to \GG_n$ (whose pullback to $\XX_0$ agrees with $f_0$). Assume that a lift $f_{n-1}$ of $f_0$ to $\XX_{n-1}$ is already constructed. The ideal defining $\XX_{n-1}\subset \XX_n$ is $$I^n\OO_\XX/I^{n+1}\OO_\XX \cong \OO_\XX \tensor_A I^n/I^{n+1}.$$
 By \cite[Thm 8.5.3]{FAG}, The obstruction for lifting $f_{n-1}$ to $f_n\colon \FF_n \to \GG_n$ lies in 
 \begin{align*}
  H = H^1(\XX_{n-1}; I^n\OO_\XX/I^{n+1}\OO_\XX\tensor_{\OO_{\XX_{n-1}}} \mathcal{H}om_{\XX_{n-1}}(\FF_{n-1},\GG_{n-1})). 
 \end{align*}
Because the coefficient sheaf is killed by $I$, it is the pushforward of its pullback along the closed immersion $\XX_0\to \XX_n$. Thus, 
\begin{align*}
H &\cong
 H^1(\XX_0; I^n\OO_\XX/I^{n+1}\OO_\XX\tensor_{\OO_{\XX_0}} \mathcal{H}om_{\XX_0}(\FF_0,\GG_0)) \\
 &\cong H^1(\XX_0; I^n/I^{n+1}\tensor_{A/I} \mathcal{H}om_{\XX_0}(\FF_0,\GG_0)) \\
 &\cong I^n/I^{n+1} \tensor_{A/I} H^1(\XX_0; \mathcal{H}om_{\XX_0}(\FF_0,\GG_0))  \\
 &\cong 0.
\end{align*}
Thus, we get a compatible system of morphisms $(f_n)_{n\in\N_0}$. All of these are isomorphisms by the Nakayama lemma. Thus, $(f_n)_{n\in\N_0}$ defines an isomorphism of adic systems and thus one of completions by \cite{Con}.
\end{proof}

\begin{proof}[Proof of theorem:]
We first want to argue that $\EE$ is the restriction of a vector bundle on $\Xb$. Indeed, by Lemma 3.2 from \cite{Mei13}, there is a reflexive sheaf $\EE'$ on $\Xb$ restricting to $\EE$. Choose a surjective \'etale map $U \to \Xb$ from a scheme $U$. By Proposition \ref{prop:basicprops}, the scheme $U$ is smooth of relative dimension $1$ over $\Z_{(l)}$ and thus a regular scheme of dimension $2$. By \cite[Cor 1.4]{Har80}, the pullback of $\EE'$ to $U$ is a vector bundle and thus $\EE'$ itself as well. 

 By Proposition \ref{prop:split}, it follows that $g_*\EE$ and a vector bundle $\FF$ of the form 
 $$\bigoplus_{i\in I} f_*\OO_{\MM'} \tensor \omega^{\otimes n_i}$$ 
 are isomorphic after base change to $\F_l$. Set $\EE_0 = \EE|_{\XX_{\F_l}}$ and $\FF_0 = \FF|_{\MM_{ell, \F_l}}$. By Lemma \ref{lem:pushpull} and the last proposition, it remains to show that 
 $$H^1(\MM_{ell, \F_l}; \mathcal{H}om(\FF_0, g_*\EE_0)) = 0.$$
 We have
 \begin{align*}
  H^1(\MM_{ell, \F_l}; \mathcal{H}om(\FF_0, g_*\EE_0)) &\cong H^1(\MM_{ell,\F_l}; g_* \mathcal{H}om(g^*\FF_0, \EE_0)) \\
  &\cong H^1(\XX_{\F_l}; \mathcal{H}om(g^*\FF_0, \EE_0))= 0
 \end{align*}
 as the cohomology of every quasi-coherent sheaf on $\XX_{\F_l}$ vanishes by Proposition \ref{prop:basicprops}. 
\end{proof}

We can be more explicit in many cases, using the following two lemmas.
\begin{lemma}\label{lem:trivial}
 The line bundle $\omega$ is trivial on $\MM_1(n)$ for $n\geq 4$ and non-trivial for $n=2,3$.
\end{lemma}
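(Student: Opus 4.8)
The plan is to reformulate triviality of $\omega$ as the existence of a canonical invariant differential. Recall from Section \ref{sec:ModularCurves} that $\omega$ is the line bundle on $\MM_1(n)$ associated to the $\Gm$-torsor whose total space is the moduli stack of triples $(E,P,\omega_0)$ consisting of an elliptic curve $E$, a point $P$ of exact order $n$, and an invariant differential $\omega_0$. Hence $\omega$ is trivial if and only if this torsor admits a section, i.e.\ if and only if one can assign, functorially in the base and compatibly with base change, a nowhere-vanishing invariant differential to every pair $(E,P)$.

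For $n\geq 4$ I would build such a section from the (inhomogeneous) Tate normal form. Since $P$ has order $\geq 4$, the pair $(E,P)$ admits, over an arbitrary base, a unique presentation $y^2+(1-c)xy-by=x^3-bx^2$ with $P=(0,0)$; this is the rigidification of the homogeneous Tate normal form already invoked in Examples \ref{exa:moduli} (see \cite[Section 4.4]{HusElliptic}). The associated differential $\frac{dx}{2y+(1-c)x-b}$ is then a canonical nowhere-vanishing section of $\omega$, providing the trivialization. In the case $n=4$ one can see this directly from the presentation $\MM_1(4)\simeq(\Spec\Z[\tfrac12][a_1,a_2,\Delta^{-1}])/\Gm$ with $\Delta=a_1^2a_2^4(a_1^2-16a_2)$: there $\omega\cong\OO(1)$, and $a_1$ is a homogeneous unit of weight $1$ because $a_1^2$ divides the inverted discriminant, so multiplication by $a_1$ gives an isomorphism $\OO\cong\omega$.

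For the non-triviality at $n=2,3$ I would use the presentations from Examples \ref{exa:moduli} together with the dictionary (Section \ref{sec:weighted}) between line bundles on a quotient $(\Spec A)/\Gm$ and graded invertible $A$-modules, under which $\omega$ corresponds to $\OO(1)=A[1]$; thus $\omega$ is trivial exactly when $A$ contains a unit of weight $1$. For $n=2$ we have $A=\Z[\tfrac12][b_2,b_4][\Delta^{-1}]$ with $|b_2|=2$ and $|b_4|=4$, so $A$ is concentrated in even weights; then $A_1=0$, the bundle $\omega$ has no nonzero global section at all, and in particular cannot be trivial. For $n=3$ we have $A=\Z[\tfrac13][a_1,a_3][\Delta^{-1}]$ with $|a_1|=1$, $|a_3|=3$ and $\Delta=a_3^3(a_1^3-27a_3)$; a weight count modulo $3$ shows that every homogeneous element of weight $1$ is divisible by $a_1$. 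Since $a_1$ vanishes at the geometric point $[a_1:a_3]=[0:1]$, which lies in $\MM_1(3)$ because there $\Delta=-27a_3^4\neq 0$, no element of weight $1$ can be a unit, and $\omega$ is non-trivial.

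The main obstacle is the first step: quoting the Tate normal form, and with it the canonical differential, over an arbitrary base rather than merely over a field, and checking that order $\geq 4$ is precisely what rigidifies the residual $\Gm$-ambiguity so that the normalization is unique. This input is classical, but needs to be cited with care; granting it, the triviality for $n\geq 4$ is immediate and the non-triviality for $n=2,3$ reduces to the elementary graded-ring computations above.
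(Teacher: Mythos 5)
Your proposal is correct and follows essentially the same route as the paper: non-triviality for $n=2,3$ via the graded presentations underlying Examples \ref{exa:moduli}, and triviality for $n\geq 4$ via a canonical Kubert--Tate normal form whose invariant differential trivializes $\omega$. The one step you defer to the literature---that a point of exact order $\geq 4$ rigidifies the normal form over an arbitrary base---is precisely what the paper proves by hand: it puts the universal curve in Tate normal form only Zariski locally, shows by the coordinate-change computation ($x=u^2x'$, $y=u^3y'+u^2sx'$ with $sb=u^4a_4'=0$ forcing $s=0$ and then $u=1$) that no non-trivial change preserves the shape of the equation and the point $(0,0)$, and uses this uniqueness to glue the local forms into a global Weierstrass equation.
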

\begin{proof}
 The non-triviality is known by Example \ref{exa:moduli}. Now assume that $n\geq 4$. Then $\MM_1(n)$ is an affine scheme by Proposition \ref{prop:basicprops} and integral by Proposition \ref{prop:irreducible}. Indeed, $\MMb_1(n)$ is smooth over $\Spec \Z[\frac1n]$ and connected (as it is connected after base change to $\C$). 
 
 Zariski locally, we can write the universal elliptic curve $E$ over $\MM_1(n)$ first in Weierstra{\ss} form and then transform it to Kubert--Tate normal form 
 $$y^2+(1-c)xy -by = x^3-bx^2$$
 with $b\neq 0$ everywhere and $(0,0)$ as the chosen $n$-torsion point (see \cite[Section 4.4]{HusElliptic} or \cite[Section 1]{B-O16}). We claim that there can be no non-trivial coordinate change fixing $(0,0)$ and the form of the equation. Indeed, the only coordinate change fixing $(0,0)$ is $x = u^2x'$ and $y = u^3y'+u^2sx'$ (see \cite[III.1]{Sil09} for the coordinate change formulas). We have 
 $sb = u^4a_4' = 0$. Thus, $s=0$. Furthermore, we have 
 $$u^2a_2' = -b =u^3a_3'$$ 
 and $a_2' = a_3'$. Thus, $u=1$. This implies that we can find a global Weierstra{\ss} form (of the form above) for $E$. Its invariant differential provides a trivialization of $\omega$ on $\MM_1(n)$. 
\end{proof}
If we had a Krull--Schmidt theorem for $\MM_{ell}$ or $\widehat{\MM}_{ell,l}$, this would directly imply a periodicity in the decomposition of $g_*\OO_{\MM_1(n)}$. Due to Example \ref{exa:KS}, we have to be more careful though and formulate a weaker substitute. To do this, note first that for $l=2$ or $3$ we have
$$f_*\OO_{\MM'} \cong f_*\OO_{\MM'} \otimes \omega^{\tensor (l+1)}.$$
Indeed, recall that $$\MM_1(3) \simeq \Spec \Z[\tfrac13][a_1,a_3,\Delta^{-1}]/\G_m$$
with $\Delta = a_3^3(a_1^3-27a_3)$ (\cite[Prop 3.2]{M-R09}); in this case $a_3$ defines the required trivialization of $\omega^{\tensor 3}$ on $\MM_1(3)$. Furthermore recall that 
$$\MM_1(2) \simeq \Spec \Z[\tfrac12][b_2,b_4,\Delta^{-1}]/\G_m$$
with $\Delta = 16b_4^2(b_2^2-4b_4)$ (\cite[Sec 1.3]{Beh06}); in this case $b_4$ defines the required trivialization of $\omega^{\tensor 4}$ on $\MM_1(2)$.

\begin{lemma}
 Let $l=2$ or $3$. Then 
 $$\bigoplus_{[k] \in \Z/(l+1)}\bigoplus_{m_k} f_*\OO_{\MM'_S}\tensor \omega^{\tensor k} \cong \bigoplus_{[k] \in \Z/(l+1)}\bigoplus_{m_k'} f_*\OO_{\MM'_S}\tensor \omega^{\tensor k}$$
 on $\MM_{ell,S}$ for $S$ a (formal) scheme with an $\Fb_l$-point implies $m_k = m'_k$ for all $[k]\in\Z/(l+1)$. 
\end{lemma}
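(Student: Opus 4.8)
The plan is to pull the alleged isomorphism all the way back to the residual gerbe of a supersingular curve, where the question becomes one about representations of a finite group and the Krull--Schmidt theorem is available.

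Concretely, let $E_0/\Fb_l$ be the supersingular curve used in the proof of Lemma \ref{lem:indec}, so that $G := \Aut(E_0)$ is the constant group $SL_2(\F_3)$ of order $24$ when $l=2$, and the dicyclic group of order $12$ (generated by $a\colon (x,y)\mapsto(x+1,y)$ and $b\colon(x,y)\mapsto(-x,iy)$ on $y^2=x^3-x$, for $i\in\Fb_3$ a primitive fourth root of unity) when $l=3$. The curve together with these automorphisms defines a map $\Spec\Fb_l/G \to \MM_{ell,\Fb_l}$, and composing with $\MM_{ell,\Fb_l}\to\MM_{ell,S}$ (from the chosen $\Fb_l$-point of $S$) gives a map along which I would pull back the given isomorphism. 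By Lemma \ref{lem:pushpull}, $f_*\OO_{\MM'_S}$ pulls back to the permutation module $V=\Fb_l[X]$, where $X$ is the finite set of level structures on $E_0$ (the points of exact order $3$ if $l=2$, of exact order $2$ if $l=3$), while $\omega$ pulls back to the character $\chi\colon G\to\Fb_l^\times$ by which $G$ acts on the cotangent line of $E_0$. Thus the summand $f_*\OO_{\MM'_S}\tensor\omega^{\tensor k}$ becomes $V\tensor\chi^k$, and it suffices to show that the modules $V\tensor\chi^k$, for $[k]\in\Z/(l+1)$, are pairwise non-isomorphic indecomposable $\Fb_l[G]$-modules: since $\Spec\Fb_l/G$ is a proper stack over $\Fb_l$, Proposition \ref{prop:Krull-Schmidt} then forces $m_k=m_k'$.

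Indecomposability is precisely the argument already made in Lemma \ref{lem:indec}: restricting to a Sylow $l$-subgroup $P$ (the quaternion group $Q_8$ for $l=2$, the subgroup $\langle a\rangle\cong\Z/3$ for $l=3$), the set $X$ is a free transitive $P$-set, so $V|_P\cong\Fb_l[P]$ is the regular representation, which is indecomposable in characteristic $l$; hence $V$ and all its twists $V\tensor\chi^k$ are indecomposable. For the pairwise non-isomorphism I would restrict instead to a cyclic subgroup $C\subseteq G$ of order $l+1$ on which $\chi$ has full order $l+1$ (an order-$3$ element mapping to a generator of $SL_2(\F_3)^{\mathrm{ab}}\cong\Z/3$ for $l=2$; the element $b$, which scales the invariant differential by $i$, for $l=3$). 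As $|C|=l+1$ is prime to $l$, the module $V|_C$ is a sum of characters with some multiplicity vector $(a_\psi)_{\psi\in\widehat C}$, and $(V\tensor\chi^k)|_C$ has multiplicity vector the cyclic shift of $(a_\psi)$ through $\chi^k|_C$. Computing $(a_\psi)$ from the $C$-orbit structure on $X$ gives $(4,2,2)$ (with respect to $\widehat C\cong\Z/3$) for $l=2$ and $(2,0,1,0)$ (with respect to $\widehat C\cong\Z/4$) for $l=3$; in both cases the $l+1$ cyclic shifts are visibly pairwise distinct, so the $V\tensor\chi^k$ are pairwise non-isomorphic.

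The main obstacle is exactly this last step: one must show that twisting by $\omega$ genuinely changes the isomorphism type at the gerbe, which reduces to the fact that the automorphisms of the supersingular curve act on the invariant differential through a character of order exactly $l+1$, while acting on the finite set $X$ of level structures with a fixed-point pattern that is \emph{not} uniformly distributed over $\widehat C$ (so that its cyclic shifts are distinct). Everything else is bookkeeping via Lemma \ref{lem:pushpull} and the Krull--Schmidt theorem.
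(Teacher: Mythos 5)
Your proof is correct, and it is close in spirit to the paper's -- both reduce, via Lemma \ref{lem:pushpull}, to representation theory at a supersingular curve over $\Fb_l$, using the very same curves $y^2+y=x^3$ (for $l=2$) and $y^2=x^3-x$ (for $l=3$) -- but the mechanism that forces $m_k=m_k'$ is genuinely different. The paper pulls back only to $\Spec\Fb_l/C$ for the cyclic group $C$ of order $l+1$ (generated by your element $s$, resp.\ $b$), where everything is semisimple: the pullback of $f_*\OO_{\MM'}\tensor\omega^{\tensor k}$ has character-multiplicity vector the $k$-fold shift of $(4,2,2)$, resp.\ $(2,0,1,0)$, so the total multiplicities satisfy $n_k=4m_k+2m_{k+1}+2m_{k+2}$, resp.\ $n_k=2m_k+m_{k+2}$, and the paper concludes by explicitly inverting this circulant linear system, $m_k=\tfrac38 n_k-\tfrac18 n_{k+1}-\tfrac18 n_{k+2}$, resp.\ $m_k=\tfrac23 n_k-\tfrac13 n_{k+2}$; no indecomposability statement is used anywhere. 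You instead pull back to the full automorphism group $G$ ($SL_2(\F_3)$ of order $24$, resp.\ the dicyclic group of order $12$), prove the summands $V\tensor\chi^k$ are indecomposable by restricting to a Sylow $l$-subgroup acting freely and transitively on the level structures -- which is precisely the argument of Lemma \ref{lem:indec}, and is legitimate here since $\chi$ is trivial on that Sylow subgroup -- prove they are pairwise non-isomorphic by your shift computation on $C$, and then invoke Krull--Schmidt (for $\Fb_l[G]$-modules, or via Proposition \ref{prop:Krull-Schmidt} applied to the proper stack $\Spec\Fb_l/G$). What your route buys: the conclusion falls straight out of Krull--Schmidt with no linear system to invert, you only need distinctness of the $l+1$ cyclic shifts (a strictly weaker numerical fact than invertibility of the circulant matrix -- compare $(1,1,0,0)$, whose shifts are distinct but whose circulant is singular), and you obtain the stronger statement that the $V\tensor\chi^k$ are pairwise non-isomorphic indecomposables. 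What the paper's route buys: it never needs the full automorphism group nor any modular (non-semisimple) representation theory, only ordinary character multiplicities for a cyclic group of order prime to $l$.
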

\begin{proof}
We can assume $S = \Spec \overline{\F}_l$ and we will leave it implicit in the notation. 

 Consider first $l=3$ with $\MM' = \MM_1(2)$. Let $E$ be given by the equation $y^2 = x^3-x$ over $\overline{\F}_3$ and let $G= \langle t\rangle$ be the group of order $4$ generated by  the automorphism
 $$t\colon (x,y) \mapsto (-x, iy)$$
 of $E$, where $i\in\overline{\F}_3$ is a chosen primitive fourth root of unity. Let 
 $$h\co \Spec \overline{\F}_3/G \to \MM_{ell}$$
 be the map classifying this elliptic curve. The $G$-representation $P$ over $\overline{\F}_3$ corresponding to $h^*f_*\OO_{\MM_1(2)}$ is the $3$-dimensional permutation representation associated to the action of $G$ on the points of exact order $2$. Our next goal is to compute $h^*(f_*\OO_{\MM_1(2)}\tensor \omega^{\tensor k})$ for $0\leq k \leq 3$.  
 
 The curve $E$ has an invariant differential $\frac{dx}{2y}$ and $t$ acts on it as multiplication by $i$. Thus, $h^*\omega^{\tensor k}$ corresponds to the $1$-dimensional representation $L_k$ where $t$ acts as $i^k$. The points of exact order $2$ of $E$ are $(0,0), (1,0)$ and $(-1,0)$. Thus, the $G$-representation $P$ splits as $L_0 \oplus L_0 \oplus L_2$, where the summands are spanned by $(0,0), (1,0)+(-1,0)$ and $(1,0)-(-1,0)$. The vector bundle $h^*(f_*\OO_{\MM_1(2)}\tensor \omega^{\tensor k})$ corresponds to the representation $L_k \oplus L_k \oplus L_{k+2}$. 
 
 Let now 
 $$\EE \cong \bigoplus_{[k] \in \Z/4} \bigoplus_{m_k} f_*\OO_{\MM_1(2)}\tensor \omega^{\tensor k}.$$
  Then $h^*\EE$ decomposes as $\bigoplus_{[k]\in\Z/4} L_k^{\oplus n_k}$ with $n_k = 2m_k+ m_{k+2}$. By the Krull--Schmidt theorem for $G$-representations over $\overline{\F}_3$, the sequence of $n_k$ is uniquely determined by $h^*\EE$. We can recover the $m_k$ by the formula $m_k = \frac23 n_k -\frac13 n_{k+2}$.
 
 Consider now $l=2$ with $\MM' =\MM_1(3)$. Let $E$ be given by the equation $y^2 +y = x^3$ over $\overline{\F}_2$ (as in Lemma \ref{lem:indec}) and let $G = \langle s\rangle$ be the group of order $3$ generated by the automorphism  
 $$s\colon (x,y) \mapsto (\zeta x, y)$$
 for a third root of unity $\zeta \in \overline{\F}_2$.  
 Let 
 $$h\co \Spec \overline{\F}_2/G \to \MM_{ell}$$
 be the map classifying this elliptic curve. 
 The $G$-representation $P$ over $\overline{\F}_l$ corresponding to $h^*f_*\OO_{\MM_1(3)}$ is the $8$-dimensional permutation representation associated to the action of $G$ on the points of exact order $3$. 
 
 The curve $E$ has invariant differential $dx$ and $s$ acts on it as multiplication by $\zeta$. Thus, $h^*\omega^{\tensor k}$ corresponds to the $1$-dimensional representation $L_k$ where $t$ acts as $\zeta^k$. The points of exact order $3$ of $E$ are 
$$(0,0), (0,1), (1,\zeta), (1,\zeta^2), (\zeta,\zeta), (\zeta, \zeta^2), (\zeta^2, \zeta), (\zeta^2,\zeta^2).$$
Thus, the $G$-representation $P$ splits as $L_0^{\oplus 4} \oplus L_1^{\oplus 2}\oplus L_2^{\oplus 2}$, where the summands are spanned by 
$$(0,0), (0,1), (1,\zeta) + (\zeta,\zeta)+(\zeta^2,\zeta), (1,\zeta^2) + (\zeta,\zeta^2)+(\zeta^2,\zeta^2), (1,\zeta) + \zeta^2(\zeta,\zeta)+\zeta(\zeta^2,\zeta),\dots$$
 The vector bundle $h^*(f_*\OO_{\MM_1(3)}\tensor \omega^{\tensor k})$ corresponds to the representation $L_k^{\oplus 4} \oplus L_{k+1}^{\oplus 2} \oplus L_{k+2}^{\oplus 2}$. 
 
  Let now 
 $$\EE \cong \bigoplus_{[k] \in \Z/3} \bigoplus_{m_k} f_*\OO_{\MM_1(3)}\tensor \omega^{\tensor k}.$$
  Then $h^*\EE$ decomposes as $\bigoplus_{[k]\in\Z/3} L_k^{\oplus n_k}$ with $n_k = 4m_k+ 2m_{k+1}+2m_{k+2}$. By the Krull--Schmidt theorem, the sequence of $n_k$ is uniquely determined by $h^*\EE$. We can recover the $m_k$ by the formula $m_k = \frac38 n_k -\frac18 n_{k+1}-\frac18n_{k+2}$.
\end{proof}

\begin{cor}\label{cor:equal}
 Let $\XX = \widehat{\MM_1(n)}_l$ or $\widehat{\MM(n)}_l$ for $n\geq 4$. Then $g_*\OO_{\widehat{\XX}_l}$ decomposes into copies of $f_*\OO_{\widehat{\MM'}_l} \oplus \cdots \oplus (f_*\OO_{\widehat{\MM'}_l}\otimes \omega^{\tensor l})$ for $l=2,3$. 
\end{cor}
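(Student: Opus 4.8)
The plan is to start from the decomposition already supplied by Theorem \ref{thm:decuncompact}, which splits $g_*\OO_{\widehat{\XX}_l}$ on $\widehat{\MM}_{ell,l}$ into summands of the form $f_*\OO_{\widehat{\MM'}_l}\tensor\omega^{\tensor i}$. Using the periodicity $f_*\OO_{\MM'}\cong f_*\OO_{\MM'}\tensor\omega^{\tensor(l+1)}$ recorded just above, I would group these summands by the residue of $i$ modulo $l+1$, so that
\[
 g_*\OO_{\widehat{\XX}_l}\cong\bigoplus_{[k]\in\Z/(l+1)}\ \bigoplus_{m_k} f_*\OO_{\widehat{\MM'}_l}\tensor\omega^{\tensor k}
\]
for some multiplicities $m_k$. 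The assertion of the corollary is exactly that these multiplicities agree, so the whole task reduces to proving $m_0=m_1=\dots=m_l$.

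The observation I would exploit is that $g^*\omega$ is trivial on $\XX$. For $\XX=\MM_1(n)$ with $n\geq 4$ this is precisely Lemma \ref{lem:trivial}; for $\XX=\MM(n)$ with $n\geq 4$ I would remember only one basis vector of the level structure to produce a map $\MM(n)\to\MM_1(n)$ over $\MM_{ell}$, along which the trivialization of $g^*\omega$ pulls back. Granting this, the projection formula for the finite affine map $g$ yields, already on $\MM_{ell,\Z_{(l)}}$ and hence after $l$-completion,
\[
 g_*\OO_{\XX}\tensor\omega\cong g_*\bigl(\OO_{\XX}\tensor g^*\omega\bigr)\cong g_*\OO_{\XX}.
\]

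Finally I would compare multiplicities. Tensoring the displayed decomposition with $\omega$ sends $f_*\OO_{\MM'}\tensor\omega^{\tensor k}$ to $f_*\OO_{\MM'}\tensor\omega^{\tensor(k+1)}$, so the class $f_*\OO_{\widehat{\MM'}_l}\tensor\omega^{\tensor k}$ appears in $g_*\OO_{\widehat{\XX}_l}\tensor\omega$ with multiplicity $m_{k-1}$. Since that sheaf is isomorphic to $g_*\OO_{\widehat{\XX}_l}$ by the previous step, the uniqueness of multiplicities established in the preceding lemma (which applies because $\widehat{\MM}_{ell,l}$ carries an $\Fb_l$-point) forces $m_{k-1}=m_k$ for every $[k]\in\Z/(l+1)$. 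Hence all $m_k$ equal a common value $m$, and $g_*\OO_{\widehat{\XX}_l}$ is a sum of $m$ copies of $f_*\OO_{\widehat{\MM'}_l}\oplus\cdots\oplus(f_*\OO_{\widehat{\MM'}_l}\tensor\omega^{\tensor l})$, as claimed. The steps are all soft, and the only delicate points I anticipate are bookkeeping ones: verifying that the triviality of $g^*\omega$ and the uniqueness of multiplicities survive passage to the $l$-completion, and handling the $\MM(n)$ case correctly through the forgetful map; the rest is the projection-formula and Krull--Schmidt argument just sketched.
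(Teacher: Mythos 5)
Your proposal is correct and follows essentially the same route as the paper: decompose via Theorem \ref{thm:decuncompact}, group the summands modulo $l+1$ using the periodicity $f_*\OO_{\MM'} \cong f_*\OO_{\MM'}\tensor\omega^{\tensor(l+1)}$, use the triviality of $g^*\omega$ and the projection formula to get $g_*\OO_\XX \cong g_*\OO_\XX \tensor \omega$, and conclude from the uniqueness-of-multiplicities lemma applied with $S = \Spf \Z_l$. In fact you treat one point more carefully than the paper does: its appeal to Lemma \ref{lem:trivial} literally covers only $\MM_1(n)$, whereas you supply the forgetful map $\MM(n)\to\MM_1(n)$ over $\MM_{ell}$ along which the trivialization of $g^*\omega$ pulls back, which is exactly the missing detail for the $\MM(n)$ case.
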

\begin{proof}
We will implicitly complete everywhere at $l$. By Theorem \ref{thm:decuncompact}, we can write
 $$g_*\OO_\XX \cong \bigoplus_{[k] \in \Z/(l+1)} \bigoplus_{m_k} f_*\OO_{\MM'}\tensor \omega^{\tensor k}.$$
 We know that $g^*\omega \cong \OO_\XX$ by Lemma \ref{lem:trivial}. This implies that $g_*\OO_\XX \cong g_*\OO_\XX \otimes \omega$ and thus that 
 $$\bigoplus_{[k] \in \Z/(l+1)} \bigoplus_{m_k} f_*\OO_{\MM'}\tensor \omega^{\tensor k} \cong \bigoplus_{[k] \in \Z/(l+1)} \bigoplus_{m_{k-1}} f_*\OO_{\MM'}\tensor \omega^{\tensor k}.$$
 The previous lemma implies the result (taking $S = \Spf \Z_l$).
\end{proof}

\subsection{Working integrally and compactified}
Consider again the diagram
\[\xymatrix{\Yb = \XXb \times_{\MMb_{ell}}\MMb' \ar[d]^{f'}\ar[r]^-  {g'} & \MMb' \ar[d]^f \\
\XXb \ar[r]^{g} & \MMb_{ell}
 }
\]
where $\Xb$ is as in Convention \ref{convention} with $R = \Z_{(l)}$ and $\MMb'$ as in Section \ref{sec:workingfield}.

\begin{thm}\label{thm:deccompact}
 Assume that $H^1(\Xb; g^*\omega)$ has no $l$-torsion. Then $g_*\OO_{\XXb}$ decomposes into a sum of vector bundles of the form $f_*\OO_{\MMb'}\tensor \omega^{\tensor i}$.
\end{thm}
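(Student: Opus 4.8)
The plan is to read off the summands from the special fibre and then lift a single isomorphism to $\Z_{(l)}$, using properness of $\MMb_{ell,(l)}$ to upgrade a fibrewise isomorphism to a global one. Write $\VV = g_*\OO_{\XXb}$, which is a vector bundle on $\MMb_{ell,(l)}$ since $g$ is finite and flat, and abbreviate $\BB_i = f_*\OO_{\MMb'}\tensor\omega^{\tensor i}$ for the allowed summands. By Lemma \ref{lem:pushpull} the restriction $\VV_{\F_l}$ is $(g_{\F_l})_*\OO_{\XXb_{\F_l}}$, which by Proposition \ref{prop:split} decomposes as $\bigoplus_i \BB_{i,\F_l}^{\oplus a_i}$ for suitable multiplicities $a_i$. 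I set $\WW = \bigoplus_i \BB_i^{\oplus a_i}$, so that $\WW_{\F_l}\cong\VV_{\F_l}$ and in particular $\rk\WW = \rk\VV$.

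The heart of the argument is the claim that $H^1(\MMb_{ell,(l)};\WW^\vee\tensor\VV)$ is a free $\Z_{(l)}$-module and that all higher cohomology vanishes. Because $g$ is affine, the projection formula $\GG\tensor g_*\OO_{\XXb}\cong g_*(g^*\GG)$ together with exactness of $g_*$ gives
\[ H^*(\MMb_{ell,(l)};\BB_i^\vee\tensor\VV)\cong H^*(\XXb;\,(g^*f_*\OO_{\MMb'})^\vee\tensor g^*\omega^{\tensor -i}). \]
By Lemma \ref{lem:splitting} the bundle $g^*f_*\OO_{\MMb'}$ is a direct summand of a finite sum of line bundles $g^*\omega^{\tensor n_j}$, so the right-hand group is a direct summand of $\bigoplus_j H^*(\XXb; g^*\omega^{\tensor(-i-n_j)})$. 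By Proposition \ref{prop:basicprops} these vanish in degrees $\geq 2$; by Proposition \ref{prop:coh} the groups $H^0(\XXb;g^*\omega^{\tensor m})$ and $H^1(\XXb;g^*\omega^{\tensor m})$ with $m\neq 1$ are finitely generated and torsion-free, hence free over $\Z_{(l)}$; and the hypothesis that $H^1(\XXb;g^*\omega)$ has no $l$-torsion supplies freeness in the remaining case $m=1$. Since a direct summand of a finite free module over the PID $\Z_{(l)}$ is free, the claim follows.

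With this in hand the conclusion is quick. As $H^{\geq 2}(\MMb_{ell,(l)};\WW^\vee\tensor\VV)=0$ and $H^1$ is flat, cohomology and base change (Lemma \ref{lem:basechange} and the following remark, applied with $p=0$) shows that the reduction map $\Hom(\WW,\VV)\to\Hom(\WW_{\F_l},\VV_{\F_l})$ is surjective. I lift the isomorphism $\WW_{\F_l}\cong\VV_{\F_l}$ to a morphism $\phi\colon\WW\to\VV$ with $\phi_{\F_l}$ an isomorphism. Since $\WW$ and $\VV$ have the same rank $r$, the determinant $\det\phi$ is a section of the line bundle $\det\WW^\vee\tensor\det\VV$, and $\phi$ is an isomorphism precisely on the complement of its vanishing locus $Z$. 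As $\phi_{\F_l}$ is an isomorphism, $Z$ is disjoint from the special fibre; and as $\MMb_{ell,(l)}$ is proper over $\Z_{(l)}$, the image of the closed substack $Z$ in $\Spec\Z_{(l)}$ is closed and avoids the closed point, hence is empty. Therefore $Z=\emptyset$, the map $\phi$ is an isomorphism, and $\VV\cong\WW=\bigoplus_i\BB_i^{\oplus a_i}$.

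The main obstacle is exactly the freeness of $H^1(\MMb_{ell,(l)};\WW^\vee\tensor\VV)$: this is where the hypothesis on the $l$-torsion in $H^1(\XXb;g^*\omega)$ is consumed, through the weight-one summand, and without it the base-change isomorphism for $\Hom$ can fail, so that the special-fibre isomorphism need not lift. For $l>3$ the whole argument can be bypassed, since then $\MMb_{ell,(l)}\simeq\PP_{\Z_{(l)}}(4,6)$ is a weighted projective line and $\VV$ splits directly by Theorem \ref{thm:VectorBundle}, its cohomology being free for exactly the same reasons.
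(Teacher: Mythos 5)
Your proof is correct, and it takes a genuinely different route from the paper's. The paper never compares $\WW$ and $\VV$ on $\MMb_{ell,(l)}$ directly: it forms the fibre product $\Yb = \XXb\times_{\MMb_{ell}}\MMb'$, proves that $(g')_*\OO_{\Yb}$ splits into line bundles on the weighted projective line $\MMb'$ by verifying the freeness hypotheses of Theorem \ref{thm:VectorBundle} (this is where Lemma \ref{lem:splitting}, Proposition \ref{prop:coh} and the $l$-torsion hypothesis enter), lifts the adjoint of the mod-$l$ isomorphism over $\MMb'$, where surjectivity of reduction is immediate from the cohomology of the $\OO(m)$, and then transports the lift back through adjunction, finishing with the same properness argument after pulling back to the fpqc cover $\MMb_1(5)\simeq\mathbb{P}^1_{\Z_{(l)}}$. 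You bypass $\Yb$, the adjunctions and Theorem \ref{thm:VectorBundle} entirely and instead lift the isomorphism in one step by controlling $\Hom$-cohomology on $\MMb_{ell,(l)}$; the essential cohomological input (projection formula reducing everything to $\XXb$, Lemma \ref{lem:splitting}, Proposition \ref{prop:coh}, and the hypothesis for the weight-one case) is identical in the two proofs. Your route is more economical --- no classification of vector bundles over normal rings is needed, and the determinant-plus-properness step runs on the stack itself --- while the paper's route keeps every lifting step over a weighted projective line, where the base-change and reduction statements it uses are literally the ones proved in Section \ref{sec:weighted}.

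That last point is where your write-up has its only real defect: you invoke ``Lemma \ref{lem:basechange} and the following remark'' for $X = \MMb_{ell,(l)}$, but for $l=2,3$ this stack is neither a weighted projective stack nor a scheme, and quasi-coherent cohomology on it is unbounded (the descent spectral sequence for $Tmf_{(l)}$ has classes in arbitrarily high filtration), so no finite \v{C}ech-type argument applies to it wholesale. The statement you need is nevertheless true for the specific sheaf $\WW^\vee\tensor\VV$: your own chain of isomorphisms exhibits it as a direct sum of sheaves $g_*\GG_i$ with $g$ affine, pushforward along $g$ preserves cohomology and commutes with arbitrary base change by Lemma \ref{lem:pushpull}, and so the required base-change isomorphism takes place on $\XXb$ --- a projective $\Z_{(l)}$-scheme in the representable case, and a weighted projective line (hence covered by Lemma \ref{lem:basechange}) in the remaining cases of Examples \ref{exa:moduli}. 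With that substitution your proof is complete.
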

\begin{proof}
 By Proposition \ref{prop:split}, there is an isomorphism
 \[\overline{h}\colon g_*\OO_{\XXb_{\F_l}} \to f_*\FF_{\F_l}\]
 for some $\FF = \bigoplus_{i\in\Z}\bigoplus_{k_i} f^* \omega^{\tensor i}$. By adjunction, this corresponds to a morphism
 \[\overline{\varphi}\colon (g')_*\OO_{\Yb_{\F_l}} \cong f^*g_*\OO_{\XXb_{\F_l}} \to \FF_{\F_l},\]
 where we use Lemma \ref{lem:pushpull} again. By Proposition \ref{prop:basicprops}, $g$ and hence $g'$ is finite and flat and thus $(g')_*\OO_{\Yb}$ is a vector bundle. We want to show that it decomposes as a sum $\GG = \bigoplus_{i\in \Z}\bigoplus_{n_i}f^*\omega^{\tensor i}$ for some $n_i$. As $\MMb'$ is a weighted projective line by Examples \ref{exa:moduli}, Proposition \ref{thm:VectorBundle} and the flatness of $\MMb'$ over $\Z_{(l)}$ imply that it is enough to show that $H^1(\MMb';(g')_*\OO_{\Yb}\tensor f^*\omega^{\tensor j})$ is a free $\Z_{(l)}$-module. We have the following chain of isomorphisms:
 \begin{align*}
  H^1(\MMb';(g')_*\OO_{\Yb}\tensor f^*\omega^{\tensor j}) &\cong H^1(\MMb_{ell};f_*(g')_*(g')^*f^*\omega^{\tensor j}) \\
  &\cong H^1(\MMb_{ell};g_*(f')_*(g')^*f^*\omega^{\tensor j}) \\
  &\cong  H^1(\MMb_{ell};g_*g^*f_*f^*\omega^{\tensor j})\\
  &\cong H^1(\Xb;g^*f_*f^*\omega^{\tensor j})\\
  &\cong H^1(\Xb;(g^*f_*\OO_{\MMb'}) \tensor g^*\omega^{\tensor j})
 \end{align*}
By Lemma \ref{lem:splitting}, the last group is a direct summand of terms of the form $H^1(\Xb; g^*\omega^{\tensor n})$. This is $\Z$-torsionfree by assumption for $n=1$ and else as in Proposition \ref{prop:coh}. Thus, $(g')_*\OO_{\Yb}$ is of the form $\GG$.
 
 We claim that the morphism
 \[\Hom_{\OO_{\MMb'}}(\GG, \FF) \to \Hom_{\OO_{\MMb'_{\F_l}}}(\GG_{\F_l}, \FF_{\F_l})\]
 is surjective. This surjectivity follows from 
$$H^0(\MMb'; f^*\omega^{\tensor i}) \to H^0(\MMb'_{\F_l}; f^*\omega^{\tensor i}_{\F_l}) \cong H^0(\MMb'_{\F_l}; f^*\omega^{\tensor i}/l)$$ being surjective, which in turn is true by the long exact cohomology sequence associated with
$$0 \to f^*\omega^{\tensor i} \xrightarrow{l} f^*\omega^{\tensor i} \to f^*\omega^{\tensor i}/l\to 0$$
and the fact that $H^1(\MMb'; f^*\omega^{\tensor i})$ has no $l$-torsion. 
 
 Thus, we can choose a map $(g')_*\OO_{\Yb} \to \FF$, which reduces to $\overline{\varphi}$ mod $l$. By tracing through the adjunctions, this corresponds to a map $h\colon g_*\OO_{\XXb} \to f_*\FF$ whose restriction to $\MMb_{ell,\F_l}$ agrees with $\overline{h}$.  To show that $h$ is an isomorphism it is enough to check this after pullback to an fpqc cover, e.g.\ $\MMb_1(5)$ or $\MMb_1(6)$, which are isomorphic to $\mathbb{P}^1_{\Z_{(l)}}$ (see Example \ref{exa:5-12}). Thus, $h$ is a morphism between vector bundles on $\mathbb{P}^1_{\Z_{(l)}}$ whose restriction to $\mathbb{P}^1_{\F_l}$ is an isomorphism. We know that $h$ is an isomorphism on an open subset of $\mathbb{P}^1_{\Z_{(l)}}$ that contains the special fiber; its complement is a closed subset $A$. The image of $A$ under $\mathbb{P}^1_{\Z_{(l)}} \to \Spec \Z_{(l)}$ is closed and thus empty as it cannot contain the closed point. Thus, $A$ is empty as well and $h$ is an isomorphism. 
\end{proof}

\begin{remark}\label{rem:cuspy}
Again by the long exact cohomology sequence associated with
$$0 \to \omega \xrightarrow{l} \omega \to \omega/l\to 0$$
on $\Xb$ we see that $H^1(\Xb; g^*\omega)$ is $l$-torsionfree if and only if the map 
$$H^0(\Xb;g^*\omega) \to H^0(\Xb_{\F_l};g^*\omega)$$
is surjective, i.e.\ if every mod-$l$ modular form of weight $1$ can be lifted to a characteristic zero form of the same kind. If there are such non-liftable modular forms, there are indeed also mod-$l$ cusp forms of weight $1$ non-liftable to characteristic zero cusp forms (of the same level). Indeed by the Semicontinuity Theorem \cite[Thm 12.8, 12.9]{Har77}, $H^1(\Xb; g^*\omega)$ having $l$-torsion implies that the rank of $H^1(\Xb_{\F_l}; g^*\omega)$ is bigger than that of $H^1(\Xb_{\C};g^*\omega)$ and these ranks agree with those of $\F_l$-valued and $\C$-valued cusp forms of weight $1$ by Corollary \ref{cor:cusps}.

As in \cite[Lemma 2]{Buz}, it is easy to show by hand that non-liftable mod-$l$ weight $1$ cusp forms do not exist for $\Xb  = \MMb_1(n)$ and $n\leq 28$. Further explorations benefit from computer help and were done in \cite{Edi06}, \cite{Buz}, \cite{SchThesis} and \cite{Sch14}. Some small examples from these sources where $l$-torsion occurs in $H^1(\MMb_1(n); g^*\omega)$ are 
 $$ (l,n) = (2,1429),\quad (3,74),\quad (3,133),\quad (5, 141) \;\text{ and }\; (199,82).$$
 There is evidence \cite{Sch14} that the torsion in $H^1(\MMb_1(n);g^*\omega)$ grows at least exponentially in $n$. 
\end{remark}

\section{Computing the decompositions and duality}
In this section, we will be more concrete and actually give formulas how to decompose vector bundles and also determine when this decomposition is ``symmetric''. As we will mostly work over a field of characteristic zero, the hard work of the last section is almost entirely unnecessary for this section though the results of this section have strong implications for the integral decompositions from the last section. The work in this section is partially joint with Viktoriya Ozornova. 
\subsection{Decompositions}\label{sec:dec}
 We work (implicitly) over a field $K$ of characteristic $0$. Let $\Gamma$ be one of the congruence subgroups $\Gamma_0(n)$, $\Gamma_1(n)$ or $\Gamma(n)$. Let $f_n\colon \MMb(\Gamma) \to \MMb_{ell}$ be the projection.  Recall that $\MMb_{ell} \simeq \PP(4,6)$ with $\OO(1) \cong \omega$. By \cite[Theorem 4.1.1]{Con07} or \cite[Theorem 5.5.1]{K-M85}, the $\OO_{\MMb_{ell}}$-module $(f_n)_*\OO_{\MMb(\Gamma)}$ is locally free of finite rank. Thus it decomposes by Proposition \ref{prop:ClassField} as 
 \begin{align}\label{eq:dec} (f_n)_*\OO_{\MMb(\Gamma)} \cong \bigoplus_{i\in\Z} \bigoplus_{l_i} \omega^{\tensor (-i)}.\end{align}
 
 Our aim is to determine the sequence of $l_i$ (which is well-defined by the Krull--Schmidt Theorem \ref{prop:Krull-Schmidt}). We will sometimes call it the \emph{decomposition sequence} of $(f_n)_*\OO_{\MMb(\Gamma)}$. 
 
 Denote by $m_i$ the dimension of the space of weight $i$ modular forms for $\Gamma$ and by $s_i$ the dimension of the space of cusp forms for the same weight and group. 
 
 \begin{prop}\label{prop:dec5}We have 
  \begin{enumerate}
   \item $l_i = 0$ for $i<0$ and $i> 11$,
   \item $l_i = m_i-m_{i-4}-m_{i-6}+m_{i-10}$ for $i\leq 11$; in particular, $l_i = m_i$ for $i\leq 3$,
   \item $l_{12-i} = s_i$ for $i\leq 4$,
   \item $l_{10}$ is the genus of $\MMb(\Gamma)$, i.e.\ $\dim_K H^0(\MMb(\Gamma); \Omega^1_{\MMb(\Gamma)/K})$. 
  \end{enumerate}
 \end{prop}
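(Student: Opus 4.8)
The plan is to reduce all four statements to a single convolution identity between the multiplicities $l_i$ and dimensions of cohomology on $\MMb_{ell}\simeq \PP(4,6)$, and then to extract each item either by inverting that identity or by a positivity argument. First I would record the master identity: for every $k\in\Z$, finiteness of $f_n$ (Proposition \ref{prop:basicprops}) and the projection formula give
\[ m_k=\dim_K H^0\big(\MMb(\Gamma);(f_n)^*\omega^{\tensor k}\big)=\dim_K H^0\big(\MMb_{ell};(f_n)_*\OO_{\MMb(\Gamma)}\tensor\omega^{\tensor k}\big)=\sum_i l_i\,d_{k-i}, \]
where $d_j:=\dim_K H^0(\MMb_{ell};\omega^{\tensor j})$ and the last step uses \eqref{eq:dec}. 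Since $\MMb_{ell}\simeq\PP(4,6)$ with $\omega\cong\OO(1)$ and $\bigoplus_j H^0(\OO(j))\cong K[t_0,t_1]$ with $|t_0|=4,\,|t_1|=6$, the generating function is $D(x):=\sum_j d_j x^j=\big((1-x^4)(1-x^6)\big)^{-1}$. Writing $L(x)=\sum_i l_i x^i$ (a Laurent polynomial, as $(f_n)_*\OO_{\MMb(\Gamma)}$ has finite rank) and $M(x)=\sum_k m_k x^k$, the identity becomes $M=L\cdot D$, hence $L=M\,(1-x^4)(1-x^6)$. Reading off coefficients yields $l_i=m_i-m_{i-4}-m_{i-6}+m_{i-10}$ for all $i$, which is item (2); for $i\le 3$ the three subtracted terms vanish since there are no modular forms of negative weight (Proposition \ref{prop:coh}), so $l_i=m_i$. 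The same vanishing, $m_k=0$ for $k<0$, makes the formula give $l_i=0$ for $i<0$, the first half of item (1).

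For the second half of item (1) I would switch from $H^0$ to $H^1$. Exactly as above, $\dim_K H^1\big(\MMb(\Gamma);(f_n)^*\omega^{\tensor m}\big)=\sum_j l_j\,e_{m-j}$ with $e_j:=\dim_K H^1(\MMb_{ell};\omega^{\tensor j})$; by Proposition \ref{prop:projcoh} each $e_j\ge 0$, with $e_{-10}=1$ coming from the unique pair $(-1,-1)$, and $e_j=0$ for $j>-10$. For $m\ge 2$ the left-hand side vanishes (Proposition \ref{prop:coh}). Given any $j\ge 12$, take $m=j-10\ge 2$: the summand indexed by $j$ equals $l_j e_{-10}=l_j$, and since every summand is non-negative the whole sum can vanish only if $l_j=0$. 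This proves $l_i=0$ for $i>11$ and completes item (1).

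For items (3) and (4) I would use Serre duality. By Corollary \ref{cor:cusps}, $s_i=\dim_K H^1\big(\MMb(\Gamma);(f_n)^*\omega^{\tensor(2-i)}\big)=\sum_j l_j\,e_{2-i-j}$. Since $e_l\ne 0$ forces $l\le -10$ and $l_j=0$ outside $0\le j\le 11$ by item (1), for $i\le 4$ the only surviving summand is $j=12-i$ (the next candidate $j=16-i$ already exceeds $11$), so $s_i=l_{12-i}e_{-10}=l_{12-i}$; this is item (3). Taking $i=2$ and recalling that weight-$2$ cusp forms are precisely the global sections of $\Omega^1_{\MMb(\Gamma)/K}$ (Corollary \ref{cor:cusps}), we obtain $l_{10}=s_2=\dim_K H^0(\MMb(\Gamma);\Omega^1_{\MMb(\Gamma)/K})$, the genus, which is item (4).

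The deep inputs — Serre duality on $\PP(4,6)$ and the vanishing results of Proposition \ref{prop:coh} — are already available, so the real work is combinatorial bookkeeping. The step I expect to need the most care is item (1) for $i>11$: one must exploit that the $l_j$ are genuine multiplicities, hence non-negative, in order to upgrade the single vanishing $H^1=0$ into the infinitely many conclusions $l_j=0$, and check that the choice $m=j-10$ isolates $l_j$ cleanly. A second delicate point is that the threshold $i\le 4$ in item (3) is exactly what keeps the term $j=16-i$ out of range. The only genuinely non-formal issue is justifying $s_i=\dim_K H^1\big((f_n)^*\omega^{\tensor(2-i)}\big)$ uniformly in $\Gamma$, including the non-representable levels (such as $\Gamma_1(n)$ for $n\le 4$), where Corollary \ref{cor:cusps} must be supplemented by the explicit weighted-projective-line descriptions of Examples \ref{exa:moduli}.
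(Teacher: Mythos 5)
Your proof is correct, and its essential inputs are the same as the paper's: the convolution identity $m_k=\sum_i l_i d_{k-i}$ coming from the projection formula and the decomposition \eqref{eq:dec}, the vanishing $H^1(\MMb(\Gamma);f_n^*\omega^{\tensor m})=0$ for $m\geq 2$ from Proposition \ref{prop:coh}, and Corollary \ref{cor:cusps}. Where you genuinely diverge is in the packaging of the cohomological step. The paper runs a Serre-duality chain identifying $H^0\bigl(\MMb_{ell};((f_n)_*\OO_{\MMb(\Gamma)})^\vee\tensor\omega^{\tensor(-10-k)}\bigr)$ with $H^0\bigl(\MMb(\Gamma);\Omega^1_{\MMb(\Gamma)/K}\tensor f_n^*\omega^{\tensor -k}\bigr)$, kills the latter for $k\geq2$, and then proves $l_i=0$ for $i>11$ by a maximal-index argument and item (3) by evaluating $H^0$ of the dual bundle in the range where $d_k=0$ for $0<k<4$; item (2) is obtained from the recursion $l_i=m_i-l_{i-4}-l_{i-6}-l_{i-8}-l_{i-10}$. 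You instead expand $\dim H^1(\MMb(\Gamma);f_n^*\omega^{\tensor m})=\sum_j l_j e_{m-j}$ and use the explicit computation of $e_j=\dim H^1(\PP(4,6);\OO(j))$ from Proposition \ref{prop:projcoh} ($e_j=0$ for $j>-10$, $e_{-10}=1$, next nonzero value at $j=-14$). This buys two small improvements: your item (1) for $i>11$ needs only non-negativity of the multiplicities $l_j$ and of the $e_j$, so it avoids Serre duality on $\MMb(\Gamma)$ entirely for that step; and the generating-function inversion $L=M\,(1-x^4)(1-x^6)$ gives the formula of item (2) for \emph{all} $i$, not just $i\leq11$. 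The two routes are interchangeable via Serre duality on $\PP(4,6)$, so this is a variant rather than a new proof; and for item (3) you still need duality on $\MMb(\Gamma)$ itself (the second half of Corollary \ref{cor:cusps}), where your caveat about non-representable levels is apt but does not mark a gap relative to the paper, whose own chain uses Serre duality on the stack $\MMb(\Gamma)$ at exactly the same point.
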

 \begin{proof}
  The number $m_k$ is by definition the dimension of 
  \[H^0(\MMb(\Gamma); (f_n)^*\omega^{\tensor k}) \cong H^0(\MMb_{ell}; (f_n)_*\OO_{\MMb(\Gamma)}\tensor \omega^{\tensor k}).\]
  Denote by $d_i = \dim_K H^0(\MMb_{ell};\omega^{\tensor i}$ the dimension of the space of holomorphic modular forms of weight $i$ for $SL_2\Z$. Then \eqref{eq:dec} implies that
  \begin{align}\label{sum}m_k = \sum_{i\in \Z} l_id_{k-i}.\end{align}
  In particular, $l_i = 0$ for $i<0$ because $m_i \geq l_i = l_id_{i-i}$ and there are no modular forms of negative weight (Proposition \ref{prop:coh}). 
  
  To get more precise results, we want to use Serre duality on $\MMb_{ell}$ and $\MMb(\Gamma)$. By Theorem \ref{thm:fundamentalweighted}, the stack $\MMb_{ell} \simeq \PP_K(4,6)$ has dualizing sheaf $\omega^{\tensor (-10)}$. Using this and that $f_n$ is affine and thus $(f_n)_*$ is exact, we get the following chain of isomorphisms: 
  
  \begin{align*}
   H^0(\MMb_{ell}; ((f_n)_*\OO_{\MMb(\Gamma)})^\vee \tensor \omega^{\tensor -10-k}) &\cong H^0(\MMb_{ell}; ((f_n)_*\OO_{\MMb(\Gamma)}\tensor \omega^{\tensor k})^\vee \tensor \omega^{\tensor -10}) \\
   &\cong H^1(\MMb_{ell}; (f_n)_*\OO_{\MMb(\Gamma)}\tensor \omega^{\tensor k})^{\vee} \\
   &\cong H^1(\MMb_{ell}; (f_n)_*(\OO_{\MMb(\Gamma)}\tensor (f_n)^*\omega^{\tensor k}))^{\vee}\\
   &\cong H^1(\MMb(\Gamma); (f_n)^*\omega^{\tensor k})^{\vee} \\
   &\cong H^0(\MMb(\Gamma); \Omega^1_{\MMb(\Gamma)/K} \tensor (f_n)^*\omega^{\tensor -k})
  \end{align*}
 By Proposition \ref{prop:coh}, this vanishes for $k\geq 2$.
  
  Since the rank of $(f_n)_*\OO_{\MMb(\Gamma)}$ is finite, only finitely many $l_i$ can be nonzero. Let $j$ be the largest index such that $l_j \neq 0$. Then $H^0(\MMb_{ell};((f_n)_*\OO_{\MMb(\Gamma)})^\vee \tensor \omega^{\tensor -j})$ has dimension $l_j$. In particular, we see by the computation above that $j\leq 11$, proving part (1) of our proposition. Using that the ring of holomorphic modular forms for $SL_2\Z$ is isomorphic to $K[c_4,c_6]$ and thus $d_0 = d_4 = d_6 = d_8 = d_{10} = 1$ and $d_i = 0$ for all other $i\leq 11$, we obtain the recursive equation
  $$l_i = m_i-l_{i-4}-l_{i-6}- l_{i-8} - l_{i-10}$$
  from Equation (\ref{sum}). Part (2) of our proposition follows by a straightforward computation.
  
  The equality $\dim_K H^0(\MMb_{ell};((f_n)_*\OO_{\MMb(\Gamma)})^\vee \tensor \omega^{\tensor -i}) = l_i$ holds even for all $j-3\leq i$ (and in particular for $i\geq 8$) as $d_k = 0$ for $0<k<4$. Part (3) follows then from Corollary \ref{cor:cusps}. Part (4) follows from the previous computations and the definition of the genus. 
 \end{proof}
 
 \begin{example}\label{exa:m12}
  Let us consider the case $n=2$. By Example \ref{exa:moduli}, the ring of modular forms for $\Gamma_1(2)$ is isomorphic to $K[b_2, b_4]$. Furthermore, we know that the rank of $(f_2)_*\OO_{\MMb_1(2)}$ is $3$. Thus, there can be at most three nonzero $l_i$ and these are $l_0 =l_2 = l_4 = 1$. In other words: $(f_2)_*\OO_{\MMb_1(2)} \cong \OO_{\MMb_{ell}} \oplus \omega^{\tensor -2} \oplus \omega^{\tensor -4}$.
 \end{example}
 
 \begin{example}\label{exa:m13}
  Now consider the case $n=3$. By Example \ref{exa:moduli}, the ring of modular forms for $\Gamma_1(3)$ is isomorphic to $K[a_1,a_3]$. By the last proposition, it follows easily that in this case
  $$l_0 =1,\; l_1=1,\; l_2=1,\; l_3=2,\; l_4=1,\; l_5 =1,\; l_6 = 1$$
  and all the other $l_i$ are zero. 
 \end{example}
 
We will need the following standard fact, which follows directly from Riemann--Roch, Proposition \ref{prop:coh} and Corollary \ref{cor:cusps}. 

\begin{prop}\label{prop:riemann}
Assume that $\MMb(\Gamma)$ is representable. Let $g = s_2$ be the genus of $\MMb(\Gamma)$. Then 
$$\deg(f_n^*\omega)i + 1-g = \begin{cases}m_1-s_1 & \text{ if }i =1\\
m_i & \text{ if } i\geq 2.
\end{cases}.$$
\end{prop}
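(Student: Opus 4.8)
The plan is to apply the Riemann--Roch theorem on the curve $\MMb(\Gamma)$ to the line bundle $f_n^*\omega^{\tensor i}$. By Proposition \ref{prop:basicprops} the representability hypothesis makes $\MMb(\Gamma)$ a smooth projective curve over $K$; in the cases at hand it is geometrically connected, so $H^0(\MMb(\Gamma);\OO) = K$ (for $\Gamma = \Gamma_1(n)$ this is Proposition \ref{prop:irreducible}). Writing $g = \dim_K H^0(\MMb(\Gamma); \Omega^1_{\MMb(\Gamma)/K})$ for the genus and using that the degree is multiplicative in tensor powers, Riemann--Roch reads
\[
m_i - \dim_K H^1(\MMb(\Gamma); f_n^*\omega^{\tensor i}) = i\deg(f_n^*\omega) + 1 - g,
\]
since by definition $m_i = \dim_K H^0(\MMb(\Gamma); f_n^*\omega^{\tensor i})$. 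It therefore remains only to evaluate the $H^1$-correction in the two ranges.

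For $i \geq 2$ I would invoke part (\ref{lem:vanishingcoh}) of Proposition \ref{prop:coh}, which gives $H^1(\MMb(\Gamma); f_n^*\omega^{\tensor i}) = 0$ outright. Substituting into the displayed equation yields $m_i = i\deg(f_n^*\omega) + 1 - g$, which is the asserted formula.

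For $i = 1$ the correction term is genuinely present and must be identified. Here the plan is to apply Serre duality on $\MMb(\Gamma)$, giving
\[
H^1(\MMb(\Gamma); f_n^*\omega)^{\vee} \cong H^0\bigl(\MMb(\Gamma); \Omega^1_{\MMb(\Gamma)/K} \tensor f_n^*\omega^{\tensor(-1)}\bigr),
\]
and then to recognize the right-hand side, via Corollary \ref{cor:cusps} with weight $i = 1$ (so that the exponent $i-2$ equals $-1$), as the space of weight-$1$ cusp forms. Hence $\dim_K H^1(\MMb(\Gamma); f_n^*\omega) = s_1$, and the displayed Riemann--Roch equation becomes $m_1 - s_1 = \deg(f_n^*\omega) + 1 - g$, as claimed.

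The argument is essentially routine; the only step demanding any care is the treatment of the critical weight $i = 1$, where the combination of Serre duality with Corollary \ref{cor:cusps} turns the cohomological defect into the dimension $s_1$ of weight-$1$ cusp forms. I note for consistency that the identification $g = s_2$ underlying the statement is the same computation run with $i = 2$: weight-$2$ cusp forms are exactly the global sections of $\Omega^1_{\MMb(\Gamma)/K}$, whose dimension is the genus.
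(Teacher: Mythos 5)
Your proof is correct and is precisely the argument the paper intends: the paper gives no written proof beyond asserting that the proposition ``follows directly from Riemann--Roch, Proposition \ref{prop:coh} and Corollary \ref{cor:cusps}'', and your write-up supplies exactly those three ingredients (Riemann--Roch for the Euler characteristic, the $H^1$-vanishing of Proposition \ref{prop:coh} for $i\geq 2$, and Corollary \ref{cor:cusps} --- whose second clause is the Serre-duality identification you spell out --- for $i=1$, plus the $i=2$ case giving $g=s_2$). The one point to flag is that your parenthetical appeal to geometric connectedness is only literally valid for $\Gamma_1(n)$: for $\Gamma=\Gamma(n)$ one has $H^0(\MMb(n);\OO_{\MMb(n)})\cong K[x]/\Phi_n(x)$ by Proposition \ref{prop:irreducible}, so one must (as the paper implicitly does, via its identification with $\overline{\mathbb{H}/\Gamma}$) work with a fixed geometrically connected component $\MMb(n)_{K,\zeta}$ for the Riemann--Roch constant $1-g$ to come out as stated.
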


Let us be more explicit about $\deg(f_n^*\omega)$ for $\Gamma = \Gamma_1(n)$. 

\begin{lemma}\label{lem:degomega}
The degree $\deg (f_n)^*\omega$ equals $\frac1{24} d_n$ for $d_n$ the degree of the map $\MM_1(n) \to \MM_{ell}$. We have 
\begin{align*}
 d_n &= \sum_{d|n}d\phi(d)\phi(n/d) \\
 &= n^2\prod_{p|n}(1-\frac1{p^2})
\end{align*}
\end{lemma}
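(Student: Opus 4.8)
The plan is to treat the two assertions separately. For the identity $\deg (f_n)^*\omega=\tfrac{1}{24}d_n$, I would start from the fact, recalled in the proof of Proposition \ref{prop:coh}, that $\omega$ has degree $\tfrac{1}{24}$ on $\MMb_{ell}$ (\cite[VI.4.4]{D-R73}). The map $f_n\colon\MMb_1(n)\to\MMb_{ell}$ is finite flat by Proposition \ref{prop:basicprops}, and since $\MMb_{ell}$ is connected its degree is constant, hence equal to the generic degree, i.e.\ to the degree $d_n$ of the restriction $\MM_1(n)\to\MM_{ell}$ to the interior (compactifying only adds the cusps and does not change the generic degree). Multiplicativity of degrees under a finite flat map then gives $\deg(f_n)^*\omega=d_n\cdot\deg\omega=\tfrac{d_n}{24}$.

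To identify $d_n$ I would use that $\MM_1(n)\to\MM_{ell}$ is representable (Proposition \ref{prop:basicprops}), so that its degree may be computed on any chart. Pulling back along a map $S\to\MM_{ell}$ classifying an elliptic curve $E/S$ produces the scheme of points of exact order $n$ on $E$; over a geometric point $\Spec\overline K$ this is a reduced finite scheme with exactly $N(n)$ points, where $N(n)$ is the number of elements of exact order $n$ in $E[n](\overline K)\cong(\Z/n)^2$. Hence $d_n=N(n)=\#\{(a,b)\in(\Z/n)^2:\gcd(a,b,n)=1\}$.

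Finally I would count $N(n)$ in two ways to obtain the two displayed formulas. For the sum formula, stratify by $e=\gcd(b,n)$: there are $\phi(n/e)$ values of $b$ with $\gcd(b,n)=e$, and since $\gcd(a,b,n)=\gcd(a,e)$ the coprimality condition then admits $(n/e)\phi(e)$ values of $a$; summing over $e\mid n$ and reindexing by $d=n/e$ yields $N(n)=\sum_{d\mid n}d\,\phi(d)\,\phi(n/d)$. For the product formula, apply inclusion--exclusion over the primes $p\mid n$, removing for each $p$ the pairs with $p\mid a$ and $p\mid b$; this gives $N(n)=\sum_{D}\mu(D)(n/D)^2=n^2\prod_{p\mid n}(1-p^{-2})$, the sum running over squarefree products $D$ of primes dividing $n$. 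Equivalently, both expressions are multiplicative in $n$ and agree on prime powers.

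The one genuinely delicate point is in the first assertion: one must be certain that the degree feeding into Riemann--Roch (Proposition \ref{prop:riemann}) is exactly $N(n)$ rather than $N(n)/2$. This is precisely where representability of $f_n$ matters --- it forces the degree of the stacky map to coincide with the naive point count on a chart, so that the generic automorphism $\pm1$, which does permute the order-$n$ points, does not halve the count. Everything else is routine combinatorics.
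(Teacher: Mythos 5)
Your proof is correct, and its first half (degree $\tfrac1{24}$ of $\omega$ from \cite[VI.4.4]{D-R73} plus multiplicativity of degree along the finite flat map $f_n$, with the observation that the degree of the compactified map agrees with the generic degree $d_n$) is exactly the paper's argument. Where you genuinely diverge is in establishing the two formulas for $d_n$: the paper simply cites \cite[Sec 3.8+3.9]{D-S05} for the index computation on coarse modular curves and then inserts the correction that the stacky degree is twice the coarse degree, because the generic elliptic curve has automorphism group $\{\pm 1\}$. You instead compute $d_n$ intrinsically: representability and flatness of $f_n$ let you evaluate the degree on a chart as the length of the fiber, which is the finite \'etale scheme of points of exact order $n$, hence $d_n = \#\{(a,b)\in(\Z/n)^2 : \gcd(a,b,n)=1\}$; the two displayed formulas then follow by the stratification over $e=\gcd(b,n)$ and by inclusion--exclusion, respectively (both counts are correct). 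The trade-off: the paper's route is shorter but hides the delicate factor of $2$ inside a comparison between stacky and coarse degrees, which is easy to get wrong; your route is self-contained, never mentions coarse spaces, and makes the factor-of-$2$ issue moot, since $\pm 1$ acts on the fiber without affecting its length --- precisely the point your last paragraph isolates. One small correction of emphasis: the fiber count computes the degree for \emph{any} finite flat representable morphism, so representability is what makes the fiber a scheme and the count meaningful, rather than something that "forces" two a priori different degrees to agree; also, Riemann--Roch (Proposition \ref{prop:riemann}) plays no role in this lemma, only in its later applications.
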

\begin{proof}
By \cite[VI.4.4]{D-R73}, the line bundle $\omega$ has degree $\frac1{24}$. As  $\deg (f_n)^*\omega = \deg(f_n) \deg \omega$, the first result follows. For the formulas for $d_n$ see \cite[Sec 3.8+3.9]{D-S05}; note that the map of stacks has twice the degree of the map of coarse moduli spaces as the generic point of $\MM_{ell}$ has automorphism group of order $2$. 
\end{proof}

 \begin{cor}\label{cor:dec3}
 Let $\Gamma$ be $\Gamma_1(n)$ for $n\geq 4$ or $\Gamma(n)$ for $n\geq 3$. Then we have a decomposition
 \[(f_n)_*\OO_{\MMb(\Gamma)} \cong \bigoplus_{i\in\Z} \bigoplus_{k_i} (f_2)_*\OO_{\MMb_1(2)} \tensor \omega^{\tensor -i}.\]
 The $k_i$ are uniquely determined and satisfy
 \begin{enumerate}
  \item $k_i = 0$ for $i<0$ and $i>7$,
  \item $k_i = m_i-m_{i-2}-m_{i-4}+m_{i-6}$; in particular, $k_i = m_i$ for $i\leq 1$,
  \item $k_7 = s_1$ and $k_6=s_2$ is the genus of $\MMb(\Gamma)$,
  \item $k_0 + k_4 = k_1 +k_5 = k_2+k_6 = k_3+k_7$.
 \end{enumerate}
\end{cor}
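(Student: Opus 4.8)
The plan is to reduce everything to the line-bundle decomposition of Proposition \ref{prop:dec5} together with Riemann--Roch (Proposition \ref{prop:riemann}). First I would record the \emph{existence} of a decomposition into copies of $(f_2)_*\OO_{\MMb_1(2)}\tensor\omega^{\tensor -i}$: for representable $\MMb(\Gamma)$ (so $\Gamma=\Gamma_1(n)$ with $n\geq 5$, or $\Gamma=\Gamma(n)$ with $n\geq 3$) this is the characteristic-zero ($l=0$) case of the decomposition results of Section \ref{sec:decexistence} (Theorem \ref{thm:int1} and its analogue for $\Gamma(n)$), while the single non-representable case $\Gamma=\Gamma_1(4)$, where $\MMb_1(4)\simeq\PP(1,2)$, is read off by hand from $M(\Gamma_1(4))=K[a_1,a_2]$ with $|a_1|=1$, $|a_2|=2$ (one finds $k_0=k_1=k_2=k_3=1$ and all other $k_i=0$). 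Granting existence, the $k_i$ are forced by the numerical identity below, which simultaneously yields their uniqueness (so I need not appeal to Krull--Schmidt, as the blocks are \emph{reducible} in characteristic zero).

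For part (2) I would apply $H^0(\MMb_{ell};\,-\tensor\omega^{\tensor k})$ to the decomposition and use that $f_n$ is affine, so that $m_k=\dim_K H^0(\MMb(\Gamma);(f_n)^*\omega^{\tensor k})=\sum_i k_i\,\dim_K H^0(\MMb_1(2);(f_2)^*\omega^{\tensor(k-i)})$. By Example \ref{exa:moduli}, $\MMb_1(2)\simeq\PP(2,4)$ with $(f_2)^*\omega\cong\OO(1)$, so the inner dimension is the weight-$j$ piece $e_j$ of $M(\Gamma_1(2))=K[b_2,b_4]$ (with $|b_2|=2$, $|b_4|=4$), whose generating function is $\sum_j e_j t^j=\frac1{(1-t^2)(1-t^4)}$. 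Thus $m_k=\sum_i k_i e_{k-i}$ is a convolution, and at the level of formal generating functions $\sum_i k_i t^i=\big(\sum_k m_k t^k\big)(1-t^2)(1-t^4)$; reading off coefficients gives $k_i=m_i-m_{i-2}-m_{i-4}+m_{i-6}$. Since $m_j=0$ for $j<0$, this already yields $k_i=m_i$ for $i\leq 1$ and $k_i=0$ for $i<0$, i.e.\ part (2) and half of part (1).

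The remaining assertions follow by substituting Riemann--Roch into this formula. By Proposition \ref{prop:riemann}, writing $d=\deg (f_n)^*\omega$ and $g=s_2$ for the genus, one has $m_i=di+1-g$ for $i\geq 2$, together with $m_1=d+1-g+s_1$ and $m_0=1$. The operator $1-t^2-t^4+t^6$ annihilates every affine-linear sequence, so $k_i=0$ as soon as $i,i-2,i-4,i-6$ all lie in the range $\geq 2$, that is for $i\geq 8$; this completes part (1). For part (3) the same cancellation leaves only the correction terms, giving $k_7=m_7-m_5-m_3+m_1=s_1$ and $k_6=m_6-m_4-m_2+m_0=g=s_2$. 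For part (4) a short bookkeeping shows that each of $k_0+k_4$, $k_1+k_5$, $k_2+k_6$, $k_3+k_7$ equals the common value $2d=2\deg (f_n)^*\omega$ (for instance $k_0+k_4=m_4-m_2=2d$ and $k_3+k_7=(m_3-m_1)+s_1=2d$), which is exactly the claimed chain of equalities.

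The only genuine obstacle is the existence statement in characteristic zero. Relating block multiplicities to the line-bundle multiplicities $l_j$ of Proposition \ref{prop:dec5} via $l_j=k_j+k_{j-2}+k_{j-4}$ shows that an $(f_2)_*\OO_{\MMb_1(2)}$-block decomposition exists if and only if these $k_i$ are \emph{nonnegative}; nonnegativity is not visible from the defining recursion and must be supplied by the geometric splitting results of Section \ref{sec:decexistence}. Everything after that is the routine linear algebra sketched above, once one has checked the low-level case $\Gamma_1(4)$ directly and arranged, for $\Gamma(n)$, to work over a field containing $\zeta_n$ so that $\MMb(\Gamma)$ is geometrically connected and $m_0=1$.
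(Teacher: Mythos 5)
Your reduction of parts (1)--(4) to the convolution identity $\sum_i k_i t^i = \bigl(\sum_k m_k t^k\bigr)(1-t^2)(1-t^4)$ together with Riemann--Roch (Proposition \ref{prop:riemann}) is exactly what the paper does, and those computations are correct (including your observation that for $\Gamma(n)$ one must work over a field containing $\zeta_n$ so that $m_0=1$, and the hand-check of $\Gamma_1(4)$). The gap is precisely the step you yourself single out as ``the only genuine obstacle'': existence, i.e.\ nonnegativity of the $k_i$. You propose to obtain it from ``the characteristic-zero ($l=0$) case of the decomposition results of Section \ref{sec:decexistence} (Theorem \ref{thm:int1} and its analogue for $\Gamma(n)$)''. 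That is circular: the $l=0$ clause of Theorem \ref{thm:int1} --- that $\EE_n$ decomposes into copies of $\EE_2\tensor\omega^{\tensor m}$ --- is not proven in Section \ref{sec:decexistence}; it is exactly what Corollary \ref{cor:dec3} (with Corollary \ref{cor:dec2}) establishes. What Section \ref{sec:decexistence} actually proves is: (i) over a field of characteristic $l$, a block decomposition where the block $\MMb'$ is $\MMb_{ell}$ itself whenever $l\neq 2,3$ --- so in characteristic $0$ this yields only line bundles, i.e.\ Proposition \ref{prop:dec5}; and (ii) over $\Z_{(3)}$, Theorem \ref{thm:deccompact} gives a decomposition into $(f_2)_*\OO_{\MMb_1(2)}$-blocks, but only under the hypotheses that $3\nmid n$ and that $H^1(\MMb(\Gamma)_{\Z_{(3)}};\omega)$ has no $3$-torsion. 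These hypotheses exclude all levels with $3\mid n$ (the Convention requires $n$ invertible in the base) and also levels with $3$-torsion in $H^1$, which exist by Remark \ref{rem:cuspy}, whereas the corollary claims the characteristic-zero splitting for every $n\geq 4$. So the geometric splitting results cannot supply nonnegativity in the stated generality.

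The paper closes this gap by a purely numerical argument in characteristic zero, which is missing from your proposal: for $i\leq 3$ one has $k_i=m_i-m_{i-2}\geq 0$ because the ring of modular forms is an integral domain and $m_2\neq 0$, so multiplication by a nonzero weight-$2$ form is injective; $k_4=m_4-m_2-m_0=2a-1\geq 0$ since $2a=m_4-m_2$ is a positive integer (here $a=\deg f_n^*\omega$); and $k_5=s_3-s_1$, $k_6=s_2$, $k_7=s_1$ are nonnegative because they are (differences of) cusp-form dimensions, with $s_3\geq s_1$ again by multiplying with a nonzero weight-$2$ form. Once nonnegativity is in hand, one verifies $l_i=k_i+k_{i-2}+k_{i-4}$ and groups the line-bundle summands of Proposition \ref{prop:dec5} into copies of $(f_2)_*\OO_{\MMb_1(2)}\tensor\omega^{\tensor -i}$ via Example \ref{exa:m12}, exactly as you describe; your Krull--Schmidt argument then gives uniqueness. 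With this nonnegativity argument inserted in place of the appeal to Section \ref{sec:decexistence}, your proof coincides with the paper's.
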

\begin{proof}
First we want to show the existence of a decomposition of the form 
\begin{align}\label{eq:dec2} (f_n)_*\OO_{\MMb(\Gamma)} \cong \bigoplus_{i\in\Z} \bigoplus_{k_i} (f_2)_*\OO_{\MMb_1(2)} \tensor \omega^{\tensor -i}.\end{align}
This follows if $n$ is not divisible by $3$ and $H^1(\MMb(\Gamma);\omega)$ (with $\MMb(\Gamma)$ viewed over $\Z_{(3)}$) contains no $3$-torsion by Theorem \ref{thm:deccompact}, but we will prove it more generally in this situation, where we are over a field of characteristic $0$. The idea is to show that one can group summands of the decomposition in \eqref{eq:dec} together and use Example \ref{exa:m12} to get copies of $(f_2)_*\OO_{\MMb_1(2)}$.

To that purpose set $k_i = m_i-m_{i-2}-m_{i-4}+m_{i-6}$. First we have to show that $k_i\geq 0$. We will use that $m_i = 0$ for $i<0$ and $m_0 = 1$. The two further properties of the sequence $m_i$ we need are that there are constants $a>0$ and $b$ such that $m_i = ai +b$ for all $i\geq 2$ and that $m_2 \neq 0$ by Proposition \ref{prop:riemann}.\footnote{For $\Gamma = \Gamma_1(4)$ the same holds, but with different constants $a,b$ for $i$ even or odd (see \cite[Fig 3.4]{D-S05}). This does not affect our proof.}  We see directly that $k_i = 0$ for $i>7$ or for $i<0$. As the ring of modular forms has no zero divisors, we also see that $m_i \geq m_{i-2}$ and likewise $s_i\geq s_{i-2}$ by multiplying with a nonzero modular form of weight $2$. This implies $k_i \geq 0$ for $i\leq 3$. We have 
$$k_4 = m_4-m_2-m_0 = 2a -1 \geq 0$$
as $2a = m_4-m_2$ for $i\geq 2$ implies that $a \in \tfrac12 \Z$.
From Proposition \ref{prop:dec5} and the formula $m_i = ai+b$, we can compute 
\begin{align*}
k_7 &= m_7-m_3-m_5+m_1\\
&= m_{11}-m_7-m_5+m_1\\
&= s_1
\end{align*}
and likewise $k_6 = s_2$ and $k_5 = s_3-s_1$, which are also clearly at least $0$. By Example \ref{exa:m12}, we just have to check now that $l_i = k_i+k_{i-2}+k_{i-4}$, which is a straightforward computation. Thus, we obtain the existence of a decomposition into $(f_2)_*\OO_{\MMb_1(2)} \tensor \omega^{\tensor -i}$.  

Next we show that the formula from item 2 has to hold for every decomposition of the form \ref{eq:dec2}. This follows by a straightforward computation from the fact that $k_i = 0$ for $i<0$ and $i>7$ (as follows from Proposition \ref{prop:dec5} and Example \ref{exa:m12}) and from the equation
\[k_i = m_i-k_{i-2}-2k_{i-4} -2k_{i-6},\]
which we obtain from the dimension of the space of modular forms for $\Gamma_1(2)$ being $1,1,2$ and $2$ in weights $0$, $2$, $4$ and $6$ respectively and zero else in this range. 
 Item 2 implies the following equations:
 \begin{align*}
  k_0+k_4 &= m_4-m_2 \\
  k_1 +k_5 &= m_5-m_3 \\
  k_2+k_6 &= m_6-m_4\\
  k_3+k_7 &= m_7-m_5
 \end{align*}
As $i\mapsto m_i$ is affine linear in $i$ for $i\geq 2$, this implies item 4. 
\end{proof}

 \begin{cor}\label{cor:dec2}
 Let $\Gamma$ be $\Gamma_1(n)$ for $n\geq 5$ or $\Gamma(n)$ for $n\geq 3$. We have a decomposition
 \[(f_n)_*\OO_{\MMb(\Gamma)} \cong \bigoplus_{i\in\Z} \bigoplus_{k_i} (f_3)_*\OO_{\MMb_1(3)} \tensor \omega^{\tensor -i}.\]
 The $k_i$ are uniquely determined and satisfy
 \begin{enumerate}
  \item $k_i = 0$ for $i<0$ and $i>5$,
  \item $k_i = m_i-m_{i-1}-m_{i-3}+m_{i-4}$,
  \item $k_5 = s_1$ and $k_4 =s_2-s_1$,
  \item $k_0 + k_3= k_1 +k_4 = k_2+k_5$.
 \end{enumerate}
\end{cor}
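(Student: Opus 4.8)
The plan is to imitate the proof of Corollary~\ref{cor:dec3} almost verbatim, with $(f_3)_*\OO_{\MMb_1(3)}$ and Example~\ref{exa:m13} playing the roles of $(f_2)_*\OO_{\MMb_1(2)}$ and Example~\ref{exa:m12}. Working over the characteristic-zero field $K$, I would start from the $\omega$-decomposition \eqref{eq:dec} of $(f_n)_*\OO_{\MMb(\Gamma)}$, whose sequence $(l_i)$ is given by Proposition~\ref{prop:dec5}, and try to regroup its line-bundle summands into copies of $(f_3)_*\OO_{\MMb_1(3)}\tensor\omega^{\tensor -i}$. By Example~\ref{exa:m13} the bundle $(f_3)_*\OO_{\MMb_1(3)}\tensor\omega^{\tensor -i}$ contributes multiplicities $(1,1,1,2,1,1,1)$ to $\omega^{\tensor -i},\dots,\omega^{\tensor -(i+6)}$, so such a regrouping with multiplicities $k_i\ge 0$ exists precisely when $l_j=k_j+k_{j-1}+k_{j-2}+2k_{j-3}+k_{j-4}+k_{j-5}+k_{j-6}$ for all $j$; by the uniqueness of the $\omega$-decomposition (Krull--Schmidt, Proposition~\ref{prop:Krull-Schmidt}) this matching of multiplicities already produces the isomorphism.

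To solve for the $k_i$ and verify this relation I would pass to Hilbert series. Put $M(t)=\sum_i m_i t^i$, $L(t)=\sum_i l_i t^i$, $K(t)=\sum_i k_i t^i$; let $D(t)=\frac{1}{(1-t^4)(1-t^6)}$ be the Hilbert series of $K[c_4,c_6]$ and let $P(t)=1+t+t^2+2t^3+t^4+t^5+t^6$ be the decomposition polynomial of $(f_3)_*\OO_{\MMb_1(3)}$ from Example~\ref{exa:m13}. The multiplicity relation above is $L(t)=K(t)P(t)$, while $m_k=\sum_i l_i d_{k-i}$ reads $M(t)=L(t)D(t)$. Since $P(t)=\frac{(1-t^4)(1-t^6)}{(1-t)(1-t^3)}$, one has $P(t)D(t)=\frac{1}{(1-t)(1-t^3)}$, which is exactly the Hilbert series of the ring $K[a_1,a_3]$ of $\Gamma_1(3)$-modular forms ($|a_1|=1$, $|a_3|=3$). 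Therefore $M(t)=K(t)P(t)D(t)$ forces
\[K(t)=(1-t)(1-t^3)\,M(t),\qquad\text{that is}\qquad k_i=m_i-m_{i-1}-m_{i-3}+m_{i-4},\]
which is item~(2); it vanishes for $i<0$, and since $m_i=ai+b$ for $i\ge 2$ by Proposition~\ref{prop:riemann} it also vanishes for $i>5$, giving item~(1). Read as a recursion, this identity expresses each $k_i$ in terms of $m_i$ and lower $k$'s, so the $k_i$ are forced and any decomposition of the asserted shape must have these multiplicities; this is the uniqueness.

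Items~(3) and~(4) I would then read off directly. Extracting $l_{11}$ and $l_{10}$ from the multiplicity relation $l_j=\sum_i k_i\,\mathrm{mult}_{j-i}$ (only $k_5$ survives in $l_{11}$, and only $k_4,k_5$ in $l_{10}$, because $k_i=0$ for $i>5$) gives $l_{11}=k_5$ and $l_{10}=k_4+k_5$; combined with the relation $l_{12-i}=s_i$ of Proposition~\ref{prop:dec5} this yields $k_5=s_1$ and $k_4=s_2-s_1$. For item~(4), substituting the formula of item~(2) gives $k_0+k_3=m_3-m_2$, $k_1+k_4=m_4-m_3$ and $k_2+k_5=m_5-m_4$, and these three are equal because $m_i=ai+b$ is affine-linear for $i\ge 2$ and all indices involved are $\ge 2$.

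The main obstacle is the nonnegativity $k_i\ge 0$, without which the regrouping in the first step is not an actual direct sum. Here $k_0=1$ and $k_5=s_1\ge 0$ are immediate, $k_3=m_3-m_2-1=\deg\bigl((f_n)^*\omega\bigr)-1\ge 0$ since $(f_n)^*\omega$ is ample on the curve $\MMb(\Gamma)$ (Lemma~\ref{lem:degomega}), and $k_2=m_2-m_1\ge 0$, $k_4=s_2-s_1\ge 0$ once we know $m_2\ge m_1$ and $s_2\ge s_1$. The genuinely new point, which in Corollary~\ref{cor:dec3} was handled by multiplying with a weight-$2$ form, is $k_1=m_1-1\ge 0$, i.e.\ the existence of a nonzero weight-$1$ modular form for $\Gamma$. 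I would provide this by a weight-$1$ Eisenstein series: for every $N\ge 3$ there is an odd Dirichlet character $\chi$ modulo $N$ (a character separating $-1$ from $1$ in $(\Z/N)^\times$), and the Eisenstein series $E_1^{1,\chi}$ is then a nonzero holomorphic weight-$1$ form of level $N$ with nebentypus $\chi$ (see \cite{D-S05}), lying in $M_1(\Gamma_1(N))\subseteq M_1(\Gamma(N))$. Multiplication by this form is injective (the ring of modular forms is a domain) and carries $M_1$ into $M_2$ and $S_1$ into $S_2$, yielding the remaining inequalities $m_2\ge m_1$ and $s_2\ge s_1$ and completing the positivity check.
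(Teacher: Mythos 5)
Your proposal is correct and follows essentially the same route as the paper's own proof: define $k_i = m_i - m_{i-1} - m_{i-3} + m_{i-4}$, verify nonnegativity and the regrouping identity against the $\omega$-decomposition of Proposition \ref{prop:dec5}, conclude existence by Krull--Schmidt, and obtain uniqueness and items (3)--(4) from the forced recursion and the linearity of $m_i$ for $i\geq 2$. The only differences are presentational: you package the multiplicity bookkeeping into the generating-function identity $P(t)D(t) = \frac{1}{(1-t)(1-t^3)}$ where the paper does the corresponding ``straightforward computations'' by hand, and you secure the key input $m_1 \geq 1$ by exhibiting the Eisenstein series $E_1^{1,\chi}$ for an odd character (exactly the form used in the paper's Appendix \ref{app:Hasse}), where the paper instead cites the weight-one bound of \cite[Thm 3.6.1]{D-S05} together with the regularity of all cusps for these groups.
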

\begin{proof}
The proof is quite analogous to that of the last corollary. 
First we want to show the existence of a decomposition of the form 
\[(f_n)_*\OO_{\MMb(\Gamma)} \cong \bigoplus_{i\in\Z} \bigoplus_{k_i} (f_3)_*\OO_{\MMb_1(3)} \tensor \omega^{\tensor -i}.\]
To that purpose set $k_i = m_i-m_{i-1}-m_{i-3}+m_{i-4}$. First we have to show that $k_i\geq 0$. The two essential properties of the sequence $m_i$ we need is that there are constants $a>0$ and $b$ such that $m_i = ai +b$ for all $i\geq 2$ by Proposition \ref{prop:riemann} and that $m_1 \neq 0$. For the latter, we use Theorem 3.6.1 from \cite{D-S05}, which shows that $2m_1$ is at least the number of regular cusps for $\Gamma$ and p.\ 103 of loc.\ cit.\ where it is shown that in our cases all cusps are regular. 

We see directly that $k_i = 0$ for $i>5$ or for $i<0$. As the ring of modular forms has no zero divisors, we also see that $m_i \geq m_{i-1}$ and likewise $s_i\geq s_{i-1}$ by multiplying with a nonzero modular form of weight $1$. This implies $k_i \geq 0$ for $i\leq 2$. We have 
$$k_3 = m_3-m_2-m_0 = a -1 \geq 0$$
using that $a \in\Z$. 
From Proposition \ref{prop:dec5} and the formula $m_i = ai+b$, we can compute $k_5 = s_1$ and $k_4 = s_2-s_1$, which are also clearly at least $0$. By Example \ref{exa:m13}, we just have to check now that 
$$l_i = k_i+k_{i-1}+k_{i-2}+2k_{i-3}+k_{i-4}+k_{i-5}+k_{i-6},$$
which is a straightforward computation. Thus, we obtain the existence of a decomposition into $(f_3)_*\OO_{\MMb_1(3)} \tensor \omega^{\tensor -i}$.  

Next we show that the formula from item 2 has to hold for every such decomposition. This follows by a straightforward computation from $k_i = 0$ for $i>5$ or for $i<0$ (as follows from Proposition \ref{prop:dec5} and Example \ref{exa:m12}) and from the equation
\[k_i = m_i-k_{i-1}-k_{i-2} -2k_{i-3}-2k_{i-4}-2k_{i-5},\]
which we obtain from the dimension of the space of modular forms for $\Gamma_1(3)$ being $1,1,1,2,2$ and $2$ in weights $0$ to $5$, respectively. 
 Item 2 implies the following equations:
 \begin{align*}
  k_0+k_3&= m_3-m_2 \\
  k_1 +k_4 &= m_4-m_3 \\
  k_2+k_5 &= m_5-m_4
 \end{align*}
As the function $i\mapsto m_i$ is linear in $i$ for $i\geq 2$, this implies item 4. 
\end{proof}

The reader might wonder about the significance of the items (4) of the last two corollaries. It is twofold: First, it implies that over the uncompactified moduli stack all powers of $\omega$ appear equally often (which was already proved in an $l$-complete setting in Corollary \ref{cor:equal}). Here, we use that $\omega^{\tensor 3}$ is trivial on $\MM_1(3)$ and that $\omega^{\tensor 4}$ is trivial on $\MM_1(2)$. For the second consequence we need some preparation and in particular the following lemma, which was proven jointly with Viktoriya Ozornova. 

\begin{lemma}
 Let $K$ be an algebraically closed field and $A$ a $\Z_{\geq 0}$-graded integral domain over $K$. Set $m_i = \dim_K A_i$ and assume that $m_0 = 1$. Then $m_2 \geq 2m_1-1$.
\end{lemma}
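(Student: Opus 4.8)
The plan is to exhibit $2m_1-1$ linearly independent elements inside the subspace $A_1\cdot A_1\subseteq A_2$ spanned by products of weight-one elements; since $A_1\cdot A_1\subseteq A_2$ this gives at once $m_2\geq\dim_K(A_1\cdot A_1)\geq 2m_1-1$. We may assume $m_1\geq 1$, the inequality being vacuous otherwise. Write $n=m_1$ and let $k\subseteq\mathrm{Frac}(A)$ be the subfield generated over $K$ by $A_1$; it is a finitely generated extension of $K$, and both $A_1$ and $A_1\cdot A_1$ are $K$-subspaces of $k$.

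The central tool will be a valuation $v$ on $k$, trivial on $K^{\times}$ and with residue field equal to $K$. Granting such a $v$, the key observation is that for any finite-dimensional $K$-subspace $V\subseteq k$ the number of distinct values taken by $v$ on $V\setminus\{0\}$ equals $\dim_K V$. Indeed, elements of pairwise distinct valuation are automatically $K$-linearly independent, since the valuation of a nontrivial $K$-combination equals the smallest valuation occurring (as $v$ is trivial on $K^{\times}$); this gives that the number of distinct values is $\leq\dim_K V$. Conversely, whenever two elements share a valuation their ratio has valuation $0$ and hence, the residue field being $K$, residue a nonzero scalar $c\in K$, so subtracting $c$ times one from the other strictly raises the valuation; iterating produces a spanning set realizing each value exactly once. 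Applied to $V=A_1$, this yields a basis $u_1,\dots,u_n$ of $A_1$ with $v(u_1)<\cdots<v(u_n)$.

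Next I would read off the products along a ``staircase''. The $2n-1$ elements
\[
u_1u_1,\;u_1u_2,\;\dots,\;u_1u_n,\;u_2u_n,\;u_3u_n,\;\dots,\;u_nu_n
\]
all lie in $A_1\cdot A_1$, and since $v(xy)=v(x)+v(y)$ their valuations are
\[
v(u_1)+v(u_1)<v(u_1)+v(u_2)<\cdots<v(u_1)+v(u_n)<v(u_2)+v(u_n)<\cdots<v(u_n)+v(u_n),
\]
which are pairwise distinct by the strict monotonicity of $v(u_1)<\cdots<v(u_n)$. Hence these $2n-1$ products are $K$-linearly independent, so $\dim_K(A_1\cdot A_1)\geq 2n-1$ and therefore $m_2\geq 2m_1-1$.

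The main obstacle is the construction of the valuation $v$, and this is exactly where algebraic closedness of $K$ enters: for a general base field the analogous bound can fail (e.g. taking both subspaces equal to $\mathbb{F}_4$ over $K=\mathbb{F}_2$, where the product is again $\mathbb{F}_4$ of dimension $2<3$). Since $k/K$ is finitely generated of some transcendence degree $d$ and $K$ is algebraically closed, I would produce $v$ as a flag (Abhyankar) valuation: choose a transcendence basis $t_1,\dots,t_d$, equip $K(t_1,\dots,t_d)$ with the lexicographic monomial valuation of value group $\mathbb{Z}^d$ and residue field $K$, and extend it to the finite extension $k$; by the fundamental inequality the residue field of the extension is a finite extension of $K$, hence $K$ again. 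Alternatively one invokes the standard fact that a finitely generated extension of an algebraically closed field admits a valuation with residue field the base field. Checking that the residue field is \emph{exactly} $K$, so that the distinct-value count is sharp, is the only delicate ingredient; everything else is formal.
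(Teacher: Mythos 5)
Your proof is correct, but it follows a genuinely different route from the paper. The paper reduces to the case where $A$ is generated in degree one, writes $A = K[x_1,\dots,x_n]/I$, encodes the degree-two part of $I$ as a space $R$ of upper-triangular matrices, and then uses the Segre embedding together with projective dimension theory (subvarieties of complementary dimension in projective space must intersect, which is where algebraic closedness enters) to bound $\dim_K R$ from above. You instead argue valuation-theoretically: a valuation on the field generated by $A_1$, trivial on $K^\times$ and with residue field exactly $K$, yields a basis $u_1,\dots,u_n$ of $A_1$ with pairwise distinct values, and the staircase products $u_1u_1,\dots,u_1u_n,u_2u_n,\dots,u_nu_n$ then have $2n-1$ distinct values and are therefore independent. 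The two delicate points you flag are indeed the only ones, and both are standard: termination of the residue-correction process follows because the value set $v(V\setminus\{0\})$ is already known to be finite of size at most $\dim_K V$, and the fundamental inequality for arbitrary (not necessarily discrete) valuations guarantees the residue field of the extension to the finite extension $k$ is finite over $K$, hence equal to $K$ by algebraic closedness. What your approach buys: it proves the stronger statement $\dim_K(VW)\geq \dim_K V+\dim_K W-1$ for \emph{any} two finite-dimensional subspaces of a domain over an algebraically closed field (a theorem of Hou--Leung--Xiang, for which your $\F_4/\F_2$ example is the standard counterexample without the hypothesis on $K$), it needs no reduction to generation in degree one, and it isolates precisely where algebraic closedness is used. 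What the paper's approach buys: it stays within elementary projective geometry, citing only Hartshorne, and avoids invoking extension theory of valuations and the fundamental inequality.
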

\begin{proof}
 We will work throughout this proof over the field $K$ and set $n = m_1$. Let $P =K[x_1,\dots, x_n]$ with the $x_i$ of degree $1$. Without loss of generality we can assume that $A$ is generated in degree $1$ and can thus be written as $P/I$ with $I$ a homogeneous ideal generated in degrees $\geq 2$. An element in $P_2$ can be written as $\sum_{i\leq j} a_{ij}x_ix_j$ and thus we can view it as an upper triangular matrix $(a_{ij})$. Let $R$ be the space of upper triangular matrices associated with the elements of $P_2 \cap I$. For an arbitrary matrix $(a_{ij})$ we set 
 $U((a_{ij})) = (m_{ij})$ with 
 $$m_{ij} = \begin{cases}
  a_{ii} &\text{ if }i=j \\
  a_{ij}+a_{ji} &\text { if }i<j\\
  0 & \text{ else.}
 \end{cases}$$
 Thus, $U$ defines a linear map $\Mat_{n\times n} \to \UpT$ from all $n\times n$-matrices to the upper triangular ones. If we view linear homogeneous polynomials in the $x_i$ as column vectors $\mathbf{v},\mathbf{w}$ in $P_1 = A_1$, then their product corresponds to the upper triangular matrix $U(\mathbf{v}\mathbf{w}^T)$. As $A$ is an integral domain,
 no nonzero element in $R$ is of this form. 
 
  We can reformulate this in terms of the Segre embedding. Recall that this is the map $\iota\colon \mathbb{P}^{n-1}\times \mathbb{P}^{n-1} \to \mathbb{P}(\Mat_{n\times n})$ sending $([\mathbf{v}],[\mathbf{w}])$ to $[\mathbf{v}\mathbf{w}^T]$. Let $V = \mathbb{P}(\Mat_{n\times n}) \setminus \mathbb{P}(\ker(U))$. As $P$ is an integral domain, $\iota$ factors over $V$. Furthermore, $U$ defines an algebraic map $V \to \mathbb{P}(\UpT)$. The composite map $\kappa\colon \mathbb{P}^{n-1}\times \mathbb{P}^{n-1} \to \mathbb{P}(\UpT)$ is proper as the source is proper over $K$. Furthermore, $\kappa$ is quasifinite because $P$ is a unique factorization domain and thus every point in $\mathbb{P}(\UpT)$ has at most two preimages in $\mathbb{P}^{n-1}\times \mathbb{P}^{n-1}$. Thus, $\kappa$ defines a finite map $ \mathbb{P}^{n-1}\times \mathbb{P}^{n-1} \to \im(\kappa)$ and thus $\im(\kappa)$ is a projective variety of dimension $2n-2$. As $\im(\kappa)\cap \mathbb{P}(R) = \varnothing$, it follows by \cite[Thm 7.2]{Har77} 
that $R$ has dimension at most $\dim_K \UpT - (2n-1)$ and thus that 
  \[m_2 = \dim_K \UpT - \dim_K R \geq 2n-1. \qedhere\] 
\end{proof}

\begin{prop}
Let $\Gamma = \Gamma_0(n), \Gamma_1(n)$ or $\Gamma(n)$. 
\begin{enumerate}
\item $m_2\geq 2m_1-1$.
\item $s_2 \geq 2s_1$ if $\MMb(\Gamma)$ is representable.
\item Let $k_i^{(2)}$ be the $k_i$ from Corollary \ref{cor:dec3}. Then $k_3^{(2)}\geq k_2^{(2)}\geq k_1^{(2)} \geq 1$.
\item  Let $k_i^{(3)}$ be the $k_i$ from Corollary \ref{cor:dec2}. Then $k_2^{(3)}\geq k_1^{(3)} \geq 1$.
\end{enumerate}
\end{prop}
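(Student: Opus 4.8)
The plan is to make part (1) the engine and to obtain (2), (3), (4) from it by Riemann--Roch bookkeeping, with the only genuinely non-formal input being lower bounds on $m_1$. For (1) I would apply the preceding lemma to the graded ring of modular forms $A=\bigoplus_{i\geq 0}H^0(\MMb(\Gamma);(f_n)^*\omega^{\tensor i})$. Since the dimensions $m_i$ are unchanged under extension of the characteristic-zero base field, I may pass to $\bar K$. For $\Gamma=\Gamma_1(n)$ the stack $\MMb_1(n)_{\bar K}$ is geometrically irreducible by Proposition \ref{prop:irreducible}, so $A$ is a graded integral domain with $A_0=\bar K$ and $m_0=1$; for $\Gamma_0(n)$ one passes to the $(\Z/n)^\times$-invariant subring, which is again such a domain; and for $\Gamma(n)$ I would replace $\MMb(n)$ by a single geometric component $\MMb(n)_{\bar K,\zeta}$, geometrically irreducible by the same proposition, all $\phi(n)$ components being isomorphic. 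In every case the lemma yields $m_2\geq 2m_1-1$.

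Part (2) I expect to be purely formal in the representable case. Proposition \ref{prop:riemann} gives $m_1-s_1=\deg(f_n^*\omega)+1-g$ and $m_2=2\deg(f_n^*\omega)+1-g$; eliminating $\deg(f_n^*\omega)$ yields the identity $m_2=2m_1-2s_1+g-1$, that is $g-2s_1=m_2-2m_1+1$. Since $s_2=g$ by Corollary \ref{cor:cusps}, part (1) gives $s_2-2s_1=m_2-2m_1+1\geq 0$, which is exactly (2).

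For (3) and (4) I would read the relevant $k_i$ off the two corollaries and reduce each inequality to (1) or (2). From Corollary \ref{cor:dec3} one has $k_1^{(2)}=m_1$, $k_2^{(2)}=m_2-1$ and $k_3^{(2)}=m_3-m_1$; using that $i\mapsto m_i$ is affine-linear for $i\geq 2$ together with Proposition \ref{prop:riemann}, one computes $k_3^{(2)}-k_2^{(2)}=\deg(f_n^*\omega)+1-m_1=g-s_1=s_2-s_1$, which is $\geq 0$ by (2), while $k_2^{(2)}-k_1^{(2)}=m_2-m_1-1\geq 0$ follows from (1) as soon as $m_1\geq 2$. From Corollary \ref{cor:dec2} one has $k_1^{(3)}=m_1-1$ and $k_2^{(3)}=m_2-m_1$, so that $k_2^{(3)}-k_1^{(3)}=m_2-2m_1+1\geq 0$ is precisely (1), and $k_1^{(3)}\geq 1$ is the bound $m_1\geq 2$.

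Everything therefore rests on the positivity inputs $m_1\geq 2$ (and hence, through (1), $m_2\geq 3$), the one step not reducible to the formal apparatus. For the representable cases I would obtain $m_1\geq 2$ from the cusp count already invoked in the proof of Corollary \ref{cor:dec2}: the curves occurring there ($\Gamma_1(n)$ with $n\geq 5$, and $\Gamma(n)$ with $n\geq 3$ per geometric component) carry at least four regular cusps, and $2m_1$ is at least the number of regular cusps; equivalently, the relation $2\deg(f_n^*\omega)=2g-2+c$ with $c$ the number of cusps (Corollary \ref{cor:Omegaomega}) gives $\deg(f_n^*\omega)\geq g+1$ and hence $m_1\geq\deg(f_n^*\omega)+1-g\geq 2$. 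The single non-representable case appearing in (3), namely $\Gamma_1(4)$, falls outside Proposition \ref{prop:riemann}, and I would settle it directly from $\MMb_1(4)\simeq\PP(1,2)$ in Examples \ref{exa:moduli}, where $m_i=\lfloor i/2\rfloor+1$ forces $k_1^{(2)}=k_2^{(2)}=k_3^{(2)}=1$. The main obstacle is thus not any comparison among the $k_i$ --- these all collapse onto (1) and (2) --- but verifying these cusp-counting lower bounds uniformly across the three families, the one place where genuine arithmetic of modular curves, rather than the machinery of the previous sections, is required.
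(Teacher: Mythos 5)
Your proposal is correct and follows essentially the same route as the paper: part (1) from the preceding graded-integral-domain lemma, part (2) by eliminating $\deg(f_n^*\omega)$ from Proposition \ref{prop:riemann} and using $s_2=g$, and parts (3)--(4) by expressing the $k_i$ through the $m_i$ and reducing everything to (1), (2), the cusp-count bound $m_1\geq 2$, and inspection of the case $\Gamma_1(4)$. Your added care about integrality of the ring of modular forms (geometric irreducibility for $\Gamma_1(n)$, invariants for $\Gamma_0(n)$, a single geometric component for $\Gamma(n)$) and your alternative derivation of $m_1\geq 2$ from $2\deg(f_n^*\omega)=2g-2+c$ only make explicit what the paper's proof leaves implicit.
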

\begin{proof}
The inequality $m_2\geq 2m_1-1$ follows directly from the last lemma. 

Now assume $\MMb(\Gamma)$ to be representable. Recall that the genus $g =\dim_{K}H^0(\MMb(\Gamma);\Omega^1_{\MMb(\Gamma)})$ of $\MMb(\Gamma)$ equals $s_2$. By Proposition \ref{prop:riemann}, $m_1-s_1 = \deg(f_n^*\omega) + 1-g$ and thus 
$$2(m_1-s_1) = m_2 +1 -s_2.$$
Together with part (1) this implies that $s_2 \geq 2s_1$. 

The inequalities for $k_i^{(2)}$ translate into
\begin{align*}
m_1 &\geq 1 \\
m_2 &\geq m_1 +1 \\
m_3 - m_2 &\geq m_1 -1\\
\end{align*}
First note that all these inequalities are true for $\Gamma = \Gamma_1(4)$ by inspection, so assume that $\MMb(\Gamma)$ is representable. 
For the first inequality note that as in Corollary \ref{cor:dec2}, the quantity $2m_1$ is at least the number of cusps. This is clearly $\geq 1$ and it is indeed easy to see by \cite[Figure 3.3]{D-S05} that it is $\geq 4$ and thus $m_2\geq 2$. The second inequality follows form part (1) and $m_1 \geq 2$. By Proposition \ref{prop:riemann}, $m_3 - m_2 = \deg(f_n^*\omega)$ and $m_1-1 = \deg(f_n^*\omega)-s_2+s_1$. By part (2), we know that $s_2 \geq s_1$, so the third inequality follows. 

The inequalities for $k_i^{(3)}$ translate into
\begin{align*}
m_1 &\geq 2 \\
m_2 &\geq m_1 -1 
\end{align*}
Both were already shown above. 
\end{proof}

\begin{cor}\label{cor:decwild}
 Let $\Gamma$ be $\Gamma_1(n)$ for $n\geq 5$ or $\Gamma(n)$ for $n\geq 3$.
 We have a decomposition
 \[(f_n)_*\OO_{\MMb(\Gamma)} \cong \bigoplus_{i\in\Z} \bigoplus_{\kappa_i} (f_q)_*\OO_{\MMb_1(q)} \tensor \omega^{\tensor -i},\]
 for $q=4$, $5$ and $6$. For every $q>6$, there is an an arbitrarily large $n$ such that $(f_n)_*\OO_{\MMb(\Gamma_1(n))}$ does not decompose in the manner above. 
\end{cor}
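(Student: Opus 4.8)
The plan is to phrase everything through the decomposition sequences of Proposition~\ref{prop:dec5}. Over a field $K$ of characteristic $0$ we have $\MMb_{ell}\simeq\PP_K(4,6)$, every vector bundle splits into powers of $\omega$ (Proposition~\ref{prop:ClassField}), and the splitting is unique by Krull--Schmidt (Proposition~\ref{prop:Krull-Schmidt}); so I encode $(f_m)_*\OO_{\MMb(\Gamma)}$ by the polynomial $L_m(t)=\sum_i l_i t^i$, where $t$ records a tensor factor $\omega^{\tensor -1}$. Since $(f_q)_*\OO_{\MMb_1(q)}\tensor\omega^{\tensor -i}$ has polynomial $t^iL_q(t)$, the bundle $(f_n)_*\OO_{\MMb(\Gamma)}$ decomposes into summands of this form if and only if $L_n=L_q\cdot\kappa$ for some $\kappa\in\Z_{\ge 0}[t]$.

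The positive part reduces to Corollaries~\ref{cor:dec3} and~\ref{cor:dec2}. From $L_2=1+t^2+t^4$ and $L_3=(1+t)(1+t^2)(1+t^3)$ one reads off $L_4=(1+t+t^2+t^3)L_2$ and $L_5=L_6=(1+t+t^2)L_3$, the latter reflecting $\MMb_1(5)\simeq\MMb_1(6)\simeq\Prj^1$ with $\omega\cong\OO(1)$. It therefore suffices to divide the sequence $(k_i^{(2)})$ of Corollary~\ref{cor:dec3} by $1+t+t^2+t^3$ and $(k_i^{(3)})$ of Corollary~\ref{cor:dec2} by $1+t+t^2$ with non-negative quotient. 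Divisibility is immediate from the symmetry relations in item~(4) of those corollaries, evaluated at the roots $-1,\pm i$, respectively at the primitive cube roots of unity. For $q=4$ the quotient is $\kappa_0=k_0^{(2)}=1$, $\kappa_1=k_1^{(2)}-1$, $\kappa_2=k_2^{(2)}-k_1^{(2)}$, $\kappa_3=k_6^{(2)}-k_7^{(2)}=s_2-s_1$, $\kappa_4=k_7^{(2)}=s_1$, all $\ge 0$ by the inequalities $k_3^{(2)}\ge k_2^{(2)}\ge k_1^{(2)}\ge 1$ and $s_2\ge 2s_1$ proved just before the corollary; the cases $q=5,6$ are identical.

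For the negative part the decisive point is a degree bound: for $q>6$ the curve $\MMb_1(q)$ either has positive genus, whence $l_{10}^{(q)}=g_q\ge 1$, or has genus $0$ with $\deg f_q^*\omega\ge 2$, whence $l_9^{(q)}=s_3^{(q)}=g_q+\deg f_q^*\omega-1\ge 1$; either way $\deg L_q\ge 9$, in contrast to $\deg L_q\le 8$ for $q\le 6$ (which is exactly what gives the positive part its room). Hence in any factorization $L_n=L_q\kappa$ one has $\deg\kappa\le\deg L_n-9\le 2$, so $\kappa=1+\kappa_1t+\kappa_2t^2$. For $n$ in the representable range I parametrize $L_n$ by Riemann--Roch (Proposition~\ref{prop:riemann}): with $D_n=\deg f_n^*\omega$ and genus $g_n$, each $l_i^{(n)}$ is an explicit affine-linear function of $(D_n,g_n)$, the quantity $s_1^{(n)}=\dim S_1(\Gamma_1(n))$ entering only through $l_1,l_5,l_7$ and the top coefficient $l_{11}^{(n)}=s_1^{(n)}$. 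A direct computation shows that, for genus-$0$ $q$, the $s_1$-free equations determine $\kappa_1,\kappa_2$ from $(D_n,g_n)$ and are then mutually consistent, leaving as sole constraint the top equation $s_1^{(n)}=\kappa_2\,l_9^{(q)}$; for positive-genus $q$ (where $\deg\kappa\le 1$) the $s_1$-free equations already over-determine $\kappa_1$ and force a fixed affine relation $\alpha D_n+\beta g_n=\gamma$.

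To conclude I let $n=p$ run over large primes, so that $\MMb_1(p)$ is representable, $D_p=(p^2-1)/24$ and $g_p\asymp p^2\to\infty$, while $\epsilon_\infty=p-1$. For positive-genus $q$ the left-hand side of $\alpha D_p+\beta g_p=\gamma$ is unbounded, so the relation fails for large $p$. For genus-$0$ $q\in\{7,8,9,10,12\}$ the surviving constraint becomes $s_1^{(p)}=\tfrac{D_q-1}{D_q^{2}}\,g_p+O(p)\asymp p^2$, which is impossible since $\dim S_1(\Gamma_1(p))=o(g_p)$. Thus for every $q>6$ the decomposition fails for arbitrarily large $n$. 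I expect this last, genus-$0$ range to be the real obstacle: there the purely ``dimension-of-modular-forms'' equations are all compatible, and ruling out a decomposition genuinely requires the arithmetic input that weight-one cusp forms are asymptotically negligible compared with the genus.
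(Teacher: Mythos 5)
Your positive half ($q=4,5,6$) is correct and is, up to the generating\hyp{}function packaging, exactly the paper's own argument: reduce to Corollaries \ref{cor:dec3} and \ref{cor:dec2}, note that the relations in item (4) there give divisibility of $\sum_i k_i t^i$ by $1+t+t^2+t^3$ (resp.\ $1+t+t^2$), and get nonnegativity of the quotient from the inequalities $k_3^{(2)}\geq k_2^{(2)}\geq k_1^{(2)}\geq 1$, $k_2^{(3)}\geq k_1^{(3)}\geq 1$ and $s_2\geq 2s_1$ proved just before the corollary. (Your $\kappa_3=k_6^{(2)}-k_7^{(2)}$ is the paper's $k_3^{(2)}-k_2^{(2)}$ in disguise, by item (4).)

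The negative half is where you diverge from the paper, and it contains a genuine gap. The paper's proof is a rank count: in your language, evaluate the identity $L_n=L_q\cdot\kappa$ at $t=1$, which forces $d_q\mid d_n$, where $d_q=L_q(1)$ is the rank of $(f_q)_*\OO_{\MMb_1(q)}$ and satisfies $d_q>24$ for all $q>6$ (Lemma \ref{lem:degomega}); Dirichlet's theorem then produces infinitely many primes $p$ with $d_q\nmid d_p=(p-1)(p+1)$, and that is the entire argument, elementary and uniform in $q$. Your route instead requires, for the genus-zero values $q\in\{7,8,9,10,12\}$, the asymptotic bound $\dim S_1(\Gamma_1(p))=o(g_p)=o(p^2)$, which you invoke but neither prove nor reference. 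That bound is true, but it is a deep theorem of analytic number theory (dihedral weight-one forms are counted by class numbers, and bounding the remaining ones is a Duke-type result); nothing in the paper supplies it, and the paper's own inequality $s_2\geq 2s_1$ (i.e.\ $s_1\leq g_p/2$, since $s_2=g_p$) is strictly too weak, because the constraint you must contradict is $s_1^{(p)}=\kappa_2\approx g_p/4$. So, as written, the cases $q\in\{7,8,9,10,12\}$ are not established. Two smaller unverified assertions -- the ``mutual consistency'' of the $s_1$-free equations for genus-zero $q$, and the nontriviality of the affine relation $\alpha D_n+\beta g_n=\gamma$ for positive-genus $q$ (needed, since a vacuous relation rules out nothing) -- are checkable, the latter because the coefficient of $g_n$ is a positive multiple of $l_9^{(q)}=g_q+D_q-1>0$, but they should be spelled out.

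If you want to stay inside your coefficient-matching framework, the weight-one input can be avoided: for $q=7$ the $s_1$-free equations force $\kappa_2=(2g_p-D_p+2)/4$, and for primes $p\equiv 5\pmod{96}$ this rational number is not an integer, which already forbids any decomposition. But this is again a divisibility argument in a fixed congruence class, done separately for each $q$; evaluating at $t=1$ and quoting Dirichlet, i.e.\ the paper's proof, accomplishes the same thing in one stroke.
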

\begin{proof}
 Consider first the case $q=4$ and the decomposition
 \[(f_n)_*\OO_{\MMb(\Gamma)} \cong \bigoplus_{i\in\Z} \bigoplus_{k_i} (f_2)_*\OO_{\MMb_1(2)} \tensor \omega^{\tensor -i}.\]
 For example, if $\Gamma=\Gamma_1(4)$ we get $k_0 = k_1=k_2=k_3= 1$ and all other $k_i = 0$. In the general case, we set $\kappa_0 = k_0 = 1$, $\kappa_1 = k_1-k_0$, $\kappa_2 = k_2-k_1$, $\kappa_3 = k_3-k_2$ and $\kappa_4 = k_7$ and all other $\kappa_i = 0$. We have to check that $k_i = \kappa_i + \kappa_{i-1} +\kappa_{i-2} + \kappa_{i-3}$, which is a straightforward computation from item 4 of Corollary \ref{cor:dec3}. Furthermore, $\kappa_i \geq 0$ by the last proposition. 
 
  Consider now the case $q=5$ or $q=6$ and the decomposition
 \[(f_n)_*\OO_{\MMb(\Gamma)} \cong \bigoplus_{i\in\Z} \bigoplus_{k_i} (f_3)_*\OO_{\MMb_1(3)} \tensor \omega^{\tensor -i}.\]
 For example, if $\Gamma=\Gamma_1(5)$ or $\Gamma =\Gamma_1(6)$ we get $k_0 = k_1=k_2= 1$ and all other $k_i = 0$. In the general case, we set $\kappa_0 = k_0 = 1$, $\kappa_1 = k_1-k_0$, $\kappa_2 = k_2-k_1$ and $\kappa_3 = k_5$. We have to check that $k_i = \kappa_i + \kappa_{i-1} +\kappa_{i-2}$, which is a straightforward computation from item 4 of Corollary \ref{cor:dec2}. Furthermore, $\kappa_i \geq 0$ by the last proposition. 
 
 It remains to show that for $q>6$ the vector bundle $(f_n)_*\OO_{\MMb(\Gamma)}$ cannot decompose into a sum of vector bundles of the form $(f_q)_*\OO_{\MMb_1(q)}\tensor \omega^{\tensor ?}$ in general. Let now $d_M$ be the degree of the map $\MM_1(m) \to \MM_{ell}$, which is also the rank of $(f_m)_*\OO_{\MMb_1(m)}$. By Lemma \ref{lem:degomega}, the function $d_m = d(\Gamma_1(m))$ is multiplicative and for primes $p$, we have $d_{p^k} = p^{2k-2}(p^2-1$); for example $d_5 = d_6 = 24$ and $d_7 = d_8 = 48$ and $d_9=72$. Moreover, $d_p > 24$ for a prime $p>5$. These facts imply that $d_q>24$ for every $q>6$. 
 
 For a decomposition as in the statement of the corollary, it is necessary that $d_q$ divides $d(\Gamma)$. Thus, we only need to show that for every $D>24$, there are infinitely many $p$ with $d_p$ not 
divisible by $d$. Every $D>24$ has a divisor of the form $d= 16$, $d=9$ or $d$ a prime that is at least $5$. Pick an $a$ that is coprime to $d$ and not congruent to $\pm 1 \mod d$; for $d=16$ we take $a=3$ and for $d=9$ we take $a=2$. By Dirichlet's prime number theorem, there are infinitely many primes $p$ such that $p\equiv a \mod d$. If $d$ is prime, this implies that $d$ does not divide $d_p = (p-1)(p+1)$. If $d =16$, this implies that $d_p \equiv 8 \mod d$, and for $d=9$, this implies $d_p \equiv 3\mod d$. In any case, $d$ does not divide $d_p$ for infinitely many primes $p$ and thus $D$ does not as well. 
\end{proof}

\begin{remark}
The only obstruction presented in the last proof for decomposing $(f_n)_*\OO_{\MMb_1(n)}$ into copies of $(f_m)_*\OO_{\MMb_1(m)}\tensor \omega^{\tensor ?}$ was that $d_m|d_n$. But in general it is not true that $d_m|d_n$ implies the possibility of such a decomposition. For example $d_7|d_{31}$, but $(f_{31})_*\OO_{\MMb_1(n)}$ does not decompose into copies of $(f_7)_*\OO_{\MMb_1(7)}\tensor \omega^{\tensor ?}$.
\end{remark}

\begin{remark}
 We remark that the last corollary also has implications over $\Z_{(l)}$ in the sense that if we have, for example, a splitting of $(f_n)_*\OO_{\MMb_1(n)}$ with $n\geq 4$ into summands of the form $(f_2)_*\OO_{\MMb_1(2)}\tensor \omega^{\tensor i}$  over $\Z_{(3)}$, we obtain also a splitting into summands of the form $(f_4)_*\OO_{\MMb_1(4)}\tensor \omega^{\tensor i}$ as we have just argued with the combinatorics of the decomposition numbers. 
\end{remark}

\subsection{Duality}\label{sec:duality}
In this section, we still work implicitly over a field $K$ of characteristic zero until further notice. Our goal is to explain that in certain cases the decomposition sequence $(l_i)$ of $(f_n)_*\OO_{\MMb_1(n)}$ displays a symmetry, which is easily noticeable in the tables of Appendix \ref{sec:tables}. Recall here that the $l_i$ are defined via the decomposition
\[(f_n)_*\OO_{\MMb_1(n)} \cong \bigoplus_{i\in\Z} \bigoplus_{l_i} \omega^{\tensor (-i)}.\]

\begin{prop}\label{prop:symmetry}
 If $f_n^*\omega^{\tensor k-10}$ is the dualizing sheaf for $\MMb_1(n)$, then we have 
 $$((f_n)_*\OO_{\MMb_1(n)})^\vee \cong (f_n)_*\OO_{\MMb_1(n)} \tensor \omega^{\tensor k}$$
 and this isomorphism holds if and only if the sequence of $l_i$ is of length $(k+1)$ and ``symmetric'', i.e.\ $l_{k-i} = l_i$. 
\end{prop}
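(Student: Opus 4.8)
The plan is to treat the two assertions in turn. Write $\EE=(f_n)_*\OO_{\MMb_1(n)}$, which is a vector bundle on $\MMb_{ell}$ because $f_n$ is finite and flat (Proposition \ref{prop:basicprops}). I will first show that the hypothesis on the dualizing sheaf produces the isomorphism $\EE^\vee\cong\EE\tensor\omega^{\tensor k}$, and then show---independently of that hypothesis---that this isomorphism is equivalent to the symmetry of the sequence $(l_i)$. Combining the two gives the full statement.

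For the isomorphism I would invoke duality for the finite flat morphism $f_n$ of smooth proper $K$-stacks $\MMb_1(n)\to\MMb_{ell}$. Writing $\omega_{f_n}$ for the relative dualizing sheaf, duality for a finite morphism gives a natural identification $f_{n*}\omega_{f_n}\cong\mathcal{H}om_{\OO_{\MMb_{ell}}}(\EE,\OO_{\MMb_{ell}})=\EE^\vee$. The adjunction formula for canonical sheaves on smooth stacks reads $\omega_{f_n}\cong\omega_{\MMb_1(n)}\tensor f_n^*\omega_{\MMb_{ell}}^\vee$. Since $\MMb_{ell}\simeq\PP_K(4,6)$ has dualizing sheaf $\omega^{\tensor(-10)}$ (Theorem \ref{thm:fundamentalweighted}) and the hypothesis gives $\omega_{\MMb_1(n)}\cong f_n^*\omega^{\tensor(k-10)}$, I obtain $\omega_{f_n}\cong f_n^*\omega^{\tensor(k-10)}\tensor f_n^*\omega^{\tensor10}\cong f_n^*\omega^{\tensor k}$. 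The projection formula then yields $\EE^\vee\cong f_{n*}(f_n^*\omega^{\tensor k})\cong\EE\tensor\omega^{\tensor k}$, as desired.

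For the equivalence with symmetry I would argue purely from Krull--Schmidt. In the decomposition $\EE\cong\bigoplus_{i}\bigoplus_{l_i}\omega^{\tensor(-i)}$, dualizing sends $\omega^{\tensor(-i)}$ to $\omega^{\tensor i}$, so the multiplicity of $\omega^{\tensor(-i)}$ in $\EE^\vee$ is $l_{-i}$, while tensoring by $\omega^{\tensor k}$ shifts the index, so its multiplicity in $\EE\tensor\omega^{\tensor k}$ is $l_{i+k}$. Because $\MMb_{ell}$ is proper over $K$, the Krull--Schmidt theorem (Proposition \ref{prop:Krull-Schmidt}) makes the decomposition into the line bundles $\omega^{\tensor m}$ unique, so $\EE^\vee\cong\EE\tensor\omega^{\tensor k}$ holds if and only if $l_{-i}=l_{i+k}$ for all $i$, i.e.\ $l_i=l_{k-i}$. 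It remains to match this with the phrase ``symmetric of length $k+1$'': the vanishing $l_i=0$ for $i<0$ (Proposition \ref{prop:dec5}) together with $l_0=m_0=1$ (from $H^0(\MMb_1(n);\OO)\cong K$, Proposition \ref{prop:irreducible}) forces any symmetric sequence to be supported exactly on $\{0,\dots,k\}$ with $l_0=l_k=1$, and conversely a symmetric sequence of length $k+1$ satisfies $l_i=l_{k-i}$ for all $i$. In particular, the dualizing-sheaf hypothesis forces the symmetry.

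I expect the only real obstacle to be making relative duality rigorous in the stacky, non-representable setting (for instance for $\MMb_1(n)$ with small $n$, or for $\MMb_{ell}$ itself, which is not a scheme). A clean way to sidestep heavy machinery is to stay on $\MMb_{ell}$: the identity $f_{n*}\omega_{f_n}\cong\EE^\vee$ is the defining property of the relative dualizing module of a finite flat morphism and needs nothing beyond $\mathcal{H}om$ of the finite $\OO_{\MMb_{ell}}$-algebra $\EE$, while the identification $\omega_{f_n}\cong f_n^*\omega^{\tensor k}$ uses only Serre duality on the two stacks, which is already available through Theorem \ref{thm:fundamentalweighted} and Corollaries \ref{cor:Omegaomega} and \ref{cor:cusps}. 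Everything else is bookkeeping with the decomposition indices.
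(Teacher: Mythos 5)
Your proposal is correct, and for the main isomorphism it takes a genuinely different route from the paper. Writing $\EE=(f_n)_*\OO_{\MMb_1(n)}$, the paper never touches relative duality: it shows, for every $i$, that $H^0(\MMb_{ell};(\EE\tensor\omega^{\tensor k})\tensor\omega^{\tensor i})\cong H^0(\MMb_{ell};\EE^\vee\tensor\omega^{\tensor i})$ via a chain of isomorphisms (projection formula and degenerate Leray, Serre duality on $\MMb_1(n)$ with the assumed dualizing sheaf, Serre duality on $\MMb_{ell}$ with dualizing sheaf $\omega^{\tensor(-10)}$), and then upgrades this equality of graded global sections to an isomorphism of sheaves using that both bundles split into powers of $\omega$ (Proposition \ref{prop:ClassField}). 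You instead construct the isomorphism directly at the sheaf level: $f_{n*}\omega_{f_n}\cong\EE^\vee$ by definition of the relative dualizing module of the finite flat $f_n$, $\omega_{f_n}\cong f_n^*\omega^{\tensor k}$ by the adjunction formula, then the projection formula. What your route buys: the isomorphism is produced without ever invoking the classification of vector bundles on $\PP_K(4,6)$ --- that classification (via Krull--Schmidt) is needed only for the ``if and only if'' with symmetry, which you and the paper handle identically (and where you supply the detail $l_0=1$, needed to get length exactly $k+1$, which the paper dismisses as clear). What the paper's route buys: it only ever uses the two Serre dualities it has actually established, and never has to formulate the adjunction formula $\omega_{f_n}\cong\omega_{\MMb_1(n)}\tensor f_n^*\omega_{\MMb_{ell}}^{\vee}$ for these stacks. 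That formula is the one genuine gap in your sketch, but your proposed workaround does close it: define $\omega_{f_n}=\mathcal{H}om_{\OO_{\MMb_{ell}}}(\EE,\OO_{\MMb_{ell}})$, viewed as a sheaf on $\MMb_1(n)$ via its $\EE$-module structure; then for coherent $\FF$ on $\MMb_1(n)$ one has, naturally in $\FF$,
\[\Hom(\FF,\omega_{f_n}\tensor f_n^*\omega^{\tensor(-10)})\cong\Hom(f_{n*}\FF,\omega^{\tensor(-10)})\cong H^1(\MMb_{ell};f_{n*}\FF)^{\vee}\cong H^1(\MMb_1(n);\FF)^{\vee}\cong\Hom(\FF,f_n^*\omega^{\tensor(k-10)}),\]
where the steps are hom-tensor adjunction for the finite flat algebra $\EE$, Theorem \ref{thm:fundamentalweighted}, degenerate Leray, and the hypothesis; Yoneda then gives $\omega_{f_n}\tensor f_n^*\omega^{\tensor(-10)}\cong f_n^*\omega^{\tensor(k-10)}$, i.e.\ $\omega_{f_n}\cong f_n^*\omega^{\tensor k}$. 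With that filled in, both arguments rest on the same inputs (the two Serre dualities, projection formula, degenerate Leray), just packaged differently, and yours has the mild additional virtue of constructing the isomorphism without any decomposition of $\EE$.
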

\begin{proof}
 The ``if and only if'' is clear by the Krull--Schmidt theorem for $\MMb_{ell}$ (Proposition \ref{prop:Krull-Schmidt}).
 
Now assume that $f_n^*\omega^{\tensor k-10}$ is the dualizing sheaf for $\MMb_1(n)$. We have:
 \begin{align*}
  H^0\left(\MMb_{ell}; ((f_n)_*\OO_{\MMb_1(n)}\tensor\omega^{\tensor k}) \tensor \omega^{\tensor i}\right) &\cong
  H^0\left(\MMb_1(n); (f_n)^*\omega^{\tensor (k-10)} \tensor (f_n)^*\omega^{\tensor (10+i)}\right)\\
  &\cong  H^1\left(\MMb_1(n);(f_n)^*\omega^{\tensor (-10 - i)}\right)^{\vee} \\
  &\cong H^1\left(\MMb_{ell}; (f_n)_*\OO_{\MMb_1(n)}\tensor \omega^{\tensor (-10 - i)}\right)^{\vee} \\
  &\cong H^0\left(\MMb_{ell}; ((f_n)_*\OO_{\MMb_1(n)})^\vee \tensor \omega^{\tensor i}\right)
 \end{align*}
 Here, we use the projection formula, Serre duality on $\MMb_{ell}$ and $\MMb_1(n)$ and a degenerate form of the Leray spectral sequence. For vector bundles $\FF, \GG$ on $\MMb_{ell}$, we have $\FF\cong \GG$ iff $H^0(\MMb_{ell};\FF\tensor \omega^{\tensor i}) \cong H^0(\MMb_{ell};\GG\tensor \omega^{\tensor i})$ for all $i\in \Z$. This uses that $\FF$ and $\GG$ are of the form $\bigoplus_{j\in\Z} \bigoplus_{l_i}\omega^{\tensor (-i)}$. Thus, we see that
 \[((f_n)_*\OO_{\MMb_1(n)})^\vee \cong (f_n)_*\OO_{\MMb_1(n)}\tensor\omega^{\tensor k}.\qedhere\] 
\end{proof}

Now we turn to the question: When does it happen that $(f_n)^*\omega^{\tensor i}$ is the dualizing sheaf $\Omega^1_{\MMb_1(n)/K}$ for $\MMb_1(n)$ for some $i$? 

Recall that $H^1(\MMb_1(n); \Omega^1_{\MMb_1(n)/K}) \cong K$ and $\dim_K H^0(\MMb_1(n);\Omega^1_{\MMb_1(n)/K})$ is the genus $g$ of $\MMb_1(n)$. 
As $H^1(\MMb_1(n);(f_n)^*\omega^{\tensor i}) = 0$ for $i\geq 2$ by Proposition \ref{prop:coh}, 
\begin{align}\label{eq:dualiso}(f_n)^*\omega^{\tensor i}\cong\Omega^1_{\MMb_1(n)/K}\end{align}
can only happen for $i\leq 1$. Our strategy will be to treat our question step by step for $i\leq -1$, for $i=0$ and for $i=1$. 

Proposition \ref{prop:coh} also implies that \eqref{eq:dualiso} can only be true for $i\leq -1$ if $g = 0$. 
The genus zero cases are only $1\leq n \leq 10$ and $n =12$. For $n=9,10$ or $12$ we do not have symmetric decomposition sequences as seen in the first table in Section \ref{sec:tables}. In the other cases we have:
\begin{align*}
\Omega^1_{\MMb_{ell}/K} &\cong \omega^{\tensor -10} \\
\Omega^1_{\MMb_1(2)/K} &\cong (f_2)^*\omega^{\tensor -6} \\
\Omega^1_{\MMb_1(3)/K} &\cong (f_3)^*\omega^{\tensor -4} \\
\Omega^1_{\MMb_1(4)/K} &\cong (f_4)^*\omega^{\tensor -3} \\
\Omega^1_{\MMb_1(5)/K} &\cong (f_5)^*\omega^{\tensor -2} \\
\Omega^1_{\MMb_1(6)/K} &\cong (f_6)^*\omega^{\tensor -2} \\
\Omega^1_{\MMb_1(7)/K} &\cong (f_7)^*\omega^{\tensor -1} \\
\Omega^1_{\MMb_1(8)/K} &\cong (f_8)^*\omega^{\tensor -1} \\
\end{align*}
Indeed, in the non-representable cases $1\leq n\leq 4$ this is true by Example \ref{exa:moduli} and Theorem \ref{thm:fundamentalweighted}. For $5\leq n\leq 8$, we have $\MMb_1(n) \simeq\mathbb{P}^1$; on these stacks a line bundle is determined by its degree. The table above of canonical sheaves follows now from Lemma \ref{lem:degomega}.

A projective curve has genus $1$ if and only if the canonical sheaf agrees with the structure sheaf. The curve $\MMb_1(n)$ has genus $g=1$ if and only if $n=11,14,15$. This is well-known and can be easily proven using the genus formula
$$g = 1+ \frac{d_n}{24} - \frac14 \sum_{d|n}\phi(d)\phi(n/d)$$
from \cite[Sec 3.8+3.9]{D-S05} and easier analogues of the methods of Proposition \ref{prop:degreecomp}. 

Now assume that $\Omega^1_{\MMb_1(n)/K} \cong (f_n)^*\omega$. Then, in particular, we have $2g-2 = \deg (f_n)^*\omega$. We want to prove the following proposition, whose proof we learned from Viktoriya Ozornova.

\begin{prop}\label{prop:degreecomp}
 We have $2g-2 = \deg (f_n)^*\omega$ if and only if $n = 23,32,33,35,40$ or $42$.
\end{prop}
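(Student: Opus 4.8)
The plan is to convert the geometric identity into a single elementary equation in $n$ and then solve it as a Diophantine problem by a finite search.

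First I would combine the two inputs already at hand. Writing $c_n = \sum_{d|n}\phi(d)\phi(n/d)$, the genus formula quoted just above reads $g = 1 + \tfrac{d_n}{24} - \tfrac14 c_n$, so that $2g-2 = \tfrac{d_n}{12} - \tfrac12 c_n$, while Lemma \ref{lem:degomega} gives $\deg (f_n)^*\omega = \tfrac{d_n}{24}$. Setting these equal and clearing denominators, the condition $2g-2 = \deg (f_n)^*\omega$ becomes simply
\[ d_n = 12\, c_n. \]
Thus the whole proposition is equivalent to classifying the $n$ satisfying this one equation.

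Next I would exploit multiplicativity. The function $d_n = n^2\prod_{p|n}(1-p^{-2})$ is visibly multiplicative, and $c_n$ is a Dirichlet convolution $\phi * \phi$ of multiplicative functions, hence multiplicative; therefore the ratio $h_n = d_n/c_n$ is multiplicative, so that if $n = \prod_i p_i^{k_i}$ then $h_n = \prod_i h_{p_i^{k_i}}$. A direct computation at a prime power gives $d_{p^k} = p^{2k-2}(p^2-1)$ and $c_{p^k} = p^{k-2}(p-1)\big[(k+1)p-(k-1)\big]$, whence
\[ h_{p^k} = \frac{p^{k}(p+1)}{(k+1)p-(k-1)}. \]
The equation to solve is then $\prod_i h_{p_i^{k_i}} = 12$. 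A short estimate shows $h_{p^k}>1$ always and that $h_{p^k}$ is strictly increasing in $k$ (equivalent to $(k+1)p^2-(2k+1)p+k>0$, which holds for all $p\geq2$, $k\geq1$), so the minimal value over powers of a fixed prime is $h_p = (p+1)/2$.

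Finally I would run the finite search. Since every factor $h_{p^k}$ exceeds $1$ and the product is $12$, each factor is $\leq 12$; as $h_p=(p+1)/2$, only primes $p\leq 23$ can divide $n$, and for each such prime the exponent is bounded because $h_{p^k}\to\infty$. Moreover $h_2h_3h_5 = 9$ while $h_2h_3h_5h_7 = 36>12$, so $n$ has at most three distinct prime factors. This leaves the explicit list of candidate prime powers with $h\leq 12$, namely $\{2,4,8,16,32\}$, $\{3,9,27\}$, $\{5,25\}$, $\{7\}$, $\{11\}$, $\{13\}$, $\{17\}$, $\{19\}$, $\{23\}$, and I would check all products of at most three of them (with distinct primes) equal to $12$. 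The single-factor solutions are $h_{32}=12$ and $h_{23}=12$; the two-factor solutions are $h_8 h_5=4\cdot3$, $h_3 h_{11}=2\cdot6$ and $h_5 h_7=3\cdot4$, giving $n=40,33,35$; and the unique three-factor solution is $h_2 h_3 h_7 = \tfrac32\cdot2\cdot4 = 12$, giving $n=42$. Hence the solutions are exactly $n=23,32,33,35,40,42$. The main obstacle is organizing this case analysis so as to be certain it is exhaustive --- in particular ruling out that the non-integer factors ($h_2,h_4,h_{16},h_9,h_{27},h_{25}$) conspire with others to produce the integer $12$, which the bounds on the number and size of the factors reduce to a quick finite verification.
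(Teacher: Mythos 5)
Your proposal is correct, and its core is the same as the paper's: both reduce the condition to the single equation $d_n = 12\,c_n$ (the paper writes it as $\tfrac1{12}\sum_{d|n}d\phi(d)\phi(n/d)=\sum_{d|n}\phi(d)\phi(n/d)$) and then exploit multiplicativity to run a finite check over prime powers; the paper's table of $\tfrac{g(p^k)}{f(p^k)}$ is exactly your $1/h_{p^k}$, and your closed formula $h_{p^k}=\tfrac{p^k(p+1)}{(k+1)p-(k-1)}$ reproduces every entry. Where you genuinely differ is in how the search is made finite: the paper first proves a separate lemma, via the AM--GM estimate $\tfrac12(d+\tfrac nd)\geq\sqrt n$, that the equation has no solutions with $n>144$, and only then tabulates prime powers up to $144$. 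You bypass that lemma entirely by observing that each factor $h_{p^k}$ lies in $(1,12]$, is increasing in $k$, and satisfies $h_p=(p+1)/2$, which bounds the primes by $23$, the exponents per prime, and the number of distinct primes by three ($h_2h_3h_5h_7=36>12$). This is a modest but real simplification: it replaces the global analytic bound with purely local, per-prime-power bounds, and it makes the exhaustiveness of the case analysis transparent (your enumeration of products equal to $12$ is complete and yields exactly $n=23,32,33,35,40,42$). One caveat: just as in the paper, for the proposition as stated you still owe the reader the identification of $g$ with the genus via the quoted genus formula and $\deg f_n^*\omega = d_n/24$ from Lemma \ref{lem:degomega}, which you do correctly in your opening reduction.
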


By Lemma \ref{lem:degomega} and the genus formula above, we see that we have just have to solve for $n$ in the equation 
\[
 \frac1{12}\sum_{d|n}d\phi(d)\phi(n/d)=\sum_{d|n} \varphi(d)\varphi\left(\frac{n}{d}\right).\qedhere
\]

 \begin{lemma}
The inequality
\[
 \frac{1}{12}\sum_{d|n} d\varphi(d)\varphi\left(\frac{n}{d}\right)>\sum_{d|n} \varphi(d)\varphi\left(\frac{n}{d}\right)
\]
holds for every natural number $n>144$.
 \end{lemma}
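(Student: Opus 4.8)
The plan is to recast the inequality multiplicatively. Write $f(n)=\sum_{d|n}d\varphi(d)\varphi(n/d)$ and $g(n)=\sum_{d|n}\varphi(d)\varphi(n/d)$, so the assertion is $f(n)>12\,g(n)$ for $n>144$. By Lemma \ref{lem:degomega} the function $f(n)=n^2\prod_{p|n}(1-p^{-2})$ is multiplicative, and $g=\varphi*\varphi$ is a Dirichlet convolution of the multiplicative function $\varphi$, hence multiplicative as well. Therefore the quotient $h:=f/g$ is multiplicative, and it suffices to control it on prime powers.

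First I would record the local factors. A short computation gives $f(p^a)=p^{2a-2}(p-1)(p+1)$ and, grouping the boundary terms $j=0,a$ separately from the $a-1$ interior terms,
\[ g(p^a)=\sum_{j=0}^{a}\varphi(p^j)\varphi(p^{a-j})=p^{a-2}(p-1)\bigl[2p+(a-1)(p-1)\bigr], \]
so that
\[ h(p^a)=\frac{f(p^a)}{g(p^a)}=\frac{p^a(p+1)}{2p+(a-1)(p-1)}. \]

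The key step is the local estimate $h(p^a)\geq p^{a/2}$ for every prime power $p^a$, which unwinds to the elementary inequality $p^{a/2}(p+1)\geq 2p+(a-1)(p-1)$. I would prove it in two pieces: writing $p^{a/2}(p+1)=p\cdot p^{a/2-1}(p+1)$, one first checks $p^{a/2-1}(p+1)\geq a+1$ (for $a=1$ this is $p+1\geq 2\sqrt p$, i.e.\ $(\sqrt p-1)^2\geq 0$; for $a\geq 2$ it follows from $p^{a/2-1}(p+1)\geq 3\cdot 2^{a/2-1}\geq a+1$, exponential beating linear), and then uses the identity $p(a+1)-\bigl[2p+(a-1)(p-1)\bigr]=a-1\geq 0$. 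I expect this local inequality to be the only real content; everything else is bookkeeping.

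Finally, multiplicativity gives $h(n)=\prod_{p^a\|n}h(p^a)\geq\prod_{p^a\|n}p^{a/2}=\sqrt n$. Hence for $n>144$ we obtain $h(n)\geq\sqrt n>12$, which is precisely $f(n)>12\,g(n)$. This argument also explains the numerical threshold: the clean bound $h(n)\geq\sqrt n$ exceeds $12$ exactly when $n>144=12^2$, so the remaining finitely many cases $n\leq 144$ (containing the equality values $23,32,33,35,40,42$) are handled by direct inspection to complete Proposition \ref{prop:degreecomp}.
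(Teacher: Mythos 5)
Your proof is correct, but it takes a genuinely different route from the paper. The paper's argument is a three-line symmetrization: since $\varphi(d)\varphi(n/d)$ is invariant under $d\mapsto n/d$, the weighted sum can be rewritten with weight $\tfrac12\left(d+\tfrac{n}{d}\right)$ in place of $d$, and termwise AM--GM then gives $\sum_{d|n}d\varphi(d)\varphi(n/d)\geq \sqrt{n}\sum_{d|n}\varphi(d)\varphi(n/d)$, which beats $12$ once $n>144$. You instead exploit multiplicativity: both $f=(\mathrm{id}\cdot\varphi)*\varphi$ and $g=\varphi*\varphi$ are multiplicative, so the ratio $h=f/g$ is determined by its values on prime powers, and your local computations ($f(p^a)=p^{2a-2}(p^2-1)$, $g(p^a)=p^{a-2}(p-1)\bigl[2p+(a-1)(p-1)\bigr]$) and the local estimate $h(p^a)\geq p^{a/2}$ (which I checked, including the identity $p(a+1)-\bigl[2p+(a-1)(p-1)\bigr]=a-1$) are all correct, yielding the same global bound $h(n)\geq\sqrt{n}$. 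What the paper's route buys is brevity and the complete absence of case analysis; what your route buys is that the local data you compute is precisely what the paper needs anyway in the proof of Proposition \ref{prop:degreecomp}, whose finite search below $144$ is organized around the values $g(p^k)/f(p^k)$ on prime powers, so your argument integrates the lemma and the subsequent classification into a single multiplicative framework. Both proofs establish the same strengthened inequality $f(n)\geq\sqrt{n}\,g(n)$, which is why both explain the threshold $144=12^2$.
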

\begin{proof}We have the following chain of (in)equalities:
 \[
  \begin{aligned}
    \frac{1}{12}\sum_{d|n} d\varphi(d)\varphi\left(\frac{n}{d}\right)&= \frac{1}{12}\sum_{d|n} \frac{1}{2}\left(d+\frac{n}{d}\right)\varphi(d)\varphi\left(\frac{n}{d}\right)\\
&\geq   \frac{1}{12}\sum_{d|n} \sqrt{n}\varphi(d)\varphi\left(\frac{n}{d}\right)\\
&\stackrel{{\tiny \sqrt{n}>12}}{>}\sum_{d|n} \varphi(d)\varphi\left(\frac{n}{d}\right).\qedhere
  \end{aligned}
 \]
\end{proof}
\begin{proof}[Proof of proposition:]
 The proof can be easily finished by a computer search of all values up to $144$. For a proof by hand, one can argue as follows: Set $f(n) = \sum_{d|n} d\varphi(d)\varphi\left(\frac{n}{d}\right)$ and $g(n) = \sum_{d|n} \varphi(d)\varphi\left(\frac{n}{d}\right)$. Both functions are multiplicative for coprime integers; thus, we only have to understand them on prime powers smaller or equal to $144$. For $p$ a prime, we have $\frac1{12}f(p) = g(p)$ exactly for $p=23$ and $\frac1{12}f(p)>g(p)$ for $p>23$; thus, the equation cannot have solutions $n$ with prime factors $>23$. The other solutions can now easily be deduced from the following table with the relevant values of $\frac{g(p^k)}{f(p^k)}$:
\begin{center}
\renewcommand{\arraystretch}{1.2}
\begin{tabular}{|c|c|c|c|c|c|c|c|c|}
\hline
$k$, $p$ & $2$ & $3$ & $5$ & $7$ & $11$ & $13$ & $17$ & $19$\\ \hline
$1$ & $\frac{2}{3}$ & $\frac{1}{2}$ & $\frac{1}{3}$ & $\frac{1}{4}$ & $\frac{1}{6}$ & $\frac{1}{7}$ & $\frac{1}{9}$ & $\frac{1}{10}$\\ \hline
$2$ & $\frac{5}{12}$ & $\frac{2}{9}$ & $\frac{7}{75}$ & $\frac5{98}$ & $\frac8{363}$ &  &  & \\ \hline
$3$ & $\frac{1}{4}$ & $\frac{5}{54}$ & $\frac{3}{125}$ &  &  &  &  & \\ \hline
$4$ & $\frac{7}{48}$ & $\frac{1}{27}$ &  &  &  &  &  & \\ \hline
$5$ & $\frac{1}{12}$ &  &  &  &  &  &  & \\ \hline
$6$ & $\frac3{64}$ & & & & & & & \\ \hline
$7$ & $\frac{5}{192}$ & & & & & & & \\ \hline
\end{tabular}
\end{center}
\end{proof}

A line bundle $\LL$ on $\MMb_1(n)$ is isomorphic to $\Omega^1_{\MMb_1(n)/K}$ if and only if $\deg \LL = \deg \Omega^1_{\MMb_1(n)/K}$ and $\dim_K H^1(\MMb_1(n); \LL) = 1$ (indeed, then $\Omega^1_{\MMb_1(n)/K} \tensor \LL^{-1}$ is a line bundle of degree $0$ with a nonzero global section by Serre duality and has thus to be the structure sheaf by \cite[Lemma IV.1.2]{Har77}). In the case of $\LL = (f_n)^*\omega$, we have $\dim_K H^1(\MMb_1(n); \LL) = s_1$, the dimension of the space of weight-$1$ cusp forms for $\Gamma_1(n)$. Among the values from Proposition \ref{prop:degreecomp}, we only have $s_1 = 1$ for $n=23$ as a simple \texttt{MAGMA} computation shows. (We remark that the case $n=23$ was already treated in \cite[Section 1]{Buz} by hand.)

Thus, we have proven the following proposition:
\begin{prop}
 We have $\Omega^1_{\MMb_1(n)/K} \cong (f_n)^*\omega^{\tensor i}$ for some $i$ if and only if 
 \[n=1,2,3,4,5,6,7,8, 11, 14, 15 \text{ or }23.\]
\end{prop}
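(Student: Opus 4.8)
The plan is to determine, for each $n$, whether there exists an integer $i$ with $(f_n)^*\omega^{\tensor i}\cong\Omega^1_{\MMb_1(n)/K}$, splitting the analysis according to the value of $i$. The first step is a cohomological range restriction: since $\dim_K H^1(\MMb_1(n);\Omega^1_{\MMb_1(n)/K})=1$ is nonzero while $H^1(\MMb_1(n);(f_n)^*\omega^{\tensor i})=0$ for all $i\geq 2$ by Proposition \ref{prop:coh}, any admissible $i$ must satisfy $i\leq 1$. I would then treat the three regimes $i\leq -1$, $i=0$ and $i=1$ in turn. (By Proposition \ref{prop:symmetry} this is the same as asking when the decomposition sequence $(l_i)$ is symmetric, but the direct approach below is cleaner here.)

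For $i\leq -1$ I would use that $H^0(\MMb_1(n);(f_n)^*\omega^{\tensor i})=0$, since there are no modular forms of negative weight (Proposition \ref{prop:coh}); this forces the genus $g=\dim_K H^0(\MMb_1(n);\Omega^1_{\MMb_1(n)/K})$ to vanish. The genus-zero cases are exactly $1\leq n\leq 10$ and $n=12$. For $1\leq n\leq 4$ the stack $\MMb_1(n)$ is a weighted projective line $\PP(a_0,a_1)$ with $\omega\cong\OO(1)$ by Example \ref{exa:moduli}, so its dualizing sheaf is $\OO(-a_0-a_1)\cong\omega^{\tensor -(a_0+a_1)}$ by Theorem \ref{thm:fundamentalweighted}, yielding the exponents $-10,-6,-4,-3$. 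For $5\leq n\leq 12$ the curve $\MMb_1(n)$ is an honest $\mathbb{P}^1$, so a line bundle is determined by its degree; matching $\deg\Omega^1_{\MMb_1(n)/K}=2g-2=-2$ against $i\cdot\deg(f_n)^*\omega=i\,d_n/24$ from Lemma \ref{lem:degomega} produces an integral $i$ precisely for $n=5,6,7,8$, and fails for $n=9,10,12$, where $\deg(f_n)^*\omega$ equals $3,3,4$ and so cannot divide $2$. The case $i=0$ is immediate from the classical fact that a smooth projective curve has trivial canonical bundle exactly in genus $1$; the genus formula then singles out $n=11,14,15$.

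The delicate case is $i=1$. Here a necessary numerical condition is $\deg\Omega^1_{\MMb_1(n)/K}=\deg(f_n)^*\omega$, i.e.\ $2g-2=d_n/24$, and Proposition \ref{prop:degreecomp} shows this holds for exactly the six values $n=23,32,33,35,40,42$. This degree condition is not sufficient, however: on a smooth projective curve a line bundle $\LL$ of degree $2g-2$ is isomorphic to $\Omega^1_{\MMb_1(n)/K}$ if and only if it has a nonzero global section, equivalently $\dim_K H^1(\MMb_1(n);\LL)=1$, by Serre duality together with \cite[Lemma IV.1.2]{Har77}. For $\LL=(f_n)^*\omega$ this dimension is $s_1$, the dimension of weight-$1$ cusp forms for $\Gamma_1(n)$, by Corollary \ref{cor:cusps}. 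I would therefore reduce the remaining question to deciding which of the six candidates satisfy $s_1=1$; a \texttt{MAGMA} computation (or, for $n=23$, the hand argument of \cite[Section 1]{Buz}) shows this holds only for $n=23$. Assembling the three regimes gives the asserted list $n\in\{1,\dots,8,11,14,15,23\}$.

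The main obstacle is precisely this final step. Degree matching is purely combinatorial and painlessly narrows the $i=1$ case to six candidates, but distinguishing $n=23$ from the remaining five requires the genuinely arithmetic input that $s_1=\dim_K H^1(\MMb_1(n);(f_n)^*\omega)$ vanishes for $n=32,33,35,40,42$ yet equals $1$ for $n=23$. There appears to be no uniform shortcut around computing these spaces of weight-one cusp forms, which is why the argument ultimately rests on an explicit (computer-assisted) verification rather than a closed formula.
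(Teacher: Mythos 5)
Your proposal is correct and follows essentially the same strategy as the paper: restrict to $i\leq 1$ via the vanishing of $H^1$ in weights $\geq 2$, force genus $0$ for $i\leq -1$ and genus $1$ for $i=0$, and for $i=1$ combine the degree computation of Proposition \ref{prop:degreecomp} with the weight-one cusp form criterion $s_1=1$, which singles out $n=23$ among the six candidates. The only minor divergence is how $n=9,10,12$ are excluded: the paper points to the non-symmetric decomposition sequences in its tables, whereas you use the self-contained observation that on $\MMb_1(n)\simeq\mathbb{P}^1$ the degree $\deg(f_n)^*\omega\in\{3,3,4\}$ cannot divide $\deg\Omega^1_{\MMb_1(n)/K}=-2$, so no integral power of $\omega$ has the correct degree.
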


\begin{cor}\label{cor:Omega}
 If we view $\MMb_1(n)$ as being defined over $\Z[\frac1n]$, we have
 $$\Omega^1_{\MMb_1(n)/\Z[\frac1n]} \cong (f_n)^*\omega^{\tensor i}$$
 for some $i$ if and only if $n=1,2,3,4,5,6,7,8, 11, 14, 15 \text{ or }23.$
\end{cor}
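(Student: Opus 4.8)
The plan is to bootstrap the integral statement from its characteristic-zero counterpart (the preceding proposition) by comparing the two line bundles $\Omega^1_{\MMb_1(n)/\Z[\frac1n]}$ and $(f_n)^*\omega^{\tensor i}$ after base change to $\Q$. The ``only if'' direction is immediate: formation of relative differentials commutes with base change along $\Z[\frac1n]\to\Q$, and $(f_n)^*\omega$ pulls back to the analogous sheaf on $\MMb_1(n)_\Q$, so an isomorphism $\Omega^1_{\MMb_1(n)/\Z[\frac1n]}\cong (f_n)^*\omega^{\tensor i}$ restricts to an isomorphism $\Omega^1_{\MMb_1(n)_\Q/\Q}\cong (f_n)^*\omega^{\tensor i}$, and the preceding proposition forces $n$ into the listed set.

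For the ``if'' direction I would first treat the non-representable cases directly. When $n=1$ we have $\Z[\frac1n]=\Z$, and since $\omega$ generates $\Pic(\MMb_{ell})$ by \cite{F-O10}, the line bundle $\Omega^1_{\MMb_{ell}/\Z}$ is automatically a power of $\omega$. For $n=2,3,4$ the stack $\MMb_1(n)$ is the weighted projective stack over $\Z[\frac1n]$ of Example \ref{exa:moduli}, with $\omega\cong\OO(1)$; as the dualizing-sheaf identification in Theorem \ref{thm:fundamentalweighted}(\ref{item:P2}) holds over an arbitrary base ring, we get $\Omega^1_{\MMb_1(n)/\Z[\frac1n]}\cong\OO(-\sum a_i)$, the asserted power of $\omega$, already integrally.

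The substance lies in the representable cases $n=5,6,7,8,11,14,15,23$. Here the idea is to show that the restriction $\Pic(\MMb_1(n)_{\Z[\frac1n]})\to\Pic(\MMb_1(n)_\Q)$ is injective; granting this, the isomorphism over $\Q$ from the preceding proposition lifts, since $\LL:=\Omega^1_{\MMb_1(n)/\Z[\frac1n]}\tensor(f_n)^*\omega^{\tensor -i}$ restricts to $\OO$ on $\MMb_1(n)_\Q$ and is therefore trivial on $\MMb_1(n)_{\Z[\frac1n]}$, with the very same $i$ that worked over $\Q$. For the genus-zero values $n=5,6,7,8$ one can even shortcut this: by Example \ref{exa:5-12}, $\MMb_1(n)\simeq\Prj^1_{\Z[\frac1n]}$, whose Picard group is $\Z$ detected by degree, and the degree of $(f_n)^*\omega$ from Lemma \ref{lem:degomega} matches that of $\Omega^1\cong\OO(-2)$ for the tabulated $i$.

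The hard part, and the main obstacle, is the injectivity of $\Pic(\MMb_1(n)_{\Z[\frac1n]})\to\Pic(\MMb_1(n)_\Q)$, which is the non-DVR analogue of Proposition \ref{prop:liu} and which I would prove by a divisor argument. Write $Y=\MMb_1(n)_{\Z[\frac1n]}$; by Proposition \ref{prop:basicprops} it is a regular scheme, proper and flat over the principal ideal domain $\Z[\frac1n]$, and it is integral (reduced since smooth over $\Z[\frac1n]$, and irreducible since flat over the irreducible base with geometrically irreducible fibres by Proposition \ref{prop:irreducible}). Given $\LL$ with $\LL_\Q\cong\OO_{Y_\Q}$, extend a trivialising section to a rational section $s$ of $\LL$; then $\mathrm{div}(s)$ is supported on the special fibres $Y_{\F_l}$ for $l\nmid n$. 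Each $Y_{\F_l}$ is irreducible by Proposition \ref{prop:irreducible} and reduced (smooth over $\F_l$), so $Y_{\F_l}=\mathrm{div}_Y(l)$, which is principal because $(l)$ is principal in $\Z[\frac1n]$. Hence $\mathrm{div}(s)$ is principal and $\LL\cong\OO_Y$, giving the claimed injectivity. It then only remains to note that the exponent $i$ is dictated over $\Q$ exactly as in the preceding proposition ($i=0$ for the genus-one $n=11,14,15$, and $i=1$ for $n=23$, using $\dim_\Q H^1(\MMb_1(23)_\Q;(f_{23})^*\omega)=s_1=1$), so the same $i$ serves over $\Z[\frac1n]$.
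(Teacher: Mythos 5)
Your proof is correct, and its core strategy coincides with the paper's: reduce to the characteristic-zero proposition by showing that $\Pic(\MMb_1(n)_{\Z[\frac1n]}) \to \Pic(\MMb_1(n)_{\Q})$ is injective. The paper's entire proof is a one-line appeal to Proposition \ref{prop:liu} for this injectivity; where you differ is that you prove the injectivity yourself, and this difference has real content. Proposition \ref{prop:liu} is stated for a proper \emph{scheme} over a discrete valuation ring or a field, so it does not literally apply here: the base $\Z[\frac1n]$ is not a DVR, and for $n\leq 4$ the moduli stacks are not schemes. Your divisor argument — a line bundle trivial on the generic fiber has a rational section whose divisor is a finite sum of vertical fibers $Y_{\F_l}$, each irreducible by Proposition \ref{prop:irreducible}, reduced by smoothness, and hence equal to the principal divisor $\mathrm{div}(l)$ — is exactly the natural extension of Proposition \ref{prop:liu} to the Dedekind base $\Z[\frac1n]$ (and is, in essence, the argument behind the result of \cite{BLR90} cited there), while your separate treatment of $n\leq 4$ via Example \ref{exa:moduli} and Theorem \ref{thm:fundamentalweighted}, and of $n=1$ via \cite{F-O10}, covers the stacky cases that the citation also misses; this mirrors how the paper itself handles those cases in the characteristic-zero discussion preceding the corollary. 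In short: same reduction, but your version supplies the argument that the paper's citation only gestures at — the paper's proof buys brevity, yours buys hypotheses that are actually satisfied.
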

\begin{proof}
 The restriction functor $\Pic(\MMb_1(n)) \to \Pic((\MMb_1)_\Q)$ is injective by Proposition \ref{prop:liu}. 
\end{proof}

\section{Applications to topological modular forms}\label{sec:tmf}
Goerss, Hopkins and Miller defined a sheaf of $E_\infty$-ring spectra $\OO^{top}$ on the \'etale site of the compactified compactified moduli stack $\MMb_{ell}$ of elliptic curves (see \cite{TMF}). It satisfies $\pi_0\OO^{top} \cong \OO_{\MMb_{ell}}$ and more generally $\pi_{2k}\OO^{top} \cong \omega^{\tensor k}$ and $\pi_{2k-1}\OO^{top} = 0$. As $\MM_0(n), \MM_1(n)$ and $\MM(n)$ are \'etale over $\MMb_{ell}$, we can define
\begin{align*}
 TMF_1(n) &= \OO^{top}(\MM_1(n)), \\
 TMF(n) &= \OO^{top}(\MM(n)), \\
 TMF_0(n) &= \OO^{top}(\MM_0(n)).
\end{align*}

In contrast, $\MMb_0(n)$ etc.\ are not \'etale over $\MMb_{ell}$. As a remedy, Hill and Lawson extended in \cite{HL13} the sheaf $\OO^{top}$ to the log-\'etale site of the compactified moduli stack $\MMb_{ell}$. Thus, we can additionally define
\begin{align*}
 Tmf_1(n) &= \OO^{top}(\MMb_1(n)), \\
 Tmf(n) &= \OO^{top}(\MMb(n)), \\
 Tmf_0(n) &= \OO^{top}(\MMb_0(n)).
\end{align*}
Everywhere here, the integer $n$ is implicitly inverted. The next step would be to define connective versions, which we will leave to a future treatment. 

The goal of this section is to draw topological corollaries from our algebraic decomposition and duality theorems and explain the significance to equivariant $TMF$. 

\subsection{Decompositions}
Our algebraic decomposition theorems rather easily imply topological counterparts. For simplicity of notation, we will formulate the next theorem only for $TMF_1(n)$ and $Tmf_1(n)$, but we will indicate later how to formulate analogous theorems for $TMF(n)$ and $TMF_0(n)$ (if $n$ is squarefree and $l>3$ or $l$ does not divide $\phi(n)$) and likewise for their ``compactified'' versions.

For a spectrum $E$, we will denote by $\widehat{E}_l = \holim_n E/l^n$ its $l$-completion (see e.g.\ \cite[Section II.7.3]{SAG} for a conceptual treatment). 
\begin{thm}\label{thm:main}
 Let $n\geq 4$ and $l$ be a prime not dividing $n$. Denote furthermore by $d_n$ the degree of $f_n\colon \MMb_1(n) \to \MMb_{ell}$. 
 \begin{enumerate}
  \item\label{item:uncompactified} The $\widehat{TMF}_l$-module $\widehat{TMF_1(n)}_l$ decomposes into copies of 
  \begin{itemize}
   \item $\frac{d_n}{24}$ copies of $\widehat{TMF_1(3)}_l \oplus \Sigma^2 \widehat{TMF_1(3)}_l \oplus \Sigma^4 \widehat{TMF_1(3)}_l$ if $l=2$
   \item $\frac{d_n}{12}$ copies of $\widehat{TMF_1(2)}_l\oplus \Sigma^2 \widehat{TMF_1(2)}_l\oplus \Sigma^4 \widehat{TMF_1(2)}_l \oplus \Sigma^6 \widehat{TMF_1(2)}_l$ if $l=3$
   \item $d_n$ copies of even suspensions of $\widehat{TMF}_l$.
  \end{itemize}
  \item\label{item:compactified} The $Tmf_{(l)}$-module $Tmf_1(n)_{(l)}$ decomposes into 
    \begin{itemize}
   \item $\frac{d_n}{8}$ copies of even suspensions of $Tmf_1(3)_{(l)}$ if $l=2$ (with $\Sigma^{2i}Tmf_1(3)_{(l)}$ occuring $k_i$ times with notation as in Corollary \ref{cor:dec2})
   \item $\frac{d_n}{3}$ copies of even suspensions of $Tmf_1(2)_{(l)}$ if $l=3$ (with with $\Sigma^{2i}Tmf_1(2)_{(l)}$ occuring $k_i$ times with notation as in Corollary \ref{cor:dec3})
   \item $d_n$ copies of even suspensions of $Tmf_{(l)}$ if $l>3$ (with $\Sigma^{2i}Tmf_{(l)}$ occuring $l_i$ times with notation as in Proposition \ref{prop:dec5})
  \end{itemize}
    if $\pi_1Tmf_1(n)$ has no $l$-torsion (or, equivalently, every mod $l$ weight $1$ cusp form for $\Gamma_1(n)$ lifts to an integral $\Gamma_1(n)$ weight $1$ cusp form as in Remark \ref{rem:cuspy}).
 \end{enumerate}
\end{thm}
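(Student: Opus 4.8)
The plan is to realize $Tmf_1(n)$ (resp.\ $TMF_1(n)$) as the global sections of a sheaf of $\OO^{top}$-modules and to promote the algebraic decompositions of Section \ref{sec:decexistence} to splittings of these sheaves, exploiting the evenness of $\OO^{top}$ to kill all obstructions. Throughout I write $f\colon \MMb'\to\MMb_{ell}$ for the structure map, with $\MMb'=\MMb_1(3),\MMb_1(2)$ or $\MMb_{ell}$ according as $l=2,3$ or $l>3$, and $g\colon\MMb_1(n)\to\MMb_{ell}$; let $\MM'$ and $\MM_1(n)$ denote the interiors. Since $f$ and $g$ are finite and (log-)\'etale, the pushforwards $f_*\OO^{top}$ and $g_*\OO^{top}$ are sheaves of $\OO^{top}$-modules on $\MMb_{ell}$ (resp.\ $\MM_{ell}$), with $\Gamma(g_*\OO^{top})\simeq Tmf_1(n)$ and $\Gamma(f_*\OO^{top})$ equal to $Tmf_1(3)$, $Tmf_1(2)$ or $Tmf$ as $Tmf$-modules, and with homotopy sheaves $\pi_{2k}\cong g_*\omega^{\tensor k}$ (resp.\ $f_*$), $\pi_{2k-1}=0$, because $\pi_{2*}\OO^{top}\cong\omega^{\tensor *}$ and $g_*$ is exact. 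The descent spectral sequence identifies $\pi_1 Tmf_1(n)\cong H^1(\MMb_1(n);\omega)$, so the stated hypothesis is exactly the $l$-torsion-freeness required in Theorem \ref{thm:deccompact}, and its cusp-form reformulation is Remark \ref{rem:cuspy}.

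First I would invoke the algebra: Theorem \ref{thm:deccompact} (for part (\ref{item:compactified}), using the hypothesis) and Corollary \ref{cor:equal} (for part (\ref{item:uncompactified}), after $l$-completion) yield an isomorphism of graded $\omega^{\tensor *}$-modules $\pi_{2*}(g_*\OO^{top})\cong\pi_{2*}\bigl(\bigoplus_j\Sigma^{2m_j}f_*\OO^{top}\bigr)$, where the shifts $2m_j$ and their multiplicities are those dictated by Corollaries \ref{cor:dec2} and \ref{cor:dec3} and Proposition \ref{prop:dec5}. Here the characteristic-zero multiplicities coincide with the localized ones, being flat-base-change-invariant ranks of cohomology groups, and in the uncompactified case the periodicity of $\omega$ on $\MM'$ supplied by Lemma \ref{lem:trivial} groups the summands into the stated blocks (of rank $24$, $12$ or $1$, whence $d_n/24$, $d_n/12$ or $d_n$ copies).

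Next I would realize this algebraic isomorphism by an $\OO^{top}$-linear map $\Phi\colon\bigoplus_j\Sigma^{2m_j}f_*\OO^{top}\to g_*\OO^{top}$. A single summand map is a point of $\map_{\OO^{top}}(\Sigma^{2m_j}f_*\OO^{top},g_*\OO^{top})$; since $f$ is finite, $f_*\OO^{top}$ is dualizable, and the projection formula together with base change along the cartesian square (the topological analogue of Lemma \ref{lem:pushpull}) identifies this mapping spectrum with a twist by a line bundle of $\OO^{top}(W)$, where $W=\MMb'\times_{\MMb_{ell}}\MMb_1(n)$ and the twist is by a power of $\omega$ coming from the relative dualizing sheaf (cf.\ Corollary \ref{cor:Omegaomega}). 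As $W$ is finite over $\MMb_1(n)$ it inherits from Proposition \ref{prop:basicprops} the vanishing of higher quasi-coherent cohomology in the uncompactified case and cohomological dimension $\leq 1$ in the compactified case, so the descent spectral sequence for $\OO^{top}(W)$ degenerates in even total degree; hence $\pi_{2m_j}$ of the mapping spectrum is the degree-$0$ cohomology group, which computes the algebraic $\Hom$ by the very same duality. Thus each algebraic summand inclusion lifts, and the sum of the lifts gives a $\Phi$ inducing the given algebraic isomorphism on $\pi_{2*}$.

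Finally, because $\OO^{top}$ is even, both sides of $\Phi$ have homotopy sheaves concentrated in even degrees, so $\Phi$ is an isomorphism on \emph{all} homotopy sheaves and therefore an equivalence of sheaves of $\OO^{top}$-modules; applying $\Gamma$, which is $\OO^{top}$-linear and commutes with finite sums, produces the asserted $Tmf_{(l)}$- (resp.\ $\widehat{TMF}_l$-) module splitting with exactly the stated suspensions and multiplicities. The main obstacle will be the middle step, namely the base-change and duality identification of the mapping spectrum with (a twist of) $\OO^{top}(W)$ in the log-\'etale framework of \cite{HL13} and the degeneration of its descent spectral sequence; once these are in place the evenness of $\OO^{top}$ makes both the lift and the resulting equivalence formal, and it is worth stressing that the $H^1$-torsion hypothesis enters only through the algebraic input of Theorem \ref{thm:deccompact}, never through the topological realization itself.
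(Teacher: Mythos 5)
Your proposal is correct and follows essentially the same route as the paper's proof: the algebraic splittings of Theorem \ref{thm:deccompact} (resp.\ Theorem \ref{thm:decuncompact} and Corollary \ref{cor:equal} after $l$-completion) are promoted to topology by lifting the isomorphism of even homotopy sheaves through the descent spectral sequence of the relevant mapping spectrum, whose obstruction groups vanish because of evenness and the cohomological dimension bound of Proposition \ref{prop:basicprops}. The only (immaterial) divergence is in how the homotopy of that mapping spectrum is computed — you dualize $f_*\OO^{top}$ and base-change to the fiber product $W$, while the paper uses local freeness of the homotopy sheaves together with the algebraic adjunction $\mathcal{H}om(\FF, g_*\GG)\cong g_*\mathcal{H}om(g^*\FF,\GG)$ to land directly on $\MMb_1(n)$ — and both reduce the obstructions to the same cohomological-dimension statement.
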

\begin{proof}
 We implicitly localize at $l$ everywhere. We will first consider the case $l=3$ of Item \ref{item:compactified} as the cases $l=2$ and $l>3$ are very similar. Consider the quasi-coherent $\OO^{top}$-module $\EE = (f_n)_*(f_n)^*\OO^{top}$, whose global sections are $Tmf_1(n)$. We know that $\OO^{top}$ is even and $\pi_{2k}\OO^{top} \cong \omega^{\tensor k}$. Moreover, taking (sheafified) homotopy groups is compatible with pushforward and pullback along flat maps. Thus, $\EE$ is even as well and 
 $$\pi_{2k}\EE \cong (f_n)_*(f_n)^*\omega^{\tensor k} \cong ((f_n)_*\OO_{\MMb_1(n)}) \tensor \omega^{\tensor k}.$$
 As every quasi-coherent sheaf on $\MMb_1(n)$ has cohomological dimension $\leq 1$ by Proposition \ref{prop:basicprops}, the same is true for pushforwards of such sheaves to $\MMb_{ell}$ by the Leray spectral sequence as $f_n$ is affine. In particular, the descent spectral sequence is concentrated in lines $0$ and $1$ and this implies that
 $$\pi_1Tmf_1(n) \cong H^1(\MMb_{ell}, \pi_2\EE) \cong H^1(\MMb_1(n), (f_n)^*\omega)$$
 and this is $l$-torsionfree by assumption.
 
 Thus, by Theorem \ref{thm:deccompact} we obtain a splitting of the form
 $$(f_n)_*\OO_{\MMb_1(n)} \cong \bigoplus_{i\in\Z} \bigoplus_{k_i} (f_2)_*\OO_{\MMb_1(2)}\tensor \omega^{\tensor -i}.$$
 Set 
 $$\FF = \bigoplus_{i\in\Z} \bigoplus_{k_i} (f_2)_*(f_2)^*\Sigma^{2i}\OO^{top}$$
 and note that $\Gamma(\FF) = \bigoplus_{i\in\Z} \bigoplus_{k_i} \Sigma^{2i}Tmf_1(2)$. As we have just seen, there is an isomorphism $h\colon \pi_0\FF \xrightarrow{\cong} \pi_0\EE$. Note that $\FF$ is even as well and $\pi_{2k}\FF \cong \pi_0\FF \tensor \omega^{\tensor k}$ so that we actually have $\pi_*\EE \cong \pi_*\FF$.

 There is a descent spectral sequence 
 \[H^q(\MMb_{ell}; \mathcal{H}om_{\pi_*\OO^{top}}(\pi_*\FF,\pi_{*+p}\EE)) \Rightarrow \pi_{p-q}\mathrm{Hom}_{\OO^{top}}(\FF,\EE)\]
 for $\mathcal{H}om$ the Hom-sheaf; indeed, $\FF$ is locally free and thus 
$$\pi_p\mathcal{H}om_{\OO^{top}}(\FF,\EE) \cong \mathcal{H}om_{\pi_*\OO^{top}}(\pi_*\FF,\pi_{*+p}\EE).$$ 
 We claim that $\mathcal{H}om_{\pi_*\OO^{top}}(\pi_*\FF,\pi_{*+p}\EE) \cong \mathcal{H}om_{\OO_{\MMb_{ell}}}(\pi_0\FF,\pi_p\EE)$ has cohomological dimension $\leq 1$. Indeed, it is the sum of vector bundles of the form 
 $$\mathcal{H}om_{\OO_{\MMb_{ell}}}((f_2)_*\OO_{\MMb_1(2)}\tensor \omega^{\tensor j}, (f_n)_*\OO_{\MMb_1(n)}),$$
each of which is isomorphic to
$$(f_n)_*\mathcal{H}om_{\OO_{\MMb_1(n)}}((f_n)^*((f_2)_*\OO_{\MMb_1(2)}\tensor \omega^{\tensor j}), \OO_{\MMb_1(n)}).$$
Thus, there cannot be any differentials or extension issues in the spectral sequence. 
 The homomorphism $h$ defines an element in the spot $p=q=0$. As there are no differentials, $h$ lifts uniquely (up to homotopy) to a morphism $\FF \to \EE$ that induces an isomorphism on $\pi_*$ and is thus an equivalence. 
 
 The other two parts of \ref{item:compactified} are very similar. For the uncompactified case \ref{item:uncompactified}, we start with a few remarks about completions. Note first that $l$-completion commutes with homotopy limits. In particular, if we define $\widehat{\OO}^{top}_l$ by $\widehat{\OO}^{top}_l(U) = (\OO^{top}(U))^{\wedge}_l$ for every Deligne--Mumford stack $U$ \'etale over $\MMb_{ell}$, we see that $\widehat{\OO}^{top}_l$ is still a sheaf on the \'etale site of $\MMb_{ell}$ with global sections $\widehat{TMF}_l$. By \cite[Cor 7.3.5.2]{SAG}, $l$-completion is monoidal and thus $\widehat{\OO}^{top}_l$ is at least a sheaf of $A_\infty$-ring spectra. Working everywhere with $l$-completions, the proof is from this point on analogous to the one in the compactified case if we use Theorem \ref{thm:decuncompact} and Corollary \ref{cor:equal}. 
\end{proof}

For space reasons, we will only give the case $l=3$ of the following two results, but there are analogous results for $l=2$ and $l>3$. 
\begin{thm}
 Let $n\geq 2$ be not divisible by $l=3$. Then the $\widehat{TMF}_3$-module $\widehat{TMF(n)}_3$ decomposes into copies of $\Sigma^{2?}\widehat{TMF_1(2)}_3$ and more precisely into copies of 
 $$\widehat{TMF_1(2)}_l\oplus \Sigma^2 \widehat{TMF_1(2)}_l\oplus \Sigma^4 \widehat{TMF_1(2)}_l \oplus \Sigma^6 \widehat{TMF_1(2)}_l$$
 if $n\geq 3$. Likewise, the $Tmf_{(3)}$-module $Tmf(n)_{(3)}$ decomposes into copies of $\Sigma^{2?}TMF_1(2)_{(3)}$ if $\pi_1Tmf(n)$ has no $3$-torsion (or, equivalently, every mod $3$ weight $1$ cusp form for $\Gamma(n)$ lifts to an integral $\Gamma(n)$ weight $1$ cusp form as in Remark \ref{rem:cuspy}).
\end{thm}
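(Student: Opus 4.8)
The plan is to run the proof of Theorem \ref{thm:main} essentially verbatim, replacing $\MMb_1(n)$ and $\MM_1(n)$ by $\MMb(n)$ and $\MM(n)$; the key structural fact making this legitimate is that $\MMb(n)$ (with $n\geq 2$) is one of the stacks covered by Convention \ref{convention}, so every algebraic input invoked below applies without change. Working implicitly at $l=3$, I would first introduce the quasi-coherent $\OO^{top}$-module $\EE = (f_n)_*(f_n)^*\OO^{top}$, whose global sections are $Tmf(n)$. Since $\OO^{top}$ is even with $\pi_{2k}\OO^{top}\cong\omega^{\tensor k}$, and since sheafified homotopy is compatible with pushforward and flat pullback, $\EE$ is even with $\pi_{2k}\EE \cong ((f_n)_*\OO_{\MMb(n)})\tensor\omega^{\tensor k}$.

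Next I would record the cohomological input. By Proposition \ref{prop:basicprops} every quasi-coherent sheaf on $\MMb(n)$ has cohomological dimension $\leq 1$, and since $f_n$ is affine the same holds for pushforwards to $\MMb_{ell}$; hence the descent spectral sequence computing $\pi_*Tmf(n)$ is concentrated in the lines $0$ and $1$. In particular
\[
\pi_1 Tmf(n) \cong H^1(\MMb_{ell};\pi_2\EE) \cong H^1(\MMb(n);(f_n)^*\omega),
\]
which is $3$-torsionfree under our hypothesis (equivalently, by Remark \ref{rem:cuspy}, every mod $3$ weight $1$ cusp form for $\Gamma(n)$ lifts integrally).

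For the compactified statement I would invoke Theorem \ref{thm:deccompact}: as $H^1(\MMb(n);(f_n)^*\omega)$ has no $3$-torsion and $\MMb'=\MMb_1(2)$ at $l=3$, we obtain an algebraic splitting
\[
(f_n)_*\OO_{\MMb(n)} \cong \bigoplus_{i\in\Z}\bigoplus_{k_i}(f_2)_*\OO_{\MMb_1(2)}\tensor\omega^{\tensor -i},
\]
with exponents governed by Corollary \ref{cor:dec3} when $n\geq 3$. Setting $\FF = \bigoplus_i\bigoplus_{k_i}(f_2)_*(f_2)^*\Sigma^{2i}\OO^{top}$ yields an even module with $\Gamma(\FF)=\bigoplus_i\bigoplus_{k_i}\Sigma^{2i}Tmf_1(2)$ and $\pi_*\FF\cong\pi_*\EE$. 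Exactly as in Theorem \ref{thm:main}, the relevant $\mathcal{H}om$-sheaf is a sum of pushforwards along the affine map $f_n$ of sheaves on $\MMb(n)$, hence of cohomological dimension $\leq 1$; thus the descent spectral sequence for $\mathrm{Hom}_{\OO^{top}}(\FF,\EE)$ has no differentials and no extension problems, so the algebraic isomorphism lifts uniquely to an equivalence $\FF\simeq\EE$, proving the compactified claim. For the uncompactified statement I would repeat the argument after $3$-completion, using that $l$-completion commutes with homotopy limits so that $\widehat{\OO}^{top}_3$ is again a sheaf of $A_\infty$-rings with global sections $\widehat{TMF}_3$; the general decomposition into copies of $\Sigma^{2?}\widehat{TMF_1(2)}_3$ then follows from Theorem \ref{thm:decuncompact} applied to $\OO_{\MM(n)}$ (valid for all $n\geq 2$), while the sharper four-term periodic form for $n\geq 3$ follows from Corollary \ref{cor:equal} (at $l=3$ one has $l+1=4$ summands). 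The proof presents no genuine obstacle beyond this bookkeeping; the one point requiring care is that the sharper $n\geq 3$ periodicity rests on the triviality of $\omega$ on $\MM(n)$, the analogue of Lemma \ref{lem:trivial}, which holds already for $n\geq 3$ once $\MM(n)$ is representable, rather than only for $n\geq 4$ as literally stated in Corollary \ref{cor:equal}.
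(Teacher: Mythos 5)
Your proposal is correct and is essentially the paper's own proof: the paper states this theorem without separate argument precisely because, as you observe, $\MMb(n)$ with $n\geq 2$ falls under Convention \ref{convention}, so the proof of Theorem \ref{thm:main} (via $\EE=(f_n)_*(f_n)^*\OO^{top}$, the collapsing descent spectral sequence, Theorem \ref{thm:deccompact}, Corollary \ref{cor:dec3}, and Theorem \ref{thm:decuncompact} with Corollary \ref{cor:equal} in the completed case) carries over verbatim. Your closing caveat is unnecessary: since $3\nmid n$ by hypothesis, the clause ``$n\geq 3$'' in the statement effectively means $n\geq 4$, so Corollary \ref{cor:equal} applies exactly as stated and no extension of Lemma \ref{lem:trivial} to $\MM(3)$ is required.
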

\begin{thm}
 Let $n\geq 2$ be squarefree and assume that neither $n$ nor $\phi(n)$ are divisible by $l=3$. Then the $\widehat{TMF}_3$-module $\widehat{TMF_0(n)}_3$ decomposes into copies of $\Sigma^{2?}\widehat{TMF_1(2)}_3$. Likewise, the $Tmf_{(3)}$-module $Tmf_0(n)_{(3)}$ decomposes into copies of $\Sigma^{2?}\widehat{TMF_1(2)}_3$ if $\pi_1Tmf_0(n)$ has no $3$-torsion (or, equivalently, every mod $3$ weight $1$ cusp form for $\Gamma_0(n)$ lifts to an integral $\Gamma_0(n)$ weight $1$ cusp form as in Remark \ref{rem:cuspy}).
\end{thm}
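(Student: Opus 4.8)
The plan is to run the argument of Theorem \ref{thm:main} essentially verbatim, replacing $\MMb_1(n)$ throughout by $\MMb_0(n)$; the only genuine content is to check that the algebraic decomposition machinery of Section \ref{sec:decexistence} applies to this stack. First I would verify that $\MMb_0(n)_{\Z_{(3)}}$ falls under Convention \ref{convention}. Since $n$ is squarefree and $3\nmid\phi(n)$, the integer $\phi(n)$ is invertible in $\Z_{(3)}$, which is exactly the hypothesis Convention \ref{convention} demands in the $\MMb_0(n)$ case. Consequently Proposition \ref{prop:basicprops} applies, so every quasi-coherent sheaf on $\MMb_0(n)$ and on $\MM_0(n)$ has cohomological dimension $\leq 1$, and both Theorem \ref{thm:decuncompact} (for $\MM_0(n)$) and Theorem \ref{thm:deccompact} (for $\MMb_0(n)$) become available as the algebraic inputs.

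With this in hand, for the $3$-complete uncompactified statement I would form the even $\widehat{\OO}^{top}_3$-module $\EE = (f_n)_*(f_n)^*\widehat{\OO}^{top}_3$, so that $\Gamma(\EE) = \widehat{TMF_0(n)}_3$ and $\pi_{2k}\EE \cong ((f_n)_*\OO_{\MM_0(n)})\tensor\omega^{\tensor k}$. Theorem \ref{thm:decuncompact} supplies a decomposition of $(f_n)_*\OO_{\MM_0(n)}$ over $\widehat{\MM}_{ell,3}$ into copies of $(f_2)_*\OO_{\MM_1(2)}\tensor\omega^{\tensor -i}$ with certain multiplicities $k_i$; setting $\FF = \bigoplus_i\bigoplus_{k_i}(f_2)_*(f_2)^*\Sigma^{2i}\widehat{\OO}^{top}_3$ then yields a model module whose global sections form the asserted sum of suspensions of $\widehat{TMF_1(2)}_3$, together with an algebraic isomorphism $\pi_*\FF\cong\pi_*\EE$. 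As in the proof of Theorem \ref{thm:main}, the descent spectral sequence computing $\pi_*\mathcal{H}om_{\widehat{\OO}^{top}_3}(\FF,\EE)$ is concentrated in filtrations $0$ and $1$, since the relevant $\mathcal{H}om$-sheaves are pushforwards from $\MM_0(n)$ of cohomological dimension $\leq 1$; there are therefore no differentials or extension problems, and the isomorphism lifts uniquely to an equivalence of $\widehat{\OO}^{top}_3$-modules.

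For the compactified statement I would repeat the argument over $\MMb_{ell,\Z_{(3)}}$, the only extra ingredient being the torsion hypothesis of Theorem \ref{thm:deccompact}, namely that $H^1(\MMb_0(n); (f_n)^*\omega)$ be $3$-torsionfree. Because the descent spectral sequence degenerates onto its two edge lines, this cohomology group is canonically isomorphic to $\pi_1 Tmf_0(n)$, so the assumption that $\pi_1 Tmf_0(n)$ have no $3$-torsion is precisely the hypothesis required, and its reformulation via liftability of weight $1$ cusp forms is as in Remark \ref{rem:cuspy}. I do not expect any real obstacle beyond this: the entire difficulty is concentrated in the opening verification that Convention \ref{convention} holds, which $3\nmid\phi(n)$ secures, after which the topological descent argument is identical to the one already carried out for $\MMb_1(n)$ at the prime $3$ in Theorem \ref{thm:main}.
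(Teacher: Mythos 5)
Your proposal is correct and follows exactly the route the paper intends: the paper states this theorem without a separate proof precisely because, once Convention \ref{convention} is seen to cover $\MMb_0(n)_{\Z_{(3)}}$ (squarefree $n$, $\phi(n)$ invertible since $3\nmid\phi(n)$, and $n$ invertible since $3\nmid n$), the algebraic inputs (Proposition \ref{prop:basicprops}, Theorems \ref{thm:decuncompact} and \ref{thm:deccompact}, Remark \ref{rem:cuspy}) apply verbatim and the descent argument of Theorem \ref{thm:main} goes through unchanged. You also correctly note that only Theorem \ref{thm:decuncompact}, and not Corollary \ref{cor:equal} (which relies on the triviality of $\omega$ on $\MM_1(n)$ and has no analogue for $\MM_0(n)$), is needed here, since the statement claims no specific distribution of the suspensions.
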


\begin{remark}
 There are several possible extensions we already have partial results on and plan to treat in future work. The first is to extend this decomposition theorems to the connective versions $tmf_1(n)$, $tmf(n)$ and $tmf_0(n)$. The second is to take the group action by $(\Z/n)^\times$ on $TMF_1(n)$ and $Tmf_1(n)$ into account.
\end{remark}

\subsection{Duality}
In this section, we will investigate possible Anderson self-duality of $Tmf_1(n)$. Let us recall the definition of Anderson duality, which was first studied by Anderson (only published in mimeographed notes \cite{Anderson}) and Kainen \cite{Kainen}, mainly for the purpose of universal coefficient sequences, and further investigated in the context of topological modular forms in \cite{Sto12}, \cite{H-M15} and \cite{G-M16}.  

For an injective abelian group $J$, the functor
\[\mathrm{Spectra} \to \text{graded abelian groups},\quad X \mapsto \Hom_\Z(\pi_{-*}X, J)\]
is representable by a spectrum $I_J$, as follows from Brown representability. If $A$ is an abelian group and $A \to J^0 \to J^1$ an injective resolution, we define the spectrum $I_A$ to be the fiber of $I_{J^0}\to I_{J^1}$. Given a spectrum $X$, we define its \emph{$A$-Anderson dual} $I_AX$ by $F(X, I_A)$. It satisfies for all $k\in\Z$ the following functorial short exact sequence:
\[0 \to \Ext^1_\Z(\pi_{-k-1}X, A) \to \pi_kI_AX \to \Hom_\Z(\pi_{-k}X, A) \to 0.\]
Note that if $A$ is a subring of $\Q$ and $\pi_{-k-1}X$ is a finitely generated $A$-module, the $\Ext$-group is just the torsion in $\pi_{-k-1}X$. More generally, we obtain for spectra $E$ and $X$ a universal coefficient sequence
\[0 \to \Ext^1_\Z(E_{k-1}X, A) \to (I_AE)^kX \to \Hom_\Z(E_kX, A) \to 0,\]
which is most useful in the case of \emph{Anderson self-duality}, i.e.\ if $I_AE$ is equivalent to $\Sigma^mE$ for some $m$, as then the middle-term can be replaced by $E^{k+m}X$. Such Anderson self-duality is, for example, true for $E = KU$ (with $m=0$), $E = KO$ (with $m=4$) by \cite{Anderson} and $E = Tmf$ (with $m=21$) by \cite{Sto12}. We want to investigate which $Tmf_1(n)$ are Anderson self-dual. \\

Recall that in Corollary \ref{cor:Omega}, we gave conditions when the dualizing sheaf of $\MMb_1(n)$ is a power of $f_n^*\omega$ for $f_n\colon \MMb_1(n) \to \MMb_{ell,\Z[\frac1n]}$ the structure map. We will explain how this implies Anderson self-duality for $Tmf_1(n)$ in these cases once we know that $\pi_*Tmf_1(n)$ is torsionfree. We will assume throughout that $n\geq 2$. 

\begin{lemma}\label{lem:torsionfree2}
 If $\Omega^1_{\MMb_1(n)/\Z[\frac1n]} \cong (f_n)^*\omega^{\tensor i}$, then the cohomology groups $H^1(\MMb_1(n); (f_n)^*\omega^{\tensor j})$ are torsionfree for all $j$. 
\end{lemma}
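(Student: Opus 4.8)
The plan is to isolate the single weight where torsion could possibly occur, and then use the hypothesis that the dualizing sheaf is a power of $\omega$ to trade that weight for one where Proposition \ref{prop:coh} already rules torsion out. Since $\Z[\frac1n]$ is torsionfree, Proposition \ref{prop:coh}(3) immediately gives that $H^1(\MMb_1(n);(f_n)^*\omega^{\tensor j})$ is torsionfree for every $j\neq 1$, so the only thing to prove is that $H^1(\MMb_1(n);(f_n)^*\omega)$ has no torsion. Equivalently, by the short exact sequence used in the proof of Proposition \ref{prop:coh}(3), I must show that for each prime $l\nmid n$ the reduction $H^0(\MMb_1(n);(f_n)^*\omega)/l \to H^0(\MMb_1(n)_{\F_l};(f_n)^*\omega)$ is surjective, i.e.\ that the characteristic-$l$ weight-$1$ forms all lift. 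Throughout I abbreviate $(f_n)^*\omega$ to $\omega$ where harmless.

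For $n\leq 4$ I would dispatch the statement directly: by Example \ref{exa:moduli} the stack $\MMb_1(n)$ is a weighted projective line with $(f_n)^*\omega\cong\OO(1)$, so each $H^1(\MMb_1(n);(f_n)^*\omega^{\tensor j})\cong H^1(\OO(j))$ is a free $\Z[\frac1n]$-module by Proposition \ref{prop:projcoh}, hence torsionfree in all weights. Thus I may assume $n\geq 5$, where by Propositions \ref{prop:basicprops} and \ref{prop:irreducible} the stack $\MMb_1(n)$ is a smooth, proper, geometrically connected relative curve over $\Z[\frac1n]$ and ordinary Serre duality is available on each fiber. Writing the hypothesis as $\Omega^1_{\MMb_1(n)/\Z[\frac1n]}\cong(f_n)^*\omega^{\tensor i}$, I would first pin down $i\leq 1$: since $\Omega^1$ commutes with base change, $H^1(\MMb_1(n)_{\Q};(f_n)^*\omega^{\tensor i})$ is Serre-dual to $H^0(\MMb_1(n)_{\Q};\OO)\cong\Q$ and is therefore nonzero, whereas Proposition \ref{prop:coh}(2) forces it to vanish as soon as $i\geq 2$.

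The key step is a torsion-counting identity obtained by comparing fibers. For any field $k$ over $\Z[\frac1n]$, flatness makes the Euler characteristic $\chi(\MMb_1(n)_k;(f_n)^*\omega^{\tensor j})$ independent of $k$, while fiberwise Serre duality together with $\Omega^1_{\MMb_1(n)_k/k}\cong(f_n)^*\omega^{\tensor i}_k$ yields $h^1_k(\omega^{\tensor j})=h^0_k(\omega^{\tensor(i-j)})$. Hence $h^0_k(\omega^{\tensor j})-h^0_k(\omega^{\tensor(i-j)})=\chi$ is constant in $k$, so the jumps of $h^0$ in weights $j$ and $i-j$ from the characteristic-$0$ fiber to the characteristic-$l$ fiber coincide. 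Because the short exact sequence above identifies $\dim_{\F_l}\Tors_l H^1(\MMb_1(n);(f_n)^*\omega^{\tensor j})$ with exactly that jump $h^0_{\F_l}-h^0_{\Q}$ in weight $j$, I get
\[
\dim_{\F_l}\Tors_l H^1(\MMb_1(n);(f_n)^*\omega^{\tensor j})=\dim_{\F_l}\Tors_l H^1(\MMb_1(n);(f_n)^*\omega^{\tensor(i-j)}).
\]
Taking $j=1$ and using $i\leq 1$, the right-hand weight $i-1$ is $\leq 0\neq 1$, so Proposition \ref{prop:coh}(3) makes the right-hand side vanish, giving the desired torsionfreeness.

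I expect the main obstacle to be the careful bookkeeping that turns ``the jump of $h^0$'' into ``the length of the torsion of $H^1$'': identifying $\dim_{\F_l}(H^0/l)$ with the generic rank of $H^0$ (using that $H^0$ is finitely generated and torsionfree over a principal ideal domain, and commutes with flat base change), and invoking constancy of the Euler characteristic together with upper semicontinuity from \cite[III.12]{Har77}. The only other point requiring care is that the fiberwise Serre duality is applied solely in the representable range $n\geq 5$, the stacky cases $n\leq 4$ having been handled separately by the weighted-projective-line computation above.
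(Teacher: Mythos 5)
Your proof is correct, and it closes the crucial weight\hyp{}one case by a genuinely different mechanism than the paper. The shared skeleton is the same: Proposition \ref{prop:coh}(3) disposes of all $j\neq 1$, the stacky cases $n\leq 4$ are handled directly (the paper says ``by hand'', you via Example \ref{exa:moduli} and Proposition \ref{prop:projcoh}), one reduces to representable $n\geq 5$, and one observes $i\leq 1$. At that point the paper argues by cases on $i$: for $i\leq 0$ it shows $H^1(\MMb_1(n);(f_n)^*\omega)$ vanishes outright, by checking that both fibers $H^1(\MMb_1(n)_{\Q};f_n^*\omega)$ and $H^1(\MMb_1(n)_{\F_p};f_n^*\omega)$ vanish (fiberwise Serre duality plus the absence of negative\hyp{}weight modular forms) and then invoking cohomology and base change \cite[Cor 12.9]{Har77}; for $i=1$ it invokes Grothendieck duality over the base to identify $H^1(\MMb_1(n);f_n^*\omega)\cong\Z[\frac1n]$. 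You instead prove the symmetry $\dim_{\F_l}\Tors_l H^1(\MMb_1(n);(f_n)^*\omega^{\tensor j})=\dim_{\F_l}\Tors_l H^1(\MMb_1(n);(f_n)^*\omega^{\tensor (i-j)})$, using constancy of the Euler characteristic in the flat proper family together with fiberwise Serre duality and the identification of the torsion length with the jump $h^0_{\F_l}-h^0_{\Q}$, and then quote Proposition \ref{prop:coh}(3) at weight $i-1\leq 0$. Your route is uniform in $i$ (no case split), needs duality only over the two residue fields rather than relative duality over $\Z[\frac1n]$, and the torsion\hyp{}symmetry identity is of independent interest as an integral shadow of the symmetry of decomposition sequences in Section \ref{sec:duality}. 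The paper's route is shorter and yields more, namely the exact value of $H^1(\MMb_1(n);(f_n)^*\omega)$ --- zero when $i\leq 0$ and $\Z[\frac1n]$ when $i=1$ --- rather than mere torsionfreeness; all the ingredients you rely on (the mod\hyp{}$l$ short exact sequence, flat base change, semicontinuity) are indeed available in the paper, so there is no gap.
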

\begin{proof}For $j\neq 1$, this follows by Proposition \ref{prop:coh}. It remains to show it for $j=1$. We can assume that $n\geq 5$ so that $\MMb_1(n)$ is representable as the other cases are easily dealt with by hand. By Section \ref{sec:duality} we furthermore know that $i\leq 1$. 

If $i \leq 0$, then $H^1(\MMb_1(n)_{\Q};f_n^*\omega) = 0 = H^1(\MMb_1(n)_{\F_p};f_n^*\omega)$ by Serre duality for all primes $p$ not dividing $n$ because there are no modular forms of negative weight by Proposition \ref{prop:coh} again. Thus, by cohomology and base change $H^1(\MMb_1(n);f_n^*\omega) = 0$ (see e.g.\ \cite[Corollary 12.9]{Har77}). If $i = 1$, then Grothendieck duality states that $H^1(\MMb_1(n);f_n^*\omega) \cong \Z[\frac1n]$. 
\end{proof}
This implies that $\pi_*Tmf_1(n)$ is torsionfree if $\Omega^1_{\MMb_1(n)/\Z[\frac1n]} \cong (f_n)^*\omega^{\tensor i}$ for some $i$. Indeed, as $\MMb_1(n)$ has cohomological dimension $1$, the descent spectral sequence for $Tmf_1(n)$ collapses (as in the proof of Theorem \ref{thm:main}) and we obtain
$$\pi_{2i}Tmf_1(n) \cong H^0(\MMb_1(n); (f_n)^*\omega^{\tensor i}) \;\text{ and }\; \pi_{2i-1}Tmf_1(n)\cong H^1(\MMb_1(n); (f_n)^*\omega^{\tensor i}).$$
The former is torsionfree because $\MMb_1(n)$ is flat over $\Z[\frac1n]$ and the latter by the last lemma. 

The following lemma will also be useful.
\begin{lemma}\label{lem:rational}
Let $A$ be a subring of $\Q$ and $X$ a spectrum whose homotopy groups are finitely generated $A$-modules. Then $(I_AX)_{\Q} \to I_{\Q}X_{\Q}$ is an equivalence, where $X_{\Q}$ denotes the rationalization.
\end{lemma}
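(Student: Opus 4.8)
The plan is to reduce the statement to the fact that $I_{\Q}$ is a rational spectrum, together with a fiber sequence coming from the inclusion $A \hookrightarrow \Q$. First I would identify $I_\Q$. Since $\Q$ is an injective $\Z$-module, $I_\Q$ directly represents $X \mapsto \Hom_\Z(\pi_{-*}X, \Q)$, and in particular $\pi_k I_\Q \cong \Hom_\Z(\pi_{-k}\S, \Q)$. As the stable stems $\pi_j\S$ are finite for $j\neq 0$ and equal to $\Z$ for $j=0$, these groups vanish for $k\neq 0$ and equal $\Q$ for $k=0$; being rational, $I_\Q$ is therefore equivalent to $H\Q$. Consequently $I_\Q X = F(X, I_\Q)$ is a module over $H\Q$, hence already rational, so that the rationalization map induces an equivalence $(I_\Q X)_\Q \simeq I_\Q X$. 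Moreover, since $X \to X_\Q$ is a rational homology equivalence and $I_\Q \simeq H\Q$ is $H\Q$-local, precomposition gives an equivalence $I_\Q X_\Q = F(X_\Q, I_\Q) \xrightarrow{\simeq} F(X, I_\Q) = I_\Q X$. This identifies the target $I_\Q X_\Q$ with $I_\Q X$.

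Next I would build the comparison map. Choosing the injective resolution $A \to \Q \to \Q/A$ (here $\Q/A$ is divisible, hence injective), the definition of $I_A$ as the fiber of $I_{\Q} \to I_{\Q/A}$ gives a fiber sequence $I_A \to I_\Q \to I_{\Q/A}$, and applying $F(X, -)$ yields a fiber sequence
$$I_A X \to I_\Q X \to I_{\Q/A} X.$$
Since its target is rational, the composite $I_A X \to I_\Q X \simeq I_\Q X_\Q$ factors through $(I_A X)_\Q$, producing the natural map $(I_A X)_\Q \to I_\Q X_\Q$ of the statement.

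The key step is then to show that $I_{\Q/A} X$ is rationally trivial. Here I would invoke the defining short exact sequence
$$0 \to \Ext^1_\Z(\pi_{-k-1}X, \Q/A) \to \pi_k I_{\Q/A} X \to \Hom_\Z(\pi_{-k}X, \Q/A) \to 0.$$
The left-hand term vanishes because $\Q/A$ is injective, and the right-hand term is a torsion group because $\Q/A$ is a torsion group; hence every $\pi_k I_{\Q/A}X$ is torsion and $(I_{\Q/A}X)_\Q = 0$.

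Finally, rationalization preserves fiber sequences, so applying $(-)_\Q$ to $I_A X \to I_\Q X \to I_{\Q/A}X$ and using $(I_{\Q/A}X)_\Q = 0$ shows that $(I_A X)_\Q \to (I_\Q X)_\Q$ is an equivalence; combined with the identifications $(I_\Q X)_\Q \simeq I_\Q X \simeq I_\Q X_\Q$ this gives the claim. I do not anticipate a genuine obstacle: the only points needing care are the identification $I_\Q \simeq H\Q$ and checking that the map assembled from $A\hookrightarrow\Q$ agrees with the intended natural map. The finite generation hypothesis plays no essential role in this particular argument and enters only through the applications of the lemma.
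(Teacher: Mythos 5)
Your overall strategy --- running $I_A$ through the arithmetic fiber sequence $I_A \to I_{\Q} \to I_{\Q/A}$ coming from the injective resolution $A \to \Q \to \Q/A$, and then killing the third term rationally --- is a legitimate alternative to the paper's argument, which instead rationalizes the defining short exact sequence for $\pi_k I_A X$ and compares it term by term with $\pi_k I_{\Q}X_{\Q}$ using two algebraic facts about finitely generated $A$-modules. Your identification of the target ($I_{\Q}\simeq H\Q$, hence $I_{\Q}X$ rational and $F(X_{\Q},I_{\Q})\simeq F(X,I_{\Q})$) matches the paper's first step. But your execution has a genuine gap at the key step, and it sits exactly where the lemma's hypothesis must enter.

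You claim that $\pi_k I_{\Q/A}X \cong \Hom_{\Z}(\pi_{-k}X,\Q/A)$ is torsion ``because $\Q/A$ is a torsion group,'' and you conclude that the finite generation hypothesis ``plays no essential role.'' Both assertions are false. For an arbitrary abelian group $M$, the group $\Hom_{\Z}(M,T)$ with $T$ torsion need not be torsion: $\Hom_{\Z}(\Q,\Q/\Z)$ is a nonzero $\Q$-vector space. What is true is that $\Hom_{\Z}(M,\Q/A)$ is torsion when $M$ is a \emph{finitely generated} $A$-module: a surjection $A^n \to M$ embeds it into $\Hom_{\Z}(A,\Q/A)^n$, and $\Hom_{\Z}(A,\Q/A)\cong \Q/A$ because every integer inverted in $A$ acts invertibly on the divisible group $\Q/A$, which has no torsion at those primes. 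So your proof is repairable, but only by invoking the very hypothesis you dismiss. Without it the lemma itself fails: for $A=\Z$ and $X = H\Q$ one has $\pi_{-1}\left((I_{\Z}X)_{\Q}\right) = \Ext^1_{\Z}(\Q,\Z)\otimes\Q \neq 0$ (this $\Ext$-group is already a nonzero $\Q$-vector space), while $\pi_{-1}I_{\Q}X_{\Q} = 0$, and likewise $\pi_0$ of the two sides are $0$ and $\Q$ respectively, so $(I_{\Z}X)_{\Q}\to I_{\Q}X_{\Q}$ is not an equivalence. The same hypothesis is what makes the paper's two algebraic inputs ($\Ext^1(M,A)\otimes \Q = 0$ and $\Hom(M,A)\otimes\Q \cong \Hom(M,\Q)$ for $M$ finitely generated) valid; both also fail for non-finitely-generated $M$.
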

\begin{proof}
Recall that $I_{\Q}X$ is defined as $F(X,I_{\Q})$, which is equivalent to $F(X_{\Q}, I_{\Q})$ as $I_{\Q}$ is rational. We have to show that the natural map $I_AX \to I_{\Q}X_{\Q}$ is an isomorphism on homotopy groups after rationalization. This boils down to the facts that for a finitely generated $A$-module $M$ we have
$$\Ext_A(M,A) \tensor \Q = 0$$ 
and that 
$$\Hom(M,A)\tensor \Q \to \Hom(M, \Q) \cong \Hom_{\Q}(M\tensor \Q, \Q)$$
is an isomorphism.
\end{proof}

\begin{prop}
 We have $I_{\Z[\frac1n]}Tmf_1(n) \simeq \Sigma^l Tmf_1(n)$ as $Tmf_1(n)$-modules for $n\geq 2$ if and only if $l$ is odd and $(f_n)^*\omega^{\tensor (-k)}$ is isomorphic to $\Omega^1_{\MMb_1(n)/\Z[\frac1n]}$ for $k = (l-1)/2$.
\end{prop}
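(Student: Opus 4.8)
The plan is to push everything down to homotopy groups, where the hypotheses become statements about $H^0$ and $H^1$ on $\MMb_1(n)$, and then to identify the Anderson duality pairing with Serre--Grothendieck duality. Write $A = \Z[\tfrac1n]$ and abbreviate $(f_n)^*\omega$ by $\omega$. Since $\MMb_1(n)$ has cohomological dimension $1$ (Proposition \ref{prop:basicprops}), the descent spectral sequence for $Tmf_1(n)$ collapses, so that $\pi_{2i}Tmf_1(n) \cong H^0(\MMb_1(n); \omega^{\tensor i})$ and $\pi_{2i-1}Tmf_1(n) \cong H^1(\MMb_1(n); \omega^{\tensor i})$, and a $Tmf_1(n)$-module equivalence is detected on these groups together with their $\pi_*Tmf_1(n)$-module structure.

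\textbf{The ``if'' direction.} Assume $\omega^{\tensor -k}\cong \Omega^1_{\MMb_1(n)/A}$ with $k=(l-1)/2$. By Lemma \ref{lem:torsionfree2} and flatness over $A$ all these cohomology groups are finitely generated and torsionfree, so the $\Ext$-terms in the Anderson sequence vanish and $\pi_m I_{A}Tmf_1(n) \cong \Hom_A(\pi_{-m}Tmf_1(n),A)$. Feeding in the two parities and applying Serre duality with dualizing sheaf $\omega^{\tensor -k}$ gives, e.g. in even degree $m=2j$, the chain $\Hom_A(H^0(\omega^{\tensor -j}),A)\cong H^1(\omega^{\tensor(j-k)})^{\vee\vee}\cong H^1(\omega^{\tensor(j-k)}) = \pi_{2j-l}Tmf_1(n)$, and likewise in odd degrees, so abstractly $\pi_* I_A Tmf_1(n)\cong \pi_{*-l}Tmf_1(n)$. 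To make this an honest module map I would use that $\pi_{-l}Tmf_1(n)\cong H^1(\MMb_1(n);\Omega^1)\cong A$ via the Grothendieck trace, whence $\pi_l I_A Tmf_1(n)\cong A$; a generator is a $Tmf_1(n)$-module map $\varphi\colon \Sigma^l Tmf_1(n)\to I_A Tmf_1(n)$. Adjointly $\varphi$ encodes the pairing $H^0(\omega^{\tensor i})\tensor H^1(\omega^{\tensor(-k-i)})\to H^1(\omega^{\tensor -k})\cong A$, which is the Serre duality pairing and hence perfect; perfectness forces $\varphi$ to be an isomorphism on all homotopy groups and therefore an equivalence.

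\textbf{The ``only if'' direction.} Assume $I_A Tmf_1(n)\simeq \Sigma^l Tmf_1(n)$. Rationalizing via Lemma \ref{lem:rational} gives $I_{\Q}(Tmf_1(n)_{\Q})\simeq \Sigma^l Tmf_1(n)_{\Q}$, i.e. the numerical symmetry $\dim_{\Q}\pi_m(Tmf_1(n))_{\Q}=\dim_{\Q}\pi_{-m-l}(Tmf_1(n))_{\Q}$ for all $m$. The even homotopy has dimensions $m_i=\dim H^0(\omega^{\tensor i})$, supported on $i\ge 0$ and growing linearly as $i\to\infty$, while the odd homotopy is supported and grows as $i\to-\infty$; an even $l$ would match even degrees with even degrees, contradicting this asymmetry, so $l$ is odd. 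For odd $l$ the symmetry reads $\dim H^0(\omega^{\tensor i})=\dim H^1(\omega^{\tensor(-i-k)})$, and Serre duality rewrites the right side as $\dim H^0(\Omega^1\tensor\omega^{\tensor(i+k)})$; comparing the (Riemann--Roch) leading and constant terms for large $i$ yields $\deg\Omega^1=\deg\omega^{\tensor -k}$, so $\LL:=\Omega^1\tensor\omega^{\tensor k}$ has degree $0$. To upgrade this to $\LL\cong\OO$ I would exploit that the equivalence is $Tmf_1(n)$-linear: it refines the equality of Hilbert functions to an isomorphism of graded modules over the ring of modular forms between the sections of $\LL$ and those of $\OO$, and since $\omega$ is ample with $\MMb_1(n)=\operatorname{Proj}$ of that ring, this gives $\LL\cong\OO$ over $\Q$; Proposition \ref{prop:liu} (injectivity of $\Pic(\MMb_1(n))\to\Pic((\MMb_1(n))_{\Q})$), as in Corollary \ref{cor:Omega}, then descends the isomorphism $\Omega^1\cong\omega^{\tensor -k}$ to $A$.

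\textbf{Main obstacle.} The delicate part is not the numerics but the passage to honest isomorphisms. In the ``if'' direction the crux is constructing $\varphi$ and proving it is an equivalence, which requires matching the topological pairing arising from the multiplicativity of the descent spectral sequence with the algebraic Serre pairing (and the homotopy-theoretic trace with the Grothendieck trace), and then invoking perfectness. In the ``only if'' direction the analogous difficulty is extracting an isomorphism of line bundles from what rationally is only a symmetry of dimensions, for which the full $\pi_*Tmf_1(n)$-module structure of the equivalence — not merely the graded Hilbert function — must be used.
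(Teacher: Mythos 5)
Your proposal is correct and follows essentially the same route as the paper: your ``if'' direction builds the same $Tmf_1(n)$-linear map from a generator of $\pi_l I_{\Z[\frac1n]}Tmf_1(n)$ (the paper's class $\delta$, dual to a generator $D$ of $H^1(\MMb_1(n);\omega^{\tensor -k})\cong\Z[\frac1n]$) and deduces the equivalence from perfectness of the Serre-duality pairing identified with the multiplication pairing on homotopy, while your ``only if'' direction rationalizes via Lemma \ref{lem:rational}, rules out even $l$ by the same support asymmetry of modular forms, and descends to $\Z[\frac1n]$ via Proposition \ref{prop:liu}, exactly as the paper does. The only divergence is the final step over $\Q$: you extract $\deg\Omega^1_{\MMb_1(n)_\Q/\Q}=\deg\omega^{\tensor(-k)}$ from Riemann--Roch and conclude by a graded-module/$\operatorname{Proj}$ argument, whereas the paper shows the multiplication pairing is a perfect pairing and then reruns the uniqueness-of-the-dualizing-sheaf argument of \cite[Thm 7.1b]{Har77} using ampleness of $\omega$ --- a cosmetic difference resting on the same inputs.
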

\begin{proof}
We will denote throughout the proof the sheaf $(f_n)^*\omega$ by $\omega$. Furthermore, we abbreviate $Tmf_1(n)$ to $R$. As noted before, we have $\pi_{2i}R \cong H^0(\MMb_1(n); \omega^{\tensor i})$ and $\pi_{2i-1}R \cong H^1(\MMb_1(n); \omega^{\tensor i})$. 

First suppose that $\omega^{\tensor (-k)}$ is a dualizing sheaf for $\MMb_1(n)$. Then $H^1(\MMb_1(n); \omega^{\tensor (-k)}) \cong \Z[\frac1n]$ and the pairing
$$H^0(\MMb_1(n); \omega^{\tensor -i-k}) \tensor H^1(\MMb_1(n); \omega^{\tensor i}) \to  H^1(\MMb_1(n); \omega^{\tensor -k})\cong \Z[\frac1n]$$
is perfect by Serre duality; note here that all occuring groups are finitely generated and torsionfree by Lemma \ref{lem:torsionfree2}. As note above, this implies that $\pi_*R$ is torsionfree as well.

Choose a generator $D$ of $H^1(\MMb_1(n); \omega^{\tensor -k})$.  This is represented by a unique element in $\pi_{-2k-1}R \cong \Z\left[\tfrac1n\right]$, which we will also denote by $D$. Denote by $\delta$ the element in $\pi_{2k+1} I_{\Z\left[\tfrac1n\right]} R$ with $\phi(\delta)(D) = 1$, where $\phi\colon \pi_{2k+1} I_{\Z\left[\tfrac1n\right]} R \xrightarrow{\cong} \Hom(\pi_{-2k-1}R,\Z\left[\tfrac1n\right])$. The element $\delta$ induces an $R$-linear map $\widehat{\delta}\co \Sigma^{2k+1}R\to I_{\Z\left[\tfrac1n\right]} R$. 

We obtain a diagram
\begin{equation}\label{AndersonSquare}
\begin{gathered}
 \xymatrix@C=1.3em{\pi_{i-2k-1}R \tensor \pi_{-i}R \ar[rr]^-{\widehat{\delta}_*\tensor \id}\ar[d] && \pi_iI_{\Z\left[\tfrac1n\right]}R\tensor \pi_{-i}R \ar[rr]_-{\cong}^-{\phi\tensor\id} && \Hom(\pi_{-i}R, \Z\left[\tfrac1n\right]) \tensor \pi_{-i}R\ar[d]^{\ev} \\
\pi_{-2k-1}R\ar[rrrr]^{\phi(\delta)}_{\cong} &&&& \Z\left[\tfrac1n\right], }
\end{gathered}
\end{equation}
which is commutative up to sign. 

The left vertical map is a perfect pairing because of Serre duality (as described above), as is the right vertical map by definition. Thus, the map 
$$\widehat{\delta}_*\colon \pi_{i-2k-1}R \to \pi_kI_{\Z\left[\tfrac1n\right]}R$$
is an isomorphism for all $i$. This shows that $\widehat{\delta}$ is an equivalence of $R$-modules. 

Now assume on the other hand that there is an equivalence $I_{\Z[\frac1n]}R \simeq \Sigma^l R$ as $R$-modules. By Lemma \ref{lem:rational}, this implies an equivalence $I_{\Q}R_\Q \simeq \Sigma^l R_\Q$ of $R_\Q$-modules. In the following, we will rationalize everything implicitly.

If $l$ is even, this implies
$$H^0(\MMb_1(n); \omega^{\tensor i}) \cong \pi_{2i}Tmf_1(n) \cong (\pi_{-2i}\Sigma^lTmf_1(n))^{\vee} \cong H^0(\MMb_1(n); \omega^{\tensor (-l/2-i)})^{\vee}.$$
As there are no modular forms of negative weight, this would imply that $H^0(\MMb_1(n); \omega^{\tensor i})$ is zero for $i$ big; this is absurd as the ring of modular forms does not contain nilpotent elements. Thus, $l$ can be written as $2k+1$. 

 Let 
$$D \in H^1(\MMb_1(n); \omega^{\tensor -k}) \cong \pi_{-2k-1}R$$
be the element corresponding under the isomorphism
\begin{align*}
 \pi_{-2k-1}R &\cong \pi_0\Sigma^{2k+1}R \\
 &\cong \pi_0I_{\Q}R \\
 &\cong \Hom_{\Q}(\pi_0R, \Q) \\
 &\cong \Hom_{\Q}(\Q,\Q)
\end{align*}
to $1$. Consider now again the diagram (\ref{AndersonSquare}), but now tensored with $\Q$. Now all the horizontal arrows are isomorphisms and the right vertical map is a perfect pairing. Therefore, the left hand vertical arrow is a perfect pairing as well. This implies that 
$$H^0(\MMb_1(n); \omega^{\tensor -i-k}) \tensor H^1(\MMb_1(n); \omega^{\tensor i}) \to  H^1(\MMb_1(n); \omega^{\tensor -k})\cong \Q$$
is a perfect pairing. As $\omega$ is ample by Proposition \ref{prop:basicprops}, one can repeat the proof of \cite[Thm 7.1b]{Har77} to see that $\omega^{\tensor -k}$ is dualizing on $\MMb_1(n)_{\Q}$ and thus isomorphic to the dualizing sheaf $\Omega^1_{\MMb_1(n)_{\Q}/\Q} = \left(\Omega^1_{\MMb_1(n)/\Z[\frac1n]}\right)_{\Q}$. By Proposition \ref{prop:liu}, this implies that $\omega^{\tensor (-k)} \cong \Omega^1_{\MMb_1(n)/\Z[\frac1n]}$ also before rationalizing. 
\end{proof}

This gives the following theorem:
\begin{thm}
 We have $I_{\Z[\frac1n]}Tmf_1(n) \simeq \Sigma^l Tmf_1(n)$ as $Tmf_1(n)$-modules for some $l$ if and only if 
 
   \begin{tabular}{ll}
      $n = 1$ &with\;  $l=21$, \\
  $n = 2$ & with\;  $l=13$, \\
  $n = 3$ & with\; $l = 9$, \\
  $n=4$ & with\; $l = 7$,\\
  $n = 5,6$ & with\;  $l = 5$, \\
  $n = 7,8$ & with\; $l = 3$,\\
  $n = 11,14,15$ &with\; $l = 1$, or \\
  $n = 23$ & with\;  $l= -1$.
   \end{tabular}
\end{thm}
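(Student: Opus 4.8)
The plan is to deduce the theorem as an essentially formal consequence of the preceding proposition together with Corollary~\ref{cor:Omega}, so that the only remaining task is bookkeeping. First I would invoke the preceding proposition, which asserts that $I_{\Z[\frac1n]}Tmf_1(n) \simeq \Sigma^l Tmf_1(n)$ holds (as $Tmf_1(n)$-modules) precisely when $l$ is odd, say $l = 2k+1$, and $(f_n)^*\omega^{\tensor(-k)} \cong \Omega^1_{\MMb_1(n)/\Z[\frac1n]}$. Thus the existence of \emph{some} $l$ with $I_{\Z[\frac1n]}Tmf_1(n) \simeq \Sigma^l Tmf_1(n)$ is equivalent to the dualizing sheaf $\Omega^1_{\MMb_1(n)/\Z[\frac1n]}$ being isomorphic to \emph{some} power $(f_n)^*\omega^{\tensor i}$ (with $i = -k$). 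By Corollary~\ref{cor:Omega}, this happens exactly for $n \in \{1,2,3,4,5,6,7,8,11,14,15,23\}$, which already pins down the list of admissible $n$ in the statement.

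For the precise value of $l$ in each case, I would read off the exponent $i$ with $\Omega^1_{\MMb_1(n)/\Z[\frac1n]} \cong (f_n)^*\omega^{\tensor i}$ from the table of dualizing sheaves compiled in Section~\ref{sec:duality} (the table is established over a field of characteristic zero, and Corollary~\ref{cor:Omega}, via Proposition~\ref{prop:liu}, upgrades the identification to $\Z[\frac1n]$). Since the criterion requires $-k = i$, i.e.\ $k = -i$, the self-dual suspension degree is $l = 2k+1 = 1 - 2i$. Running through the cases, $i = -10,-6,-4,-3,-2,-2,-1,-1$ for $n = 1,\dots,8$ gives $l = 21,13,9,7,5,5,3,3$; the genus-one cases $n = 11,14,15$ have $\Omega^1 \cong \OO_{\MMb_1(n)}$, i.e.\ $i = 0$, giving $l = 1$; and $n = 23$ has $\Omega^1 \cong (f_n)^*\omega$, i.e.\ $i = 1$, giving $l = -1$. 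This reproduces exactly the table in the statement.

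Since all the analytically and arithmetically substantial input---the Serre/Grothendieck-duality computation feeding the Anderson-duality square in the preceding proposition, and the number-theoretic classification of Corollary~\ref{cor:Omega}---is already in hand, there is no serious obstacle remaining; the proof is a translation between the language of dualizing sheaves and that of Anderson duality, followed by the arithmetic $l = 1 - 2i$. The only points demanding a little care are to confirm that $l$ is forced to be odd (so that the negative value $l = -1$ for $n = 23$ is genuinely allowed and no even $l$ can occur), which is exactly the parity constraint supplied by the preceding proposition, and to match the genus-one and $n = 23$ rows correctly, where the relevant exponents $i = 0$ and $i = 1$ come from the identification of $\Omega^1$ with $\OO_{\MMb_1(n)}$ and with $(f_n)^*\omega$ respectively. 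As a consistency check, the case $n = 1$ yields $l = 21$, recovering the Anderson self-duality of $Tmf$ due to Stojanoska.
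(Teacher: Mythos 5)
Your treatment of the cases $n\geq 2$ matches the paper's proof exactly: invoke the preceding proposition to translate Anderson self-duality into the statement that $\Omega^1_{\MMb_1(n)/\Z[\frac1n]}$ is a power $(f_n)^*\omega^{\tensor i}$, use Corollary \ref{cor:Omega} to get the list of admissible $n$, and read off $l = 1-2i$ from the table of dualizing sheaves in Section \ref{sec:duality}. That part, including the parity observation and the values of $l$, is correct.

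However, there is a genuine gap at $n=1$. The preceding proposition is stated, and proved, only for $n\geq 2$, and this restriction is essential rather than cosmetic: its proof uses that $\pi_{2i}Tmf_1(n) \cong H^0(\MMb_1(n);\omega^{\tensor i})$ and $\pi_{2i-1}Tmf_1(n) \cong H^1(\MMb_1(n);\omega^{\tensor i})$ via a collapsing descent spectral sequence (which needs the cohomological dimension $\leq 1$ statement of Proposition \ref{prop:basicprops}, valid under Convention \ref{convention}, i.e.\ $n\geq 2$), together with torsionfreeness of the relevant cohomology (Lemma \ref{lem:torsionfree2}). For $n=1$ all of this fails: $\MMb_{ell}$ is not tame at $2$ and $3$, its quasi-coherent cohomology is nonzero in arbitrarily high degrees at the supersingular points, the descent spectral sequence for $Tmf$ has many differentials, and $\pi_*Tmf$ has $2$- and $3$-torsion (e.g.\ $\eta$, $\nu$). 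So the statement $I_{\Z}Tmf \simeq \Sigma^{21}Tmf$ cannot be obtained as a ``consistency check'' of your formal argument; it is a substantially harder theorem of Stojanoska, which the paper cites as an external input (proved with $2$ inverted in \cite{Sto12}, announced integrally in \cite{Sto14}). Your proof as written would either silently assume the proposition for $n=1$, where it is false as a method of proof, or leave the $n=1$ row of the table unproved. The fix is simply to treat $n=1$ separately by citation, exactly as the paper does.
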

\begin{proof}
 The only case not dealt with by the last proposition and Corollary \ref{cor:Omega} is the case $n=1$. With $2$ inverted, this was shown in \cite{Sto12}. Without inverting $2$, this was announced in \cite{Sto14}. 
\end{proof}

\subsection{Connections to Equivariant $TMF$}\label{sec:eq}
Before we come to the equivariant part of the story, we summarize Lurie's viewpoint of $TMF$ from \cite{Lur07} in the terminology of \cite{SAG}. 

For us, the term \emph{derived stack} means a nonconnective spectral Deligne--Mumford stack in the sense of \cite[Section 1.4.4]{SAG}. As data, it consists of an $\infty$-topos $\XX$ with a sheaf $\OO_{\XX}$ of $E_\infty$-ring spectra on it. An $\infty$-topos $\XX$ has an underlying $1$-topos $\XX^{\heartsuit}$, which is the full sub-$\infty$-category of discrete objects. We call $\XX$ \emph{$1$-localic} if the natural morphism $\XX \to \Shv(\XX^{\heartsuit})$ into space-valued sheaves is an equivalence. As $\XX \mapsto \XX^{\heartsuit}$ defines by \cite[Section 6.4.5]{HTT} an equivalence between the $\infty$-categories of $1$-localic $\infty$-topoi and $1$-topoi, we will not distinguish between $1$-topoi and $1$-localic $\infty$-topoi in the following. 

Denote the underlying topos of $\MM_{ell}$ by $\MM$ and recall the notions of (pre)oriented elliptic curves over $E_\infty$-rings from \cite{Lur07}. Lurie constructs in \cite{Lur07} a derived stack $\MM_{ell}^{top} = (\MM,\OO^{top})$ representing the moduli problem of oriented elliptic curves over $E_\infty$-rings and sketches why this sheaf $\OO^{top}$ agrees with the one constructed by Goerss--Hopkins--Miller. By this moduli interpretation we obtain a universal oriented elliptic curve $\EE^{top}$ over $\MM_{ell}^{top}$, which is characterized by the property that for every $E_\infty$-ring $R$ and map $f\colon \Spec R \to \MM_{ell}^{top}$, the pullback of $\EE^{top}$ is exactly the oriented elliptic curve over $R$ classified by $f$. 

The following lemma is certainly well-known to experts. 

\begin{lemma}
  The underlying $\infty$-topos of $\EE^{top}$ is $1$-localic and agrees with the underlying $1$-topos of the universal elliptic curve $\EE^{uni}$ over $\MM_{ell}$ and we can identify $\OO_{\EE^{uni}}$ with $\pi_0$ of the structure sheaf $\OO^{top}_{\EE}$ of $\EE^{top}$. 
\end{lemma}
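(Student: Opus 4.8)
The plan is to separate the purely topos-theoretic assertions ($1$-localicity and the comparison of underlying $1$-topoi) from the geometric identification of the classical truncation. First I would record that $p\colon \EE^{top}\to\MM_{ell}^{top}$ is representable by spectral algebraic spaces (elliptic curves being projective over the base), so that $\EE^{top}$ is itself a nonconnective spectral Deligne--Mumford stack and its underlying $\infty$-topos is governed by a small \'etale site. The key soft input is the $\pi_0$-invariance of the small \'etale site \cite{SAG}: for a connective $E_\infty$-ring $A$ the small \'etale $\infty$-topos of $\Spec A$ is equivalent to that of $\Spec \pi_0 A$, which is the ordinary \'etale topos of the classical affine scheme $\Spec \pi_0 A$ and hence $1$-localic. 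Glueing over an \'etale atlas, this shows that the underlying $\infty$-topos $\XX_\EE$ of $\EE^{top}$ is $1$-localic, with underlying $1$-topos $\XX_\EE^{\heartsuit}$ equal to the \'etale topos of the classical truncation $(\EE^{top})^{\mathrm{cl}}=(\XX_\EE,\pi_0\OO^{top}_{\EE})$.

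It then remains to identify $(\EE^{top})^{\mathrm{cl}}$, as a classical stack over $\MM_{ell}=(\MM,\pi_0\OO^{top})$, with the universal elliptic curve $\EE^{uni}$, compatibly with structure sheaves. I would argue \'etale-locally on $\MM_{ell}^{top}$: on a chart $\Spec A\to\MM_{ell}^{top}$ Lurie's construction \cite{Lur07} classifies an oriented elliptic curve $E/A$, and the restriction of $\EE^{top}$ is $E$. By the first paragraph the classical truncation of $E$ is its underlying classical elliptic curve over $R_0:=\pi_0 A$, which is precisely the classical elliptic curve $E_0$ classified by the induced map $\Spec R_0\to\MM_{ell}$, that is $\EE^{uni}|_{\Spec R_0}$. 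Since the passage from an oriented spectral elliptic curve to its underlying classical elliptic curve is canonical and functorial, these local identifications glue to an equivalence $(\EE^{top})^{\mathrm{cl}}\cong\EE^{uni}$ over $\MM_{ell}$; in particular $\pi_0\OO^{top}_{\EE}\cong\OO_{\EE^{uni}}$, giving all three claims.

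The hard part will be the \'etale-local identification in the second step, namely that $\pi_0$ of the spectral elliptic curve recovers the classical elliptic curve: this is where one must use the precise deformation-theoretic content of Lurie's moduli interpretation of $\MM_{ell}^{top}$, namely that an oriented elliptic curve over $A$ restricts over $\pi_0 A$ to a classical elliptic curve and that $\pi_0$ of its relative structure sheaf recovers the classical one. The topos-theoretic assertions, by contrast, are formal and follow solely from the insensitivity of the small \'etale site to the higher homotopy of the structure sheaf.
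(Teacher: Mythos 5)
The formal, topos-theoretic half of your argument is fine: \'etale rigidity does make the underlying $\infty$-topos of a nonconnective spectral Deligne--Mumford stack insensitive to the higher homotopy of the structure sheaf, which gives $1$-localicity and identifies the underlying $1$-topos with the \'etale topos of the classical truncation. (One correction: you invoke this for \emph{connective} $E_\infty$-rings, but every chart of $\MM_{ell}^{top}$ and of $\EE^{top}$ is emphatically nonconnective --- \'etale-locally $\OO^{top}$ is even periodic --- so you need the version of \'etale rigidity for arbitrary $E_\infty$-rings; fortunately that is what holds.) This part of your route is in fact more direct than the paper's, which obtains $1$-localicity only as a byproduct of identifying the truncation with $\EE^{uni}$.

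The genuine gap is the step you yourself flag as ``the hard part,'' and it cannot simply be cited away. Flatness plus the definition of a spectral elliptic curve do give that $E\times_{\Spec A}\Spec \pi_0 A$ is a classical elliptic curve whose structure sheaf is $\pi_0$ of that of $E$. What does \emph{not} follow from this, nor from the bare isomorphism $\pi_0\OO^{top}\cong \OO_{\MM_{ell}}$, is your assertion that this classical curve is the one classified by the induced map $\Spec \pi_0 A\to \MM_{ell}$: that is a compatibility of the isomorphism $\pi_0\OO^{top}\cong\OO_{\MM_{ell}}$ with the moduli interpretations on both sides, and it is precisely the content of the lemma stated chart-wise --- so deferring it to ``Lurie's moduli interpretation'' is close to circular. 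The paper closes exactly this gap by passing through the \emph{preoriented} moduli stack: $\OO^{pre}$ is connective with $\pi_0\OO^{pre}=\OO_{\MM_{ell}}$ (Lurie's Proposition 4.1), so $\MM_{ell}$ is the $0$-truncation of $\MM_{ell}^{pre}$; since a preorientation over a $0$-truncated ring carries no information, maps from $0$-truncated spectral Deligne--Mumford stacks into $\EE^{pre}$ agree with maps into $\EE^{uni}$ (via the truncation adjunction of \cite[Cor 1.4.6.4]{SAG}), identifying the $0$-truncation of $\EE^{pre}$ with $\EE^{uni}$; finally $\EE^{top}$ is the pullback of $\EE^{pre}$ along $\MM_{ell}^{top}\to\MM_{ell}^{pre}$, a map which is the \emph{identity} on underlying topoi, and flatness of $\EE^{pre}\to\MM_{ell}^{pre}$ yields $\pi_0\OO^{top}_{\EE}\cong \OO_{\EE^{uni}}$. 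Your chart-wise gluing scheme could be made rigorous, but filling in the local identification would force you to reproduce this detour through $\MM_{ell}^{pre}$ (or to redo Lurie's construction), so as written the proposal reduces the lemma to itself rather than proving it.
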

\begin{proof}
Denote by $\FF$ the functor that associates to each $E_\infty$-ring spectrum $A$ the space of preoriented elliptic curves over $A$. By Proposition 4.1 and the discussion thereafter in \cite{Lur07}, the functor $\FF$ is represented by a derived stack $\MM_{ell}^{pre} = (\MM, \OO^{pre})$ and $\OO^{pre}$ is connective with $\pi_0\OO^{pre} = \OO_{\MM_{ell}}$. Thus, $\MM_{ell}$ can be identified with the $0$-truncation of $\MM_{ell}^{pre}$ in the sense of \cite[Section 1.4.6]{SAG}. The induced map $\MM_{ell} \to \MM_{ell}^{pre}$ classifies the unique preorientation on a classical elliptic curve. 

We claim that the underlying $\infty$-topos of the universal preoriented elliptic curve $\EE^{pre}$ agrees with $\EE$. Indeed, let $X$ be a $0$-truncated spectral Deligne--Mumford stack with a map $X \to \EE^{pre}$. The composition $X \to \MM_{ell}^{pre}$ factors by \cite[Cor 1.4.6.4]{SAG} essentially uniquely through the $0$-truncation of $\MM_{ell}^{pre}$, which is $\MM_{ell}$. Thus, a map $X \to \EE^{pre}$ is the same as a map into the pullback of $\EE^{pre}$ along $\MM_{ell} \to \MM_{ell}^{pre}$, which is $\EE^{uni}$. Thus, the $0$-truncation of $\EE^{pre}$ coincides with $\EE^{uni}$ and we can write $\EE^{pre} = (\EE,\OO^{pre}_{\EE})$.

 The universal oriented elliptic curve is the pullback of
 \[
  \xymatrix{
  & (\EE, \OO^{pre}_{\EE})=\EE^{pre} \ar[d] \\
  (\MM_{ell},\OO^{top}) \ar[r] & (\MM_{ell},\OO^{pre}) = \MM_{ell}^{pre},
  }
 \]
where the map on underlying Deligne--Mumford stacks $\MM_{ell}\to \MM_{ell}$ is the identity by \cite[Section 4.1]{Lur07}. It follows that this pullback is given as $(\EE,\OO^{top}_{\EE})$ with 
$$\OO^{top}_{\EE} = \OO^{pre}_{\EE}\sm_{\OO^{pre}}\OO^{top}.$$ 
As $\EE^{pre} \to \MM_{ell}^{pre}$ is flat by the definition of an elliptic curve, we have \[\pi_0 \OO^{top}_{\EE} \cong \pi_0 \OO_{\EE}^{pre} \tensor_{\pi_0\OO^{pre}} \pi_0\OO^{top} \cong \OO_{\EE^{uni}}.\qedhere\]
\end{proof}

In \cite{Lur07}, Lurie uses the universal derived elliptic curve to construct equivariant elliptic cohomology and we will recall features of this construction now.  

Let $G$ be an abelian compact Lie group. Define $\MM_G$ to be the moduli stack of elliptic curves $E$ together with a morphism $\widehat{G} \to E$ from the constant group scheme of characters $\widehat{G} = \Hom(G,\C^\times)$ of $G$.
Lurie defines $\MM_G^{top}$ as the derived mapping stack $\Hom(\widehat{G}, \EE^{top})$. As $\widehat{G}$ is a finitely generated abelian group, this can concretely be constructed as follows: We set $\Hom(\Z,\EE^{top})$ to be $\EE^{top}$ itself and $\Hom(\Z/n, \EE^{top})$ to be the $n$-torsion points of $\EE^{top}$, i.e.\ the fiber product 
\[\xymatrix{\EE^{top}[n] \ar[r]\ar[d] & \EE^{top} \ar[d]^{[n]}\\
\MM_{ell}^{top} \ar[r]^e & \EE^{top}.
}\]
Because $[n]$ is flat by \cite[Theorem 2.3.1]{K-M85}, this fiber product descends to underlying Deligne--Mumford stacks so that the underlying stack of $\EE^{top}[n]$ are the $n$-torsion points of $\EE^{uni}$, i.e.\ $\EE^{uni}[n]$.
Furthermore, $\Hom(H_1\oplus H_2,\EE^{top})$ is $\Hom(H_1,\EE^{top})\times_{\MM_{ell}^{top}} \Hom(H_2, \EE^{top}).$  
It will follow from Lemma \ref{lem:FiniteFlat} that this fiber product again descends to the underlying stacks so that the underlying stack of $\MM_G^{top}$ is $\MM_G = \Hom(\widehat{G}, \EE^{uni})$. We denote the structure sheaf of $\MM_G^{top}$ by $\OO^{top}_G$.

Lurie defines in \cite{Lur07} $\infty$-functors $$\FF_G\colon (\text{finite }G\text{-CW complexes})^{op} \to \QCoh(\MM_G^{top})$$
satisfying the following properties:
\begin{enumerate}
 \item $\FF_G$ sends finite homotopy colimits of $G$-CW complexes to finite homotopy limits,
 \item $\FF_G(\pt) = \OO^{top}_G$, and
 \item For $H\subset G$ and $X$ a finite $H$-CW complex, we have $\FF_G(X\times_H G) \simeq f_*\FF_H(X)$ for $f\colon \MM_H \to \MM_G$ the morphism defined by restriction.
\end{enumerate}

The $G$-equivariant $TMF$-cohomology of some finite $G$-CW complex $X$ is then computed as the (homotopy groups of the) global sections of $\FF_G(X)$. We set the $G$-fixed points $TMF^G$ of $G$-equivariant $TMF$ to be $\Gamma(\OO^{top}_G) = \OO^{top}_G(\MM_G).$ Its homotopy groups are the value of $G$-equivariant $TMF$ at a point.

\begin{remark}
 In forthcoming work, Gepner and Nikolaus show that this definition of $TMF^G$ actually refines to a global equivariant spectrum $TMF$. 
\end{remark}

We need the following lemma. 

\begin{lemma}\label{lem:FiniteFlat}
 For $G$ finite abelian, the morphism $\MM_G \to \MM_{G/H}$ is finite and flat for every split inclusion $H\subset G$.
\end{lemma}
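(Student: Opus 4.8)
The plan is to reduce the assertion to a base-change statement and then invoke the Katz--Mazur finiteness of multiplication-by-$m$ on the universal elliptic curve. The key observation is that a split inclusion $H \subset G$ of finite abelian groups yields a split short exact sequence $0 \to H \to G \to G/H \to 0$, and applying the contravariant, exact functor $\Hom(-,\C^\times)$ (exact since $\C^\times$ is divisible, hence injective over $\Z$) produces a split short exact sequence
\[0 \to \widehat{G/H} \to \widehat{G} \to \widehat{H} \to 0,\]
so that $\widehat{G} \cong \widehat{H} \oplus \widehat{G/H}$. Since $\MM_G = \Hom(\widehat{G}, \EE^{uni})$ and $\Hom$ turns direct sums in the source into fibre products over $\MM_{ell}$, I would first identify
\[\MM_G \cong \MM_H \times_{\MM_{ell}} \MM_{G/H},\]
under which the structure map $\MM_G \to \MM_{G/H}$ (induced by precomposition with the inclusion $\widehat{G/H} \hookrightarrow \widehat{G}$ dual to the quotient $G \to G/H$) becomes precisely the projection onto the second factor. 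Consequently $\MM_G \to \MM_{G/H}$ is the base change of $\MM_H \to \MM_{ell}$ along $\MM_{G/H} \to \MM_{ell}$, and since finite and flat morphisms are stable under base change, it suffices to prove that $\MM_H \to \MM_{ell}$ is finite and flat.

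For the latter I would write $\widehat{H} \cong \Z/m_1 \oplus \cdots \oplus \Z/m_s$, giving
\[\MM_H \cong \EE^{uni}[m_1] \times_{\MM_{ell}} \cdots \times_{\MM_{ell}} \EE^{uni}[m_s].\]
Each torsion scheme $\EE^{uni}[m_i]$ is, by definition, the pullback of $[m_i]\colon \EE^{uni} \to \EE^{uni}$ along the zero section $e\colon \MM_{ell} \to \EE^{uni}$, so that $\EE^{uni}[m_i] \to \MM_{ell}$ is the base change of $[m_i]$ along $e$. By \cite[Theorem 2.3.1]{K-M85} the map $[m_i]$ is finite locally free, in particular finite and flat, and hence so is each $\EE^{uni}[m_i] \to \MM_{ell}$. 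Building the iterated fibre product up one factor at a time, each projection is a base change of some $\EE^{uni}[m_j] \to \MM_{ell}$ and is therefore finite and flat; composing these finite flat projections shows that $\MM_H \to \MM_{ell}$ is finite and flat, which completes the argument.

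Two points need care but no real work. First, one should record that finiteness and flatness of morphisms of Deligne--Mumford stacks are fppf-local on the target, which is what justifies their stability under the base changes and composites used above. Second, one must keep track of the direction of the induced maps, i.e.\ that $\MM_G \to \MM_{G/H}$ genuinely corresponds to $\widehat{G/H} \hookrightarrow \widehat{G}$ and hence to the projection onto the $\widehat{G/H}$-factor, rather than to the restriction map $\MM_H \to \MM_G$ discussed earlier. I expect no genuine obstacle: the whole argument lives at the level of the classical underlying stacks, where $\MM_G = \Hom(\widehat{G},\EE^{uni})$ is a classical iterated fibre product of torsion subschemes, so no derived or Tor-independence subtlety arises; the single external input is the Katz--Mazur finiteness already quoted in the text immediately preceding the lemma.
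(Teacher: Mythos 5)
Your proof is correct and follows essentially the same route as the paper: both exploit the splitting $G \cong H \oplus G/H$ to identify $\MM_G$ as a fibre product over $\MM_{ell}$, recognize $\MM_G \to \MM_{G/H}$ as the projection (hence a base change of a torsion-scheme map), and invoke \cite[Theorem 2.3.1]{K-M85} for finiteness and flatness of $[k]$ on the universal elliptic curve. The only difference is organizational — you reduce to showing $\MM_H \to \MM_{ell}$ is finite flat and then decompose $H$ into cyclic factors via an iterated fibre product, whereas the paper runs an induction that assumes $H$ cyclic from the start — but this is the same argument.
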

\begin{proof}By writing $H$ as a sum of cyclic groups and using induction, we can assume that $H$ is cyclic of order $k$. As $G \cong H \oplus G/H$, we obtain $\MM_G \simeq \MM_{G/H} \times_{\MM_{ell}} \EE^{uni}[k]$. By \cite[Theorem 2.3.1]{K-M85}, the map $\EE[k]$ is finite and flat over $\MM_{ell}$ and the result follows. 
\end{proof}

\begin{thm}
For $G$ finite abelian, the global sections functor
\[\Gamma\colon \QCoh(\MM_G^{top}) \to TMF^G\modules\]
is an equivalence of $\infty$-categories.
\end{thm}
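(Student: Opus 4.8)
The plan is to recognize the assertion as the statement that the nonconnective spectral Deligne--Mumford stack $\MM_G^{top}$ is \emph{$0$-affine}, i.e.\ that its global sections functor $\QCoh(\MM_G^{top}) \to \Gamma(\OO^{top}_G)\modules$ is an equivalence; here $\Gamma(\OO^{top}_G) = TMF^G$ by definition. I would prove this by combining the $0$-affineness of the base $\MM_{ell}^{top}$ with the fact that the structure map $f\colon \MM_G^{top} \to \MM_{ell}^{top}$ is affine, using that $0$-affineness is inherited along affine morphisms.

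First I would recall that $\MM_{ell}^{top}$ is $0$-affine, so that $\Gamma\colon \QCoh(\MM_{ell}^{top}) \xrightarrow{\simeq} TMF\modules$; this is a theorem of Mathew and Meier \cite{MM15}. Next comes the general reduction: if $f\colon \YY \to \ZZ$ is an affine morphism of (nonconnective) spectral Deligne--Mumford stacks and $\ZZ$ is $0$-affine, then $\YY$ is $0$-affine. Indeed, for an affine morphism one has a natural equivalence $\QCoh(\YY) \simeq \Mod_{f_*\OO_\YY}(\QCoh(\ZZ))$, the spectral avatar of the identification of quasi-coherent sheaves on a relative $\Spec$ with modules over the pushforward algebra \cite{SAG}. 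Composing with the equivalence $\QCoh(\ZZ)\simeq \Gamma(\OO_\ZZ)\modules$ and noting that $f_*\OO_\YY$ is carried to the $\Gamma(\OO_\ZZ)$-algebra $\Gamma(\YY,\OO_\YY)$, one obtains $\QCoh(\YY) \simeq \Gamma(\OO_\YY)\modules$; since global sections over $\YY$ factor as $\Gamma_\ZZ \circ f_*$, this composite equivalence is implemented by $\Gamma$ itself, which is exactly what is claimed.

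It remains to check that $f\colon \MM_G^{top} \to \MM_{ell}^{top}$ is affine. On underlying stacks, writing $\widehat G \cong \bigoplus_i \Z/n_i$ exhibits $\MM_G = \Hom(\widehat G, \EE^{uni})$ as an iterated fiber product over $\MM_{ell}$ of the torsion subgroup schemes $\EE^{uni}[n_i]$, each of which is finite and flat over $\MM_{ell}$ by \cite[Theorem 2.3.1]{K-M85}; this is precisely the content of iterated applications of Lemma \ref{lem:FiniteFlat}, so $\MM_G \to \MM_{ell}$ is finite, hence affine. For the spectral enhancement, the fiber products defining $\MM_G^{top}$ are formed along the flat multiplication map $[n]$ on $\EE^{top}$, so as explained before the theorem they are base changes of the finite flat morphisms $\EE^{top}[n] \to \MM_{ell}^{top}$; therefore $f$ is finite flat, in particular affine, as a morphism of spectral stacks. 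Together with the previous paragraph this yields that $\MM_G^{top}$ is $0$-affine.

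The main obstacle is this last step: verifying affineness of $f$ at the \emph{spectral} level rather than merely on underlying classical stacks. The point to pin down is that $\OO^{top}_G$ arises by flat pushforward from $\OO^{top}$ along a finite flat underlying map, so that \'etale-locally on $\MM_{ell}^{top}\simeq \Spec A$ the stack $\MM_G^{top}$ is $\Spec B$ with $B$ a finite flat $A$-algebra. This is exactly where the flatness of $[n]$ from \cite{K-M85} is essential, since it guarantees that the fiber products computing $\MM_G^{top}$ descend to the underlying Deligne--Mumford stacks in the manner asserted in the construction, and hence that no higher homotopy is introduced beyond what the pushforward algebra records.
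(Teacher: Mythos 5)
Your proof is correct and follows essentially the same route as the paper: both deduce the theorem from the $0$-affineness of $\MM_{ell}^{top}$ established in \cite{MM15} together with the finiteness and flatness of the underlying map $\MM_G \to \MM_{ell}$ supplied by Lemma \ref{lem:FiniteFlat}. The only difference is that the paper simply invokes \cite[Prop 3.29]{MM15} for the inheritance step, whose hypothesis concerns only the \emph{underlying classical} morphism being (quasi-)affine, whereas you re-prove that inheritance in the affine case via $\QCoh(\YY)\simeq \Mod_{f_*\OO_{\YY}}(\QCoh(\ZZ))$; consequently the spectral-level affineness that you single out as the main obstacle is not actually needed once that proposition is cited.
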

\begin{proof}
 By construction, there is a morphism $\MM_G^{top} \to \MM_{ell}^{top}$ of derived stacks. By one of the main results of \cite{MM15}, the derived stack $\MM_{ell}^{top}$ is $0$-affine in the sense that the global sections functor 
 $$\Gamma\colon \QCoh(\MM_{ell}^{top}) \to TMF\modules$$
 is an equivalence of $\infty$-categories. By the last lemma the underlying morphism of stacks $\MM_G \to \MM_{ell}$ is finite and flat. It follows by \cite[Prop 3.29]{MM15} that $\MM_G^{top}$ is $0$-affine as well. 
\end{proof}

In particular, we do not loose information when we use the global sections functor. This gives special importance to the spectra $TMF^G$. We will determine $TMF^G$ in terms of better known spectra after localizing or completing at a prime not dividing $|G|$, or at least inverting $|G|$. To that purpose, let $\MM^G$ be the moduli stack of elliptic curves with $G$-level structure, i.e.\ for an elliptic curve $E/S$ an $S$-homomorphism $G\times S \to E$ that is injective after base change along every geometric point $s\colon \Spec k \to S$. 

\begin{lemma}
For $G$ finite abelian, there is a splitting 
$$\Hom(G,\EE^{uni})[\tfrac1{|G|}] \simeq \coprod_{K\subset G} \MM^{G/K}[\tfrac1{|G|}],$$ 
where $K$ runs over all subgroups of $G$ such that $G/K$ embeds into $(\Z/|G|)^2$.
 \end{lemma}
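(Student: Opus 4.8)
The plan is to analyze the functor of points of $\Hom(G,\EE^{uni})$ directly and to show that, once $|G|$ is inverted, it breaks up into open and closed pieces indexed by the kernel of the universal homomorphism. First I would observe that since every element of $G$ has order dividing $|G|$, any homomorphism $\phi\colon G_S \to E$ from the constant group scheme into an elliptic curve $E/S$ factors through the $|G|$-torsion subgroup scheme $E[|G|]$. After inverting $|G|$ on the base, $E[|G|]$ is finite \'etale over $S$ by \cite[Theorem 2.3.1]{K-M85}, so its identity section $e\colon S \to E[|G|]$ is an open and closed immersion.

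The key step is then that the kernel of $\phi$ is locally constant in families. For each $g \in G$ the section $\phi(g)\colon S \to E[|G|]$ has vanishing locus $\phi(g)^{-1}(e(S))$, which is open and closed in $S$ as the preimage of a clopen subset. Hence $S$ decomposes into the disjoint union of the open--closed substacks $S_K$ on which $\phi(g) = 0$ exactly for $g \in K$, as $K$ ranges over the subgroups of $G$; on $S_K$ the geometric-fibre kernel of $\phi$ is constantly equal to $K$. This exhibits $\Hom(G,\EE^{uni})[\tfrac1{|G|}]$ as a disjoint union, over all subgroups $K \subset G$, of the locus where $\ker\phi = K$.

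Next I would identify each piece with a level-structure stack. On the component where $\ker\phi = K$, the homomorphism $\phi$ factors through a map $\overline{\phi}\colon (G/K)_S \to E$ which is injective on every geometric fibre (as $\phi(g)$ lands in the complement of the identity section for $g \notin K$), i.e.\ precisely a $(G/K)$-level structure in the sense defining $\MM^{G/K}$; conversely, composing a $(G/K)$-level structure with the quotient $G \twoheadrightarrow G/K$ recovers such a $\phi$. This gives an equivalence between the $K$-component and $\MM^{G/K}[\tfrac1{|G|}]$. Finally, since $E[|G|] \cong (\Z/|G|)^2$ over every geometric point, the stack $\MM^{G/K}$ is empty unless $G/K$ embeds into $(\Z/|G|)^2$, so only those $K$ contribute and we obtain the stated coproduct.

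The main obstacle is the second paragraph: making precise that the kernel is constant on clopen pieces in a family. This is exactly where the hypothesis that $|G|$ is invertible is essential, since it guarantees the \'etaleness of $E[|G|]$ and hence the clopen-ness of its identity section. Without it the vanishing loci of the $\phi(g)$ need not be open, and in residue characteristics dividing $|G|$ the fibrewise structure of $E[|G|]$ degenerates, so neither the decomposition nor the identification of the pieces with level-structure stacks would survive.
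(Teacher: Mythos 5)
Your proof is correct, and its skeleton matches the paper's: factor $\phi$ through $E[|G|]$, which is finite \'etale after inverting $|G|$ by \cite[Theorem 2.3.1]{K-M85}; stratify by the kernel; identify the stratum with kernel $K$ with $\MM^{G/K}$ via the induced fibrewise-injective homomorphism $(G/K)_S\to E$, with inverse given by precomposition along $G\twoheadrightarrow G/K$; and discard the empty strata where $G/K$ does not embed into $(\Z/|G|)^2$. The genuine difference is the mechanism for local constancy of the kernel. The paper restricts to a \emph{connected} test scheme $S$ and invokes the equivalence between finite \'etale group schemes over $S$ and finite groups with continuous $\pi_1(S,s)$-action, under which $\phi$ becomes a homomorphism $G\to(\Z/|G|)^2$ with image in the $\pi_1$-invariants, so its kernel is a single abstract subgroup. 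You instead observe that the identity section of the separated \'etale scheme $E[|G|]\to S$ is an open and closed immersion, hence each locus $\{\phi(g)=e\}$ is clopen and $S$ splits into clopen pieces $S_K$ on which the geometric-fibre kernel is exactly $K$. Your route is more elementary (no \'etale fundamental group) and works over an arbitrary, not necessarily connected, base, so the decomposition of the moduli stack is immediate rather than assembled from connected test schemes; the paper's route has the merit of making the kernel visibly a fixed subgroup of $G$ and exhibiting the level structure directly as a $\pi_1$-invariant embedding. Both arguments hinge on the invertibility of $|G|$ in exactly the way you identify at the end.
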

\begin{proof}Let $S$ be a connected scheme with $|G|$ invertible. We have to show that there is a natural decomposition $\MM_G(S) \simeq \coprod_{K\subset G} \MM^{G/K}(S)$ of groupoids (where $K$ is as above). Let $E/S$ be an elliptic curve and $f\colon G\times S \to E$ be a homomorphism over $S$. This factors over $E[|G|]$, which is a finite \'etale group scheme over $S$. Choose a geometric point $s$ of $S$. Then the category of finite \'etale group schemes over $S$ is equivalent to the category of finite groups with continuous $\pi_1(S,s)$-action of the \'etale fundamental group \cite[Thm 33]{Stix} or \cite[Exp 5]{SGA1}. For example, $E[|G|]$ is $(\Z/|G|)^2$ with a certain action and $f$ corresponds to a map $f'\colon G \to (\Z/|G|)^2$ with image in the $\pi_1(S,s)$-invariants.
 
 Let $K$ be the kernel of $f'$. The resulting map $\overline{f}'\colon G/K \to (\Z/|G|)^2$ corresponds to a homomorphism $\overline{f}\colon G/K\times S \to (\Z/|G|)^2$ over $S$ that is an injection on geometric points. 
 
 If we have conversely a subgroup $K\subset G$ and a $G/K$-level structure on $\EE$, this defines an $S$-homomorphism $G\times S \to E$ by precomposition with $G \to G/K$. 
\end{proof}

Using that $\MM_G \simeq \Hom(G,\EE^{uni})[\tfrac1{|G|}]$, this induces a corresponding splitting of the $E_\infty$-ring spectra $TMF^G[\frac1{|G|}]$. For example,
\[TMF^{\Z/n}[\tfrac1n] \simeq \prod_{k|n} TMF_1(k)[\tfrac1n]\]
as $E_\infty$-ring spectra and by Theorem \ref{thm:main} every factor with $k>3$ decomposes further into well-understood pieces. We obtain more generally: 

\begin{thm}
 Let $G$ be a finite abelian group. After completion at a prime $l$ not dividing $|G|$, the $TMF$-module $TMF^G$ splits into one copy of $TMF$ and even suspensions of $TMF_1(3)$ (for $l=2$), $TMF_1(2)$ (for $l=3$) or $TMF$ (if $l>3$). 
\end{thm}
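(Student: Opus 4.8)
The plan is to reduce to the cases already handled and, where necessary, to extend the decomposition machinery of Section \ref{sec:decexistence} to mixed level structures. First I would invoke the preceding lemma: after inverting $|G|$ (which is automatic once we complete at $l\nmid |G|$) there is a splitting of $E_\infty$-rings
\[
TMF^G[\tfrac1{|G|}] \simeq \prod_{K\subset G} \OO^{top}(\MM^{G/K})[\tfrac1{|G|}],
\]
the product running over those $K$ with $G/K\hookrightarrow(\Z/|G|)^2$. Completing at $l$ it therefore suffices to treat each factor $\widehat{\OO^{top}(\MM^H)}_l$ separately, for $H=G/K$ a finite abelian group with $|H|$ prime to $l$. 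The summand $H=0$ is $\MM^0=\MM_{ell}$ and contributes exactly the single copy of $\widehat{TMF}_l$; all other summands I must show split into even suspensions of the relevant building block, with no further copy of $\widehat{TMF}_l$.

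Every such $H$ is of the form $\Z/a\times\Z/b$ with $a\mid b$. When the exponent $b\le 3$ one has $\MM^H\in\{\MM_1(2),\MM_1(3),\MM(2),\MM(3)\}$, each covered by Convention \ref{convention}; together with the cyclic case $\MM_1(4)$ these are precisely the stacks for which Theorem \ref{thm:main} and the analogues established above for $TMF(n)$ already give the desired splitting. The remaining, genuinely new, cases are the mixed structures with $1<a<b$, which force $b\ge 4$.

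For these I would extend Theorem \ref{thm:decuncompact} to $\MM^H$ by checking that the only special inputs of its proof survive. Because $H$ contains a point of exact order $b\ge 4$, the stack $\MMb^H$ is tame (indeed representable, by the rigidification argument of Proposition \ref{prop:basicprops}), so $\MM^H$ has vanishing higher quasi-coherent cohomology and the analogues of Proposition \ref{prop:coh} hold. The obstruction classes controlling Lemma \ref{lem:splitting} then vanish after pullback: at $l=3$ and $l>3$ the relevant classes $g^*\nu$ and $g^*\tilde\nu$ lie in groups $H^1(\MMb^H;g^*\omega^{\tensor m})$ with $m\ge 2$ and so vanish by Proposition \ref{prop:coh}; at $l=2$ the single class $g^*\eta$ vanishes because, forgetting the $\Z/a$-factor, $g$ factors through $\MMb_1(b)\to\MMb_{ell}$ with $b$ odd, on which $\eta$ already restricts to zero by Proposition \ref{prop:eta}. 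With $g^*f_*\OO_{\MMb'}$ thus a sum of line bundles $g^*\omega^{\tensor n}$, the field-case argument of Proposition \ref{prop:split} and then the deformation-theoretic lift of Theorem \ref{thm:decuncompact} go through verbatim, yielding a splitting of $g_*\OO_{\MM^H}$ into bundles $f_*\OO_{\MM'}\tensor\omega^{\tensor i}$ over $\widehat{\MM}_{ell,l}$. Finally I would lift this algebraic splitting to one of $\widehat{\OO^{top}(\MM^H)}_l$ as a $\widehat{TMF}_l$-module exactly as in the proof of Theorem \ref{thm:main}: since $\MM^H$ has cohomological dimension $\le 1$ the descent spectral sequence collapses, the relevant $\mathcal{H}om$-sheaves again have cohomological dimension $\le 1$, and the isomorphism on homotopy lifts uniquely to an equivalence of modules.

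The main obstacle is exactly this extension to the mixed-level stacks $\MM^H$, which do not appear in Convention \ref{convention}; the crux is the vanishing of $g^*\eta$ at $l=2$, which I handle by factoring through the cyclic quotient $\MM_1(b)$ and appealing to Proposition \ref{prop:eta}. Once representability and tameness and these vanishing statements are in hand, the remainder is the same homological bookkeeping as in the cyclic case, and no new multiplicity computation is needed for the qualitative statement.
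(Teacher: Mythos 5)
Your reduction via the product splitting, the identification of the trivial factor with one copy of $\widehat{TMF}_l$, and the handling of the cyclic and full-level factors all match the paper. The divergence --- and the gap --- is in how you treat the mixed factors $H = \Z/a\times\Z/b$ with $1<a<b$. The paper never extends the decomposition machinery to $\MM^H$, because it does not need to: Proposition \ref{prop:split} and Theorem \ref{thm:decuncompact} are stated for an \emph{arbitrary} vector bundle $\EE$ on $\XX$, not just for $\OO_{\XX}$. Writing $G/K \cong \Z/k_1\oplus\Z/k_2$ with $k_1\geq 2$, the projection $h\colon \MM^{G/K}\to\MM_{ell}$ factors through $\MM_1(k_1)$; the map $\MM^{G/K}\to\MM_1(k_1)$ is finite and flat (both stacks are finite \'etale over $\MM_{ell}$), so the pushforward of $\OO_{\MM^{G/K}}$ is a vector bundle on $\MM_1(k_1)$, and Theorem \ref{thm:decuncompact} applied to this bundle already splits $h_*\OO_{\MM^{G/K}}$ over $\widehat{\MM}_{ell,l}$ into the required pieces; the topological lift is then exactly as in Theorem \ref{thm:main}. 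No tameness, representability, cohomology, or Kodaira--Spencer statements about $\MMb^H$ enter at any point.

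As for your proposed extension itself, two of its steps are genuinely gapped. First, the rigidification claim is wrong as stated: a point of exact order $b\geq 4$ does \emph{not} make $\MMb^H$ representable --- the threshold in Proposition \ref{prop:basicprops} is $n\geq 5$, and its proof shows explicitly that points of exact order $4$ on N\'eron $1$- and $2$-gons can be fixed by automorphisms $\tau u_\zeta$; correspondingly $\MMb_1(4)\simeq\PP_{\Z[\frac12]}(1,2)$ is not a scheme. So the case $H=\Z/2\times\Z/4$, which does occur at $l=3$, is not covered by the argument you cite and would need its own automorphism analysis, including at the cusps. Second, the ``analogues of Proposition \ref{prop:coh}'' for $\MMb^H$ are asserted rather than proved, and they are not formal consequences of the paper: the proof of Proposition \ref{prop:coh}(2) rests on the Kodaira--Spencer isomorphism of Corollary \ref{cor:Omegaomega}, which is established only for $\Gamma_0(n)$, $\Gamma_1(n)$ and $\Gamma(n)$, and you cannot deduce the vanishing of $H^1(\MMb^H; g^*\omega^{\tensor m})$ for $m\geq 2$ by pushing forward to $\MMb_1(b)$, since Proposition \ref{prop:coh} controls only the line bundles $\omega^{\tensor m}$ there, not the higher-rank bundle that arises. (Your observation that $g^*\eta=0$ on $\MMb^H$ via the factorization through $\MMb_1(b)$ and Proposition \ref{prop:eta} is correct, but it is the only ingredient of the extension that comes for free.)
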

\begin{proof}
 We $l$-complete throughout. By the last lemma, we have
 $$TMF^G \simeq \prod_{K\subset G} \OO^{top}(\MM^{G/K}).$$
 Assume that $G/K$ is non-trivial and write $G/K \cong \Z/k_1 \oplus \Z/k_2$ with $k_1\geq 2$. Then the map $h\colon \MM^{G/K} \to \MM_{ell}$ factors over $\MM_1(k_1)$. As in the proof of (1) of Proposition \ref{prop:basicprops}, the map $\MM^{G/K} \to \MM_1(k_1)$ is flat (as both source and target are finite \'etale over $\MM_{ell}$) and thus the pushforward of $\OO_{\MM^{G/K}}$ to $\MM_1(k_1)$ is a vector bundle. By Theorem \ref{thm:decuncompact}, this vector bundle $h_*\OO_{\MM^{G/K}}$ decomposes into vector bundles of the form $\omega^{\tensor j}$ (for $l>3$), $(f_2)_*\OO_{\MM_1(2)} \tensor \omega^{\tensor j}$ (for $l=3$) or $(f_3)_*\OO_{\MM_1(3)}\tensor \omega^{\tensor j}$ (for $l=2$). Analogously to the proof of Theorem \ref{thm:main}, we conclude a topological splitting from this algebraic one. 
\end{proof}

If $H^1(\MMb_1(k); \omega)$ is $l$-torsionfree for all $k|n$ and $k>1$, we can use Theorem \ref{thm:deccompact} to replace $l$-completion by localization at $l$.

\appendix
\section{Lifting the Hasse invariant}\label{app:Hasse}
This appendix does not contain an original contribution by the author. Besides a short introduction to the Hasse invariant it proves that the Hasse invariant is liftable to characteristic zero once we have chosen a $\Gamma_1(k)$-level structure for $k\geq 2$. This proof is (essentially) taken from \cite{Hasse} and the credit belongs to the mathoverflow user Electric Penguin. 

We begin by recalling the definition of the Hasse invariant from \cite[Section 2.0]{Kat73}. Let $f\colon E\to \Spec R$ be an elliptic curve. By Grothendieck duality, we have a canonical isomorphism $R^1f_*\OO_E \cong \omega_{E/R}^{\tensor -1}$ of line bundles. Choosing a trivialization of $\omega_{E/R}$ thus induces a dual $R$-basis $x$ of $H^1(E;\OO_E)$. If $R$ is an $\F_p$-algebra, the absolute Frobenius $F_{abs}$ acts on $H^1(E;\OO_E)$ so that $F_{abs}^*(x) = \lambda x$ with $\lambda \in R$. Associating to each elliptic curve $E\to \Spec R$ over an $\F_p$-algebra $R$ with chosen trivialization of $\omega_{E/R}$ the element $\lambda \in R$ defines an $\F_p$-valued (holomorphic) modular form $A_p$ of weight $p-1$, as explained in \cite{Kat73}. This is the \emph{Hasse invariant}. Katz shows that the $q$-expansion of $A_p$ in $\F_p\llbracket q\rrbracket$ is identically $1$.

We want to prove that the Hasse invariant is liftable to characteristic $0$ in the presence of a level structure. More precisely, we have the following:

\begin{prop}\label{prop:Hasse}
 For every odd $k\geq 2$, there is a modular form $F$ of weight $1$ and level $\Gamma_1(k)$ over a cyclotomic ring $\Z_{(2)}[\zeta_{2^m}]$ such that $F\equiv A_2 \mod 2$. 
\end{prop}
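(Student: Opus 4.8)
The plan is to realize the lift as a weight-$1$ \emph{Eisenstein series}. The point is conceptual: the Hasse invariant $A_2$ has constant $q$-expansion $1$, and the obstruction to lifting it at level $1$ is exactly the absence of a weight-$1$ Eisenstein series for $SL_2(\Z)$; introducing a $\Gamma_1$-structure is precisely what produces such series, and we only need one whose $q$-expansion is $\equiv 1 \bmod 2$. The first reduction I would make is to prime level. If $\ell$ is an odd prime dividing $k$, the assignment $(E,P)\mapsto(E,(k/\ell)P)$ defines a morphism $\pi\colon \MMb_1(k)\to\MMb_1(\ell)$ over $\MMb_{ell}$, so $\pi^* f_\ell^*\omega\cong f_k^*\omega$; since the Hasse invariant is intrinsic to the elliptic curve we have $\pi^*A_2=A_2$, and hence a weight-$1$ form $F$ of level $\Gamma_1(\ell)$ with $F\equiv A_2\bmod 2$ pulls back to $\pi^*F$, a weight-$1$ form of level $\Gamma_1(k)$ with $\pi^*F\equiv A_2\bmod 2$. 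As every odd $k\geq 2$ has an odd prime factor, it suffices to treat $k=\ell$ prime.

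Next I would build the series. Write $\ell-1=2^s u$ with $u$ odd and choose a Dirichlet character $\chi$ modulo $\ell$ of exact order $2^s$; such a $\chi$ is odd (it is nontrivial on the unique element of order $2$) and takes values in $\Z[\zeta_{2^s}]\subset\Z_{(2)}[\zeta_{2^m}]$ for $m\geq s$. For $s\geq 2$ I would pass to the Galois trace
\[ F = \sum_{\sigma\in\Gal(\Q(\zeta_{2^s})/\Q)} E_1^{\sigma\chi}, \qquad E_1^{\psi}=\tfrac12 L(0,\psi)+\sum_{n\geq 1}\Big(\sum_{d\mid n}\psi(d)\Big)q^n. \]
Then $F$ has rational $q$-expansion, and for $n\geq 1$ its $q^n$-coefficient is $\sum_{d\mid n}\mathrm{Tr}_{\Q(\zeta_{2^s})/\Q}(\chi(d))$. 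Since the trace of a $2$-power root of unity equals $\pm 2^{s-1}$ when that root is $\pm 1$ and $0$ otherwise, every positive-degree coefficient is divisible by $2^{s-1}$, hence even. Thus $F$ is a weight-$1$ form over $\Z_{(2)}$ with $q$-expansion $c+(\text{even})$, where $c=\tfrac12\mathrm{Tr}_{\Q(\zeta_{2^s})/\Q}L(0,\chi)$. The complementary case $s=1$, i.e.\ $\ell\equiv 3\bmod 4$, is the quadratic one: here $\chi$ is the Legendre symbol, $E_1^\chi$ already has integer coefficients, and one uses $2E_1^\chi$, whose constant term is $-B_{1,\chi}$, essentially the class number of $\Q(\sqrt{-\ell})$.

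Finally I would normalize the constant term and invoke the $q$-expansion principle. Multiplying $F$ by a suitable power of $2$ (using the divisibility of the positive-degree coefficients as slack), it remains only to control the $2$-adic valuation of $c$: one needs $v_2\!\big(\mathrm{Tr}_{\Q(\zeta_{2^s})/\Q}B_{1,\chi}\big)$ small enough that some scalar multiple of $F$ has an odd unit constant term while its higher coefficients stay even. In the quadratic case this is exactly the assertion that $h(\Q(\sqrt{-\ell}))$ is odd, which holds by genus theory because $-\ell$ has a single ramified prime. Once $F$ is normalized so that its $q$-expansion is $\equiv 1\bmod 2$, its reduction $\bar F$ and $A_2$ are two global sections of $\omega$ on $\MMb_1(\ell)_{\F_2}$ agreeing on $q$-expansions at the cusp $\infty$; as $\MMb_1(\ell)_{\F_2}$ is irreducible by Proposition \ref{prop:irreducible}, the $q$-expansion principle forces $\bar F=A_2$, i.e.\ $F\equiv A_2\bmod 2$.

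The main obstacle is this last $2$-adic estimate on the (trace of the) generalized Bernoulli number $B_{1,\chi}$. Controlling $v_2(B_{1,\chi})$, and its Galois trace, for characters of $2$-power order is genuine number theory, amounting to $2$-adic information on relative class numbers of subfields of $\Q(\zeta_\ell)$; I expect the case $\ell\equiv 1\bmod 8$ (where $s\geq 3$ and the naive conjugate-pair combination already fails to be integral) to be the delicate point, while the even-coefficient bookkeeping and the reduction to prime level are routine by comparison.
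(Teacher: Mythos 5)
Your construction follows the same strategy as the paper (reduce to an odd prime $\ell$, take an odd character $\chi$ mod $\ell$ of exact $2$-power order $2^s$, form its weight-$1$ Eisenstein series, average over the Galois group to get rational $q$-coefficients, and invoke the $q$-expansion principle plus irreducibility), but you have left unproved exactly the step that carries all of the content: the $2$-adic valuation of the constant term. Your rescaling argument needs $v_2\bigl(\tfrac12\mathrm{Tr}_{\Q(\zeta_{2^s})/\Q}L(0,\chi)\bigr)\leq s-2$, and "small enough" cannot be waved at: a priori the Galois trace could cancel the constant term to arbitrarily high $2$-adic order, in which case no power of $2$ makes the constant term odd while keeping the higher coefficients even. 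Declaring this estimate to be "genuine number theory" about relative class numbers, and conjecturing that $\ell\equiv 1\bmod 8$ is the delicate case, is not a proof; the missing statement is precisely the Claim that occupies the core of the paper's argument.

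The gap is real but, fortunately for your approach, fillable by an elementary computation rather than class field theory, and there is no special difficulty at $\ell\equiv 1\bmod 8$. Write $\ell-1=2^su$ with $u$ odd and set $S=\sum_{n\in\ker\chi}\overline{n}$, where $|\ker\chi|=u$. Since $\mathrm{Tr}(\chi(n))$ equals $2^{s-1}$ on $\ker\chi$, equals $-2^{s-1}$ on the coset $-\ker\chi$ (this is where $\chi(-1)=-1$ enters), and vanishes elsewhere, pairing $n$ with $\ell-n$ gives
\[\mathrm{Tr}_{\Q(\zeta_{2^s})/\Q}\,L(0,\chi)=-\frac{2^{s-1}}{\ell}\bigl(2S-\ell u\bigr),\]
and $\ell u-2S$ is odd, so $v_2\bigl(\mathrm{Tr}\,L(0,\chi)\bigr)=s-1$ exactly, for every odd prime $\ell$. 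Hence $2^{2-s}\sum_{\sigma}E_1^{\sigma\chi}$ has odd unit constant term and even higher coefficients, and your argument closes; the case $s=1$ is the same computation (it recovers, rather than assumes, the oddness of the class number of $\Q(\sqrt{-\ell})$). For comparison, the paper pins down the valuation before tracing -- it proves $v_2(L(0,\chi))<1$ while $v_2\bigl((1-\zeta)L(0,\chi)\bigr)\geq 1$ by a congruence computation, then normalizes by multiplying $E_1^{\chi}$ by $(1-\zeta)$ and sums the rational components $f_i$ of $(1-\zeta)E_1^{\chi}=\sum_i\zeta^if_i$ instead of taking the plain trace. Either normalization works, but only once the valuation statement is actually established.
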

\begin{proof}
The proof we present is based on the mathoverflow post \cite{Hasse} by the user Electric Penguin and uses the theory of Eisenstein series. Let $\chi\colon (\Z/k)^\times \to \C^\times$ be an odd character, i.e.\ we require $\chi(-1) = -1$. Following \cite[Section 4.8]{D-S05}, we define its associated weight $1$ Eisenstein series by
\[E_1^{\chi}(\tau) = \frac12L(0,\chi) + \sum_{n=1}^{\infty}c_nq^n,\]
where $c_n = \sum_{d|n}\chi(n)$ (this is half of the normalization chosen in \cite{D-S05}). Here $L(s,\chi)$ denotes the Dirichlet $L$-series associated to $\chi$. Let $a$ be the \emph{conductor} of $\chi$, i.e.\ the smallest $a|k$ such that $\chi$ factors over $(\Z/a)^\times$. Then we have
\[L(0,\chi) = -\frac1{a}\sum_{n=1}^{a-1} n\chi(n).\]
It is proven in \cite{D-S05} that $E_1^{\chi}$ is a $\Gamma_1(k)$-modular form of weight $1$ over the ring $\C$ with character $\chi$ (although the latter fact will not be relevant for us). 

In general, if $K/\Q$ is a finite extension of degree $n$, we can extend the $2$-adic valuation from $\Q$ to $K$ by setting $v_2(x) = \frac1n v_2(N_{K/\Q}(x))$ for $x\in K$. For example, let $K/\Q$ be the cyclotomic extension $\Q(\zeta)$ with $\zeta =\zeta_{2^m}$ a primitive root of unity. Then $N_{K/\Q}(x-\zeta) = x^{2^{m-1}}+1$ (for $x$ not acted upon by the Galois group) as both have the same zeros. In particular, $v_2(1-\zeta) = \frac1{2^{m-2}}$. As $1-\zeta$ generates the maximal ideal of $\Z_{(2)}[\zeta]$, we see that $\frac1{2^{m-2}}$ is the minimal positive $2$-adic valuation in $\Q(\zeta)$ and thus every $2$-adic valuation is a multiple of $\frac1{2^{m-2}}$. 

For the proof of the proposition, we may assume $k$ to be an odd prime $p$ (as every $\Gamma_1(p)$-modular form is also a $\Gamma_1(k)$-modular form for $p|k$), which we will do in the following. Thus consider an odd character $\chi\colon (\Z/p)^\times \to \C^\times$ and the associated Eisenstein series $E_1^\chi$. We will assume that $\chi$ has order $2^m$ for $p-1 = 2^ml$ with $l$ odd. This implies that $\chi$ is surjective onto the $2^m$-th roots of unity and that $\ker(\chi)\subset (\Z/p)^\times$ has order $l$.

\begin{claim}We have
 $$v_2(L(0,\chi)) = 1-\frac1{2^{m-2}},$$
 where the valuation is taken in $\Q(\zeta_{2^m})$. 
\end{claim}
\begin{myproof}
Choose $b_0,\dots, b_{2^{m-1}-1}$ with $\chi(b_j) = \zeta^j$, where we still use the notation $\zeta = \zeta_{2^m}$. We furthermore use the notation $\overline{x}$ to denote for an integer $x$ the integer $0\leq \overline{x}\leq p-1$ it is congruent to mod $p$. 

We have
\begin{align*}
L(0,\chi) &= -\frac1p\sum_{n=1}^{p-1}n\chi(n) \\
 &= -\frac1p\sum_{j=0}^{2^{m-1}-1} \sum_{[i]\in\ker(\chi)} \left(\overline{ib_j}\chi(ib_j) + (p-\overline{ib_j})\chi(p-ib_j)\right) \\
 &= -\frac1p\sum_{j=0}^{2^{m-1}-1} (-pl + 2\sum_{[i]\in\ker(\chi)} \overline{ib_j})\zeta^j \\
 &\equiv \sum_{j=0}^{2^{m-1}-1} \zeta^j \mod 2
\end{align*}
in $\Z_{(2)}[\zeta]$. As this is not congruent to $0$ mod $2$, this implies in particular that $v_2(L(0,\chi)) < 1$. Moreover,
 \[L(0,\chi)(1-\zeta) \equiv (1+\zeta + \cdots + \zeta^{2^{m-1}-1})(1-\zeta) \equiv 1+1 \equiv 0 \mod 2\]
 and thus $v_2(L(0,\chi)(1-\zeta)) = v_2(L(0,\chi)) + \frac1{2^{m-2}}\geq 1$. As every $2$-adic valuation $\Z_{(2)}[\zeta]$ is a multiple of $\frac1{2^{m-2}}$, this implies the result. 
\end{myproof}

We see that $E =(1-\zeta)E_1^{\chi}$ is a level $p$, weight $1$ modular form with $q$-expansion in the ring $\Z_{(2)}[\zeta]$. By the $q$-expansion principle from \cite[Thm 1.6.1]{Kat73}, $E$ is thus a modular form over the ring $\Z_{(2)}[\zeta]$. Furthermore, we know that $E \equiv 1 \mod (1-\zeta)$ in $\Z_{(2)}[\zeta]\llbracket q\rrbracket$.

Write 
\[ E = \sum_{i=0}^{2^{m-1}-1}\zeta^if_i \in \Z_{(2)}[\zeta]\llbracket q\rrbracket\]
with $f_i \in \Z_{(2)}\llbracket q\rrbracket$. 

\begin{claim}
Each $f_i$ is a $\Gamma_1(p)$-modular form.
\end{claim}
\begin{myproof}
The Galois group $(\Z/2^m)^\times = \Gal(\Q(\zeta)/\Q)$ acts on $\Q(\zeta)$-valued modular forms. In particular, $\sum_{g\in \Gal(\Q(\zeta)/\Q)}g(\zeta^{-i}E)$ is a modular form, namely $2^{m-1}f_i$. 
\end{myproof}

Thus, also $$F = \sum_{i=0}^{2^{m-1}-1}f_i\in\Z_{(2)}\llbracket q\rrbracket$$
is a $\Gamma_1(p)$-modular form over the ring $\Z_{(2)}[\zeta]$ and also 
$$F \equiv E \equiv 1 \mod (1-\zeta).$$
As $(1-\zeta)\Z_{(2)}[\zeta]\llbracket q\rrbracket \cap \Z_{(2)}\llbracket q\rrbracket = (2)\Z_{(2)}\llbracket q\rrbracket$, we also get $F \equiv 1 \mod 2$. 

This proves the proposition.
\end{proof}
\newpage

\section{Tables of decomposition numbers}\label{sec:tables}
Let $f_n\colon \MMb_1(n)_{\C} \to \MMb_{ell,\C}$ be the projection and $$(f_n)_*\OO_{\MMb_1(n)_\C} \cong \bigoplus_{i\in\Z}\bigoplus_{l_i}\omega^{\tensor -i}.$$

\renewcommand{\arraystretch}{0.98}
  \begin{tabular}{lccccccccccccc}
$n$ & genus & $l_0$ & $l_1$ & $l_2$ & $l_3 $& $l_4$ & $l_5$ & $l_6$ & $l_7$ & $l_8$ & $l_9$ & $l_{10}$ & $l_{11}$ \\\hline\\
2 & 0 & 1 & 0 & 1 & 0 & 1 & 0 & 0 & 0 & 0 & 0 & 0 & 0 \\
3 & 0 & 1 & 1 & 1 & 2 & 1 & 1 & 1 & 0 & 0 & 0 & 0 & 0 \\
4 & 0 & 1 & 1 & 2 & 2 & 2 & 2& 1 & 1 & 0 & 0 & 0 & 0 \\
5 & 0 & 1 & 2 & 3 & 4 & 4 & 4 & 3 & 2 & 1 & 0 & 0 & 0 \\
6 & 0 & 1 & 2 & 3 & 4 & 4 & 4 & 3 & 2 & 1 & 0 & 0 & 0 \\
7 & 0 & 1 & 3 & 5 & 7 & 8 & 8 & 7 & 5 & 3 & 1 & 0 & 0 \\
8 & 0 & 1 & 3 & 5 & 7 & 8 & 8 & 7 & 5 & 3 & 1 & 0 & 0 \\
9 & 0 & 1 & 4 & 7 & 10 & 12 & 12 & 11 & 8 & 5 & 2 & 0 & 0 \\
10 & 0 & 1 & 4 & 7 & 10 & 12 & 12 & 11 & 8 & 5 & 2 & 0 & 0 \\
11 & 1 & 1 & 5 & 10 & 15 & 19 & 20 & 19 & 15 & 10 & 5 & 1 & 0 \\
12 & 0 & 1 & 5 & 9 & 13 & 16 & 16 & 15 & 11 & 7 & 3 & 0 & 0 \\
13 & 2 & 1 & 6 & 13 & 20 & 26 & 28 & 27 & 22 & 15 & 8 & 2 & 0 \\
14 & 1 & 1 & 6 & 12 & 18 & 23 & 24 & 23 & 18 & 12 & 6 & 1 & 0 \\
15 & 1 & 1 & 8 & 16 & 24 & 31 & 32 & 31 & 24 & 16 & 8 & 1 & 0 \\
16 & 2 & 1 & 7 & 15 & 23 & 30 & 32 & 31 & 25 & 17 & 9 & 2 & 0 \\
17 & 5 & 1 & 8 & 20 & 32 & 43 & 48 & 47 & 40 & 28 & 16 & 5 & 0 \\
18 & 2 & 1 & 8 & 17 & 26 & 34 & 36 & 35 & 28 & 19 & 10 & 2 & 0 \\
19 & 7 & 1 & 9 & 24 & 39 & 53 & 60 & 59 & 51 & 36 & 21 & 7 & 0 \\
20 & 3 & 1 & 10 & 22 & 34 & 45 & 48 & 47 & 38 & 26 & 14 & 3 & 0 \\
21 & 5 & 1 & 12 & 28 & 44 & 59 & 64 & 63 & 52 & 36 & 20 & 5 & 0 \\
22 & 6 & 1 & 10 & 25 & 40 & 54 & 60 & 59 & 50 & 35 & 20 & 6 & 0 \\
23 & 12 & 1 & 12 & 33 & 55 & 76 & 87 & 87 & 76 & 55 & 33 & 12 & 1 \\
24 & 5 & 1 & 12 & 28 & 44 & 59 & 64 & 63 & 52 & 36 & 20 & 5 & 0 \\
25 & 12 & 1 & 14 & 39 & 64 & 88 & 100 & 99 & 86 & 61 & 36 & 12 & 0 \\
26 & 10 & 1 & 12 & 33 & 54 & 74 & 84 & 83 & 72 & 51 & 30 & 10 & 0 \\
27 & 13 & 1 & 15 & 42 & 69 & 95 & 108 & 107 & 93 & 66 & 39 & 13 & 0 \\
28 & 10 & 1 & 15 & 39 & 63 & 86 & 96 & 95 & 81 & 57 & 33 & 10 & 0 \\
29 & 22 & 1 & 14 & 49 & 84 & 118 & 140 & 139 & 126 & 91 & 56 & 22 & 0 \\
30 & 9 & 1 & 16 & 40 & 64 & 87 & 96 & 95 & 80 & 56 & 32 & 9 & 0 \\
31 & 26 & 1 & 16 & 55 & 95 & 134 & 159 & 159 & 144 & 105 & 65 & 26 & 1 \\
32 & 17 & 1 & 16 & 48 & 80 & 111 & 128 & 127 & 112 & 80 & 48 & 17 & 0 \\
33 & 21 & 1 & 20 & 60 & 100 & 139 & 160 & 159 & 140 & 100 & 60 & 21 & 0 \\
34 & 21 & 1 & 16 & 52 & 88 & 123 & 144 & 143 & 128 & 92 & 56 & 21 & 0 \\
35 & 25 & 1 & 24 & 72 & 120 & 167 & 192 & 191 & 168 & 120 & 72 & 25 & 0 \\
36 & 17 & 1 & 20 & 56 & 92 & 127 & 144 & 143 & 124 & 88 & 52 & 17 & 0 \\
37 & 40 & 1 & 18 & 75 & 132 & 188 & 228 & 227 & 210 & 153 & 96 & 40 & 0 \\
38 & 28 & 1 & 18 & 63 & 108 & 152 & 180 & 179 & 162 & 117 & 72 & 28 & 0 \\
39 & 33 & 1 & 25 & 80 & 136 & 191 & 223 & 223 & 199 & 144 & 88 & 33 & 1 \\
40 & 25 & 1 & 24 & 72 & 120 & 167 & 192 & 191 & 168 & 120 & 72 & 25 & 0 \\
41 & 51 & 1 & 20 & 90 & 160 & 229 & 280 & 279 & 260 & 190 & 120 & 51 & 0 \\
42 & 25 & 1 & 24 & 72 & 120 & 167 & 192 & 191 & 168 & 120 & 72 & 25 & 0 
 \end{tabular}

 \newpage
\begin{tabular}{cc}
\begin{tabular}{lcccccccc}
$n$ &  $k_0$ & $k_1$ & $k_2$ & $k_3 $& $k_4$ & $k_5$ & $k_6$ & $k_7$\\ \hline
4 & 1 & 1 & 1 & 1 & 0 & 0 & 0 & 0 \\
5 & 1 & 2 & 2 & 2 & 1 & 0 & 0 & 0 \\
6 & 1 & 2 & 2 & 2 & 1 & 0 & 0 & 0 \\
7 & 1 & 3 & 4 & 4 & 3 & 1 & 0 & 0 \\
8 & 1 & 3 & 4 & 4 & 3 & 1 & 0 & 0 \\
9 & 1 & 4 & 6 & 6 & 5 & 2 & 0 & 0 \\
10 & 1 & 4 & 6 & 6 & 5 & 2 & 0 & 0 \\
11 & 1 & 5 & 9 & 10 & 9 & 5 & 1 & 0 \\
12 & 1 & 5 & 8 & 8 & 7 & 3 & 0 & 0 \\
13 & 1 & 6 & 12 & 14 & 13 & 8 & 2 & 0 \\
14 & 1 & 6 & 11 & 12 & 11 & 6 & 1 & 0 \\
15 & 1 & 8 & 15 & 16 & 15 & 8 & 1 & 0 \\
16 & 1 & 7 & 14 & 16 & 15 & 9 & 2 & 0 \\
17 & 1 & 8 & 19 & 24 & 23 & 16 & 5 & 0 \\
18 & 1 & 8 & 16 & 18 & 17 & 10 & 2 & 0 \\
19 & 1 & 9 & 23 & 30 & 29 & 21 & 7 & 0 \\
20 & 1 & 10 & 21 & 24 & 23 & 14 & 3 & 0 \\
21 & 1 & 12 & 27 & 32 & 31 & 20 & 5 & 0 \\
22 & 1 & 10 & 24 & 30 & 29 & 20 & 6 & 0 \\
23 & 1 & 12 & 32 & 43 & 43 & 32 & 12 & 1 
\end{tabular}
&
\begin{minipage}{6cm}\begin{center} Decomposition numbers for $$(f_n)_*\OO_{\MMb_1(n)_{(3)}}$$ for decompositions into $k_i$ copies of $$(f_2)_*\OO_{\MMb_1(2)_{(3)}}\tensor \omega^{\tensor -i}$$ \end{center}\end{minipage}\\\\

\begin{tabular}{lccccccc}
$n$ &  $k_0$ & $k_1$ & $k_2$ & $k_3 $& $k_4$ & $k_5$  \\ \hline
5 & 1 & 1 & 1 & 0 & 0 & 0 \\
6 & 1 & 1 & 1 & 0 & 0 & 0 \\
7 & 1 & 2 & 2 & 1 & 0 & 0 \\
8 & 1 & 2 & 2 & 1 & 0 & 0 \\
9 & 1 & 3 & 3 & 2 & 0 & 0 \\
10 & 1 & 3 & 3 & 2 & 0 & 0 \\
11 & 1 & 4 & 5 & 4 & 1 & 0 \\
12 & 1 & 4 & 4 & 3 & 0 & 0 \\
13 & 1 & 5 & 7 & 6 & 2 & 0 \\
14 & 1 & 5 & 6 & 5 & 1 & 0 \\
15 & 1 & 7 & 8 & 7 & 1 & 0 \\
16 & 1 & 6 & 8 & 7 & 2 & 0 \\
17 & 1 & 7 & 12 & 11 & 5 & 0 \\
18 & 1 & 7 & 9 & 8 & 2 & 0 \\
19 & 1 & 8 & 15 & 14 & 7 & 0 \\
20 & 1 & 9 & 12 & 11 & 3 & 0 \\
21 & 1 & 11 & 16 & 15 & 5 & 0 \\
22 & 1 & 9 & 15 & 14 & 6 & 0 \\
23 & 1 & 11 & 21 & 21 & 11 & 1 

\end{tabular}
& \begin{minipage}{7cm}\begin{center} Decomposition numbers for $$(f_n)_*\OO_{\MMb_1(n)_{(2)}}$$ for decompositions into $k_i$ copies of $$(f_3)_*\OO_{\MMb_1(3)_{(2)}}\tensor \omega^{\tensor -i}$$\end{center}\end{minipage}
\end{tabular}

\bibliographystyle{alpha}
\bibliography{../Chromatic}
\end{document}